\newcommand{\R}{\ensuremath{\mathbb{R}}}
\newcommand{\norm}[1]{\ensuremath{\left\|#1\right\|}}
\newcommand{\regSp}{\mathcal{C}}
\renewcommand{\L}{\mathbb{L}} 
\DeclareMathOperator{\Var}{Var}
\DeclareMathOperator{\cov}{Cov}
\DeclareMathOperator{\HSIC}{HSIC}
\DeclareMathOperator{\HilbSchmidt}{HS}
\DeclareMathOperator{\HH}{HSIC}
\DeclareMathOperator{\E}{\mathbb{E}} 
\newcommand\reallywidehat[1]{%
\savestack{\tmpbox}{\stretchto{%
  \scaleto{%
    \scalerel*[\widthof{\ensuremath{#1}}]{\kern-.6pt\bigwedge\kern-.6pt}%
    {\rule[-\textheight/2]{1ex}{\textheight}}
  }{\textheight}%
}{0.5ex}}%
\stackon[1pt]{#1}{\tmpbox}%
}
\newcommand{\pa}[1]{\left(#1\right)}
\newcommand{\cro}[1]{\left[#1\right]}
\newcommand{\ac}[1]{\left\{#1\right\}}
\newcommand{\abs}[1]{\left| #1 \right|}
\newcommand{\rperm}{\tau} 					
\newcommand{\rpermb}{\kappa} 					
\newcommand{\echperm}{\pi}					
\newcommand{\perm}{\sigma}					
\DeclareMathOperator{\id}{id}				
\newcommand{\ZZ}{\mathbb{Z}}				
\newcommand{\proba}[1]{\mathbb{P}\!\pa{#1}}	
\newcommand{\esp}[1]{\mathbb{E}\!\cro{#1}}	
\newcommand{\esps}[2]{\mathbb{E}_{#1}\!\cro{#2}}	
\newcommand{\I}{I}							
\DeclareMathOperator{\Card}{Card}
\newcommand{\W}{\mathcal{W}}
\newtheorem{prop}{Proposition}
\newtheorem{corr}{Corollary}
\newtheorem{lemm}{Lemma}
\newtheorem{theorem}{Theorem}
\numberwithin{equation}{section}
\theoremstyle{plain}
\title{\textsc{Adaptive test of independence based on HSIC measures.}}
\date{}
\author[,1]{M\'elisande Albert\thanks{Electronic address: melisande.albert@insa-toulouse.fr}}
\author[,1]{Béatrice Laurent\thanks{Electronic address: beatrice.laurent@insa-toulouse.fr}}
\author[,2]{Amandine Marrel\thanks{Electronic address: amandine.marrel@cea.fr}}
\author[,1,2]{Anouar Meynaoui\thanks{Electronic address: anouar.meynaoui@gmail.com}}
\affil[1]{\small Institut de Mathématiques de Toulouse ; UMR5219, Université de Toulouse ; CNRS, INSA, F-31077 Toulouse, France.}
\affil[2]{\small CEA, DEN, DER, F-13108 Saint-Paul-lez-Durance, France.}
\begin{document}

\maketitle

\vspace{-18pt}
\begin{small}{\bf Abstract:}~The Hilbert-Schmidt Independence Criterion (HSIC) is a dependence measure based on reproducing kernel Hilbert spaces that is widely used to test independence between two random vectors. Remains the delicate choice of the kernel. 
In this work, we develop a new HSIC-based aggregated procedure which avoids such a kernel choice, and provide theoretical guarantees for this procedure. 
To achieve this, on the one hand, we introduce non-asymptotic single tests based on Gaussian kernels with a given bandwidth, which are of prescribed level. 
Then, we aggregate several single tests with different bandwidths, and prove sharp upper bounds for the uniform separation rate of the aggregated procedure over Sobolev balls. 
On the other hand, we provide a lower bound for the non-asymptotic minimax separation rate of testing over Sobolev balls, and deduce that the aggregated procedure is adaptive in the minimax sense over such regularity spaces. 
Finally, from a practical point of view, we perform numerical studies in order to assess the efficiency of our aggregated procedure and compare it to existing tests in the literature.

\medskip

\noindent{\bf Mathematics Subject Classification:} Primary: 62G10; secondary: 62G09

\smallskip

\noindent{\bf Keywords:} nonparametric test of independence, Hilbert-Schmidt Independence Criterion, permutation methods, uniform separation rates, aggregated tests, non-asymptotic minimax and adaptive tests

\end{small}

\section{Introduction to independence testing}
\label{Introduction}

\begin{sloppypar}
Many nonparametric approaches to test independence between two continuous random vectors have been explored in the last few decades.
Among them, \cite{hoeffding1948non} introduces a test based on the difference between the joint distribution function and the product of the marginal distribution functions. 
This test has good properties in the asymptotic framework since it is consistent. 
Yet, it only applies to univariate random variables. 
Lately, \cite{weihs2018symmetric} extend Hoeffding's test to the case of multivariate random variables, but still in an asymptotic framework. 
Another classical method for testing independence is based on comparing the joint density and the product of the marginal densities \cite{rosenblatt1975quadratic,ahmad1997testing}. For this, an intermediate step is to estimate these densities using, e.g., the kernel-based method of Parzen-Rosenblatt \cite{parzen1962estimation}. 
More recently, many approaches based on reproducing kernel Hilbert spaces (RKHS) have been developed (see \cite{aronszajn1950theory} for more details). 
One of the first RKHS measures is the kernel canonical correlation (KCC) \cite{bach2002kernel}. 
Yet, the estimation of the KCC is not practical since it requires an extra regularization which has to be adjusted. 
Other dependence measures, easier to estimate have been studied later. For instance, the kernel mutual information (KMI) \cite{gretton2003kernel,gretton2005kernel} and the constrained covariance (COCO) \cite{gretton2005kernelConstrained,gretton2005kernel} are widely used, since they are relatively easy to interpret and to implement. 
Finally, one of the most interesting kernel dependence measure is the Hilbert-Schmidt independence criterion (HSIC) \cite{gretton2005measuring}. 
The HSIC has a very low computational cost and seems to numerically outperform all previous RKHS measures \cite{gretton2005measuring}.
A first independence test based on the HSIC is developed using large deviation inequalities \cite{gretton2005measuring}. Then, other tests are constructed in \cite{gretton2008kernel,li2019optimality} using an approximation of the null distribution of the HSIC estimator either by an asymptotic Gamma distribution or by a permutation approach.
A generalization to joint and mutual independence testing is presented in \cite{pfister2018kernel}. 
We also mention the RKHS-based test \cite{poczos2012copula}, using the copula-based kernel dependency measure. 
Yet, this test is more conservative than the test of \cite{gretton2008kernel}, since it is based on large deviation inequalities. 
Lately, based on characteristic functions, \cite{szekely2007measuring} introduce the distance covariance which has good properties and can be used in high dimensional frameworks \cite{szekely2013distance, yao2018testing}. Furthermore, it has been shown that the distance covariance coincides with the HSIC for a specific choice of kernels. 
Other tests have emerged based for instance on a sample space partitioning \cite{heller2016consistent} or based on binary expansion \cite{zhang2019bet} and very recently extended to any arbitrary dimension \cite{lee2019testing}.
Finally, the authors of \cite{berrett2019nonparametric} introduce a new test based on nearest neighbour methods and kernel mutual information which seems to achieve comparable results with the classical tests based on HSIC. In this paper, we focus on HSIC measures to test independence.
\end{sloppypar}

\subsection{Adaptive independence tests} 

\begin{sloppypar}
To study the non-asymptotic performances of testing, we consider the \emph{uniform separation rate} as defined in \cite{baraud2002non}. For any $\alpha$-level test $ \Delta_{\alpha}$ with values in $\{0,1\}$, which rejects independence when $\Delta_{\alpha}=1$, the uniform separation rate $\rho \left( \Delta_{\alpha} , \regSp_\delta , \beta \right)$ of $ \Delta_{\alpha}$, over a class $\regSp_\delta$ of regular alternatives $f$ (such that the difference between the density $f$ and the product of its marginals $f_1\otimes f_2$ satisfies smoothness assumptions), with respect to (w.r.t.) the $\L_2$-norm, is defined for all $\beta$ in $(0,1)$ by
\begin{equation}\label{seprate}
\rho \left( \Delta_{\alpha}, \regSp_\delta , \beta \right) = \inf \ac{ \rho > 0; \sup_{f \in \mathcal{F}_\rho(\regSp_\delta)} {P}_{f} \left( \Delta_{\alpha} = 0 \right) \leq \beta },
\end{equation} 
where $ \mathcal{F}_\rho(\regSp_\delta) = \left\{ f; f-f_1\otimes f_2 \in \regSp_\delta, \norm{f-f_1\otimes f_2}_2 > \rho \right\}$ and $\norm{\cdot}_2$ designates the usual $\L_2$-norm. 
The uniform separation rate is then the smallest value in the sense of the $\L_2$-norm of $f - f_1 \otimes f_2$ allowing to control the second kind error of the test by $\beta$. 
A test of level $\alpha$ having the optimal performances, should then have a uniform separation rate as small as possible over $\regSp_\delta$. 
To quantify this, let us define, as in \cite{baraud2002non}, the \emph{non-asymptotic minimax rate of testing} by 
\begin{equation}\label{minimaxrate}
\rho\left( \regSp_\delta , \alpha, \beta \right) = \inf_{\Delta_{\alpha}} \rho \left( \Delta_{\alpha}, \regSp_\delta , \beta \right),
\end{equation}
where the infimum is taken over all $\alpha$-level tests. 
If the uniform separation rate of a test is upper bounded up to a constant by the non-asymptotic minimax rate of testing, then this test is said to be \emph{optimal in the minimax sense}. 
The problem of non-asymptotic minimax rate of testing was raised in many papers over the past years. Among them, we mention for example \cite{ingster1998minimax, laurent2012non} for minimax signal detection testing. 
Concerning independence testing, optimality in the minimax sense defined as above is closely related to \emph{the asymptotic minimax rate} as introduced in the notable works of Ingster \cite{ingster1989asymptotically, ingster1993minimax}, of Yod\'e \cite{yode2004asymptotically, yode2011adaptive} or very recently of \cite{li2019optimality}. Lately, \cite{berrett2019nonparametric} study upper bounds w.r.t. mutual information, and \cite{ramdas2016minimax} obtain minimax lower bounds for linear independence testing. 
In the non-asymptotic framework considered in this paper, \cite{albert2015tests} obtains upper bounds w.r.t. the $\L_2$ distance over weak Besov spaces. Concurrent with our work, and independently, \cite{berrett2020optimal} and \cite{kim2020minimax} also obtain minimax separation rates for independence tests based on permuted $U$-statistics. 
\end{sloppypar}

\begin{sloppypar}
Furthermore, beyond the problem of minimax optimality, the straightforward practical construction of a minimax test usually depends on the unknown smoothness parameter $\delta$ of the regularity space $\regSp_\delta$. The objective is then to construct a minimax test which does not need any smoothness assumption to be implemented. These tests are called \emph{minimax adaptive}. 
The problem of adaptivity has received a great attention in the literature. We mention for instance the works of \cite{baraud2003adaptive} for linear regression model testing with Gaussian noise and of \cite{ingster2000adaptive,fromont2006adaptive,balakrishnan2019hypothesis} for goodness-of-fit testing. 
The authors of \cite{fromont2013two} consider an interesting approach for testing the equality of two Poisson processes intensities, which consists in aggregating several single kernel-based tests, and prove that it is adaptive over several regularity spaces. 
This paper lies in the lineage of these works.
\end{sloppypar}

\subsection{Mathematical framework and notation}
\label{sect:notation}

\begin{sloppypar}
In this work, we study the problem of testing the independence between two continuous real random vectors $X = ( X^{(1)}, \ldots ,X^{(p)} )$ and $Y = (Y^{(1)}, \ldots ,Y^{(q)})$. 
The couple $(X,Y)$ is assumed to have a joint density $f$ w.r.t. Lebesgue measure on $\R^p\times \R^q$, with marginal density functions $f_1$ and $f_2$. To avoid any misunderstanding, let us highlight that $f_1$ and $f_2$ are assumed to be unknown and are not fixed \emph{a priori}. 
We denote by $f_1\otimes f_2:(x,y)\in\R^p\times \R^q \mapsto f_1(x)f_2(y)$ the product of the marginal densities.
We also assume that we observe a $n$-sample $(X_1,Y_1), \ldots ,(X_n,Y_n)$ of independent and identically distributed (i.i.d.) random variables with common density $f$. 
The probability measure associated to this $n$-sample is denoted $P_f$. By analogy, $P_{f_1 \otimes f_2}$ designates the probability measure associated to a $n$-sample with common density $f_1 \otimes f_2$. 
\end{sloppypar}

We address here the question of testing the null hypothesis $(\mathcal{H}_0)$: \enquote{$X$ and $Y$ are independent} against the alternative $(\mathcal{H}_1)$: \enquote{$X$ and $Y$ are dependent}. That is equivalent to test 
\begin{center}
$(\mathcal{H}_0)$: \enquote{$f = f_1 \otimes f_2$} \quad against \quad $(\mathcal{H}_1)$: \enquote{$f \neq f_1 \otimes f_2$}. 
\end{center} 
Throughout this document, we consider the following assumption:
\begin{align*}
\boldsymbol{\mathcal{A}_1}:\ & \mbox{the density $f$, and its marginal densities $f_1$ and $f_2$ are bounded,}\\
& \mbox{and denote }M_f=\max\ac{\norm{f}_{\infty}, \norm{f_1}_{\infty}, \norm{f_2}_{\infty}}. 
\end{align*}

Moreover, the generic notation $C(a,b, \ldots)$ denotes a positive constant depending only on its arguments $(a,b, \ldots)$ and that may vary from line to line. 
Finally, the dimensions $p$ and $q$ are assumed to be fixed, and do not depend on the sample size. 

\subsection{Review on HSIC measures}
\label{sect:HSICmeasures}

\begin{sloppypar}
The definition of the HSIC is derived from the notion of cross-covariance operator \cite{baker1973joint, fukumizu2004dimensionality}, which can be seen as a generalization of the classical covariance, measuring many forms of dependence between $X$ and $Y$ (not only linear ones). For this, \cite{gretton2005measuring} associate to $X$ an RKHS $\mathcal{F}$ composed of functions mapping from $\mathbb{R}^p$ to $\mathbb{R}$ ($\mathcal{F}$ is a set of transformations for $X$), and characterized by a scalar product $\langle \cdot , \cdot \rangle_\mathcal{F}$. The same operation is carried out for $Y$, considering an RKHS denoted $\mathcal{G}$ and a scalar product $\langle \cdot , \cdot \rangle_\mathcal{G}$. The cross-covariance operator $C_{X,Y}$ associated to $\mathcal{F}$ and $\mathcal{G}$ is the operator mapping from $\mathcal{G}$ to $\mathcal{F}$ and verifying for all $(F,G) \in \mathcal{F} \times \mathcal{G}$,
\begin{equation*}
\langle F , C_{X,Y} (G) \rangle_\mathcal{F} = \cov \left( F(X),G(Y) \right). 
\end{equation*}
Designating by $(u_i)_i$ and $(v_j)_j$ respectively orthonormal bases of $\mathcal{F}$ and $\mathcal{G}$, the HSIC between $X$ and $Y$ is the square Hilbert-Schmidt norm of the operator $C_{X,Y}$  defined as in \cite{gretton2005measuring} by
\begin{equation*}
\HSIC (X,Y) = \norm{C_{X,Y}}_{\HilbSchmidt}^2 = \sum_{i,j} \langle u_i , C_{X,Y} (v_j) \rangle_\mathcal{F}^2 = \displaystyle \sum_{i,j} \cov \left( u_i(X), v_j(Y) \right)^2.
\end{equation*}
The fundamental idea behind this definition is that $\HSIC (X,Y)$ equals zero if and only if  
$\cov \left( F(X), G(Y) \right) = 0$ for all $(F,G)$ in $\mathcal{F} \times \mathcal{G}$. 
Furthermore, $X$ and $Y$ are independent if and only if $\cov \left( F(X), G(Y) \right) = 0$ for all bounded and continuous functions $F$ and $G$ (see e.g. \cite{jacod2012probability}). 
It follows that, for well chosen RKHS, the nullity of the HSIC characterizes independence. 
Authors of \cite{gretton2005kernelConstrained} show that a sufficient condition so that the nullity of the associated HSIC characterizes independence is that the RKHS $\mathcal{F}$ (resp. $\mathcal{G}$) induced by a kernel $k$ (resp. $l$) is dense in the space of bounded and continuous functions mapping from $\mathbb{R}^p$ (resp. $\mathbb{R}^q$) to $\mathbb{R}$. 
Such kernels are called \emph{universal} \cite{micchelli2006universal}. However, the universality is a very limiting condition and only adapted to compact domains. Recently, a wider class of kernels called \emph{characteristic kernels} has been introduced in \cite{fukumizu2008kernel, sriperumbudur2010hilbert}. These kernels characterize independence on compact as well as non-compact sets. Among them, one of the most commonly used is the Gaussian kernel \cite{steinwart2001influence}, which we consider in this paper. It is defined as follows. Let $g_d$ be the density of the standard Gaussian distribution on $\R^d$ defined for all $x=(x^{(1)},\ldots,x^{(d)})$ in $\R^d$ by
\begin{equation}\label{gs}
g_d(x)= \frac1{(2\pi)^{d/2}} \exp \left( - \frac12\sum_{i=1}^d \cro{x^{(i)}}^2 \right).
\end{equation}
For any bandwidths $\lambda = \left( \lambda_1,\ldots, \lambda_p \right)$ in $(0,+ \infty)^p$ and $\mu = \left( \mu_1,\ldots, \mu_q \right)$ in $(0,+ \infty)^q$, we denote for any $x$ in $\R^p$ and $y$ in $\R^q$, 
\begin{equation}\label{varphilambdaphimu}
\varphi_{\lambda}(x) = \frac1{\lambda_1 \ldots \lambda_p } g_p\pa{\frac{x^{(1)}}{\lambda_1},\ldots , \frac{x^{(p)}}{\lambda_p}}, \quad \phi_{\mu}(y) = \frac1{\mu_1 \ldots \mu_q } g_q\pa{\frac{y^{(1)}}{\mu_1},\ldots , \frac{y^{(q)}}{\mu_q}}.
\end{equation}
Finally, the Gaussian kernels are defined for $x, x'$ in $\R^p$ and $y, y'$ in $\R^q$ by
\begin{equation*}
 k_{\lambda} (x,x')= \varphi_{\lambda}(x - x'), \quad \mbox{and}\quad l_{\mu}(y,y')= \phi_{\mu}(y-y').
\end{equation*}
\end{sloppypar}

A very convenient form of $\HSIC(X,Y)$ is expressed in \cite{gretton2005measuring} using kernels $k$ and $l$ respectively associated to $\mathcal{F}$ and $\mathcal{G}$,
\begin{align}
\HSIC (X,Y) &= \esp{k (X , X') l(Y , Y')} + \esp{k \left(X , X' \right)} \esp{l \left( Y , Y' \right)} \nonumber \\ &- 2 \mathbb{E} \Big[ \esp{k \left(X , X' \right) \mid X} \esp{l \left( Y , Y' \right) \mid Y} \Big],
\label{formHSIC}
\end{align}
where $(X',Y')$ is an i.i.d. copy of $(X,Y)$. Note that $\HSIC(X,Y)$ only depends on the density $f$ of $(X,Y)$. 
Hence, in the following, we denote by $\HSIC_{\lambda,\mu}(f)$ the HSIC measure defined in (\ref{formHSIC}), where the kernels $k$ and $l$ are respectively the Gaussian kernels $k_{\lambda}$ and $l_{\mu}$. \\

Given an i.i.d. $n$-sample $(X_i,Y_i)_{1 \leq i \leq n}$ with common density $f$, an estimator of $\HSIC_{\lambda,\mu}(f)$ can be obtained by estimating each expectation of Equation \eqref{formHSIC}. For this, we introduce the following $U$-statistics, respectively of order 2, 3 and 4, 
\begin{eqnarray*}
\widehat{\HSIC}_{\lambda,\mu}^{(2)} &=& \frac{1}{n(n-1)} \sum_{(i,j) \in \mathbf{i}^n_2} k_\lambda\pa{X_i,X_j} l_\mu\pa{Y_i,Y_j},\\
\widehat{\HSIC}_{\lambda,\mu}^{(3)} &=& \frac{1}{n(n-1)(n-2)} \sum_{(i,j,r) \in \mathbf{i}^n_3} k_\lambda\pa{X_i,X_j} l_\mu\pa{Y_j,Y_r},\\
\widehat{\HSIC}_{\lambda,\mu}^{(4)} &=& \frac{1}{n(n-1)(n-2)(n-3)} \sum_{(i,j,r,s) \in \mathbf{i}^n_4} k_\lambda\pa{X_i,X_j} l_\mu\pa{Y_r,Y_s},
\end{eqnarray*}
where 
$\mathbf{i}^n_r$ is the set of all r-tuples drawn without replacement from $\{ 1,\ldots, n\}$.
We estimate $\HSIC_{\lambda,\mu}(f)$ by the $U$-statistic
\begin{equation}\label{esHSIC}
\widehat{\HSIC}_{\lambda,\mu} = \widehat{\HSIC}_{\lambda,\mu}^{(2)} + \widehat{\HSIC}_{\lambda,\mu}^{(4)} - 2 \widehat{\HSIC}_{\lambda,\mu}^{(3)}.
\end{equation}
\begin{sloppypar}
Similar estimators have been used to construct independence tests (see e.g. \cite{gretton2008kernel}). 
Yet, only a heuristic choice of the bandwidths $\lambda$ and $\mu$ is considered with no theoretical guarantees.
To avoid this choice, following the work of \cite{fromont2013two}, we introduce in this paper an aggregated procedure based on Gaussian kernel HSIC measures and prove that it is minimax adaptive over Sobolev balls. Note that in the continuity of our work, \cite{kim2020minimax} obtain minimax adaptive results over H\"older spaces for two-sample and independence tests based on permutations.\\
\end{sloppypar}

The structure of this paper is as follows. In Section \ref{Single kernel}, we first present a theoretical non-asymptotic HSIC-based test of prescribed level $\alpha$ as well as a permutation-based HSIC-test that is implemented in practice. 
We then provide theoretical conditions based on concentration inequalities for $U$-statistics, allowing to control the second kind error of the theoretical test by a given $\beta$. 
This last step leads us to sharp upper bounds of the uniform separation rate over Sobolev balls, and an optimal bandwidth choice (depending on the regularity parameter) in order to obtain a minimax optimal test. 
In Section \ref{Multiple}, we introduce an aggregated procedure avoiding the bandwidth choice. We prove both an oracle-type inequality and sharp upper bounds for the uniform separation rate over Sobolev balls. 
Lower bounds over Sobolev spaces are obtained in Section \ref{LowerBoundSobolev}. Finally, a comparison of the permutation-based test with the theoretical test first, and then with other existing tests, is presented in a simulation study in Section \ref{NumericalSimulations}.

\section{Single HSIC-based tests of independence}
\label{Single kernel}

The aim of this section is to sharply upper bound the (non-asymptotic) uniform separation rate of HSIC-based tests over Sobolev balls which are well adapted to kernel-based tests. For this, theoretical conditions allowing to control the second kind error are first given in terms of $\HSIC_{\lambda,\mu}(f)$ and then in terms of the $\L_2$-norm of $f - f_1 \otimes f_2$. 
In this section, we consider fixed bandwidths~$(\lambda,\mu)$.

\subsection{The testing procedures} 
\label{Single-tests}

Consider the notations introduced in Sections \ref{sect:notation} and \ref{sect:HSICmeasures}. 

\paragraph{A theoretical test of independence} Since Gaussian kernels are characteristic, testing the independence between $X$ and $Y$ is equivalent to testing 
\begin{equation*}
(\mathcal{H}_0): \HSIC_{\lambda,\mu}(f)~=~0 \quad\quad \text{against} \quad\quad (\mathcal{H}_1): \HSIC_{\lambda,\mu}(f)~>~0.
\end{equation*}
The statistic $\widehat{\HSIC}_{\lambda,\mu}$ defined in Equation \eqref{esHSIC} is then a natural test statistic since it is an unbiased estimator of $\HSIC_{\lambda,\mu}(f)$. 
For a prescribed level $\alpha$ in $(0,1)$, we consider the theoretical statistical test $\Delta_{\alpha}^{\lambda,\mu}$ defined by 
\begin{equation}\label{Deltalambdamu}
\Delta_{\alpha}^{\lambda,\mu} = \mathds{1}_{\widehat{\HSIC}_{\lambda,\mu} \; > \; q_{1-\alpha}^{\lambda,\mu}}, 
\end{equation}
where $q_{1-\alpha}^{\lambda,\mu}$ denotes the $(1-\alpha)$-quantile of $\widehat{\HSIC}_{\lambda,\mu}$ under $P_{f_1\otimes f_2}$. 
We reject $(\mathcal{H}_0)$ if $\Delta_{\alpha}^{\lambda,\mu} = 1$. 
By definition of the quantile, this theoretical test is of non-asymptotic level $\alpha$, that is for all densities $f_1$ and $f_2$, 
$P_{f_1 \otimes f_2}(\Delta^{\lambda,\mu}_\alpha = 1) \leq \alpha. $
Note that the analytical computation of the quantile $q_{1-\alpha}^{\lambda,\mu}$ is not possible since its value depends on the unknown marginal densities $f_1$ and $f_2$. In practice, this quantile is approached by permutation with a Monte Carlo approximation as described in the following paragraph.

\paragraph{A permutation-based test of independence} 

Let $\ZZ_n=(X_i,Y_i)_{1\leq i\leq n}$ denote the original sample and compute the test statistic $\widehat{\HSIC}_{\lambda,\mu}\pa{\ZZ_n}$ defined by Equation~\eqref{esHSIC}. Then, let $\rperm_1, \ldots , \rperm_B$ be $B$ i.i.d. random permutations of $\{1,\ldots,n\}$, independent of $\ZZ_n$. We define for each permutation $\rperm_b$ the corresponding permuted sample $\ZZ_n^{\rperm_b}=(X_i,Y_{\rperm_b(i)})_{1\leq i\leq n}$ and compute the permuted test statistic on this new sample
$$\widehat{H}^{\star b}_{\lambda,\mu}=\widehat{\HSIC}_{\lambda,\mu}\pa{\ZZ_n^{\rperm_b}}.$$ 

\begin{sloppypar}
Under $P_{f_1\otimes f_2}$, each permuted sample $\ZZ_n^{\rperm_b}$ has the same distribution as the original sample $\ZZ_n$. Hence, the random variables $\{\widehat{H}^{\star b}_{\lambda,\mu}\}_{1 \leq b \leq B}$, have the same distribution as $\widehat{\HSIC}_{\lambda,\mu}$. We apply a trick, based on \cite[Lemma 1]{romano2005exact}, which consists in adding the original sample to the Monte Carlo sample in order to obtain a test of non-asymptotic level $\alpha$. To do so, denote 
$$\widehat{H}_{\lambda,\mu}^{\star B+1} = \widehat{\HSIC}_{\lambda,\mu}
\quad \mbox{and}\quad 
\widehat{H}_{\lambda,\mu}^{\star (1)} \leq \widehat{H}_{\lambda,\mu}^{\star (2)}\leq \ldots \leq \widehat{H}_{\lambda,\mu}^{\star (B+1)}$$
the order statistic. Then,
the permuted quantile with Monte Carlo approximation $\hat{q}_{1-\alpha}^{\lambda,\mu}$ is thus defined as
\begin{equation}\label{eq:permquant}
\hat{q}_{1-\alpha}^{\lambda,\mu} = \widehat{H}_{\lambda,\mu}^{\star (\lceil (B+1)(1-\alpha)\rceil)}.
\end{equation}
where $\lceil \cdot \rceil$ denotes the ceiling function. The permuted test with Monte Carlo approximation $\widehat{\Delta}_{\alpha}^{\lambda,\mu}$ performed in practice is then defined as 
\begin{equation}\label{approxtest}
\widehat{\Delta}^{\lambda,\mu}_\alpha = \mathds{1}_{\widehat{\HSIC}_{\lambda,\mu} \, > \, \hat{q}^{\lambda,\mu}_{1-\alpha}}.
\end{equation}
\end{sloppypar}

\begin{prop}\label{levelapprxtest}
Let $\alpha$ in $(0,1)$ and consider the permuted test with Monte Carlo approximation $\widehat{\Delta}^{\lambda,\mu}_\alpha$ defined by Equation \eqref{approxtest}. 
Then, for all $B$,
$P_{f_1\otimes f_2} \left( \widehat{\Delta}^{\lambda,\mu}_\alpha = 1 \right) \leq \alpha.$
\end{prop}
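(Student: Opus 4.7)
The plan is to invoke the classical permutation argument in two steps: first prove that under $P_{f_1\otimes f_2}$ the $B+1$ statistics $\widehat{H}^{\star 1}_{\lambda,\mu}, \ldots, \widehat{H}^{\star B+1}_{\lambda,\mu}$ are exchangeable, then conclude by Lemma~1 of~\cite{romano2005exact}.

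Writing $\widehat{H}^{\star b}_{\lambda,\mu} = T(\ZZ_n^{\tau_b})$ (with $T$ the symmetric functional of the sample that computes the estimator~\eqref{esHSIC} and the convention $\tau_{B+1} = \id$), I would introduce an auxiliary uniform random permutation $\tau_0$ of $\{1, \ldots, n\}$, independent of $(\ZZ_n, \tau_1, \ldots, \tau_B)$. Under $(\mathcal{H}_0)$, the $Y_i$'s are i.i.d.\ and independent of $(X_1, \ldots, X_n)$, hence $\ZZ_n \stackrel{d}{=} \ZZ_n^{\sigma}$ for every fixed permutation $\sigma$; together with the independence of $\tau_0$ from $(\ZZ_n, \tau_1, \ldots, \tau_B)$, this yields the joint identity $(\ZZ_n, \tau_0, \tau_1, \ldots, \tau_B) \stackrel{d}{=} (\ZZ_n^{\tau_0}, \tau_0, \tau_1, \ldots, \tau_B)$. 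Substituting $\ZZ_n$ by $\ZZ_n^{\tau_0}$ in the statistic vector then gives
\begin{equation*}
\bigl(T(\ZZ_n^{\tau_1}), \ldots, T(\ZZ_n^{\tau_B}), T(\ZZ_n)\bigr) \stackrel{d}{=} \bigl(T(\ZZ_n^{\tau_0\tau_1}), \ldots, T(\ZZ_n^{\tau_0\tau_B}), T(\ZZ_n^{\tau_0})\bigr),
\end{equation*}
and a direct computation of the joint law shows that $(\tau_0\tau_1, \ldots, \tau_0\tau_B, \tau_0)$ are i.i.d.\ uniform on $\mathfrak{S}_n$ and independent of $\ZZ_n$. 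The right-hand side is therefore $T$ evaluated at $B+1$ i.i.d.\ uniform permutations of a fixed sample, which is manifestly exchangeable; the exchangeability transfers to the left-hand side.

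Once exchangeability is established, Lemma~1 of~\cite{romano2005exact} yields that the rank of $\widehat{H}^{\star B+1}_{\lambda,\mu} = \widehat{\HSIC}_{\lambda,\mu}$ in the ordered sample $\widehat{H}_{\lambda,\mu}^{\star(1)} \leq \ldots \leq \widehat{H}_{\lambda,\mu}^{\star(B+1)}$ is stochastically dominated by the uniform law on $\{1, \ldots, B+1\}$, whence
\begin{equation*}
P_{f_1\otimes f_2}\!\left(\widehat{\HSIC}_{\lambda,\mu} > \widehat{H}_{\lambda,\mu}^{\star(\lceil (B+1)(1-\alpha)\rceil)}\right) \leq \frac{(B+1) - \lceil (B+1)(1-\alpha)\rceil}{B+1} \leq \alpha,
\end{equation*}
which is precisely the claimed level bound. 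The only subtle step is the exchangeability: since $\tau_{B+1} = \id$ is deterministic while $\tau_1, \ldots, \tau_B$ are random and uniform, one cannot directly relabel coordinates, but the trick of composing with an auxiliary uniform $\tau_0$ symmetrises the problem cleanly by turning the deterministic identity into the equally random $\tau_0$.
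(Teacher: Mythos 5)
Your proposal is correct, and it follows the same overall skeleton as the paper: establish exchangeability of the $B+1$ statistics $\widehat{H}^{\star 1}_{\lambda,\mu},\ldots,\widehat{H}^{\star B+1}_{\lambda,\mu}$ under $P_{f_1\otimes f_2}$, then conclude with Lemma~1 of Romano and Wolf. The only place where you diverge is the proof of exchangeability itself. The paper argues directly: for an arbitrary deterministic relabeling $\echperm$ of $\{1,\ldots,B+1\}$ it distinguishes the cases $\echperm(B+1)=B+1$ and $\echperm(B+1)\neq B+1$, and in the second case it conditions on $\rperm_{\echperm(B+1)}$ and checks by an explicit computation that $\rperm_{\echperm(1)}\circ\rperm_{\echperm(B+1)}^{-1},\ldots,\rperm_{\echperm(B)}\circ\rperm_{\echperm(B+1)}^{-1}$ are i.i.d.\ uniform and independent of $\ZZ_n^{\rperm_{\echperm(B+1)}}$, which has the law of $\ZZ_n$ under the null. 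You instead symmetrise once and for all with an auxiliary uniform $\tau_0$ independent of $(\ZZ_n,\rperm_1,\ldots,\rperm_B)$: the null invariance gives $(\ZZ_n,\tau_0,\rperm_1,\ldots,\rperm_B)\overset{d}{=}(\ZZ_n^{\tau_0},\tau_0,\rperm_1,\ldots,\rperm_B)$, and since $(\tau_0\circ\rperm_1,\ldots,\tau_0\circ\rperm_B,\tau_0)$ are i.i.d.\ uniform and independent of $\ZZ_n$, the statistic vector is conditionally i.i.d.\ given $\ZZ_n$, hence exchangeable — a strictly stronger conclusion than needed, obtained without the case analysis. Both arguments rest on the same group-composition idea; yours packages it more cleanly (no case distinction, no conditioning computation), while the paper's is self-contained at the level of elementary conditional probabilities. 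Your final step, converting the event $\{\widehat{\HSIC}_{\lambda,\mu}>\hat{q}^{\lambda,\mu}_{1-\alpha}\}$ into the count condition required by the Romano--Wolf lemma and bounding $\lfloor\alpha(B+1)\rfloor/(B+1)\leq\alpha$, matches the paper's computation.
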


Hence, both the theoretical test $\Delta_{\alpha}^{\lambda,\mu}$ and the permuted test $\widehat{\Delta}^{\lambda,\mu}_\alpha$ are of prescribed non-asymptotic level $\alpha$. 
A comparison in terms of power is done on simulated data in Section \ref{Single_permuted_tests} justifying the restriction of the following theoretical study to the theoretical test.

\subsection{Control of the second kind error in terms of HSIC}
\label{Second-error-control}

For an arbitrarily small $\beta$ given in $(0,1)$, Lemma \ref{powerful-test} provides a first non-asymptotic condition on the alternative $f$ ensuring that the probability under $P_f$ of second kind error of the theoretical test $\Delta_{\alpha}^{\lambda,\mu}$ defined in Equation \eqref{Deltalambdamu} is at most equal to $\beta$. This condition is given for the value of $\HSIC_{\lambda,\mu}(f)$. It involves the variance of the estimator $ \widehat{\HSIC}_{\lambda,\mu}$ which is finite since this estimator is bounded. 

\begin{lemm} \label{powerful-test}
Let $\alpha$, $\beta$ in $(0,1)$ and $(X_i,Y_i)_{1\leq i\leq n}$ be an i.i.d. sample with distribution $P_f$. Consider the test statistic $\widehat{\HSIC}_{\lambda,\mu}$ defined by \eqref{esHSIC} and denote $q_{1-\alpha}^{\lambda,\mu}$ its $(1-\alpha)$-quantile under $P_{f_1\otimes f_2}$. Then $P_{f} (\widehat{\HSIC}_{\lambda,\mu} \leq q_{1-\alpha}^{\lambda,\mu}) \leq \beta$ as soon as 
\begin{equation*}
\HSIC_{\lambda,\mu}(f) \geq \displaystyle \sqrt{\frac{\Var_f(\widehat{\HSIC}_{\lambda,\mu})}{\beta}} + q_{1-\alpha}^{\lambda,\mu}.
\end{equation*}
\end{lemm}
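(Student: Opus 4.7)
The plan is to use a simple Chebyshev-type argument, exploiting the fact that $\widehat{\HSIC}_{\lambda,\mu}$ is an unbiased estimator of $\HSIC_{\lambda,\mu}(f)$. Indeed, by construction (see the decomposition in Equation~\eqref{esHSIC} and the definition of the $U$-statistics that compose it), each of $\widehat{\HSIC}_{\lambda,\mu}^{(2)}$, $\widehat{\HSIC}_{\lambda,\mu}^{(3)}$, $\widehat{\HSIC}_{\lambda,\mu}^{(4)}$ estimates without bias one of the three expectations in the expression~\eqref{formHSIC} of $\HSIC_{\lambda,\mu}(f)$, so $\esp{\widehat{\HSIC}_{\lambda,\mu}}=\HSIC_{\lambda,\mu}(f)$ under $P_f$.

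First I would rewrite the second-kind-error event as a deviation event for the centered statistic. Namely,
\begin{equation*}
\{\widehat{\HSIC}_{\lambda,\mu}\leq q_{1-\alpha}^{\lambda,\mu}\}
= \{\HSIC_{\lambda,\mu}(f)-\widehat{\HSIC}_{\lambda,\mu}\geq \HSIC_{\lambda,\mu}(f)-q_{1-\alpha}^{\lambda,\mu}\}.
\end{equation*}
Under the assumption on $\HSIC_{\lambda,\mu}(f)$, the right-hand side is at least $\sqrt{\Var_f(\widehat{\HSIC}_{\lambda,\mu})/\beta}$, so this event is included in
\begin{equation*}
\Bigl\{\HSIC_{\lambda,\mu}(f)-\widehat{\HSIC}_{\lambda,\mu}\geq \sqrt{\Var_f(\widehat{\HSIC}_{\lambda,\mu})/\beta}\Bigr\}.
\end{equation*}

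Then I would apply Markov's inequality to the non-negative random variable $(\widehat{\HSIC}_{\lambda,\mu}-\HSIC_{\lambda,\mu}(f))^2$, which is exactly Chebyshev's inequality: for any $t>0$,
\begin{equation*}
P_f\pa{\HSIC_{\lambda,\mu}(f)-\widehat{\HSIC}_{\lambda,\mu}\geq t}\leq \frac{\Var_f(\widehat{\HSIC}_{\lambda,\mu})}{t^2}.
\end{equation*}
Taking $t=\sqrt{\Var_f(\widehat{\HSIC}_{\lambda,\mu})/\beta}$ yields the bound~$\beta$, which concludes the proof.

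There is essentially no obstacle here: the argument is a one-sided Chebyshev bound combined with unbiasedness. The only point worth verifying carefully is that the variance is finite, which is guaranteed because the Gaussian kernels $k_\lambda$ and $l_\mu$ are bounded, hence $\widehat{\HSIC}_{\lambda,\mu}$ is a bounded random variable. The real work (control of $\Var_f(\widehat{\HSIC}_{\lambda,\mu})$ and translation of the HSIC-level condition into an $\L_2$-norm condition on $f-f_1\otimes f_2$) is deferred to later results in Section~\ref{Second-error-control}.
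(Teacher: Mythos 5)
Your proof is correct and follows essentially the same route as the paper: unbiasedness of $\widehat{\HSIC}_{\lambda,\mu}$ plus Chebyshev's inequality applied to the centered statistic, then the one-sided inclusion of the acceptance event under the stated condition on $\HSIC_{\lambda,\mu}(f)$. Nothing further is needed.
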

Lemma \ref{powerful-test} gives a threshold for $\HSIC_{\lambda,\mu}(f)$ from which the dependence between $X$ and $Y$ is detectable with probability greater than $1-\beta$. 
In order to express the order of magnitude of this threshold w.r.t. $n$, $\lambda$ and $\mu$, we establish sharp upper bounds for both the variance $\Var_f(\widehat{\HSIC}_{\lambda,\mu})$ and the quantile $q_{1-\alpha}^{\lambda,\mu}$. 
Proposition \ref{LTV} gives an upper bound for the variance.

\begin{prop} \label{LTV}
Let $f$ be a density satisfying Assumption $\boldsymbol{\mathcal{A}_1}$, and $(X_i,Y_i)_{1\leq i\leq n}$ be an i.i.d. sample with distribution $P_f$. Consider the test statistic $\widehat{\HSIC}_{\lambda,\mu}$ defined by \eqref{esHSIC}. 
Then, 
\begin{equation*}
\Var_f(\widehat{\HSIC}_{\lambda,\mu}) \leq C\pa{M_f, p, q} \ac{\frac{1}{n} + \frac{1}{\lambda_1 \ldots \lambda_p \mu_1 \ldots \mu_q n^2}}.
\end{equation*}
\end{prop}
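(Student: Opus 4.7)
The plan is to rewrite $\widehat{\HSIC}_{\lambda,\mu}$ as a single $U$-statistic of order $m=4$ and then apply Hoeffding's variance decomposition. A direct combinatorial manipulation yields
\[\widehat{\HSIC}_{\lambda,\mu} = \binom{n}{4}^{-1} \sum_{1 \le i_1 < i_2 < i_3 < i_4 \le n} h(Z_{i_1}, Z_{i_2}, Z_{i_3}, Z_{i_4}),\]
where $Z_i = (X_i,Y_i)$ and $h$ is the symmetric kernel obtained by averaging over the $4!$ permutations of $(1,2,3,4)$ the asymmetric kernel
\[h_0(z_1,z_2,z_3,z_4) = k_\lambda(x_1,x_2)l_\mu(y_1,y_2) + k_\lambda(x_1,x_2)l_\mu(y_3,y_4) - 2 k_\lambda(x_1,x_2) l_\mu(y_2,y_3).\]
This rewriting is classical for HSIC-type estimators.

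Hoeffding's decomposition then gives
\[\Var_f(\widehat{\HSIC}_{\lambda,\mu}) = \binom{n}{4}^{-1} \sum_{c=1}^{4} \binom{4}{c}\binom{n-4}{4-c}\zeta_c, \qquad \zeta_c = \Var_f\!\left(h_c(Z_1,\dots,Z_c)\right),\]
with $h_c(z_1,\dots,z_c) = \esp{h(z_1,\dots,z_c,Z_{c+1},\dots,Z_4)}$. Since the combinatorial coefficient of $\zeta_c$ is of order $n^{-c}$, the task reduces to upper bounding each $\zeta_c \le \esp{h_c^2}$ (by Jensen) and summing the four resulting contributions.

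The heart of the argument is the dichotomy between $c=1$ and $c\ge 2$. For $c=1$, every monomial appearing in $h_1(z_1)$ has at least one of $Z_2, Z_3, Z_4$ integrated out, and the elementary computation $\esp{\varphi_\lambda(x-X_2)\phi_\mu(y-Y_2)} = \bigl(f\ast(\varphi_\lambda\otimes\phi_\mu)\bigr)(x,y) \le M_f$ (together with its marginal analogues for $\varphi_\lambda$ or $\phi_\mu$ alone, which are bounded by $M_f$ since $\varphi_\lambda,\phi_\mu$ are probability densities and $f_1,f_2$ are bounded) shows that $h_1$ is uniformly bounded by a constant depending only on $M_f$, whence $\zeta_1 \le C(M_f)$. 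For $c\ge 2$ this smoothing disappears: terms of the form $k_\lambda(x_i,x_j)l_\mu(y_i,y_j)$ with both $i,j \le c$ survive intact in $h_c$. Using Assumption $\boldsymbol{\mathcal{A}_1}$ together with $\int\varphi_\lambda^2 = C_p/(\lambda_1\cdots\lambda_p)$ and $\int\phi_\mu^2 = C_q/(\mu_1\cdots\mu_q)$, a single integration of the joint density $f$ against the squared kernel yields
\[\esp{\varphi_\lambda(X_1-X_2)^2\phi_\mu(Y_1-Y_2)^2} \le \frac{C(M_f,p,q)}{\lambda_1\cdots\lambda_p\mu_1\cdots\mu_q},\]
and expanding $h_c^2$ and applying this bound term by term gives $\zeta_c \le C(M_f,p,q)/(\lambda_1\cdots\lambda_p\mu_1\cdots\mu_q)$ for all $c \ge 2$.

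Assembling the four terms,
\[\Var_f(\widehat{\HSIC}_{\lambda,\mu}) \le C(M_f,p,q)\left[\frac{1}{n} + \sum_{c=2}^{4}\frac{1}{n^c \lambda_1\cdots\lambda_p\mu_1\cdots\mu_q}\right] \le C(M_f,p,q)\left[\frac{1}{n} + \frac{1}{n^2 \lambda_1\cdots\lambda_p\mu_1\cdots\mu_q}\right],\]
which is the announced bound. The main obstacle is the careful bookkeeping in the symmetrized kernel $h$: one must verify that a \emph{single} integration in $h_1$ already suffices to remove the $1/(\lambda_1\cdots\mu_q)$ blow-up, while from $c\ge 2$ onwards this blow-up is unavoidable, which is precisely what produces the two-term structure of the bound.
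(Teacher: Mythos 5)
Your proof is, in substance, the paper's own: the paper's auxiliary lemma on $U$-statistic variances is exactly your Hoeffding decomposition combined with the combinatorial estimate $\binom{n}{4}^{-1}\binom{4}{c}\binom{n-4}{4-c}\leq C(c)\,n^{-c}$, and its two quantities $\sigma^2(\lambda,\mu)=\zeta_1$ and $s^2(\lambda,\mu)$ correspond to your $c=1$ and $c\geq 2$ cases. Your way of handling $c=1$ (showing $h_1$ is uniformly bounded by $C(M_f)$ because one integration of each kernel against a bounded density removes the bandwidth factors) is a slightly more direct route to $\zeta_1\leq C(M_f)$ than the paper's case-by-case second-moment computation, but it is the same idea.

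The one loose step is your claim that for $c\geq 2$ one can expand $h_c^2$ and apply the displayed bound $\mathbb{E}\!\left[\varphi_\lambda(X_1-X_2)^2\phi_\mu(Y_1-Y_2)^2\right]\leq C(M_f,p,q)/(\lambda_1\cdots\lambda_p\mu_1\cdots\mu_q)$ \emph{term by term}. For $c=2,3$ the conditional kernel $h_c$ also contains partially smoothed monomials in which one of the two kernels has been entirely integrated out, for instance $k_\lambda(x_1,x_2)\,\mathbb{E}[l_\mu(Y_3,Y_4)]$ when $c=2$; bounding $\mathbb{E}[l_\mu(Y_3,Y_4)]$ by $\norm{f_2}_\infty$ and squaring only yields $C(M_f,p)/(\lambda_1\cdots\lambda_p)$, which is \emph{not} dominated by $1/(\lambda_1\cdots\lambda_p\mu_1\cdots\mu_q)$ when $\mu_1\cdots\mu_q>1$ --- and Proposition \ref{LTV} imposes no restriction on the bandwidths (Assumption $\boldsymbol{\mathcal{A}_2(\alpha)}$ only enters later). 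The conclusion $\zeta_c\leq C(M_f,p,q)/(\lambda_1\cdots\lambda_p\mu_1\cdots\mu_q)$ is still true, and the clean repair is the one the paper uses: for every $c\geq 2$ bound $\zeta_c\leq \Var_f(h(Z_1,\ldots,Z_4))$ by the law of total variance (equivalently, $\mathbb{E}[h_c^2]\leq\mathbb{E}[h^2]$ by conditional Jensen), and observe that in the unconditioned kernel $h$ every monomial retains both a $k_\lambda$ and an $l_\mu$ factor, so after Cauchy--Schwarz the squared-kernel bound applies to each term. Alternatively, keep your expansion of $h_c$ but bound the smoothed factors by $\pa{\mathbb{E}[l_\mu(Y_3,Y_4)]}^2\leq \norm{f_2}_\infty\norm{\phi_\mu}_\infty\leq C(M_f,q)/(\mu_1\cdots\mu_q)$, and symmetrically for $k_\lambda$. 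With that one-line fix your argument is complete and coincides with the paper's proof.
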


Propostion \ref{LTQ} provides an upper bound for the quantiles. It requires the following assumptions on the bandwidths $(\lambda,\mu)$:
$$\boldsymbol{\mathcal{A}_2(\alpha)}:\ \max\ac{\prod_{i=1}^p\lambda_i, \prod_{j=1}^q\mu_j} < 1
\quad\mbox{and}\quad 
n \sqrt{\lambda_1\ldots\lambda_p\mu_1\ldots\mu_q} > \log \left(\frac{1}{\alpha} \right) > 1.$$
Note that larger sample sizes allow for smaller bandwidths.

\begin{prop}\label{LTQ} 
Let $\alpha$ in $(0,1)$. Let $f$ be a density satisfying $\boldsymbol{\mathcal{A}_1}$ and $(X_i,Y_i)_{1\leq i\leq n}$ be an i.i.d. sample with distribution $P_f$.
Consider bandwidths $(\lambda,\mu)$ satisfy Assumptions $\boldsymbol{\mathcal{A}_2(\alpha)}$. Denote $\widehat{\HSIC}_{\lambda,\mu}$ the test statistic defined by \eqref{esHSIC} and $q_{1-\alpha}^{\lambda,\mu}$ its $(1-\alpha)$-quantile under $P_{f_1\otimes f_2}$. 
Then, 
\begin{equation*}
q_{1-\alpha}^{\lambda,\mu} \leq \displaystyle \frac{C\pa{M_f, p, q}}{n \sqrt{\lambda_1 \ldots \lambda_p \mu_1 \ldots \mu_q} } \log\left( \frac{1}{\alpha} \right).
\end{equation*}
\end{prop}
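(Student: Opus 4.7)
\textbf{Proof plan for Proposition~\ref{LTQ}.}
Under $P_{f_1\otimes f_2}$, the Gaussian kernel being characteristic, $\widehat{\HSIC}_{\lambda,\mu}$ is an unbiased estimator of $\HSIC_{\lambda,\mu}(f_1\otimes f_2)=0$. Writing $\Lambda=\lambda_1\cdots\lambda_p$ and $M=\mu_1\cdots\mu_q$, the target rate $\log(1/\alpha)/(n\sqrt{\Lambda M})$ is of the order of the standard deviation of $\widehat{\HSIC}_{\lambda,\mu}$ under $P_{f_1\otimes f_2}$ multiplied by $\log(1/\alpha)$: this is a Bernstein-type regime, and the gain of a factor $\sqrt{n}$ compared to a generic $U$-statistic Hoeffding bound reflects the \emph{degeneracy} of $\widehat{\HSIC}_{\lambda,\mu}$ at independence. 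A direct calculation, tracking the coefficients in front of $u_\lambda(x_1)v_\mu(y_1)$, $u_\lambda(x_1)\bar v_\mu$, $\bar u_\lambda v_\mu(y_1)$ and $\bar u_\lambda\bar v_\mu$ (where $u_\lambda(x)=\esp{k_\lambda(x,X)}$, $v_\mu(y)=\esp{l_\mu(y,Y)}$, $\bar u_\lambda=\esp{u_\lambda(X)}$, $\bar v_\mu=\esp{v_\mu(Y)}$), shows that the precise combination of signs $+1,+1,-2$ in~\eqref{esHSIC} forces $\esps{f_1\otimes f_2}{\widehat{\HSIC}_{\lambda,\mu}\mid Z_1}=0$, so the first-order Hoeffding projection vanishes identically.

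Once this degeneracy is established, I would express $\widehat{\HSIC}_{\lambda,\mu}$ as a single symmetric $U$-statistic of order~$4$ by embedding each of $\widehat{\HSIC}^{(2)}_{\lambda,\mu}$, $\widehat{\HSIC}^{(3)}_{\lambda,\mu}$, $\widehat{\HSIC}^{(4)}_{\lambda,\mu}$ into an order-$4$ $U$-statistic (averaging over the extra indices). Its Hoeffding decomposition is then completely degenerate from order $2$ onwards, so I would apply a Bernstein-type concentration inequality for completely degenerate $U$-statistics---in the spirit of Houdr\'e and Reynaud-Bouret or Arcones-Gin\'e---to the order-$2$ Hoeffding component, treating the order-$3$ and order-$4$ components with cruder bounds that contribute only lower-order terms. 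Schematically, this yields a tail bound of the form
\[
P_{f_1\otimes f_2}\!\pa{\widehat{\HSIC}_{\lambda,\mu}>t} \leq K \exp\!\pa{-c\min\!\ac{\frac{n^2t^2}{A^2},\,\frac{nt}{B},\,\frac{(nt)^{2/3}}{C^{2/3}},\,\frac{(nt)^{1/2}}{D^{1/2}}}},
\]
where $A^2,B,C,D$ are explicit norms of the kernel of the order-$2$ projection.

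The moment bookkeeping is then carried out via the elementary bounds $\norm{\varphi_\lambda}_\infty\leq C(p)/\Lambda$ and $\norm{\varphi_\lambda}_2^2\leq C(p)/\Lambda$ (analogously for $\phi_\mu$), combined with Assumption $\boldsymbol{\mathcal{A}_1}$ which controls the $\L_2(f_1\otimes f_2)$-norms of $k_\lambda$, $l_\mu$ and their products. One obtains $A^2\leq C(M_f,p,q)/(\Lambda M)$ and $B\leq C(M_f,p,q)/\sqrt{\Lambda M}$, with $C$ and $D$ controlled similarly. Inverting the tail bound at level $\alpha$ and invoking Assumption $\boldsymbol{\mathcal{A}_2(\alpha)}$---in particular $n\sqrt{\Lambda M}>\log(1/\alpha)>1$ and $\Lambda,M<1$---one checks that the linear ``$B$-term'' dominates the other three, yielding $q_{1-\alpha}^{\lambda,\mu} \leq C(M_f,p,q)\log(1/\alpha)/(n\sqrt{\Lambda M})$. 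The main technical obstacle is the careful accounting of the four Houdr\'e--Reynaud-Bouret quantities $A,B,C,D$ for our specific kernel, and the verification that the bandwidth regime in $\boldsymbol{\mathcal{A}_2(\alpha)}$ precisely places us in the Bernstein regime where the linear term $Bt/n$ governs the exponential tail.
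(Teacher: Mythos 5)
Your plan follows essentially the same route as the paper: write $\widehat{\HSIC}_{\lambda,\mu}$ as a single order-$4$ $U$-statistic, use its Hoeffding/ANOVA decomposition into completely degenerate components under $P_{f_1\otimes f_2}$ (degeneracy of the first projection being exactly Gretton et al.'s Theorem 2), apply exponential inequalities for degenerate $U$-statistics (the paper uses Gin\'e--Lata\l a--Zinn for the order-$2$ part and Arcones--Gin\'e for the order-$3$ and order-$4$ parts, which is the same family of bounds you invoke), and then carry out the same moment bookkeeping in terms of $M_f$, $\lambda_1\cdots\lambda_p$ and $\mu_1\cdots\mu_q$, using $\boldsymbol{\mathcal{A}_2(\alpha)}$ to make the term linear in $\log(1/\alpha)$ dominate. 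The approach is correct and matches the paper's proof.
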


Combining Lemma \ref{powerful-test} with Propositions \ref{LTV} and \ref{LTQ}, Corollary \ref{powerful-testparam} provides a sufficient condition on $\HSIC_{\lambda,\mu}(f)$ depending on the bandwidths $\lambda$, $\mu$ and the sample size $n$ in order to control the second kind error rate by $\beta$. 

\begin{corr} \label{powerful-testparam}
Under the assumptions of Lemma \ref{powerful-test}, Propositions \ref{LTV} and \ref{LTQ},
one has $P_{f}(\Delta_{\alpha}^{\lambda,\mu}=0)\leq \beta$ as soon as 
\begin{equation*}
\HSIC_{\lambda,\mu}(f) > C\pa{M_f, p, q ,\beta} \left\{\displaystyle \frac{1}{\sqrt{n}} + \frac{1}{n \sqrt{\lambda_1 \ldots \lambda_p \mu_1 \ldots \mu_q}} \log \left( \frac{1}{\alpha} \right)\right\}. 
\end{equation*}
\end{corr}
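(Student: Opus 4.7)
The plan is simply to plug the upper bounds from Propositions \ref{LTV} and \ref{LTQ} into the sufficient condition provided by Lemma \ref{powerful-test}, and then to collect the resulting terms into a single bound of the stated form. There is essentially no new mathematical content: all the work has already been done in the three preceding results, and this corollary is a bookkeeping statement.

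First, I would start from Lemma \ref{powerful-test}, which guarantees $P_f(\Delta_\alpha^{\lambda,\mu} = 0) \leq \beta$ as soon as
\[
\HSIC_{\lambda,\mu}(f) \geq \sqrt{\Var_f(\widehat{\HSIC}_{\lambda,\mu})/\beta} + q_{1-\alpha}^{\lambda,\mu}.
\]
It is therefore enough to upper bound each of these two terms by a quantity of the stated form, and then require $\HSIC_{\lambda,\mu}(f)$ to exceed their sum.

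Second, I would apply Proposition \ref{LTV} together with the subadditivity $\sqrt{a+b} \leq \sqrt{a} + \sqrt{b}$ for $a,b \geq 0$. This yields
\[
\sqrt{\frac{\Var_f(\widehat{\HSIC}_{\lambda,\mu})}{\beta}} \leq \sqrt{\frac{C(M_f,p,q)}{\beta}}\left\{\frac{1}{\sqrt{n}} + \frac{1}{n\sqrt{\lambda_1\cdots\lambda_p\mu_1\cdots\mu_q}}\right\}.
\]
Proposition \ref{LTQ} directly gives
\[
q_{1-\alpha}^{\lambda,\mu} \leq \frac{C(M_f,p,q)}{n\sqrt{\lambda_1\cdots\lambda_p\mu_1\cdots\mu_q}}\log(1/\alpha).
\]

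Finally, I would sum the two bounds and simplify. Assumption $\boldsymbol{\mathcal{A}_2(\alpha)}$ ensures $\log(1/\alpha) > 1$, so the term $\frac{1}{n\sqrt{\lambda_1\cdots\lambda_p\mu_1\cdots\mu_q}}$ coming from the variance bound can be absorbed, up to a constant depending only on $M_f, p, q, \beta$, into the corresponding term $\frac{\log(1/\alpha)}{n\sqrt{\lambda_1\cdots\lambda_p\mu_1\cdots\mu_q}}$ coming from the quantile bound. Collecting all multiplicative constants into a single $C(M_f,p,q,\beta)$, we obtain that the hypothesis of Lemma \ref{powerful-test} is satisfied whenever
\[
\HSIC_{\lambda,\mu}(f) > C(M_f,p,q,\beta)\left\{\frac{1}{\sqrt{n}} + \frac{\log(1/\alpha)}{n\sqrt{\lambda_1\cdots\lambda_p\mu_1\cdots\mu_q}}\right\},
\]
which is exactly the stated condition. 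There is no real obstacle here; the only minor point to be careful about is using $\log(1/\alpha) > 1$ at the right moment so that the two ``$1/(n\sqrt{\cdots})$'' contributions can be merged into one without leaving a residual $\log$-free term in the final bound.
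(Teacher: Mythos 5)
Your proposal is correct and follows exactly the route the paper intends: the paper's own proof simply states that the corollary is immediately obtained from Lemma \ref{powerful-test} and Propositions \ref{LTV} and \ref{LTQ}, which is precisely your plug-in argument with $\sqrt{a+b}\leq\sqrt{a}+\sqrt{b}$ and the absorption of the $\log$-free term via $\log(1/\alpha)>1$ from $\boldsymbol{\mathcal{A}_2(\alpha)}$.
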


Note that the right-hand side term given in Corollary \ref{powerful-testparam} depends on the unknown density $f$. However, this dependence is weak since it only involves the infinite norm of $f$ and its marginals.

\subsection{Control of the second kind error in terms of $\L_2$-norm}
\label{Second-error-control-L2}

For the sake of interpretation, and in order to upper bound the non-asymptotic uniform separation rates w.r.t. the $\L_2$-norm, we now want to express the condition on $\HSIC_{\lambda,\mu}(f)$ obtained in Corollary \ref{powerful-testparam} in terms of the $\L_2$ norm of the difference $f - f_1 \otimes f_2$.
To do so, we first give in Lemma \ref{Hpsi} a link between $\HSIC_{\lambda,\mu}(f)$ and $\norm{f- f_1\otimes f_2}_{2}^2$. 

\begin{lemm} \label{Hpsi}
Let $\psi= f- f_1\otimes f_2 $. 
The $\HSIC$ measure of $f$ associated to kernels $k_\lambda$ and $l_\mu$ and defined in Equation \eqref{formHSIC} can be written as 
\begin{equation*}
\HSIC_{\lambda,\mu}(f) = \langle \psi , \psi \ast (\varphi_\lambda \otimes \phi_\mu) \rangle_{2},
\end{equation*}
where $\varphi_\lambda$ and $\phi_\mu$ are the functions defined in Equation \eqref{varphilambdaphimu}, 
and $\langle \cdot , \cdot \rangle_{2}$ denotes the usual scalar product in the $\L_2$ space. One can easily deduce that 
\begin{equation}\label{L2HSIC}
\HSIC_{\lambda,\mu}(f) = \frac{1}{2} \bigg(\norm{\psi}_{2}^2 + \norm{\psi \ast (\varphi_\lambda \otimes \phi_\mu)}_{2}^2 - \norm{\psi - \psi \ast (\varphi_\lambda \otimes \phi_\mu)}_{2}^2 \bigg).
\end{equation}
\end{lemm}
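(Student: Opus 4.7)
The plan is to expand both sides of the claimed identity as explicit integrals over $\R^p \times \R^q$ (or its double product), and show they coincide term-by-term.

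First, I would write out the convolution explicitly. Since $\varphi_\lambda$ is an even function on $\R^p$ and $\phi_\mu$ is even on $\R^q$ (both are products of centred Gaussian densities up to scaling), one has
\begin{equation*}
(\psi \ast (\varphi_\lambda \otimes \phi_\mu))(x,y) \;=\; \int_{\R^p \times \R^q} \psi(x',y')\, \varphi_\lambda(x-x')\, \phi_\mu(y-y')\, dx'\, dy'.
\end{equation*}
Taking the $\L_2$-scalar product against $\psi$ and using Fubini, the right-hand side of the identity becomes the quadruple integral
\begin{equation*}
\langle \psi , \psi \ast (\varphi_\lambda \otimes \phi_\mu)\rangle_2 = \int \psi(x,y)\psi(x',y')\, \varphi_\lambda(x-x')\, \phi_\mu(y-y')\, dx\,dy\,dx'\,dy'.
\end{equation*}

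Next I would expand $\psi(x,y)\psi(x',y') = (f(x,y)-f_1(x)f_2(y))(f(x',y')-f_1(x')f_2(y'))$ into four terms and match each of them against the three terms of the HSIC formula \eqref{formHSIC} with $k = k_\lambda$, $l = l_\mu$, recalling that $k_\lambda(x,x') = \varphi_\lambda(x-x')$ and $l_\mu(y,y') = \phi_\mu(y-y')$, and that the marginals of $f$ are $f_1$ and $f_2$. Concretely:
\begin{itemize}
\item the $f(x,y)f(x',y')$ term gives $\esp{k_\lambda(X,X')l_\mu(Y,Y')}$;
\item the $f_1(x)f_2(y)f_1(x')f_2(y')$ term factorises into $\esp{k_\lambda(X,X')}\,\esp{l_\mu(Y,Y')}$;
\item the two cross terms $f(x,y)f_1(x')f_2(y')$ and $f_1(x)f_2(y)f(x',y')$ are equal by the change of variables $(x,y)\leftrightarrow(x',y')$ together with the evenness of $\varphi_\lambda$ and $\phi_\mu$, and each yields $\esp{\esp{k_\lambda(X,X')\mid X}\esp{l_\mu(Y,Y')\mid Y}}$, producing the factor $-2$.
\end{itemize}
Summing these pieces reproduces exactly \eqref{formHSIC}, which is the first claim.

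Finally, the second equality \eqref{L2HSIC} follows at once from the polarisation identity
\begin{equation*}
\langle a, b\rangle_2 \;=\; \tfrac{1}{2}\bigl(\norm{a}_2^2 + \norm{b}_2^2 - \norm{a-b}_2^2\bigr),
\end{equation*}
applied to $a=\psi$ and $b=\psi\ast(\varphi_\lambda\otimes\phi_\mu)$. The only mild obstacle is verifying that all integrals are absolutely convergent so that Fubini's theorem applies; this is justified by Assumption $\boldsymbol{\mathcal{A}_1}$ (the density $f$ and its marginals are bounded, hence in $\L_1 \cap \L_2$), together with the integrability of the Gaussian kernels $\varphi_\lambda$ and $\phi_\mu$.
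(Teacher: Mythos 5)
Your proof is correct and follows essentially the same route as the paper's: the paper starts from \eqref{formHSIC}, writes the three expectations as quadruple integrals and factorises them (using the symmetry of $k_\lambda$ and $l_\mu$) into $\int k_\lambda l_\mu\,\psi(x,y)\psi(x',y')$, which is exactly your expansion read in the reverse direction, and the polarisation step \eqref{L2HSIC} is treated as immediate in both. Your added remark on absolute convergence via $\boldsymbol{\mathcal{A}_1}$ is a harmless (and valid) extra justification.
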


Theorem \ref{th:powerful-testparam} gives a sufficient condition on $\norm{f- f_1\otimes f_2}_{2}^2$, for the second kind error of the test $\Delta_{\alpha}^{\lambda,\mu}$ to be upper bounded by $\beta$. 

\begin{theorem} \label{th:powerful-testparam}
Let $\alpha$, $\beta$ in $(0,1)$ and consider the test $\Delta_{\alpha}^{\lambda,\mu}$ defined by \eqref{Deltalambdamu}. 
Assume that the density $f$ satisfies $\boldsymbol{\mathcal{A}_1}$ and that the bandwidths $(\lambda,\mu)$ satisfy $\boldsymbol{\mathcal{A}_2(\alpha)}$. 
Then, $P_{f}(\Delta_{\alpha}^{\lambda,\mu}=0)\leq \beta$ as soon as 
\begin{equation}\label{CondNorm2}
\norm{\psi}_{2}^2 > \norm{\psi - \psi \ast (\varphi_\lambda \otimes \phi_\mu)}_{2}^2 + \frac{C\pa{M_f, p, q , \beta}}{n \sqrt{\lambda_1 \ldots \lambda_p \mu_1 \ldots \mu_q}} \log \pa{\frac{1}{\alpha}}.
\end{equation}
where $\psi=f-f_1\otimes f_2$, 
and $C(\cdot)$ denotes a positive constant depending only on its arguments. 
\end{theorem}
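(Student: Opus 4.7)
The strategy is to reduce the $\L_2$-condition \eqref{CondNorm2} to the $\HSIC$-based sufficient condition already established in Corollary~\ref{powerful-testparam}, using the identity of Lemma~\ref{Hpsi}. Once the two ingredients are combined, the argument is essentially algebraic.

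First, I would start from the identity \eqref{L2HSIC} of Lemma~\ref{Hpsi},
$$2\HSIC_{\lambda,\mu}(f)=\|\psi\|_2^2+\|\psi\ast(\varphi_\lambda\otimes\phi_\mu)\|_2^2-\|\psi-\psi\ast(\varphi_\lambda\otimes\phi_\mu)\|_2^2,$$
and drop the nonnegative middle term on the right-hand side to obtain the key lower bound
$$2\HSIC_{\lambda,\mu}(f)\;\geq\;\|\psi\|_2^2-\|\psi-\psi\ast(\varphi_\lambda\otimes\phi_\mu)\|_2^2.$$
Assuming \eqref{CondNorm2} therefore immediately yields
$$\HSIC_{\lambda,\mu}(f)\;>\;\frac{C(M_f,p,q,\beta)}{2\,n\sqrt{\lambda_1\cdots\lambda_p\mu_1\cdots\mu_q}}\log(1/\alpha).$$

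Second, I would compare this to the threshold of Corollary~\ref{powerful-testparam}, which is of the form $C'(M_f,p,q,\beta)\bigl\{n^{-1/2}+\log(1/\alpha)/(n\sqrt{\lambda_1\cdots\mu_q})\bigr\}$. Assumption~$\boldsymbol{\mathcal{A}_2(\alpha)}$ gives $n\sqrt{\lambda_1\cdots\mu_q}>\log(1/\alpha)>1$, which allows one to regroup the two terms in this threshold into a single term of the form $C''(M_f,p,q,\beta)\log(1/\alpha)/(n\sqrt{\lambda_1\cdots\mu_q})$, at the cost of enlarging the constant. Choosing the constant $C$ in \eqref{CondNorm2} large enough to offset both this regrouping and the factor~$1/2$ produced in the previous step, the hypothesis~\eqref{CondNorm2} then implies the sufficient condition of Corollary~\ref{powerful-testparam}, from which $P_f(\Delta_\alpha^{\lambda,\mu}=0)\leq\beta$ follows.

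The core of the proof, i.e.\ the passage from $\HSIC$ to $\L_2$-norms, is immediate from Lemma~\ref{Hpsi}. The main piece of bookkeeping—and the only nontrivial step—is the calibration of the constants when absorbing the $n^{-1/2}$ contribution of the variance bound (Proposition~\ref{LTV}) into $\log(1/\alpha)/(n\sqrt{\lambda_1\cdots\mu_q})$ using $\boldsymbol{\mathcal{A}_2(\alpha)}$, so that the single-term threshold appearing on the right-hand side of~\eqref{CondNorm2} really does dominate the two-term threshold of the corollary. Once this is checked, the conclusion is a one-line application of Corollary~\ref{powerful-testparam}.
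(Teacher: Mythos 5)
There is a genuine gap, and it sits exactly where you locate the ``only nontrivial step.'' Your plan is to deduce the HSIC condition $\HSIC_{\lambda,\mu}(f)>\tfrac{C}{2}\log(1/\alpha)/(n\sqrt{\lambda_1\cdots\lambda_p\mu_1\cdots\mu_q})$ and then invoke Corollary~\ref{powerful-testparam}, absorbing the $1/\sqrt{n}$ part of its threshold into the $\log(1/\alpha)/(n\sqrt{\lambda_1\cdots\mu_q})$ part ``using $\boldsymbol{\mathcal{A}_2(\alpha)}$.'' But $\boldsymbol{\mathcal{A}_2(\alpha)}$ only gives the \emph{lower} bound $n\sqrt{\lambda_1\cdots\mu_q}>\log(1/\alpha)$, whereas the absorption you need is the \emph{upper} bound $n\sqrt{\lambda_1\cdots\mu_q}\leq C\sqrt{n}\,\log(1/\alpha)$, i.e.\ $\sqrt{\lambda_1\cdots\mu_q}\lesssim \log(1/\alpha)/\sqrt{n}$, which is not assumed and is false for moderate bandwidths (take all $\lambda_i,\mu_j$ close to $1$: then $\log(1/\alpha)/(n\sqrt{\lambda_1\cdots\mu_q})\asymp \log(1/\alpha)/n \ll 1/\sqrt{n}$). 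So the hypothesis \eqref{CondNorm2} does not imply the two-term sufficient condition of Corollary~\ref{powerful-testparam}, and your one-line reduction fails. This is precisely the issue the paper flags: going through Corollary~\ref{powerful-testparam} (equivalently, through the crude variance bound of Proposition~\ref{LTV}) leaves an irreducible $1/\sqrt{n}$ term in the $\L_2$ condition, which would also spoil the separation rates over Sobolev balls when $4\delta>p+q$.

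The repair requires keeping the very term you discard. In the identity \eqref{L2HSIC}, do not drop $\norm{\psi\ast(\varphi_\lambda\otimes\phi_\mu)}_2^2$: combining Lemma~\ref{powerful-test} with Lemma~\ref{Hpsi} yields the intermediate condition \eqref{eq:etapinter}, which contains the negative term $-\norm{\psi\ast(\varphi_\lambda\otimes\phi_\mu)}_2^2$. Then one uses the refined variance bound of Proposition~\ref{prop:finemajvar}, $\Var_f(\widehat{\HSIC}_{\lambda,\mu})\leq C(M_f)\norm{\psi\ast(\varphi_\lambda\otimes\phi_\mu)}_2^2/n + C(M_f,p,q)/(n^2\lambda_1\cdots\mu_q)$, and $2\sqrt{ab}\leq ca+b/c$ to get $2\sqrt{\Var_f(\widehat{\HSIC}_{\lambda,\mu})/\beta}\leq \norm{\psi\ast(\varphi_\lambda\otimes\phi_\mu)}_2^2 + C(M_f,\beta)/n + C(M_f,p,q,\beta)/(n\sqrt{\lambda_1\cdots\mu_q})$. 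The convolution term cancels against the negative term in \eqref{eq:etapinter}, and the leftover $1/n$ (not $1/\sqrt{n}$) \emph{is} absorbable, since $\lambda_1\cdots\lambda_p\mu_1\cdots\mu_q<1$ and $\log(1/\alpha)>1$ give $1/n\leq \log(1/\alpha)/(n\sqrt{\lambda_1\cdots\mu_q})$; adding the quantile bound of Proposition~\ref{LTQ} then gives \eqref{CondNorm2}. In short, your reduction to Corollary~\ref{powerful-testparam} cannot work as stated; the theorem genuinely needs Proposition~\ref{prop:finemajvar} and the compensation mechanism.
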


In Condition \eqref{CondNorm2} appears a compromise between a bias term, namely $ \norm{\psi - \psi \ast (\varphi_\lambda \otimes \phi_\mu)}_{2}^2 $, and a term induced by the square root of the variance of the estimator $ \widehat{\HSIC}_{\lambda,\mu}$. Note that, due to Proposition \ref{LTQ}, this variance term also controls the quantile term. Comparing the conditions on the HSIC given in Corollary \ref{powerful-testparam} and on the $\L_2$-norm $\norm{f- f_1\otimes f_2}_{2}^2$ given in Theorem \ref{th:powerful-testparam}, the meticulous reader may notice that the term in $1/\sqrt{n}$ has been removed. This suppression seems to be necessary to obtain optimal separation rates according to the literature in other testing frameworks. This derives from quite tricky computations that we point out here and that directly prove Theorem \ref{th:powerful-testparam}. 
By combining Lemmas \ref{powerful-test} and \ref{Hpsi}, direct computations lead to the condition
\begin{equation}\label{eq:etapinter}
\norm{\psi}_{2}^2 > \norm{\psi - \psi \ast (\varphi_\lambda \otimes \phi_\mu)}_{2}^2 - \norm{\psi \ast (\varphi_\lambda \otimes \phi_\mu)}_{2}^2 + 2\sqrt{\frac{\Var_f(\widehat{\HSIC}_{\lambda,\mu})}{\beta}} + 2q_{1-\alpha}^{\lambda,\mu}.
\end{equation}

If one directly considers the upper bound of the variance $\Var_f(\widehat{\HSIC}_{\lambda,\mu})$ given in Proposition \ref{LTV}, one would get the unwanted $1/\sqrt{n}$ term. 
The idea is to take advantage of the negative term $- \norm{\psi \ast (\varphi_\lambda \otimes \phi_\mu)}_{2}^2$ to compensate such term. To do so, we need a more refined control of the variance given in the technical Proposition \ref{prop:finemajvar}. 

\begin{prop} \label{prop:finemajvar}
Let $f$ be a density satisfying Assumption $\boldsymbol{\mathcal{A}_1}$, and $(X_i,Y_i)_{1\leq i\leq n}$ be an i.i.d. sample with distribution $P_f$. 
Consider the test statistic $\widehat{\HSIC}_{\lambda,\mu}$ defined by \eqref{esHSIC}. 
Then, 
\begin{equation*}
\Var_f(\widehat{\HSIC}_{\lambda,\mu}) \leq C(M_f)\frac{\norm{\psi \ast (\varphi_\lambda \otimes \phi_\mu)}_{2}^2}{n} + \frac{C\pa{M_f, p, q}}{n^2\lambda_1 \ldots \lambda_p \mu_1 \ldots \mu_q}.\end{equation*}
\end{prop}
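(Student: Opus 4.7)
The plan is to refine Proposition~\ref{LTV} by applying a Hoeffding-type projection decomposition of $\widehat{\HSIC}_{\lambda,\mu}$ around $\HSIC_{\lambda,\mu}(f)$. Setting $Z_i=(X_i,Y_i)$ and denoting $G=\varphi_\lambda\otimes\phi_\mu$, decompose
$$\widehat{\HSIC}_{\lambda,\mu}-\HSIC_{\lambda,\mu}(f)=L+R,$$
where $L=\sum_{i=1}^n\bigl\{\E_f[\widehat{\HSIC}_{\lambda,\mu}\mid Z_i]-\HSIC_{\lambda,\mu}(f)\bigr\}$ is the projection of the (symmetric) centered statistic onto the degree-one subspace, and $R$ is the $L^2$-orthogonal residual (sum of projections of degree $c\geq 2$). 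By orthogonality of the Hoeffding decomposition, $\Var_f(\widehat{\HSIC}_{\lambda,\mu})=\Var_f(L)+\Var_f(R)$, so it suffices to bound the two pieces separately.

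The core step is the exact computation of $h_1(Z_1):=\E_f[\widehat{\HSIC}_{\lambda,\mu}\mid Z_1]-\HSIC_{\lambda,\mu}(f)$. Taking conditional expectations in each $\widehat{\HSIC}_{\lambda,\mu}^{(r)}$, $r\in\{2,3,4\}$, and splitting the sum over $r$-tuples according to how many of the indices coincide with $1$, produces an $O(1/n)$ non-constant contribution per statistic, involving quantities such as $(f\ast G)(Z_1)$, $(f_1\ast\varphi_\lambda)(X_1)(f_2\ast\phi_\mu)(Y_1)$ and partial convolutions like $\int\varphi_\lambda(X_1-u)(f_2\ast\phi_\mu)(v)f(u,v)\,du\,dv$, together with the population constants $T_1,T_2,T_3$ naturally appearing in Section~\ref{sect:HSICmeasures}. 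Substituting $f=f_1\otimes f_2+\psi$ throughout and recombining with the signed weights $+1,-2,+1$ defining $\widehat{\HSIC}_{\lambda,\mu}$, the $f_1\otimes f_2$ contributions cancel exactly (which is precisely the design purpose of the HSIC estimator at the population level, cf.\ Lemma~\ref{Hpsi}). After routine algebra one obtains
$$h_1(Z_1)=\frac{2}{n}\bigl\{(\psi\ast G)(X_1,Y_1)-\gamma(X_1)-\delta(Y_1)+c_0\bigr\},$$
with $\gamma(x)=\int f_2(w)(\psi\ast G)(x,w)\,dw$, $\delta(y)=\int f_1(u)(\psi\ast G)(u,y)\,du$, and $c_0$ a recentering constant enforcing $\E_f[h_1(Z_1)]=0$.

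Since the summands of $L$ are i.i.d. functions of a single $Z_i$, $\Var_f(L)=(4/n)\Var_f\bigl\{(\psi\ast G)(Z_1)-\gamma(X_1)-\delta(Y_1)\bigr\}$. Under $\boldsymbol{\mathcal{A}_1}$, $\E_f[(\psi\ast G)^2(Z_1)]\leq M_f\,\|\psi\ast G\|_2^2$; by the weighted Cauchy-Schwarz inequality applied to $\gamma(x)=\int f_2^{1/2}\cdot f_2^{1/2}(\psi\ast G)(x,\cdot)$ one gets $\gamma(x)^2\leq\int f_2(w)(\psi\ast G)(x,w)^2\,dw$, and integrating against $f_1$ with $\|f_1\|_\infty,\|f_2\|_\infty\leq M_f$ gives $\E_f[\gamma(X_1)^2]\leq M_f^2\,\|\psi\ast G\|_2^2$, with an analogous bound for $\delta$. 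Combining these estimates yields $\Var_f(L)\leq C(M_f)\|\psi\ast G\|_2^2/n$.

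For $R$, its degree-one projection vanishes by construction, so only Hoeffding projections of order $c\geq 2$ contribute, each decaying like $1/n^c\leq 1/n^2$. The leading constants are controlled uniformly by the second moment of the elementary kernel $k_\lambda(X_1,X_2)\,l_\mu(Y_1,Y_2)$: using $\|f\|_\infty\leq M_f$ and the classical identity $\|g_d\|_2^2=(4\pi)^{-d/2}$ combined with the rescaling in \eqref{varphilambdaphimu}, one obtains $\E_f[k_\lambda(X_1,X_2)^2 l_\mu(Y_1,Y_2)^2]\leq M_f^2\|\varphi_\lambda\|_2^2\|\phi_\mu\|_2^2\leq C(M_f,p,q)/(\lambda_1\cdots\lambda_p\mu_1\cdots\mu_q)$, and similar bounds for the kernels underlying $\widehat{\HSIC}_{\lambda,\mu}^{(3)},\widehat{\HSIC}_{\lambda,\mu}^{(4)}$. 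This yields $\Var_f(R)\leq C(M_f,p,q)/(n^2\lambda_1\cdots\lambda_p\mu_1\cdots\mu_q)$, essentially as in Proposition~\ref{LTV} but with the $1/n$ term absorbed into $L$. Adding the two contributions gives the claim. The hard part is the algebraic step: tracking all the contributions from the three $U$-statistics of orders $2,3,4$, keeping all the factors $T_1,T_2,T_3$ and cross-terms, and verifying that after the substitution $f=f_1\otimes f_2+\psi$ the linear combination of weights $(+1,-2,+1)$ makes every piece depending only on $f_1\otimes f_2$ disappear, leaving $h_1$ as a clean functional of $\psi\ast G$.
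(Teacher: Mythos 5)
Your proposal is correct and follows essentially the same route as the paper: both rest on the Hoeffding decomposition of the order-4 $U$-statistic, with the degree-one part controlled by $\norm{\psi \ast (\varphi_\lambda \otimes \phi_\mu)}_{2}^2/n$ (via exactly the identity $\esp{k_\lambda(X_1,X_2)\pa{l_\mu(Y_1,Y_2)-l_\mu(Y_1,Y_3)}\mid Z_1}=\psi\ast(\varphi_\lambda\otimes\phi_\mu)(X_1,Y_1)$ and its analogues conditionally on $X_1$ or $Y_1$) and the higher-order projections absorbed into the $s^2/n^2$ term already bounded in Proposition \ref{LTV}. The only difference is presentational: you compute the H\'ajek projection in closed form (your formula for $h_1$, including the factor $2/n$, is indeed correct) and invoke orthogonality, whereas the paper avoids this exact algebra by directly bounding $\Var_f\pa{\esp{h_{1,2,3,4}\mid Z_1}}$ through the three conditional-expectation identities and the law of total variance.
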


Finally, using standard inequalities such as $\sqrt{a+b}\leq \sqrt{a} + \sqrt{b}$ or such as $2\sqrt{ab}\leq c a + b/c$ for all positive $a,\ b$ and $c$, one can prove that
$$2\sqrt{\frac{\Var_f(\widehat{\HSIC}_{\lambda,\mu})}{\beta}} \leq \norm{\psi \ast (\varphi_\lambda \otimes \phi_\mu)}_{2}^2 + \frac{C\pa{M_f,\beta}}{n} + \frac{C\pa{M_f, p, q, \beta}}{n\sqrt{\lambda_1 \ldots \lambda_p \mu_1 \ldots \mu_q}},$$ 
which leads to Theorem \ref{th:powerful-testparam} when combined with Equation \eqref{eq:etapinter} and Proposition \ref{LTQ}.
Notice that such trick is already present in \cite{fromont2013two}.

\subsection{Uniform separation rate over Sobolev balls}
\label{Uniform-separation-rate}

The bias term in Theorem \ref{th:powerful-testparam} comes from the fact that we do not estimate $\norm{f-f_1 \otimes f_2}_{2}^2$ but $\HSIC_{\lambda,\mu}(f)$. In order to have a control of the bias term w.r.t $\lambda$ and $\mu$, we assume that $f-f_1 \otimes f_2$ belongs to some class of regular functions. \\

The Sobolev ball $\mathcal{S}^{\delta}_d (R)$ in dimension $d$ in $\mathbb{N}^*$, with regularity parameter $\delta > 0$ and radius $R > 0$, is defined by
\begin{equation}\label{eq:Sobolevball}
\mathcal{S}^{\delta}_d (R) = \ac{ s: \mathbb{R}^d \rightarrow \mathbb{R} \ ;\ s \in \L_1 (\mathbb{R}^d) \cap \L_2 (\mathbb{R}^d), \int_{\mathbb{R}^d} \norm{u}^{2 \delta} \abs{\hat{s} (u)}^2 \mathrm{d}u \leq (2 \pi)^{d} R^2 },
\end{equation}
where $\norm{\cdot}$ denotes the Euclidean norm associated to the usual scalar product $\langle\cdot, \cdot \rangle$ in $\R^d$, and $\hat{s}$ denotes the Fourier transform of $s$, defined on $\R^d$ by $\hat{s} (u) = \int_{\mathbb{R}^d} s(x) e^{i \langle x,u \rangle} \; \mathrm{d}x$. 
Lemma \ref{TBSB} gives an upper bound for the bias term in the case where $f-f_1 \otimes f_2 $ belongs to particular Sobolev balls.

\begin{lemm} \label{TBSB}
Assume that $ \psi = f-f_1 \otimes f_2$ belongs to the Sobolev ball $\mathcal{S}^{\delta}_{p+q} (R)$ with positive parameters $\delta$ and  $R$, defined in \eqref{eq:Sobolevball}. Let $\varphi_\lambda$ and $\phi_\mu$ be the functions defined in \eqref{varphilambdaphimu}. Then, there exists $T_\delta$ in $[0,1]$ such that 
$$\norm{\psi- \psi\ast (\varphi_\lambda \otimes \phi_\mu)}_2^2 \leq \pa{1-e^{-T_\delta^2/2}} \norm{\psi}_2^2 + C(p,q,\delta,R) \left[ \sum_{i=1}^p \lambda_i^{2 \delta} +  \sum_{j=1}^q \mu_j^{2 \delta} \right].$$ 
Moreover, if $\delta$ belongs to $(0,2]$, then $T_\delta=0$ and the term with $\norm{\psi}_2$ vanishes. 
\end{lemm}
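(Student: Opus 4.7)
The natural setting for this bound is Fourier analysis. Using the paper's convention $\hat s(u) = \int s(x) e^{i\langle x, u\rangle}\, dx$, a direct computation yields $\widehat{\varphi_\lambda}(u) = \exp\!\pa{-\tfrac12 \sum_{i=1}^p \lambda_i^2 (u^{(i)})^2}$ and analogously $\widehat{\phi_\mu}(v) = \exp\!\pa{-\tfrac12 \sum_{j=1}^q \mu_j^2 (v^{(j)})^2}$, so $\widehat{\varphi_\lambda \otimes \phi_\mu}(w) = e^{-A(w)/2}$ where $A(w) = \sum_{i=1}^p \lambda_i^2 (u^{(i)})^2 + \sum_{j=1}^q \mu_j^2 (v^{(j)})^2$ for $w=(u,v)$. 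Combining Plancherel's identity with $\widehat{s\ast t} = \hat{s}\cdot\hat{t}$ rewrites the quantity of interest as
\begin{equation*}
\norm{\psi - \psi \ast (\varphi_\lambda \otimes \phi_\mu)}_2^2 \;=\; \frac{1}{(2\pi)^{p+q}} \int_{\R^{p+q}} \abs{\hat\psi(w)}^2 \pa{1 - e^{-A(w)/2}}^2 dw.
\end{equation*}
The proof thus reduces to bounding the Fourier multiplier $\pa{1 - e^{-A/2}}^2$ pointwise in a form compatible with the Sobolev constraint $\int \norm{w}^{2\delta}\abs{\hat\psi(w)}^2 dw \leq (2\pi)^{p+q}R^2$.

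The central scalar inequality I would establish is that, for every $\delta>0$, there exists $T_\delta \in [0,1]$ (with $T_\delta = 0$ whenever $\delta \leq 2$) and a constant $C_\delta > 0$ such that $\pa{1 - e^{-A/2}}^2 \leq \pa{1 - e^{-T_\delta^2/2}} + C_\delta A^\delta$ for all $A \geq 0$. For $\delta \in (0,2]$ the elementary two-sided bound $\pa{1-e^{-A/2}}^2 \leq \min(A^2/4,\,1)$, split at the threshold $A=2$, gives the clean estimate $\pa{1-e^{-A/2}}^2 \leq 2^{-\delta} A^\delta$ globally, so $T_\delta = 0$ works. For $\delta > 2$ the function $g(A) = A^\delta - \pa{1-e^{-A/2}}^2$ satisfies $g(0)=0$, $g(A)<0$ for small positive $A$ (since $A^\delta \ll A^2$), and $g(1) = 1 - \pa{1-e^{-1/2}}^2 > 0$; choosing $T_\delta^2$ as the largest zero of $g$ in $[0,1]$ guarantees $\pa{1-e^{-A/2}}^2 \leq A^\delta$ on $A \geq T_\delta^2$, while on $A < T_\delta^2$ the monotonicity of $A\mapsto\pa{1-e^{-A/2}}^2$ together with the fact that this quantity lies in $[0,1]$ yields $\pa{1-e^{-A/2}}^2 \leq 1-e^{-T_\delta^2/2}$, so the displayed bound holds in both regimes.

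It remains to convert $A(w)^\delta$ into something of the form $\pa{\sum_i \lambda_i^{2\delta} + \sum_j \mu_j^{2\delta}}\norm{w}^{2\delta}$. Using subadditivity of $x \mapsto x^\delta$ on $[0,\infty)$ for $\delta \leq 1$, or the power-mean inequality $(a_1+\cdots+a_n)^\delta \leq n^{\delta-1}(a_1^\delta+\cdots+a_n^\delta)$ for $\delta > 1$, together with $\abs{u^{(i)}}, \abs{v^{(j)}} \leq \norm{w}$, one obtains
\begin{equation*}
A(w)^\delta \;\leq\; C(p,q,\delta) \pa{\sum_{i=1}^p \lambda_i^{2\delta} + \sum_{j=1}^q \mu_j^{2\delta}} \norm{w}^{2\delta}.
\end{equation*}
Integrating the scalar inequality against $\abs{\hat\psi(w)}^2 dw$ and dividing by $(2\pi)^{p+q}$, the offset term becomes $\pa{1-e^{-T_\delta^2/2}}\norm{\psi}_2^2$ via Parseval, and the $C_\delta A^\delta$ term is controlled by the Sobolev assumption, producing exactly the announced bound.

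The main obstacle is the scalar inequality in the regime $\delta > 2$: this is not a technicality but reflects the classical saturation of Gaussian convolution smoothers at order $2$. Since $\pa{1-e^{-A/2}}^2 \sim A^2/4$ as $A \to 0$, no pure rate $C_\delta A^\delta$ with $\delta > 2$ can hold uniformly in a neighborhood of the origin, and a non-vanishing offset --- realized here as $\pa{1-e^{-T_\delta^2/2}}\norm{\psi}_2^2$ --- is structurally unavoidable.
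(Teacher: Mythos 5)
Your proposal is correct and follows essentially the same route as the paper's proof: Plancherel together with the explicit Gaussian Fourier multiplier, a pointwise bound on $\pa{1-e^{-A/2}}^2$ split at a threshold $T_\delta$ (with $T_\delta=0$ for $\delta\le 2$, exactly as in the paper's Equation \eqref{eq:Tdelta}), and a subadditivity/H\"older step to reach the $\sum_i\lambda_i^{2\delta}+\sum_j\mu_j^{2\delta}$ form via the Sobolev constraint. The only difference is cosmetic: you package the two regimes into a single scalar inequality in the variable $A=\norm{(\lambda\xi,\mu\zeta)}^2$, whereas the paper splits the integral into the regions $\norm{(\lambda\xi,\mu\zeta)}<T_\delta$ and $\geq T_\delta$.
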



In the following, we study optimality over $S_{p+q}^{\delta}(R,R')$ defined by 
\begin{equation}\label{eq:Sobolevinf}
S_{p+q}^{\delta}(R,R') = S_{p+q}^\delta(R) \cap \ac{f ; \max\ac{\norm{f}_{\infty}, \norm{f_1}_{\infty}, \norm{f_2}_{\infty}}\leq R'}.
\end{equation} 
One can deduce from Theorem \ref{th:powerful-testparam} upper bounds for the uniform separation rates, defined in \eqref{seprate}, of the test $\Delta_{\alpha}^{\lambda,\mu}$ over Sobolev balls.

\begin{theorem}
\label{ThresholdSobolev}
Let $\alpha$, $\beta$ in $(0,1)$, and positive parameters $\delta$, $R$ and $R'$. 
Consider bandwidths $(\lambda,\mu)$ satisfying Assumptions $\boldsymbol{\mathcal{A}_2(\alpha)}$ and denote $\Delta_{\alpha}^{\lambda,\mu}$ the test defined by \eqref{Deltalambdamu}. 
Then, the uniform separation rate defined in \eqref{seprate} of the test $\Delta_{\alpha}^{\lambda,\mu}$ over the Sobolev ball $\mathcal{S}^{\delta}_{p+q} (R,R')$ defined in Equation \eqref{eq:Sobolevinf} can be upper bounded as follows
\begin{multline}\label{eq:majsobolev}
\cro{\rho \left( \Delta_{\alpha}^{\lambda,\mu} , \mathcal{S}^{\delta}_{p+q} (R,R') , \beta \right)}^2 
\leq \displaystyle C(p,q,\delta,R) \left[ \sum_{i=1}^p \lambda_i^{2 \delta} + \sum_{j=1}^q \mu_j^{2 \delta} \right] \\
+ \frac{C\pa{R', p, q, \beta}}{n\sqrt{\lambda_1 \ldots \lambda_p \mu_1 \ldots \mu_q}} \log\left( \frac{1}{\alpha} \right),
\end{multline}
where $C(\cdot)$ denote positive constants depending only on their arguments.
\end{theorem}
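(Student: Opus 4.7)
The proof plan is to combine directly Theorem \ref{th:powerful-testparam} with the bias control of Lemma \ref{TBSB}, and then to read off an upper bound on $\rho$ from the definition \eqref{seprate} of the uniform separation rate.

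First I would fix an arbitrary density $f$ such that $\psi = f - f_1\otimes f_2$ lies in $\mathcal{S}^{\delta}_{p+q}(R,R')$. By definition of this class in \eqref{eq:Sobolevinf}, $\psi$ belongs to the Sobolev ball $\mathcal{S}^{\delta}_{p+q}(R)$ and $f$ satisfies Assumption $\boldsymbol{\mathcal{A}_1}$ with $M_f \leq R'$. The bandwidths $(\lambda,\mu)$ satisfy $\boldsymbol{\mathcal{A}_2(\alpha)}$, so I can apply Theorem \ref{th:powerful-testparam}: bounding $M_f$ by $R'$ in the constant, $P_f(\Delta_\alpha^{\lambda,\mu} = 0)\leq \beta$ holds as soon as
\begin{equation*}
\norm{\psi}_2^2 > \norm{\psi - \psi\ast(\varphi_\lambda\otimes \phi_\mu)}_2^2 + \frac{C(R',p,q,\beta)}{n\sqrt{\lambda_1\ldots\lambda_p\mu_1\ldots\mu_q}}\log\pa{\frac{1}{\alpha}}.
\end{equation*}

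Next I would inject the bias bound from Lemma \ref{TBSB}, which gives some $T_\delta \in [0,1]$ such that
\begin{equation*}
\norm{\psi - \psi\ast(\varphi_\lambda\otimes \phi_\mu)}_2^2 \leq (1-e^{-T_\delta^2/2})\norm{\psi}_2^2 + C(p,q,\delta,R)\cro{\sum_{i=1}^p \lambda_i^{2\delta} + \sum_{j=1}^q \mu_j^{2\delta}}.
\end{equation*}
Substituting and grouping the $\norm{\psi}_2^2$ terms, a sufficient condition for $P_f(\Delta_\alpha^{\lambda,\mu} = 0)\leq \beta$ is
\begin{equation*}
e^{-T_\delta^2/2}\norm{\psi}_2^2 > C(p,q,\delta,R)\cro{\sum_{i=1}^p \lambda_i^{2\delta} + \sum_{j=1}^q \mu_j^{2\delta}} + \frac{C(R',p,q,\beta)}{n\sqrt{\lambda_1\ldots\lambda_p\mu_1\ldots\mu_q}}\log\pa{\frac{1}{\alpha}}.
\end{equation*}
Since $T_\delta\in [0,1]$, the factor $e^{T_\delta^2/2}$ is bounded by $\sqrt{e}$, so it can be absorbed into the two constants without altering their dependencies. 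I would therefore conclude that $P_f(\Delta_\alpha^{\lambda,\mu} = 0)\leq \beta$ whenever
\begin{equation*}
\norm{\psi}_2^2 > C(p,q,\delta,R)\cro{\sum_{i=1}^p \lambda_i^{2\delta} + \sum_{j=1}^q \mu_j^{2\delta}} + \frac{C(R',p,q,\beta)}{n\sqrt{\lambda_1\ldots\lambda_p\mu_1\ldots\mu_q}}\log\pa{\frac{1}{\alpha}}.
\end{equation*}

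Finally, denoting by $\rho^2_\star$ the right-hand side above, any $f$ with $\psi\in \mathcal{S}^{\delta}_{p+q}(R,R')$ and $\norm{\psi}_2 > \rho_\star$ satisfies $P_f(\Delta_\alpha^{\lambda,\mu} = 0)\leq \beta$. From the definition \eqref{seprate}, this means $\rho(\Delta_\alpha^{\lambda,\mu},\mathcal{S}^{\delta}_{p+q}(R,R'),\beta) \leq \rho_\star$, which is exactly \eqref{eq:majsobolev}. The only mildly subtle point is the absorption of the $(1-e^{-T_\delta^2/2})\norm{\psi}_2^2$ contribution; since Lemma \ref{TBSB} guarantees $T_\delta<\infty$ (and $T_\delta=0$ when $\delta\leq 2$), the coefficient $e^{-T_\delta^2/2}$ is bounded away from zero and this rearrangement costs only a multiplicative constant depending on $\delta$, which is legitimately folded into $C(p,q,\delta,R)$.
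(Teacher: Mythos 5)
Your proposal is correct and follows essentially the same route as the paper's proof: apply Theorem \ref{th:powerful-testparam} with $M_f\leq R'$, inject the bias bound of Lemma \ref{TBSB}, use $T_\delta\leq 1$ to absorb the factor $e^{T_\delta^2/2}\leq\sqrt{e}$ into the constants (the paper writes this as $1-e^{-T_\delta^2/2}\leq 1-e^{-1/2}$), and conclude via the definition \eqref{seprate}. The only cosmetic remark is that the multiplicative factor must be folded into both constants, not only $C(p,q,\delta,R)$, which changes nothing in the argument.
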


One can now determine optimal bandwidths $(\lambda^*, \mu^*)$ which minimize the right-hand side of Equation \eqref{eq:majsobolev}. 
To do so, the idea is to find for which $(\lambda, \mu)$ both terms in the right-hand side of \eqref{eq:majsobolev} are of the same order w.r.t. $n$. 
We also provide an upper bound for the uniform separation rate of the optimized test $\Delta_{\alpha}^{\lambda^*,\mu^*} $ over Sobolev balls.

\begin{corr}
\label{cor:ThresholdSobolev} Let $\alpha$ in $(0,1/e)$, $\beta$ in $(0,1)$, and $\delta,R,R'>0$. Define for all $i$ in $\{1,\dots,p\}$ and for all $j$ in $\{1,\dots,q\}$, 
$$ \lambda_i^* = \mu_j^* = n^{- 2 / (4 \delta + p+q)}.$$
If $n  > \pa{\log(1/\alpha)}^{1+(p+q)/(4\delta)}$, then, $(\lambda^*,\mu^*)$ satisfy $\boldsymbol{\mathcal{A}_2(\alpha)}$ and the uniform separation rate of the optimized test $ \Delta_{\alpha}^{\lambda^*,\mu^*}$ over the Sobolev ball $\mathcal{S}^{\delta}_{p+q} (R,R')$ is controlled as follows 
\begin{equation*}
\rho \left( \Delta_{\alpha}^{\lambda^*,\mu^*} , \mathcal{S}^{\delta}_{p+q} (R,R') , \beta \right) 
\leq \displaystyle C\pa{p, q, \alpha, \beta, \delta,R,R'} n^{- 2 \delta/(4 \delta + p+q)}.
\end{equation*}
\end{corr}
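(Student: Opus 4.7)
The plan is essentially a calculation, since Theorem~\ref{ThresholdSobolev} already supplies the upper bound
$$[\rho(\Delta_{\alpha}^{\lambda,\mu},\mathcal{S}^{\delta}_{p+q}(R,R'),\beta)]^2 \leq C_1 \Big[\sum_{i=1}^p \lambda_i^{2\delta} + \sum_{j=1}^q \mu_j^{2\delta}\Big] + \frac{C_2 \log(1/\alpha)}{n\sqrt{\lambda_1\cdots\lambda_p\mu_1\cdots\mu_q}},$$
valid as soon as $(\lambda,\mu)$ satisfies $\boldsymbol{\mathcal{A}_2(\alpha)}$. My strategy is therefore twofold: (i) verify that the proposed choice $\lambda_i^*=\mu_j^*=n^{-2/(4\delta+p+q)}$ is admissible, i.e.\ satisfies $\boldsymbol{\mathcal{A}_2(\alpha)}$ under the hypothesis $n>(\log(1/\alpha))^{1+(p+q)/(4\delta)}$, and (ii) perform the bias-variance trade-off, checking that the two terms balance at the announced rate.

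For (i), substituting $\lambda_i^*=\mu_j^*=n^{-2/(4\delta+p+q)}$ yields $\prod_{i=1}^p \lambda_i^* = n^{-2p/(4\delta+p+q)}$ and $\prod_{j=1}^q \mu_j^* = n^{-2q/(4\delta+p+q)}$, both strictly less than $1$ for all $n\geq 2$. Moreover
$$n\sqrt{\lambda_1^*\cdots\lambda_p^*\mu_1^*\cdots\mu_q^*} = n^{1-(p+q)/(4\delta+p+q)} = n^{4\delta/(4\delta+p+q)},$$
so the second requirement of $\boldsymbol{\mathcal{A}_2(\alpha)}$ reads $n^{4\delta/(4\delta+p+q)}>\log(1/\alpha)$, which is exactly the assumed lower bound on $n$ after taking the $\tfrac{4\delta+p+q}{4\delta}$-th power. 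The auxiliary condition $\log(1/\alpha)>1$ is guaranteed by the restriction $\alpha\in(0,1/e)$.

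For (ii), plugging the $\lambda_i^*,\mu_j^*$ into the bound of Theorem~\ref{ThresholdSobolev}, the bias term becomes $\sum_i(\lambda_i^*)^{2\delta}+\sum_j(\mu_j^*)^{2\delta} = (p+q)\,n^{-4\delta/(4\delta+p+q)}$, while the variance/quantile term equals $C_2\log(1/\alpha)\,n^{-4\delta/(4\delta+p+q)}$. Both contributions being of identical order in $n$, one concludes that $[\rho(\Delta_{\alpha}^{\lambda^*,\mu^*},\mathcal{S}^{\delta}_{p+q}(R,R'),\beta)]^2 \leq C\pa{p,q,\alpha,\beta,\delta,R,R'}\, n^{-4\delta/(4\delta+p+q)}$, and taking the square root produces the announced rate $n^{-2\delta/(4\delta+p+q)}$.

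There is no real obstacle here; the calculation is driven by the observation that when one chooses $\lambda_i=\mu_j=h$, the bias scales as $h^{2\delta}$ while the variance term scales as $(n h^{(p+q)/2})^{-1}$, and equating these two orders gives precisely $h=n^{-2/(4\delta+p+q)}$. The only mild subtlety is bookkeeping: one must translate the resulting condition $n\sqrt{\prod\lambda_i\prod\mu_j}>\log(1/\alpha)$ into the explicit sample size threshold $n>(\log(1/\alpha))^{1+(p+q)/(4\delta)}$ appearing in the statement, which is what fixes the admissible range of $\alpha$ through the auxiliary inequality $\log(1/\alpha)>1$.
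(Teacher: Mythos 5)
Your proposal is correct and follows essentially the same route as the paper: plug the balanced bandwidths $\lambda_i^*=\mu_j^*=n^{-2/(4\delta+p+q)}$ into the bound of Theorem \ref{ThresholdSobolev} and observe that bias and variance/quantile terms are both of order $n^{-4\delta/(4\delta+p+q)}$. Your explicit verification that the sample-size condition $n>(\log(1/\alpha))^{1+(p+q)/(4\delta)}$ is exactly equivalent to $\boldsymbol{\mathcal{A}_2(\alpha)}$ for this choice is a point the paper leaves implicit here, and it is done correctly.
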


Note that, in the definition of the Sobolev ball $\mathcal{S}^{\delta}_{p+q} (R,R')$, we have the same regularity parameter $\delta >0$ for all directions in $ \R^{p+q}$. This corresponds to an isotropic regularity condition. Similar results over anisotropic Nikol'skii regularities are obtained in Appendix \ref{section:supplNikol} in the supplementary material. 

Moreover, the test $\Delta_{\alpha}^{\lambda^*,\mu^*}$ with the optimized bandwidths depends on the regularity parameter $\delta$ and cannot be computed in practice. In the next section, for the purpose of adaptivity, we build an aggregated testing procedure taking into account a collection of bandwidths. In particular, this avoids the delicate choice of arbitrary bandwidths. We then prove an oracle-type inequality and show that the uniform separation rate of this aggregated procedure is of the same order as the smallest uniform separation rate of the tests in the chosen collection, up to a logarithmic term. 

Finally, note that subsequently, \cite{kim2020minimax} generalize Theorem \ref{ThresholdSobolev} to the permuted tests. 
However, they obtain a polynomial dependence in $\alpha$, that is $1/\sqrt{\alpha}$ instead of $\log(1/\alpha)$, which leads to minimax optimal tests for an optimized bandwidth choice, as in Corollary \ref{ThresholdSobolev}. 
Yet, the dependence in $\alpha$ is not sharp enough to provide adaptive tests by aggregating.

\section{Aggregated HSIC-based test of independence}
\label{Multiple}

In Section \ref{Single kernel}, we consider single tests based on Gaussian kernels associated to given bandwidths $(\lambda,\mu)$. However, there is as yet no justified method to choose $\lambda$ and $\mu$ with theoretical guarantees. In many cases, authors choose these parameters w.r.t the available data $(X_i,Y_i)_{1 \leq i \leq n}$ by taking for example $\lambda$ (resp. $\mu$) as the empirical median (see \cite{Zhang2011uai}) or the empirical mean (see, e.g., \cite{de2017sensitivity,marrel2020statistical}) of $(\norm{X_i-X_j})_{1\leq i<j\leq n}$ (resp. $(\norm{Y_i-Y_j})_{1\leq i<j\leq n}$). To avoid this arbitrary choice, we consider in this section an aggregated testing procedure combining a collection of single tests based on different bandwithds. 

\subsection{The aggregated testing procedure}
\label{Agreg-proc}

Consider now a finite or countable collection $\W\subset(0,+\infty)^p\times (0,+\infty)^q$ of bandwidths $(\lambda,\mu) $ and a collection of positive weights $\ac{\omega_{\lambda,\mu}}_{(\lambda,\mu) \in \W}$ such that $\sum_{(\lambda,\mu) \in \W} e^{- \omega_{\lambda,\mu}} \leq 1$.

For a given $\alpha$ in $(0,1)$, the aggregated test rejects $(\mathcal{H}_0)$ if there is at least one $(\lambda,\mu)$ in $\W$ such that the corresponding single test with corrected level $u_\alpha \exp(-\omega_{\lambda,\mu})$ rejects $(\mathcal{H}_0)$, that is 
\begin{equation*}
\exists (\lambda,\mu) \in \W \ ;\ \widehat{\HSIC}_{\lambda,\mu} \; > \; q_{1- u_\alpha e^{- \omega_{\lambda,\mu}}}^{\lambda,\mu},
\end{equation*}
where $u_\alpha$ is the less conservative value such that the  aggregated test is of level $\alpha$. More precisely, this level correction is defined by 
\begin{equation}\label{u_alpha}
u_\alpha = \sup \ac{ u > 0\ ; \ P_{f_1\otimes f_2} \pa{ \sup_{(\lambda,\mu) \in \W} \ac{ \widehat{\HSIC}_{\lambda,\mu} - q_{1- u e^{- \omega_{\lambda,\mu}}}^{\lambda,\mu}} > 0 } \leq \alpha }.
\end{equation}

We should mention here that the supremum in Equation \eqref{u_alpha} exists since the function 
$$u \mapsto P_{f_1\otimes f_2} ( \sup_{(\lambda,\mu) \in \W} \{ \widehat{\HSIC}_{\lambda,\mu} - q_{1- u e^{- \omega_{\lambda,\mu}}}^{\lambda,\mu} \} >0)$$
is well defined for $u$ in the interval $(0,\inf_{(\lambda,\mu) \in \W}\ac{\exp(\omega_{\lambda,\mu})})$, non-decreasing, and converges to 0 and 1 respectively at the boundaries of this interval. 
Moreover, we can choose weights such that $\sum_{(\lambda,\mu) \in \W} e^{- \omega_{\lambda,\mu}} = 1$. Yet, in practice, it  just changes the value of $u_\alpha$ and leads to the same test.   

The (theoretical) aggregated test $\Delta_{\alpha}$ is then defined by
\begin{equation}\label{agregtest}
\Delta_{\alpha} = \mathds{1}\!\!\!\!\!_{\underset{(\lambda,\mu) \in \W}{\sup} \ac{ \widehat{\HSIC}_{\lambda,\mu} - q_{1- u_\alpha e^{- \omega_{\lambda,\mu}}}^{\lambda,\mu}} > 0},
\end{equation}
and rejects $(\mathcal{H}_0)$ if $\Delta_{\alpha} = 1$. By definition of $u_\alpha$, the test $\Delta_{\alpha}$ is of level $\alpha$. \\

For computational limitations, the collection $\W$  is finite in practice.
Moreover, as for the quantile, the correction $u_\alpha$ of the level is not analytically computable since it depends on the unknown marginal densities $f_1$ and $f_2$. 
In practice, it can also be approached by a permutation method with Monte Carlo approximation, as done in \cite{albert2015tests}. 
More precisely, consider the notations of Section \ref{Single-tests}. 
First, generate $B_1$ independent and uniformly distributed random permutations of $\{1,\ldots,n\}$, denoted $\rperm_1, \ldots , \rperm_{B_1}$, independent of $\ZZ_n$ and compute for each $(\lambda,\mu)$ in $\W$ and each $u>0$ the permuted quantile with Monte Carlo approximation $\hat{q}^{\lambda,\mu}_{1-u e^{-\omega_{\lambda,\mu}}}$ as defined in \eqref{eq:permquant}. 
Second, in order to estimate the probabilities under $P_{f_1\otimes f_2}$ in \eqref{u_alpha}, generate $B_2$ independent and uniformly distributed random permutations of $\{1,\ldots,n\}$, denoted $\rpermb_1, \ldots , \rpermb_{B_2}$, independent of $\ZZ_n$ and of $\rperm_1, \ldots , \rperm_{B_1}$. Denote for all permutation $\rpermb_b$, the corresponding permuted statistic
$$\widehat{H}^{\rpermb_b}_{\lambda,\mu}=\widehat{\HSIC}_{\lambda,\mu}\pa{\ZZ_n^{\rpermb_b}}$$
Then, the correction $u_\alpha$ is approached by Monte Carlo as follows:
\begin{equation}\label{Ualphaestimation}
\hat{u}_\alpha= \sup\ac{u>0\ ;\ \frac{1}{B_2} \sum_{b=1}^{B_2} \mathds{1}_{\underset{(\lambda,\mu) \in \W}{\max} \ac{\widehat{H}^{\rpermb_b}_{\lambda,\mu} - \hat{q}^{\lambda,\mu}_{1-u e^{-\omega_{\lambda,\mu}}}}>0}\leq \alpha}. 
\end{equation}
In the end, the permuted aggregated test $\hat{\Delta}_{\alpha}$ with Monte Carlo approximation is defined by 
\begin{equation}\label{agregtestperm}
\hat{\Delta}_{\alpha} = \mathds{1}\!\!\!\!\!_{\underset{(\lambda,\mu) \in \W}{\max} \ac{ \widehat{\HSIC}_{\lambda,\mu} - \hat{q}^{\lambda,\mu}_{1-\hat{u}_\alpha e^{-\omega_{\lambda,\mu}}} } > 0}.
\end{equation}

As for the single tests, a comparison in terms of power is done on simulated data in Section \ref{sect:comparTheoPerm} justifying the restriction of the following theoretical study to the theoretical aggregated test.

\subsection{Oracle-type conditions for the uniform separation rate over Sobolev balls}
\label{Agreg-oracle}

As a reminder, our goal is to construct a testing procedure with a uniform separation rate as small as possible and whose implementation does not require any information about the regularity of the difference $f-f_1\otimes f_2$. 

The main advantage of the aggregated procedure is that its second kind error is upper bounded by the smallest error of the single tests (with corrected levels) in the collection. 
The main argument is highlighted in Lemma~\ref{prop5}.

\begin{lemm}\label{prop5}
Let $\alpha$, $\beta$ in $(0,1)$, and consider the aggregated test $\Delta_{\alpha}$ defined in Equation \eqref{agregtest}. Then, $u_\alpha\geq \alpha$ and
$$P_{f} \pa{\Delta_\alpha = 0}
\leq 
\inf_{(\lambda,\mu) \in \W} \ac{P_{f} \pa{\Delta_{\alpha e^{- \omega_{\lambda,\mu}}}^{\lambda,\mu}=0}}.
$$
\end{lemm}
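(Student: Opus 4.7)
\medskip

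\noindent\textbf{Proof plan for Lemma \ref{prop5}.}

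The plan is to first establish that $u_\alpha \geq \alpha$ by a union bound, and then leverage the monotonicity of quantiles to show that the event $\{\Delta_\alpha = 0\}$ is contained in each of the events $\{\Delta^{\lambda,\mu}_{\alpha e^{-\omega_{\lambda,\mu}}} = 0\}$.

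For the first part, I would show that the value $u = \alpha$ lies in the set over which the supremum defining $u_\alpha$ is taken. By a simple union bound followed by the definition of the $(1-\alpha e^{-\omega_{\lambda,\mu}})$-quantile under $P_{f_1 \otimes f_2}$, one gets
\begin{align*}
P_{f_1\otimes f_2}\!\pa{\sup_{(\lambda,\mu)\in\W}\{\widehat{\HSIC}_{\lambda,\mu} - q^{\lambda,\mu}_{1-\alpha e^{-\omega_{\lambda,\mu}}}\}>0}
&\leq \sum_{(\lambda,\mu)\in\W} P_{f_1\otimes f_2}\!\pa{\widehat{\HSIC}_{\lambda,\mu} > q^{\lambda,\mu}_{1-\alpha e^{-\omega_{\lambda,\mu}}}} \\
&\leq \sum_{(\lambda,\mu)\in\W} \alpha e^{-\omega_{\lambda,\mu}} \;\leq\; \alpha,
\end{align*}
where the last inequality uses the summability assumption $\sum_{(\lambda,\mu)\in\W} e^{-\omega_{\lambda,\mu}} \leq 1$ on the weights. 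This shows $\alpha$ belongs to the set over which $u_\alpha$ is the supremum, hence $u_\alpha \geq \alpha$.

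For the second part, I would use the fact that $u\mapsto q^{\lambda,\mu}_{1-ue^{-\omega_{\lambda,\mu}}}$ is non-increasing in $u$ (since $t\mapsto q^{\lambda,\mu}_{1-t}$ is non-increasing): indeed, from $u_\alpha \geq \alpha$ one deduces $q^{\lambda,\mu}_{1-u_\alpha e^{-\omega_{\lambda,\mu}}} \leq q^{\lambda,\mu}_{1-\alpha e^{-\omega_{\lambda,\mu}}}$ for every $(\lambda,\mu)\in\W$. Now, if $\Delta_\alpha = 0$, then by the definition in \eqref{agregtest}, $\widehat{\HSIC}_{\lambda,\mu} \leq q^{\lambda,\mu}_{1-u_\alpha e^{-\omega_{\lambda,\mu}}}$ for all $(\lambda,\mu)\in\W$, and combining with the previous inequality yields $\widehat{\HSIC}_{\lambda,\mu} \leq q^{\lambda,\mu}_{1-\alpha e^{-\omega_{\lambda,\mu}}}$, i.e., $\Delta^{\lambda,\mu}_{\alpha e^{-\omega_{\lambda,\mu}}}=0$. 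Therefore the event inclusion $\{\Delta_\alpha = 0\} \subseteq \{\Delta^{\lambda,\mu}_{\alpha e^{-\omega_{\lambda,\mu}}} = 0\}$ holds for every $(\lambda,\mu)\in\W$; taking $P_f$ and then the infimum over $\W$ gives the claimed bound.

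There is no substantive obstacle here: the proof is essentially a union bound together with the monotonicity of quantiles. The only point requiring a bit of care is the existence/monotonicity of the map defining $u_\alpha$, already discussed in the paragraph following \eqref{u_alpha}, which guarantees that $u_\alpha$ is well defined and that $u_\alpha \geq \alpha$ implies the desired quantile comparison.
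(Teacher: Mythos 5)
Your proposal is correct and follows essentially the same route as the paper: a Bonferroni/union bound combined with the definition of the quantiles shows that $u=\alpha$ belongs to the set defining $u_\alpha$, hence $u_\alpha\geq\alpha$, and then the monotonicity of $u\mapsto q^{\lambda,\mu}_{1-ue^{-\omega_{\lambda,\mu}}}$ together with the inclusion of the event $\{\Delta_\alpha=0\}$ in each single-test acceptance event yields the infimum bound. The only cosmetic difference is that the paper first bounds the probability of the intersection by the infimum and then applies the quantile comparison, whereas you apply the quantile comparison at the level of events first; the two orderings are equivalent.
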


According to Lemma \ref{prop5}, if there exists at least one single test $\Delta_{\alpha e^{- \omega_{\lambda,\mu}}}^{\lambda,\mu}$ with a probability of second kind error at most equal to $\beta$, then the same control holds for the aggregated test $\Delta_{\alpha}$. 
Theorem \ref{theorem3Sobol} gives an oracle-type inequality for the uniform separation rate of the aggregated testing procedure $\Delta_{\alpha}$, showing the interest of this procedure.

\begin{theorem}\label{theorem3Sobol}
Let $\alpha, \beta$ in $(0,1)$. Consider a finite or countable collection $\W\subset (0,+\infty)^p\times (0,+\infty)^q$ of bandwidths $(\lambda,\mu)$ and a collection of positive weights $\ac{\omega_{\lambda,\mu}}_{(\lambda,\mu) \in \W}$ such that $\sum_{(\lambda,\mu) \in \W} e^{- \omega_{\lambda,\mu}} \leq 1$ and such that 
all $(\lambda,\mu)$ in $\W$ verifies Assumption $\boldsymbol{\mathcal{A}_2(\alpha e^{-\omega_{\lambda,\mu}})}$. 
Then, the uniform separation rate over Sobolev balls $\mathcal{S}^{\delta}_{p+q} (R,R')$ with positive parameters $\delta$, $R$ and $R'$ of the aggregated test $\Delta_{\alpha}$ defined in Equation \eqref{agregtest} can be upper bounded as follows
\begin{multline*}
\cro{\rho \left( \Delta_{\alpha} , \mathcal{S}^{\delta}_{p+q} (R,R') , \beta \right) }^2\leq \displaystyle C\pa{p, q, \beta , \delta, R,R'} \times \\ \inf_{(\lambda,\mu) \in \W} \left\{ 
\left[ \sum_{i=1}^p \lambda_i^{2 \delta} + \sum_{j=1}^q \mu_j^{2 \delta} \right] +  \frac{1}{n\sqrt{\lambda_1 \ldots \lambda_p \mu_1 \ldots \mu_q}} \left( \log\left( \frac{1}{\alpha} \right) + \omega_{\lambda,\mu} \right) \right\},
\end{multline*}
where $C(\cdot)$ is a positive constant depending only on its arguments.
\end{theorem}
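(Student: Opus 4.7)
}

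The plan is to combine the oracle-type control of the probability of second kind error provided by Lemma \ref{prop5} with the single-test upper bound on the uniform separation rate of Theorem \ref{ThresholdSobolev}, applied with the appropriate corrected level.

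First, I would translate Lemma \ref{prop5} into a control of uniform separation rates. For any $(\lambda,\mu)\in\W$ and any density $f$, Lemma \ref{prop5} gives
$$P_f(\Delta_\alpha = 0) \;\leq\; P_f\pa{\Delta^{\lambda,\mu}_{\alpha e^{-\omega_{\lambda,\mu}}} = 0}.$$
Since this pointwise inequality is preserved by taking the supremum over $f\in\mathcal{F}_\rho(\mathcal{S}^{\delta}_{p+q}(R,R'))$, and the set $\mathcal{F}_\rho$ shrinks with $\rho$, I can pass to the infimum over $\rho>0$ making the supremum smaller than $\beta$. This yields, for every $(\lambda,\mu)\in\W$,
$$\rho\pa{\Delta_\alpha, \mathcal{S}^{\delta}_{p+q}(R,R'), \beta} \;\leq\; \rho\pa{\Delta^{\lambda,\mu}_{\alpha e^{-\omega_{\lambda,\mu}}}, \mathcal{S}^{\delta}_{p+q}(R,R'), \beta}.$$

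Second, I would estimate each right-hand side by Theorem \ref{ThresholdSobolev}. By hypothesis, every $(\lambda,\mu)\in\W$ satisfies $\boldsymbol{\mathcal{A}_2(\alpha e^{-\omega_{\lambda,\mu}})}$, so Theorem \ref{ThresholdSobolev} applies with corrected level $\alpha' = \alpha e^{-\omega_{\lambda,\mu}}$, giving
$$\cro{\rho\pa{\Delta^{\lambda,\mu}_{\alpha'}, \mathcal{S}^{\delta}_{p+q}(R,R'), \beta}}^2 \leq C(p,q,\delta,R)\left[\sum_{i=1}^p\lambda_i^{2\delta}+\sum_{j=1}^q\mu_j^{2\delta}\right] + \frac{C(R',p,q,\beta)}{n\sqrt{\lambda_1\ldots\lambda_p\mu_1\ldots\mu_q}}\log(1/\alpha').$$
Using $\log(1/\alpha') = \log(1/\alpha) + \omega_{\lambda,\mu}$ and combining with the previous display, the squared separation rate of $\Delta_\alpha$ is bounded by the above expression for every $(\lambda,\mu)\in\W$. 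Taking the infimum over $\W$ yields the announced oracle-type inequality with a constant depending on $p,q,\beta,\delta,R,R'$.

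The argument is essentially a clean assembly of two ingredients already available. The only subtlety I would watch for is the passage from the pointwise probability inequality of Lemma \ref{prop5} to an inequality on separation rates: this relies on the monotonicity of $\mathcal{F}_\rho$ in $\rho$ and on the fact that the uniform separation rate is defined as the infimum of admissible radii, which justifies squeezing the infimum through. No concentration argument or new technical estimate is needed beyond Theorem \ref{ThresholdSobolev} and Lemma \ref{prop5}; the main ``work'' is booking the constants and the $\log(1/\alpha)+\omega_{\lambda,\mu}$ term.
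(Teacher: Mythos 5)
Your proposal is correct and follows essentially the same route as the paper: combine Lemma \ref{prop5} with Theorem \ref{ThresholdSobolev} applied at the corrected level $\alpha e^{-\omega_{\lambda,\mu}}$, use $\log\pa{1/(\alpha e^{-\omega_{\lambda,\mu}})}=\log(1/\alpha)+\omega_{\lambda,\mu}$, and take the infimum over $\W$. The monotonicity argument you flag for passing from the pointwise error bound to the separation rates is exactly the justification implicitly used in the paper's proof.
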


Theorem \ref{theorem3Sobol} can be interpreted as an oracle-type condition for the uniform separation rate of the test $\Delta_{\alpha}$. Indeed, without knowing the regularity of $f- f_1\otimes f_2 $, we prove that the uniform separation rate of $\Delta_{\alpha}$ is of the same order as the smallest uniform separation rate of the single tests corresponding to  bandwidths $(\lambda,\mu)$ in $\W$, up to an additional term $\omega_{\lambda,\mu}$ due to the correction of the individual levels.

\subsection{Uniform separation rate over Sobolev balls}
\label{USR}

In this section, we consider the aggregated test for a particular choice of bandwidths collection $\W$ defined by 
\begin{equation}\label{eq:defLambdaU}
\W =  \bigg\{ (2^{-m} \boldsymbol{1}_{p+q}, m \in \ac{1, \ldots, M_n^{p,q}}\bigg\},
\end{equation}
where $\boldsymbol{1}_{p+q} = (1,1,\ldots,1) \in \R^{p+q}$ and, denoting $\lfloor \cdot \rfloor$ the floor function, 
$$ M_n^{p,q} = \left\lfloor {\log_2\pa{\cro{\frac n {\log(n)}}^{\frac 2{p+q}}} }\right\rfloor.$$
In addition, we associate to every $(\lambda, \mu) = 2^{-m} \boldsymbol{1}_{p+q}$ in $\W$ the positive weight
\begin{equation}\label{eq:weigths}
\omega_{\lambda, \mu} = 2  \log\pa{m  \times \frac{\pi}{\sqrt{6}} },
\end{equation} 
so that $\sum_{(\lambda,\mu) \in \W} e^{- \omega_{\lambda,\mu}} \leq 1$. 
Corollary \ref{corr1Sobol} justifies that this particular choice of bandwidths collection and associated weights is well adapted to Sobolev regularities. 

 
\begin{corr}\label{corr1Sobol}
Let $\alpha, \beta$ in $(0,1)$. 
Consider the aggregated test $\Delta_{\alpha}$ defined in \eqref{agregtest}, with the particular choice of the collection $\W$ and  the weights $\left( \omega_{\lambda, \mu} \right)_{(\lambda, \mu) \in \W}$ defined in \eqref{eq:defLambdaU} and \eqref{eq:weigths}. Assume that $\log \log (n) > 1$.  Under the assumptions of Theorem \ref{theorem3Sobol},  for any $\delta,R,R'>0$, there exists a positive constant $C(p,q,\alpha,\delta)$ such that for all $n \geq C(p,q,\alpha,\delta)$, the uniform separation rate over the Sobolev ball $\mathcal{S}^{\delta}_{p+q} (R,R')$ of $\Delta_{\alpha}$ can be upper bounded as follows:
\begin{equation}\label{eq:adaptsobolaggreg}
\rho \left( \Delta_{\alpha} , \mathcal{S}^{\delta}_{p+q} (R,R') , \beta \right) \leq \displaystyle C\pa{p, q, \alpha, \beta, \delta,R,R'} \left( \frac{\log \log (n)}{n} \right)^{2 \delta/( 4 \delta + p+q)}.
\end{equation}
\end{corr}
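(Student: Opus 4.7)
The plan is to specialize the oracle-type inequality from Theorem \ref{theorem3Sobol} to the dyadic collection $\W$ and weights introduced in \eqref{eq:defLambdaU}--\eqref{eq:weigths}, then pick a single index $m^{\star}$ that balances the bias and variance-quantile terms in the resulting bound.

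First I would check that the hypotheses of Theorem \ref{theorem3Sobol} are satisfied. The summability condition holds since $\sum_{(\lambda,\mu)\in\W} e^{-\omega_{\lambda,\mu}} = \sum_{m\geq 1} 6/(\pi^{2} m^{2}) \leq 1$. For Assumption $\boldsymbol{\mathcal{A}_2}(\alpha e^{-\omega_{\lambda,\mu}})$ at $(\lambda,\mu)=2^{-m}\mathbf{1}_{p+q}$ with $m\in\{1,\dots,M_n^{p,q}\}$, one has $\prod_i\lambda_i = 2^{-mp}<1$, $\prod_j\mu_j = 2^{-mq}<1$, and by the very definition of $M_n^{p,q}$,
\begin{equation*}
n\sqrt{\lambda_1\cdots\lambda_p\mu_1\cdots\mu_q} \;=\; n\,2^{-m(p+q)/2} \;\geq\; \sqrt{n\log n}.
\end{equation*}
Meanwhile $\log(1/(\alpha e^{-\omega_{\lambda,\mu}})) = \log(1/\alpha)+2\log(m\pi/\sqrt{6})$ is of order $\log(1/\alpha)+\log\log n$, so the required ordering holds for all $m\in\{1,\dots,M_n^{p,q}\}$ provided $n\geq C(p,q,\alpha,\delta)$, which is exactly the large-$n$ assumption of the corollary.

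Second, plug $(\lambda,\mu)=2^{-m}\mathbf{1}_{p+q}$ into Theorem \ref{theorem3Sobol}. Using $\sum_{i=1}^{p}\lambda_i^{2\delta}+\sum_{j=1}^{q}\mu_j^{2\delta}=(p+q)2^{-2m\delta}$, the oracle inequality reduces to
\begin{equation*}
\bigl[\rho(\Delta_\alpha,\mathcal{S}^{\delta}_{p+q}(R,R'),\beta)\bigr]^{2} \;\leq\; C\,\min_{1\leq m\leq M_n^{p,q}} F_n(m),
\end{equation*}
where
\begin{equation*}
F_n(m) \;=\; (p+q)\,2^{-2m\delta} \;+\; \frac{2^{m(p+q)/2}}{n}\bigl(\log(1/\alpha)+2\log(m\pi/\sqrt{6})\bigr).
\end{equation*}

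Third, I would choose $m^{\star}$ as the nearest integer to the real solution
\begin{equation*}
\widetilde{m} \;:=\; \frac{2}{4\delta+p+q}\log_2\!\Bigl(\frac{n}{\log\log n}\Bigr)
\end{equation*}
of the balance equation $2^{-2m\delta}=2^{m(p+q)/2}\log\log n/n$. Because $2/(4\delta+p+q)<2/(p+q)$ and $M_n^{p,q}\sim \frac{2}{p+q}\log_2(n/\log n)$, for $n$ sufficiently large (depending only on $p,q,\alpha,\delta$, using also $\log\log n>1$) the integer $m^{\star}$ lies in $\{1,\dots,M_n^{p,q}\}$ and is therefore admissible in the minimum. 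At this index, direct computation gives $2^{-2m^{\star}\delta}\asymp (\log\log n/n)^{4\delta/(4\delta+p+q)}$, and since $m^{\star}$ is of order $\log n$ one has $\log m^{\star}\asymp \log\log n$, hence
\begin{equation*}
\frac{2^{m^{\star}(p+q)/2}}{n}\bigl(\log(1/\alpha)+2\log(m^{\star}\pi/\sqrt{6})\bigr) \;\asymp\; \left(\frac{\log\log n}{n}\right)^{4\delta/(4\delta+p+q)}.
\end{equation*}
Both terms of $F_n(m^{\star})$ are thus of the same order $(\log\log n/n)^{4\delta/(4\delta+p+q)}$, and taking square roots yields \eqref{eq:adaptsobolaggreg}.

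The main delicate point is keeping track of the slowly varying $\log m$ factor coming from the weights $\omega_{\lambda,\mu}$: it is precisely this factor, treated separately from the exponential $2^{m(p+q)/2}$ in the balancing step, that transforms the minimax rate $n^{-2\delta/(4\delta+p+q)}$ of Corollary \ref{cor:ThresholdSobolev} into the adaptive rate $(\log\log n/n)^{2\delta/(4\delta+p+q)}$. A balance that ignored this term would produce the wrong exponent of $\log\log n$, which is why the choice $\widetilde m$ is scaled by $\log\log n$ rather than by a constant.
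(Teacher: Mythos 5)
Your proposal follows essentially the same route as the paper's own proof: check the summability of the weights and Assumption $\boldsymbol{\mathcal{A}_2(\alpha e^{-\omega_{\lambda,\mu}})}$ for every bandwidth in $\W$, apply Theorem \ref{theorem3Sobol}, and select the dyadic index with $2^{m^{\star}}\asymp (n/\log\log n)^{2/(4\delta+p+q)}$ (exactly the paper's $m^*$), verifying it lies in $\{1,\dots,M_n^{p,q}\}$ for $n$ large and that both terms are then of order $(\log\log n/n)^{4\delta/(4\delta+p+q)}$. One minor slip: the definition of $M_n^{p,q}$ only yields $n\,2^{-m(p+q)/2}\geq \log n$ (not $\sqrt{n\log n}$), but this weaker bound still dominates $\log(1/\alpha)+2\log(m\pi/\sqrt{6})$ for $n\geq C(p,q,\alpha)$, so your verification of $\boldsymbol{\mathcal{A}_2}$ and the rest of the argument remain valid.
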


According to Corollary \ref{corr1Sobol}, the uniform separation rate of the aggregated procedure over Sobolev balls is of the same order as the one of the optimized test $\Delta_{\alpha}^{\lambda^*,\mu^*}$ (given in Corollary \ref{cor:ThresholdSobolev}), up to a $\log \log(n)$ factor. 
Note that this logarithmic loss is usually the price to pay for aggregated tests (see, e.g., \cite{spokoiny1996adaptive,ingster2000adaptive}).
Similar results over Nikol'skii-Besov spaces are also obtained in the supplementary material.

\section{Lower bound for the non-asymptotic minimax rate over Sobolev balls}
\label{LowerBoundSobolev}

In this section, we 
present a general method based on a Bayesian approach to lower bound the non-asymptotic minimax rate of testing as defined in \eqref{minimaxrate}. The general idea of this method is due to \cite{ingster1993asymptotically} and relies on Lemma \ref{Lem:MethodLowerBound}.

\begin{lemm}\label{Lem:MethodLowerBound} 
Let $\alpha, \beta,\eta$ in $(0,1)$ such that $\alpha+\beta+\eta<1$. 
Let $\regSp_\delta$ denote some regularity space, and recall that for all positive $\rho$, the set $\mathcal{F}_{\rho} (\regSp_\delta)$ is defined by 
$$\mathcal{F}_{\rho} (\regSp_\delta) = \{ f\ ;\ f - f_1 \otimes f_2 \in \regSp_\delta,\ \norm{f - f_1 \otimes f_2}_{2} \geq \rho \}.$$ 
Let us denote 
$$\beta \big[ \mathcal{F}_{\rho} (\regSp_\delta) \big] = \underset{\Delta_\alpha}{\inf} \underset{f \in \mathcal{F}_{\rho} (\regSp_\delta)}{\sup} P_f \left( \Delta_\alpha = 0 \right),$$ 
where the infimum is taken over all $\alpha$-level tests of $(\mathcal{H}_0)$ against $(\mathcal{H}_1)$. \\

Let $\rho_* > 0$ and consider a probability measure $\nu_{\rho_*}$ defined on the set of densities in $\L_2(\R^p\times \R^q)$ such that $\nu_{\rho_*}(\mathcal{F}_{\rho_*} (\regSp_\delta)) \geq 1-\eta$. 
Define the associated probability measure $P_{\nu_{\rho_*}}$ for all measurable set $A$ in $\R^{n(p+q)}$ by
$$P_{\nu_{\rho_*}}(A) = \int_{\L_2(\R^p\times \R^q)} P_f(A) \, \mathrm{d} \nu_{\rho_*}(f).$$

Assume there exists a density $f_0$ that satisfies $(\mathcal{H}_0)$ such that the probability measure $P_{\nu_{\rho_*}}$ is absolutely continuous w.r.t. $P_{f_0}$ and verifies
\begin{equation}\label{Ineq:Lnu}
\esps{P_{f_0}}{ L^2_{\nu_{\rho_*}} \left( \mathbb{Z}_n \right) } < 1 + 4 (1 - \alpha - \beta-\eta)^2,
\end{equation}
where the likelihood ratio $L_{\nu_{\rho_*}}$ is defined by 
$L_{\nu_{\rho_*}} = \mathrm{d} P_{\nu_{\rho_*}}/\mathrm{d} P_{f_0}.$
Then, for all $\rho \leq \rho_*$ we have that 
$ \beta \big[ \mathcal{F}_{\rho} (\regSp_\delta) \big] > \beta.$
It follows that 
$$\rho \left(\regSp_\delta, \alpha, \beta \right) = \inf_{\Delta_\alpha}\rho \left( \Delta_\alpha, \regSp_\delta, \beta \right) \geq \rho_*.$$
\end{lemm}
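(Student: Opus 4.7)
The plan is to argue by contradiction: assume that for some $\alpha$-level test $\Delta_\alpha$ one has $\sup_{f \in \mathcal{F}_{\rho_*}(\regSp_\delta)} P_f(\Delta_\alpha = 0) \leq \beta$, and derive a lower bound on $\esps{P_{f_0}}{L_{\nu_{\rho_*}}^2(\ZZ_n)}$ that contradicts \eqref{Ineq:Lnu}. The whole proof is a chain of three comparisons: (i) the supremum over $\mathcal{F}_{\rho_*}$ dominates a Bayesian average under $\nu_{\rho_*}$; (ii) this Bayesian average is close to a true probability under the mixture $P_{\nu_{\rho_*}}$; and (iii) $P_{\nu_{\rho_*}}$ is close to $P_{f_0}$ in total variation, controlled via the second moment of the likelihood ratio.

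First, I would split the integral defining $P_{\nu_{\rho_*}}(\Delta_\alpha = 0)$ between $\mathcal{F}_{\rho_*}(\regSp_\delta)$ and its complement. Using the bound $\nu_{\rho_*}(\mathcal{F}_{\rho_*}(\regSp_\delta)^c) \leq \eta$ and the trivial estimate $P_f(\Delta_\alpha = 0) \leq 1$ on the complement, I obtain
\begin{equation*}
P_{\nu_{\rho_*}}(\Delta_\alpha = 0) \leq \sup_{f \in \mathcal{F}_{\rho_*}(\regSp_\delta)} P_f(\Delta_\alpha = 0) + \eta \leq \beta + \eta.
\end{equation*}
On the other hand, since $f_0$ satisfies $(\mathcal{H}_0)$ and $\Delta_\alpha$ is of level $\alpha$, $P_{f_0}(\Delta_\alpha = 0) \geq 1 - \alpha$. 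Subtracting yields $P_{f_0}(\Delta_\alpha = 0) - P_{\nu_{\rho_*}}(\Delta_\alpha = 0) \geq 1 - \alpha - \beta - \eta$, which is positive by assumption.

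Next, I would rewrite this difference using the likelihood ratio: since $L_{\nu_{\rho_*}} = \mathrm{d} P_{\nu_{\rho_*}} / \mathrm{d} P_{f_0}$ and $\esps{P_{f_0}}{L_{\nu_{\rho_*}}} = 1$,
\begin{equation*}
P_{f_0}(\Delta_\alpha = 0) - P_{\nu_{\rho_*}}(\Delta_\alpha = 0) = \esps{P_{f_0}}{(1 - L_{\nu_{\rho_*}}) \mathds{1}_{\Delta_\alpha = 0}}.
\end{equation*}
Bounding $|\esps{P_{f_0}}{(1-L_{\nu_{\rho_*}})\mathds{1}_A}|$ by the total-variation-type quantity $\tfrac{1}{2}\esps{P_{f_0}}{|1 - L_{\nu_{\rho_*}}|}$ (by applying it both to $A$ and $A^c$ and averaging), then using Cauchy--Schwarz together with $\esps{P_{f_0}}{(1-L_{\nu_{\rho_*}})^2} = \esps{P_{f_0}}{L_{\nu_{\rho_*}}^2} - 1$, gives
\begin{equation*}
1 - \alpha - \beta - \eta \leq \tfrac{1}{2}\sqrt{\esps{P_{f_0}}{L_{\nu_{\rho_*}}^2} - 1}.
\end{equation*}
Squaring this produces $\esps{P_{f_0}}{L_{\nu_{\rho_*}}^2} \geq 1 + 4(1-\alpha-\beta-\eta)^2$, which directly contradicts the hypothesis \eqref{Ineq:Lnu}. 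Hence $\beta[\mathcal{F}_{\rho_*}(\regSp_\delta)] > \beta$. Finally, monotonicity of $\mathcal{F}_\rho(\regSp_\delta)$ in $\rho$ (the constraint $\|f - f_1 \otimes f_2\|_2 \geq \rho$ weakens as $\rho$ decreases, so $\mathcal{F}_{\rho_*}(\regSp_\delta) \subset \mathcal{F}_{\rho}(\regSp_\delta)$ for $\rho \leq \rho_*$) gives $\beta[\mathcal{F}_\rho(\regSp_\delta)] \geq \beta[\mathcal{F}_{\rho_*}(\regSp_\delta)] > \beta$ for all $\rho \leq \rho_*$, so no $\alpha$-level test achieves uniform separation $\leq \beta$ at such a $\rho$, and $\rho(\regSp_\delta, \alpha, \beta) \geq \rho_*$.

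The only delicate bookkeeping step is the first one: one must be careful that $\nu_{\rho_*}$ is a prior on all densities in $\L_2(\R^p \times \R^q)$, not only on $\mathcal{F}_{\rho_*}(\regSp_\delta)$, which is why the $+\eta$ slack appears and ultimately propagates into the factor $4(1-\alpha-\beta-\eta)^2$. Everything else is a standard Cauchy--Schwarz calculation, so no real obstacle is expected.
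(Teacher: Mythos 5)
Your proof is correct and follows essentially the same route as the paper's: the $\eta$-slack from the prior mass, the level-$\alpha$ property at $f_0$, and the total-variation bound $\sup_A\abs{P_{\nu_{\rho_*}}(A)-P_{f_0}(A)}\leq\tfrac12\esps{P_{f_0}}{\abs{1-L_{\nu_{\rho_*}}}}\leq\tfrac12\sqrt{\esps{P_{f_0}}{L_{\nu_{\rho_*}}^2}-1}$ are exactly the ingredients of the paper's argument, merely packaged as a contradiction rather than as the paper's direct chain of lower bounds on $\beta\big[\mathcal{F}_{\rho_*}(\regSp_\delta)\big]$. One cosmetic caveat: to get the stated strict inequality $\beta\big[\mathcal{F}_{\rho}(\regSp_\delta)\big]>\beta$ (and not merely $\geq\beta$ after taking the infimum over tests), note that your bound $\sup_{f}P_f(\Delta_\alpha=0)\geq 1-\alpha-\eta-\tfrac12\sqrt{\esps{P_{f_0}}{L_{\nu_{\rho_*}}^2}-1}>\beta$ is uniform in $\Delta_\alpha$, which your inequalities already deliver even though the contradiction phrasing obscures it.
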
 

We aim at proving that 
\begin{equation*}\rho_n^* = C n^{-2\delta/(4\delta+p+q)}\end{equation*}
is a lower bound for the non-asymptotic minimax rate of testing, defined in \eqref{minimaxrate}, over Sobolev balls $\mathcal{S}^{\delta}_{p+q} (R,R')$, for some positive constant $C$, that is, $\rho \left(\mathcal{S}^{\delta}_{p+q} (R,R'), \alpha, \beta \right)\geq \rho_n^*$. 
According to Lemma \ref{Lem:MethodLowerBound}, it is sufficient to find a probability distribution $\nu_{\rho_n^*}$ such that  $\nu_{\rho_n^*}(\mathcal{F}_{\rho_n^*} (\mathcal{S}^{\delta}_{p+q} (R,R'))) \geq 1-\eta$ and such that Condition \eqref{Ineq:Lnu} holds. \\

To do so, we generalize the construction of \cite{butucea2007goodness} to our multidimensional framework. The idea is to construct a finite set of alternatives $(f_\theta)_\theta$ by perturbing the uniform density on $[0,1]^{p}\times [0,1]^{q}$, and define $\nu_{\rho_n^*}$ as a uniform mixture of these alternatives. For this, consider the function $G$ defined for all $t$ in $\R$ by 
\begin{equation}\label{Eq:functionG}
G(t) = e^{-1/[1 - (4t + 3)^2]} \mathds{1}_{(-1,-1/2)} (t) - e^{-1/[1 - (4t + 1)^2]} \mathds{1}_{(-1/2,0)} (t).
\end{equation}
One may notice that $G$ is continuous, with support in $[-1,0]$ and that $\int_{\R}G(t)\mathrm{d}t=0$. The function $G$ together with its Fourier transform has valuable properties for our study. 

Let $h_n$ be in $(0,1]$ to be specified later such that $M_n := 1/h_n$ in an integer. 
Denote $\I_{n,p,q}=\ac{1,\ldots, M_n}^p \times \ac{1,\ldots, M_n}^q$. 
For all $\theta = (\theta_{(j,l)})_{(j,l)\in \I_{n,p,q}}$ in $\ac{-1, 1}^{M_n^{p+q}}$, define for all $(x,y)$ in $\R^p\times \R^q$, 
\begin{multline}\label{ftheta}
f_\theta (x,y) = 
 \mathds{1}_{[0,1]^{p+q}}(x,y)  \\
+C_0 h_n^{\delta + (p+q)} \sum_{(j,l)\in \I_{n,p,q}} \theta_{(j,l)} \prod_{r = 1}^p G_{h_n} (x_r - j_r h_n) \prod_{s = 1}^q G_{h_n}(y_s - l_s h_n),
\end{multline}
where for all $h>0$, $G_h(\cdot) = \left( 1/h \right) G(\cdot/h)$ and $C_0$ is a constant depending on $(p,q,\delta,R,R',\eta)$ that will be specified later. 
One may notice that for all $\theta$, the alternative $f_\theta$ is supported in $[0,1]^{p+q}$. Moreover, since the integral of $G$ over $\R$ equals $0$, the marginals $f_{\theta,1}$ and $f_{\theta,2}$ of $f_\theta$ are respectively the uniform densities on $[0,1]^p$ and $[0,1]^q$. 
Lemmas \ref{prop:alternftheta} and \ref{lemma:alternfthetaSobolev} justify the choice of these alternatives. 

\begin{lemm}\label{prop:alternftheta}
Let $\delta>0$, $R>0$ and $R'\geq 1$. Consider $h_n$ in $(0,1]$ such that $M_n := 1/h_n$ is an integer.
Then, for all $\theta = (\theta_{(j,l)})_{(j,l)\in \I_{n,p,q}}$ in $\ac{-1, 1}^{M_n^{p+q}}$, the function $f_\theta$ defined in Equation \eqref{ftheta} satisfies the following properties. 
\begin{enumerate}
\item\label{borninfproba} If $C_0 \leq \min\{1,R'-1\}e^{p+q}$, then the function $f_\theta$ is a density function and  $$\max\{\norm{f_\theta}_\infty,\norm{f_{\theta,1}}_\infty,\norm{f_{\theta,2}}_\infty\}\leq R'.$$ 
\item\label{borninfdistH0} The function $f_\theta$ is such that $\norm{f_{\theta} - f_{\theta,1} \otimes f_{\theta,2}}_2 = C_0 \norm{G}_2^{p+q} h_n^{\delta}$.
\end{enumerate}
\end{lemm}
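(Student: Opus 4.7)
The plan is to exploit two key properties of the bump function $G$ that are built into its definition in Equation \eqref{Eq:functionG}: namely that $G$ is continuous with support in $[-1,0]$, and that $\int_\R G(t)\, \mathrm{d}t = 0$. I will also need the explicit sup-norm bound $\norm{G}_\infty = 1/e$, which one reads off by noting that on $(-1,-1/2)$ the function $G(t)=\exp(-1/[1-(4t+3)^2])$ is maximized at $t=-3/4$ (where $4t+3=0$) giving $e^{-1}$, and symmetrically on $(-1/2,0)$ the minimum is $-e^{-1}$.

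For Part~\ref{borninfproba}, I would separately verify that $f_\theta$ is a bona fide probability density and then bound its sup-norm (and those of its marginals). First, since $h_n=1/M_n$ and $j_r\in\ac{1,\ldots,M_n}$, the rescaled bump $G_{h_n}(\cdot - j_r h_n)$ has support in the cube $[(j_r-1)h_n,\,j_r h_n]\subset [0,1]$; the supports associated with different indices $(j,l)\in\I_{n,p,q}$ are therefore pairwise disjoint (up to null sets). On any point in $[0,1]^{p+q}$, the perturbation thus reduces to a single product term, whose absolute value is at most
\[
C_0 h_n^{\delta+(p+q)} \norm{G_{h_n}}_\infty^{p+q} = C_0 h_n^{\delta+(p+q)} \pa{h_n^{-1}\norm{G}_\infty}^{p+q} = C_0 h_n^{\delta}\, e^{-(p+q)}.
\]
Since $h_n\le 1$ and $\delta>0$, the hypothesis $C_0\le e^{p+q}$ makes this quantity at most $1$, so $f_\theta\ge 0$, and $C_0\le (R'-1)e^{p+q}$ yields $\norm{f_\theta}_\infty\le R'$. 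Integrating $f_\theta$ against $\mathrm{d}x\,\mathrm{d}y$ and using $\int_\R G=0$, each product term integrates to zero, hence $\int f_\theta = 1$. Computing the marginal $f_{\theta,1}(x)=\int f_\theta(x,y)\,\mathrm{d}y$ and again invoking $\int G=0$ (applied to any one of the $y$-coordinates) kills the perturbation entirely, giving $f_{\theta,1}=\mathds{1}_{[0,1]^p}$; symmetrically $f_{\theta,2}=\mathds{1}_{[0,1]^q}$, so both marginals have sup-norm $1\le R'$.

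For Part~\ref{borninfdistH0}, the previous marginal computation implies
\[
f_\theta - f_{\theta,1}\otimes f_{\theta,2} = f_\theta - \mathds{1}_{[0,1]^{p+q}},
\]
i.e.\ exactly the perturbation term. Squaring and integrating, the cross-terms between distinct indices $(j,l)\ne (j',l')$ vanish by the disjoint-support property, leaving
\[
\norm{f_\theta - f_{\theta,1}\otimes f_{\theta,2}}_2^2 = C_0^2 h_n^{2\delta+2(p+q)} \sum_{(j,l)\in\I_{n,p,q}} \prod_{r=1}^p \int G_{h_n}(x_r - j_r h_n)^2\,\mathrm{d}x_r \prod_{s=1}^q \int G_{h_n}(y_s - l_s h_n)^2\,\mathrm{d}y_s.
\]
A change of variables gives $\int G_{h_n}(u)^2\,\mathrm{d}u = h_n^{-1}\norm{G}_2^2$, and $\Card(\I_{n,p,q}) = M_n^{p+q} = h_n^{-(p+q)}$, so the right-hand side simplifies to $C_0^2\,h_n^{2\delta}\,\norm{G}_2^{2(p+q)}$. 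Taking the square root gives the claimed identity.

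None of the steps are conceptually difficult; the only mild subtlety is making sure the disjoint-support argument is applied cleanly both when bounding $\norm{f_\theta}_\infty$ pointwise and when expanding the $\L_2$-square, so that the sum over $\I_{n,p,q}$ reduces to a sum of single-block contributions rather than a full bilinear form in $\theta$. This is where the specific choice of the shifted lattice $(j_r h_n)_{j_r\in\ac{1,\ldots,M_n}}$ combined with $\mathrm{supp}\,G\subset[-1,0]$ pays off.
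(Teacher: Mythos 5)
Your proof is correct and follows essentially the same route as the paper: disjointness of the supports of the shifted bumps to reduce the perturbation to a single term pointwise (giving nonnegativity and the sup-norm bound via $\norm{G}_\infty=e^{-1}$), $\int_\R G=0$ to get unit mass and uniform marginals, and the same disjoint-support expansion plus change of variables to compute the $\L_2$-norm exactly. No gaps worth noting.
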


Let us now consider a uniform mixture $\nu_{\rho_n^*}$ of the alternatives $(f_\theta)$, for $\theta$ in $\ac{-1, 1}^{M_n^{p+q}}$. Note that this is equivalent to considering a random alternative $f_\Theta$ where $\Theta = (\Theta_{(j,l)})_{(j,l)\in I_{n,p,q}}$ with i.i.d. Rademacher components $\Theta_{(j,l)}$.
The aim of Lemma \ref{lemma:alternfthetaSobolev} is to prove that, for a well chosen constant $C_0$, the random function 
$ f_\Theta - f_{\Theta,1} \otimes f_{\Theta,2} $ belongs to the Sobolev ball $\mathcal{S}^\delta_{p+q} (R,R')$ with high probability.

\begin{lemm}\label{lemma:alternfthetaSobolev}
Let $\delta>0$, $R>0$ and $R'\geq 1$. Let $\Theta $ be the random vector $ \Theta= (\Theta_{(j,l)})_{(j,l)\in I_{n,p,q}}$ with i.i.d. Rademacher components $\Theta_{(j,l)}$. Consider $f_\Theta$ defined by \eqref{ftheta}, where the vector $\theta$ is replaced by the random vector $\Theta$. 
Then, there exists a positive constant $C(p,q,\delta,\eta)$ such that, if $C_0^2 \leq (2\pi)^{p+q} R^2 / [2C(p,q,\delta,\eta)]$, we have that
$$\proba{f_\Theta - f_{\Theta,1} \otimes f_{\Theta,2} \in \mathcal{S}^\delta_{p+q} (R)}\geq 1-\eta. $$
\end{lemm}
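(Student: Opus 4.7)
The plan is to compute the expectation of the Sobolev seminorm of $\psi_\Theta := f_\Theta - f_{\Theta,1}\otimes f_{\Theta,2}$ via the Fourier representation, exploiting the orthogonality of Rademacher variables, and then conclude via Markov's inequality.

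First, I would use Lemma \ref{prop:alternftheta} (more precisely, the computation behind item \ref{borninfdistH0}) to observe that the marginals $f_{\Theta,1}$ and $f_{\Theta,2}$ are the uniform densities on $[0,1]^p$ and $[0,1]^q$ respectively, so that $f_{\Theta,1}\otimes f_{\Theta,2} = \mathds{1}_{[0,1]^{p+q}}$, and
\[
\psi_\Theta(x,y) = C_0 h_n^{\delta+(p+q)} \sum_{(j,l)\in \I_{n,p,q}} \Theta_{(j,l)} \prod_{r=1}^p G_{h_n}(x_r-j_rh_n)\prod_{s=1}^q G_{h_n}(y_s-l_sh_n).
\]
Taking the Fourier transform and using that $\widehat{G_{h_n}}(u)=\hat{G}(h_nu)$ and that a translation by $j_rh_n$ becomes multiplication by $e^{ij_rh_nu_r}$, I get
\[
\hat{\psi}_\Theta(u,v) = C_0 h_n^{\delta+(p+q)} \prod_{r=1}^p\hat{G}(h_nu_r)\prod_{s=1}^q\hat{G}(h_nv_s)\sum_{(j,l)\in\I_{n,p,q}}\Theta_{(j,l)}\, e^{ih_n(\langle j,u\rangle+\langle l,v\rangle)}.
\]

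Second, I would compute $\esp{|\hat\psi_\Theta(u,v)|^2}$. Since $\esp{\Theta_{(j,l)}\Theta_{(j',l')}}=\mathds{1}_{(j,l)=(j',l')}$, only the diagonal terms survive, giving
\[
\esp{|\hat\psi_\Theta(u,v)|^2} = C_0^2 h_n^{2\delta+2(p+q)}\, M_n^{p+q}\prod_{r=1}^p|\hat{G}(h_nu_r)|^2\prod_{s=1}^q|\hat{G}(h_nv_s)|^2,
\]
and since $M_n=1/h_n$ this simplifies to $C_0^2 h_n^{2\delta+(p+q)}\prod_r|\hat{G}(h_nu_r)|^2\prod_s|\hat{G}(h_nv_s)|^2$. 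Integrating against $\|(u,v)\|^{2\delta}$ and Fubini-swapping with the expectation, the change of variables $u'=h_nu$, $v'=h_nv$ (which contributes $h_n^{-(p+q)}$ from the Jacobian and $h_n^{-2\delta}$ from the norm factor) exactly cancels all the $h_n$ powers. I end up with
\[
\esp{\int_{\R^{p+q}}\|(u,v)\|^{2\delta}|\hat\psi_\Theta(u,v)|^2\,\mathrm{d}u\,\mathrm{d}v} = C_0^2\, A(p,q,\delta),
\]
where $A(p,q,\delta) = \int_{\R^{p+q}}\|(u',v')\|^{2\delta}\prod_r|\hat{G}(u'_r)|^2\prod_s|\hat{G}(v'_s)|^2\,\mathrm{d}u'\,\mathrm{d}v'$.

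Third, I would check that $A(p,q,\delta)$ is a finite constant depending only on $(p,q,\delta)$. This is where the careful choice of $G$ in \eqref{Eq:functionG} matters: $G$ is smooth and compactly supported, so $\hat{G}$ decays faster than any polynomial, which (after bounding $\|(u',v')\|^{2\delta}$ by a constant times $\sum_r|u'_r|^{2\delta}+\sum_s|v'_s|^{2\delta}$ and using $\int|\hat G|^2<\infty$) makes $A(p,q,\delta)$ finite. Setting $C(p,q,\delta,\eta) := A(p,q,\delta)/(2\eta)$ and applying Markov's inequality then yields
\[
\proba{\int_{\R^{p+q}}\|u\|^{2\delta}|\hat\psi_\Theta(u)|^2\,\mathrm{d}u > (2\pi)^{p+q}R^2} \leq \frac{C_0^2\, A(p,q,\delta)}{(2\pi)^{p+q}R^2} \leq \eta
\]
under the stated condition $C_0^2 \leq (2\pi)^{p+q}R^2/[2C(p,q,\delta,\eta)]$, which is exactly the conclusion. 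The main work is the bookkeeping of the $h_n$ powers in step two; the rest is standard.
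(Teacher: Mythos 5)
Your proof is correct, but it follows a genuinely different route from the paper. The paper works with the squared Fourier modulus pointwise and splits it into a deterministic diagonal part $H_{1,n}$ (where $\theta_{(j,l)}^2=1$) and a random off-diagonal part $H_{2,n}$; it bounds the weighted integral of $H_{1,n}$ directly by $C(p,q,\delta)C_0^2$ and controls the off-diagonal Rademacher chaos $\sum\Theta_{j_1,l_1}\Theta_{j_2,l_2}\int\|(u,v)\|^{2\delta}\mathcal{G}_{j_1,l_1,j_2,l_2}$ by Hoeffding's inequality, allotting $(2\pi)^{p+q}R^2/2$ to each piece. You instead compute the \emph{first moment} of the whole weighted Fourier integral: by Tonelli and $\esp{\Theta_{(j,l)}\Theta_{(j',l')}}=\mathds{1}_{(j,l)=(j',l')}$ only the diagonal survives, the rescaling $u'=h_nu$, $v'=h_nv$ cancels all powers of $h_n$ (using $M_nh_n=1$), and the resulting constant $A(p,q,\delta)$ is finite by the decay of $\hat G$ from Butucea's lemma; Markov's inequality then gives the claim with $C(p,q,\delta,\eta)=A(p,q,\delta)/(2\eta)$. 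What each buys: your argument is shorter, needs only pairwise orthogonality of the $\Theta_{(j,l)}$ (so it sidesteps any independence considerations for the products $\Theta_{j_1,l_1}\Theta_{j_2,l_2}$ that the concentration step requires), and trivially yields the stated existence of $C(p,q,\delta,\eta)$; the paper's exponential concentration yields a constant with only logarithmic dependence on $1/\eta$, versus the $1/\eta$ dependence from Markov, which is immaterial here since the lemma allows $C$ to depend on $\eta$ arbitrarily. Minor bookkeeping you should add for completeness: note that $\psi_\Theta$ is bounded with compact support, hence in $\L_1\cap\L_2$, so membership in $\mathcal{S}^\delta_{p+q}(R)$ indeed reduces to the seminorm bound.
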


Following Lemma \ref{Lem:MethodLowerBound}, let $P_{\nu_{\rho_n^*}}$ be the probability measure defined for all measurable set $A$ in $\R^{n(p+q)}$ by
\begin{equation}\label{eq:Pnu}
P_{\nu_{\rho_n^*}} (A) = \int_{\{-1,1\}^{M_n^{p+q}}} P_{f_\theta}(A) \pi (\mathrm{d} \theta) = \frac{1}{2^{M_n^{p+q}}} \sum_{\theta\in\{-1,1\}^{M_n^{p+q}}}P_{f_\theta}(A),
\end{equation}
where $\pi$ is the distribution of a $(M_n^{p+q})$-sample of i.i.d. Rademacher random variables. Proposition \ref{prop:rapportvrais} justifies the use of these alternatives and this probability measure to prove the lower bound. 

\begin{prop}\label{prop:rapportvrais}
Let $\alpha, \beta, \eta$ in $(0,1)$ such that $\alpha+\beta+\eta<1$, and let $\delta,R>0$ and $R'\geq 1$. 
Denote $f_0$ the uniform density on $[0,1]^{p+q}$. 
Assume that $C_0=C_0(p,q,\delta,R,R',\eta)$ satisfies the assumptions of Lemmas \ref{prop:alternftheta} and \ref{lemma:alternfthetaSobolev}.
There exists some positive constant $C(p,q,\alpha,\beta,\delta,R,R',\eta)$ such that, if we set 
\begin{equation}\label{eq:defhn}
M_n = \left\lceil \frac{1}{C(p,q,\alpha,\beta,\delta,R,R',\eta) n^{-2/(4\delta+p+q)}}\right\rceil \quad \mbox{and}\quad h_n = \frac{1}{M_n},
\end{equation}
we have $\nu_{\rho_n^*}(\mathcal{F}_{\rho_n^*} (\mathcal{S}^{\delta}_{p+q} (R,R'))) \geq 1-\eta$. 
Furthermore, if we define $P_{\nu_{\rho_n^*}}$ by Equations \eqref{ftheta} and \eqref{eq:Pnu}, then we have, for $n$ large enough,
$$\esps{P_{f_0}}{ \pa{ \frac{\mathrm{d} P_{\nu_{\rho_n^*}}}{\mathrm{d} P_{f_0}} (\mathbb{Z}_n) }^2 } < 1 + 4(1 - \alpha-\eta - \beta)^2.$$ 
\end{prop}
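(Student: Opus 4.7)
The plan is to verify the two conclusions of the proposition separately. The first is a purely structural statement about the random alternatives, while the second reduces to a Rademacher second-moment computation which is the main technical point.

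\textbf{Membership of the mixture in $\mathcal{F}_{\rho_n^{*}}(\mathcal{S}^\delta_{p+q}(R,R'))$.} I will set $\rho_n^{*} := C_0 \norm{G}_2^{p+q} h_n^\delta$. Lemma~\ref{prop:alternftheta} provides deterministically, for every $\theta$, that $f_\theta$ is a density with $\max\{\norm{f_\theta}_\infty, \norm{f_{\theta,1}}_\infty, \norm{f_{\theta,2}}_\infty\} \leq R'$ and $\norm{f_\theta - f_{\theta,1} \otimes f_{\theta,2}}_2 = \rho_n^{*}$, so the $L_2$-separation condition is saturated for every $\theta$. Lemma~\ref{lemma:alternfthetaSobolev} then guarantees $f_\Theta - f_{\Theta,1} \otimes f_{\Theta,2} \in \mathcal{S}^\delta_{p+q}(R)$ with probability at least $1-\eta$ under the Rademacher prior $\pi$. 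These two facts together yield the claimed lower bound on $\nu_{\rho_n^{*}}(\mathcal{F}_{\rho_n^{*}}(\mathcal{S}^\delta_{p+q}(R,R')))$. With the prescribed $M_n$, $h_n \asymp C(p,q,\alpha,\beta,\delta,R,R',\eta)\, n^{-2/(4\delta+p+q)}$, so $\rho_n^{*}$ is of the announced order $n^{-2\delta/(4\delta+p+q)}$.

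\textbf{Setup of the chi-square bound.} Since $f_0 \equiv 1$ on $[0,1]^{p+q}$ and every $f_\theta$ is supported there, the likelihood ratio rewrites as $L_{\nu_{\rho_n^{*}}}(\mathbb{Z}_n) = \esps{\Theta}{\prod_{i=1}^n f_\Theta(X_i, Y_i)}$ where $\Theta$ is independent of $\mathbb{Z}_n$. Introducing an independent copy $\Theta'$ of $\Theta$ and applying Fubini,
\begin{equation*}
\esps{P_{f_0}}{L_{\nu_{\rho_n^{*}}}^2(\mathbb{Z}_n)} = \esps{\Theta,\Theta'}{\pa{\int_{[0,1]^{p+q}} f_\Theta(z)\, f_{\Theta'}(z)\, \mathrm{d}z}^n}.
\end{equation*}
Writing $f_\theta = 1 + g_\theta$ with $g_\theta$ the perturbation, the cancellation $\int g_\theta = 0$ (from $\int G = 0$) kills the cross terms, while the disjointness of the bump supports $\mathrm{supp}(G_{h_n}(\cdot - j h_n)) \subset [(j-1)h_n, jh_n]$ collapses the double sum over bumps to the diagonal, giving
\begin{equation*}
\int_{[0,1]^{p+q}} f_\theta f_{\theta'} = 1 + A \sum_{(j,l) \in \I_{n,p,q}} \theta_{(j,l)}\, \theta'_{(j,l)},\qquad A := C_0^2 \norm{G}_2^{2(p+q)} h_n^{2\delta + (p+q)},
\end{equation*}
the factor $h_n^{-(p+q)}$ coming from $\norm{G_{h_n}}_2^2 = \norm{G}_2^2/h_n$ combining with the prefactor $h_n^{2(\delta+p+q)}$ from \eqref{ftheta}.

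\textbf{Exponential moment and calibration of the constant.} The integral above is nonnegative, so $1+x \leq e^x$ combined with the independence of the Rademacher products $(\Theta_{(j,l)}\Theta'_{(j,l)})_{(j,l)}$ and $\cosh(x) \leq e^{x^2/2}$ yields
\begin{equation*}
\esps{P_{f_0}}{L_{\nu_{\rho_n^{*}}}^2} \leq \esps{\Theta,\Theta'}{\exp\pa{nA \sum_{(j,l) \in \I_{n,p,q}} \Theta_{(j,l)} \Theta'_{(j,l)}}} = [\cosh(nA)]^{M_n^{p+q}} \leq \exp\pa{\frac{n^2 A^2 M_n^{p+q}}{2}}.
\end{equation*}
Using $M_n^{p+q} = h_n^{-(p+q)}$ and the prescribed scaling $h_n \asymp C(p,q,\alpha,\beta,\delta,R,R',\eta)\, n^{-2/(4\delta+p+q)}$,
\begin{equation*}
n^2 A^2 M_n^{p+q} = C_0^4 \norm{G}_2^{4(p+q)}\, \pa{n\, h_n^{(4\delta + p+q)/2}}^2 \asymp C_0^4 \norm{G}_2^{4(p+q)}\, C(\cdots)^{4\delta + p+q}.
\end{equation*}
The main obstacle is the joint calibration of the constants: $C_0$ is already pinned down by the two preceding lemmas, so the free constant $C(\cdots)$ in the definition of $M_n$ must be chosen small enough that $\exp\pa{C_0^4 \norm{G}_2^{4(p+q)} C(\cdots)^{4\delta + p+q}/2}$ is strictly smaller than $1 + 4(1-\alpha-\beta-\eta)^2$; this is possible since the left-hand side tends to $1$ as $C(\cdots) \to 0$. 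The clause \enquote{for $n$ large enough} absorbs the ceiling discretization in $M_n$ and ensures the asymptotic equivalence above becomes a concrete inequality.
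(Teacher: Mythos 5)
Your proof is correct, but it takes a genuinely different route from the paper's. For the chi-square bound, the paper expands $L_{\nu_{\rho_n^*}}^2(\mathbb{Z}_n)$ directly as a product over the bump cells, isolates the terms $B_{j,l}$, and then runs a fairly heavy combinatorial argument (vanishing of the terms where some index can be isolated, multinomial coefficient bounds, geometric series) to control $\esps{P_{f_0}}{L^2_{\nu_{\rho_n^*}}}$ by $1+x/(1-x)$ with $x \asymp C_0^2\, n^2 h_n^{4\delta+p+q}$ up to constants. You instead use the classical duplication trick: introduce an independent copy $\Theta'$ of $\Theta$, write $\esps{P_{f_0}}{L^2_{\nu_{\rho_n^*}}} = \esps{\Theta,\Theta'}{\pa{\int f_\Theta f_{\Theta'}}^n}$, exploit $\int G = 0$ and the disjoint supports to reduce the inner integral to $1 + A\sum_{(j,l)}\Theta_{(j,l)}\Theta'_{(j,l)}$, and then conclude via $(1+x)^n \le e^{nx}$ (legitimate since the integral of two densities is nonnegative, which you correctly flag) together with $\cosh(nA)\le e^{n^2A^2/2}$ over the $M_n^{p+q}$ independent Rademacher products. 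The quantity you are calibrating, $n^2A^2M_n^{p+q} = C_0^4\norm{G}_2^{4(p+q)}\pa{n\,h_n^{(4\delta+p+q)/2}}^2$, is exactly the same effective parameter as in the paper, and the final choice of the constant in \eqref{eq:defhn} (small enough that the exponential stays below $1+4(1-\alpha-\beta-\eta)^2$) plays the same role as the paper's explicit choice of $C(p,q,\alpha,\beta,\delta,R,R',\eta)$. What your route buys is brevity and transparency: the independence structure of the prior does all the combinatorial bookkeeping for you, at the cost of a slightly lossier (exponential rather than geometric-series) intermediate bound, which is harmless here. The first part of your argument (combining the deterministic conclusions of Lemma \ref{prop:alternftheta} with the probabilistic Sobolev membership of Lemma \ref{lemma:alternfthetaSobolev}, and taking $\rho_n^* = C_0\norm{G}_2^{p+q}h_n^{\delta}$, which has the announced order under \eqref{eq:defhn}) matches the paper's reasoning.
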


Finally, combining Lemmas \ref{Lem:MethodLowerBound}, \ref{prop:alternftheta} and \ref{lemma:alternfthetaSobolev} with Proposition \ref{prop:rapportvrais} leads to a lower bound for the non-asymptotic minimax  rate of testing in Theorem \ref{Th:Lowerbound}. 

\begin{theorem}\label{Th:Lowerbound}
Consider $\alpha,\beta,\eta$ in $(0,1)$ such that $\alpha+\beta+\eta<1$. 
Let $\delta>0$, $R>0$ and $R'\geq 1$. 
Then, there exists a positive constant $C(p,q,\alpha,\beta,\delta,R,R',\eta)$ such that, for $n$ large enough,
$$\rho \left(\mathcal{S}^{\delta}_{p+q} (R,R'), \alpha, \beta \right) \geq C(p,q,\alpha,\beta,\delta,R,R',\eta)\ n^{-2\delta/(4\delta+p+q)}.$$
\end{theorem}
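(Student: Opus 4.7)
The plan is to assemble the pieces already in place rather than prove anything new. I would invoke the Bayesian-method framework of Lemma~\ref{Lem:MethodLowerBound} with the explicit prior constructed just above, namely the uniform mixture $\nu_{\rho_n^*}$ of the alternatives $f_\theta$ defined in~\eqref{ftheta} over $\theta\in\{-1,1\}^{M_n^{p+q}}$. The candidate separation rate is $\rho_n^* = C(p,q,\alpha,\beta,\delta,R,R',\eta)\, n^{-2\delta/(4\delta+p+q)}$ with the precise constant chosen so that $h_n$ matches the one prescribed in~\eqref{eq:defhn} of Proposition~\ref{prop:rapportvrais}. The null density $f_0$ is the uniform density on $[0,1]^{p+q}$, with respect to which each $f_\theta$ is absolutely continuous because all $f_\theta$ are supported in $[0,1]^{p+q}$.

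First, I would verify that with this choice, the prior $\nu_{\rho_n^*}$ concentrates on $\mathcal{F}_{\rho_n^*}(\mathcal{S}^\delta_{p+q}(R,R'))$ with probability at least $1-\eta$. Fix the constant $C_0=C_0(p,q,\delta,R,R',\eta)$ so that it simultaneously meets the hypotheses of Lemmas~\ref{prop:alternftheta} and~\ref{lemma:alternfthetaSobolev}. By Lemma~\ref{prop:alternftheta}(1), every $f_\theta$ is a bona fide density bounded by $R'$, together with its marginals. By Lemma~\ref{prop:alternftheta}(2), $\|f_\theta - f_{\theta,1}\otimes f_{\theta,2}\|_2 = C_0\|G\|_2^{p+q} h_n^\delta$, which is bounded below by $\rho_n^*$ for a suitable choice of the constant in the definition of $\rho_n^*$. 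Finally, Lemma~\ref{lemma:alternfthetaSobolev} furnishes the Sobolev ball membership with $\pi$-probability at least $1-\eta$. Combining these three facts gives $\nu_{\rho_n^*}\bigl(\mathcal{F}_{\rho_n^*}(\mathcal{S}^\delta_{p+q}(R,R'))\bigr)\geq 1-\eta$, which is exactly the first hypothesis of Lemma~\ref{Lem:MethodLowerBound}.

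Next, I would check the $\chi^2$-type condition~\eqref{Ineq:Lnu}. This is precisely the conclusion of Proposition~\ref{prop:rapportvrais}, which states that for $n$ large enough,
\begin{equation*}
\esps{P_{f_0}}{\pa{\frac{\mathrm{d} P_{\nu_{\rho_n^*}}}{\mathrm{d} P_{f_0}}(\mathbb{Z}_n)}^2} < 1 + 4(1-\alpha-\beta-\eta)^2,
\end{equation*}
with the same $h_n$ and $C_0$ prescribed above. Applying Lemma~\ref{Lem:MethodLowerBound} then yields $\rho(\mathcal{S}^\delta_{p+q}(R,R'),\alpha,\beta)\geq \rho_n^*$, which is the announced bound. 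All the genuinely technical work (the sub-Gaussian tail controlling the random Sobolev norm, and the second-moment computation of the likelihood ratio reducing to $\cosh$-type bounds on products of Rademacher random variables) has already been deferred to Proposition~\ref{prop:rapportvrais}, so no additional obstacle remains at this final assembly step. The main ``obstacle'' conceptually was the calibration of $C_0$ and $h_n$: $C_0$ must be small enough for the density, $\L^\infty$, and Sobolev constraints, while $h_n$ must be chosen so that the signal $\rho_n^* \asymp h_n^\delta$ is maximized subject to the likelihood-ratio second moment remaining $O(1)$, which forces $n h_n^{2\delta + (p+q)/2} \lesssim 1$, i.e., exactly the rate exponent $2\delta/(4\delta+p+q)$.
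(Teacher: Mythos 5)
Your proposal is correct and follows exactly the paper's argument: the theorem is obtained by combining Lemma~\ref{Lem:MethodLowerBound} with the alternatives $f_\theta$ of \eqref{ftheta}, using Lemmas~\ref{prop:alternftheta} and~\ref{lemma:alternfthetaSobolev} for the constraint $\nu_{\rho_n^*}(\mathcal{F}_{\rho_n^*}(\mathcal{S}^\delta_{p+q}(R,R')))\geq 1-\eta$ and Proposition~\ref{prop:rapportvrais} for condition \eqref{Ineq:Lnu}, with $h_n$ as in \eqref{eq:defhn}. Your calibration remark ($n h_n^{2\delta+(p+q)/2}\lesssim 1$ forcing the exponent $2\delta/(4\delta+p+q)$) is also the correct reading of the second-moment constraint.
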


Theorem \ref{Th:Lowerbound} proves that the optimized test $\Delta_{\alpha}^{\lambda^*,\mu^*}$ introduced in Corollary \ref{cor:ThresholdSobolev} is optimal in the minimax sense over Sobolev balls since the upper and lower bounds coincide up to constants. 
Moreover, the aggregated testing procedure defined in Corollary \ref{corr1Sobol} is optimal up to a $\log \log(n)$ term over Sobolev balls. 
Note that this logarithmic term obtained in the upper bound \eqref{eq:adaptsobolaggreg} is sometimes unavoidable for adaptivity (c.f. \cite{ingster2000adaptive} for the test of uniformity on $[0,1]$). It seems reasonable to conjecture that it is also the case for independence testing. 
Hence, since the aggregated testing procedure does not depend on the prior knowledge of the regularity parameter $\delta$, we may conclude that it is adaptive.

\section{Numerical simulations}
\label{NumericalSimulations}

In this section, numerical simulations are performed in order to study the practical validity of our testing procedures. 
More precisely, we first compare the \emph{theoretical} aggregated test $\Delta_\alpha$ defined in \eqref{agregtest} (studied in theory) and the \emph{permuted} aggregated test $\hat{\Delta}_\alpha$ defined in \eqref{agregtestperm} (applied in practice) in terms of power. 
A similar verification for the single tests, together with a comparison of the power for different bandwidth collections and weights choices, are also carried out in Appendix \ref{Single_permuted_tests} in the supplementary material.
Then, we compare the \emph{permuted} aggregated test with existing nonparametric independence tests on simulated data. 

\subsection{Comparison between the theoretical and the permuted aggregated tests}
\label{sect:comparTheoPerm}

In this section, we numerically illustrate that the power of the permuted aggregated HSIC test approximates very well the power of the theoretical aggregated test, as soon as enough permutations are used to estimate the quantile under the null hypothesis. \\

All along this section, we rely on the following data generating mechanism inspired from the Ishigami function \cite{ishigami1990importance}. Let
\begin{equation}\label{eq:modelM}
X=U_1 \quad \mbox{and}\quad Y = \sin (U_1) + 4 \sin^2 (U_2) + 0.5\ U_3^4 \sin (U_1).
\end{equation}
where $U_1$, $U_2$ and $U_3$ are independent uniform random variables on $[0,1]$.\\

The practical implementation of the theoretical and permuted aggregated testing procedures are described in Algorithms \ref{AlgoTheo} and \ref{AlgoPermu}. They both require the estimation of the value of $u_\alpha$ defined in Equation \eqref{u_alpha}. A very straightforward approach to do so is to proceed by dichotomy on the interval $\left[\alpha, M \right]$, where $M = \inf_{(\lambda, \mu) \in \W} \ac{e^{\omega_{\lambda,\mu}}}$ ($u_\alpha$ belonging to this interval as mentioned in Section \ref{Agreg-proc}). 
More precisely, we need to estimate for different values of $u$, the probability
\begin{equation}\label{eq:Pu}
P(u) = P_{f_1\otimes f_2} \pa{ \sup_{(\lambda,\mu) \in \W} \ac{\widehat{\HSIC}_{\lambda,\mu} - q_{1- u e^{- \omega_{\lambda,\mu}}}^{\lambda,\mu}} > 0}. 
\end{equation}
In the theoretical case, this probability is approached by Monte Carlo independently on the observation (provided that we can simulate under the null hypothesis) whereas in the permuted case, it is based on samples obtained by permuting the observation. 
The algorithmic complexity of Algorithm \ref{AlgoPermu} is $O\pa{(B_1+B_2) \abs{\W} n^2}$, corresponding to the estimation of the HSIC for all the permutations in Step 1, and all the windows in the collection $\W$.

\begin{algorithm}[h] 
\caption{\textit{Theoretical aggregated procedure}}
\flushleft\textbf{Input:} The observed $n$-sample, a prescribed level $\alpha$, a collection of bandwidths $\W$ and a family of weights $(\omega_{\lambda,\mu})_{(\lambda,\mu)\in\W}$. 
\begin{enumerate}
\item\label{steptheoun} Simulate a first set, denoted set \texttt{(A)}, of 500.000 $n$-samples under the null hypothesis (to estimate the quantiles) and a second set, denoted \texttt{(B)} of 1000 $n$-samples also under the null hypothesis (to estimate the probabilities $P(u)$ defined in Equation \eqref{eq:Pu} for different values of $u$).
\item Set $u_{min}=\alpha$ and $u_{max}=M$, where $M= \inf_{(\lambda, \mu) \in \W} \ac{e^{\omega_{\lambda,\mu}}}$. 
\item\label{stepDichoth} While $(u_{max}-u_{min}) > 10^{-3} u_{min}$, repeat the following steps. 
\begin{enumerate}
\item Set $u=(u_{min}+u_{max})/2$. 
\item For all $(\lambda,\mu)$ in $\W$, compute the Monte Carlo estimator $\tilde{q}_{1- u e^{- \omega_{\lambda,\mu}}}^{\lambda,\mu}$ of the quantile $q_{1- u e^{- \omega_{\lambda,\mu}}}^{\lambda,\mu}$ using the 500.000 samples of set \texttt{(A)}. 
\item Estimate the probability$P(u)$ by Monte Carlo using the 1000 samples of set \texttt{(B)}. More precisely, consider $\hat{P}_u$ as the ratio of times at least one $\widehat{\HSIC}_{\lambda,\mu}$ is greater than $\tilde q_{1- u e^{- \omega_{\lambda,\mu}}}^{\lambda,\mu}$. 
\item If $\hat{P}_u\leq \alpha$, then set $u_{min}=u$. Else set $u_{max}=u$ and repeat Step \ref{stepDichoth}. 
\end{enumerate}
\item\label{steptheoder} Set $\tilde{u}_\alpha=u$ and the quantiles with corrected levels $\pa{\tilde{q}_{1- \tilde{u}_\alpha e^{- \omega_{\lambda,\mu}}}^{\lambda,\mu}}_{(\lambda,\mu)\in\W}$. 
\item\label{steptheoappli} Finally, compute the observed statistics $(\widehat{\HSIC}_{\lambda,\mu})_{(\lambda,\mu)\in\W}$ (on the given observation) and reject the null hypothesis if there is at least one $(\lambda, \mu)$ such that 
$$\widehat{\HSIC}_{\lambda,\mu} > \tilde{q}_{1- \tilde{u}_\alpha e^{- \omega_{\lambda,\mu}}}^{\lambda,\mu}.$$ 
\end{enumerate}
\label{AlgoTheo}
\end{algorithm}

\begin{algorithm}[h!] 
\caption{\textit{Permuted aggregated procedure}}
\flushleft\textbf{Input:} The observed $n$-sample $\ZZ_n$, a prescribed level $\alpha$, a collection of bandwidths $\W$ and a family of weights $(\omega_{\lambda,\mu})_{(\lambda,\mu)\in\W}$. 
\begin{enumerate}
\item\label{steppermun} Generate a first set, say \texttt{(A')}, of $B_1$ i.i.d. random permutations of $\{1,\ldots,n\}$ (to estimate the quantiles), and independently generate a second set, denoted \texttt{(B')}, of $B_2$ i.i.d. random permutations of $\{1,\ldots,n\}$ (to estimate the probabilities $P(u)$ defined in Equation \eqref{eq:Pu}), all independent of $\ZZ_n$.
\item\label{stepinitDichoperm} Set $u_{min}=\alpha$ and $u_{max}= M$, where $M= \inf_{(\lambda, \mu) \in \W} \ac{e^{\omega_{\lambda,\mu}}}$. 
\item\label{stepDichoperm} While $(u_{max}-u_{min}) > 10^{-3} u_{min}$, repeat the following steps. 
\begin{enumerate}
\item Set $u=(u_{min}+u_{max})/2$. 
\item For all $(\lambda,\mu)$ in $\W$, compute the permuted quantile with Monte Carlo approximation $\hat{q}^{\lambda,\mu}_{1-u e^{-\omega_{\lambda,\mu}}}$ as defined in \eqref{eq:permquant} using the set \texttt{(A')}. 
\item Estimate $P(u)$ by permutation with Monte Carlo approximation using the set \texttt{(B')}. More precisely, consider 
$$\hat{P}^\star_u(\ZZ_n) = \frac{1}{B_2} \sum_{b=1}^{B_2} \mathds{1}_{\max_{(\lambda,\mu) \in \W} \ac{\widehat{H}^{\rpermb_b}_{\lambda,\mu} - \hat{q}^{\lambda,\mu}_{1-u e^{-\omega_{\lambda,\mu}}}}>0},$$ where $(\rpermb_b)_{1\leq b\leq B_2}$ denote the permutations of set \texttt{(B')} and $\widehat{H}^{\rpermb_b}_{\lambda,\mu}$ is the statistic computed on the $b${th} permuted sample $\ZZ_n^{\rpermb_b}$, namely $\widehat{\HSIC}_{\lambda,\mu}\pa{\ZZ_n^{\rpermb_b}}$ . 
\item If $\hat{P}^\star_u(\ZZ_n)\leq \alpha$, then set $u_{min}=u$. Else set $u_{max}=u$ and repeat Step \ref{stepDichoth}. 
\end{enumerate}
\item\label{steppermder} Set $\hat{u}_\alpha=u$ and the quantiles with corrected levels $\pa{\hat{q}_{1- \hat{u}_\alpha e^{- \omega_{\lambda,\mu}}}^{\lambda,\mu}}_{(\lambda,\mu)\in\W}$. 
\item\label{steptheoappli} Finally, compute the observed statistics $(\widehat{\HSIC}_{\lambda,\mu})_{(\lambda,\mu)\in\W}$ (on the given observation) and reject the null hypothesis if there is at least one $(\lambda, \mu)$ such that 
$$\widehat{\HSIC}_{\lambda,\mu} > \hat{q}_{1- \hat{u}_\alpha e^{- \omega_{\lambda,\mu}}}^{\lambda,\mu}.$$ 
\end{enumerate}
\label{AlgoPermu}
\end{algorithm}

\paragraph{Theoretical power} For a given sample size $n$ and a given collection of bandwidths $\W$ with associated weights, we estimate the power of the theoretical aggregated test as follows. 
Since the approximation of the value of $u_\alpha$ and of the quantiles can be done independently of the observation, we run Steps \ref{steptheoun} to \ref{steptheoder} of Algorithm \ref{AlgoTheo} only once.
Then, we generate 1000 i.i.d. samples (observations) and for each one, we apply Step \ref{steptheoappli} of Algorithm \ref{AlgoTheo}. 
Finally, we estimate the theoretical power by $\hat\pi_{th} (n,\alpha)$ which is the proportion of times the aggregated procedure rejects the null hypothesis.

\paragraph{Permuted power} 
Unlike the theoretical case, we do not assume we are able to simulate under the null hypothesis to estimate the quantiles and to compute the correction $u_\alpha$. 
Note that for the permuted test, Step \ref{stepDichoperm} of Algorithm \ref{AlgoPermu} depends on the observation and needs to be done for each new observation. 
Hence, for a given sample size $n$, a given collection of bandwidths $\W$ and associated weights, we generate 1000 i.i.d. samples and for each one, we apply Steps \ref{steppermun} to \ref{steptheoappli} of Algorithm \ref{AlgoPermu}. 
Finally, we estimate the power of the permuted aggregated test by $\hat\pi(n,\alpha,B_1,B_2)$ which is the ratio of times the null hypothesis is rejected.

\paragraph{Numerical results}
In all the following, the prescribed level of the tests is set to $\alpha = 0.05$ and we consider sample sizes $n$ in $\ac{50, 100, 200}$. 
We consider six different collections of bandwidths $\left( \W_r \right)_{2 \leq r \leq 7}$, defined for all $r$ by 
\begin{equation} \label{eq:collectpermvstheo}
\W_r = \left\{1, 1/2, \ldots, 1/2^{r-1} \right\}^2.
\end{equation}
Note that, the case $r = 1$ would correspond to the single test with $\lambda = \mu =1$.
Moreover, for each $r$, we consider uniform weights defined for all $(\lambda ,\mu)$ in the collection $\W_r$ by
\begin{equation}\label{eq:defPoidsUnif}
\omega_{\lambda,\mu} = \log(r^2).
\end{equation} 

For the permuted aggregated procedure, the number $B_1$ of permutations used to estimate the quantiles varies in $\ac{100, 200, 500, 1000, \ldots, 5000}$ and the number of permutations used to estimate the probabilities $P(u)$ is set to $B_2 = 500$. 

For each triplet $(r,n,B_1)$, the empirical power of both the theoretical and the permuted aggregated testing procedures, respectively denoted $\hat\pi_{th} (n,\alpha)$ and $\hat\pi(n,\alpha,B_1,B_2)$, are obtained from 1000 different samples as described above. 
To compare them, we consider the relative absolute error defined by
$$Err(n,\alpha,B_1,B_2) = \frac{\abs{\hat\pi(n,\alpha,B_1,B_2) - \hat\pi_{th} (n,\alpha)}}{\hat\pi_{th} (n,\alpha)}.$$ 
Results are given in Figure~\ref{fig:PowerPermutedAgreg}. Notice that, regardless of the sample size $n$, the required number $B_1$ of permutations to well approximate the theoretical power increases with $r$. In fact, the supremum in Equation \eqref{Ualphaestimation} becomes more difficult to estimate as the number $r^2$ of aggregated tests increases. Unsurprisingly, for a given $B_1$, the accuracy of the power estimation increases with $n$ as in the case of single tests. In particular, we observe that for $n = 50$, the largest error becomes less than $10 \%$ from $B_1 = 3500$, while this threshold seems to be achieved from $B_1 = 3000$ for $r = 4,5,6$ and from $B_1 = 500$ for $r = 2,3$. For larger sample sizes $n = 100$ and $200$, a good approximation of the theoretical test seems to be achieved from small values of $B_1$, even for a relatively large number of aggregated tests. In particular, for $n = 200$, an error smaller than $10 \%$ is reached for all values of~$B_1$.

\begin{figure}
 \centering
 \includegraphics[width=0.32\textwidth]{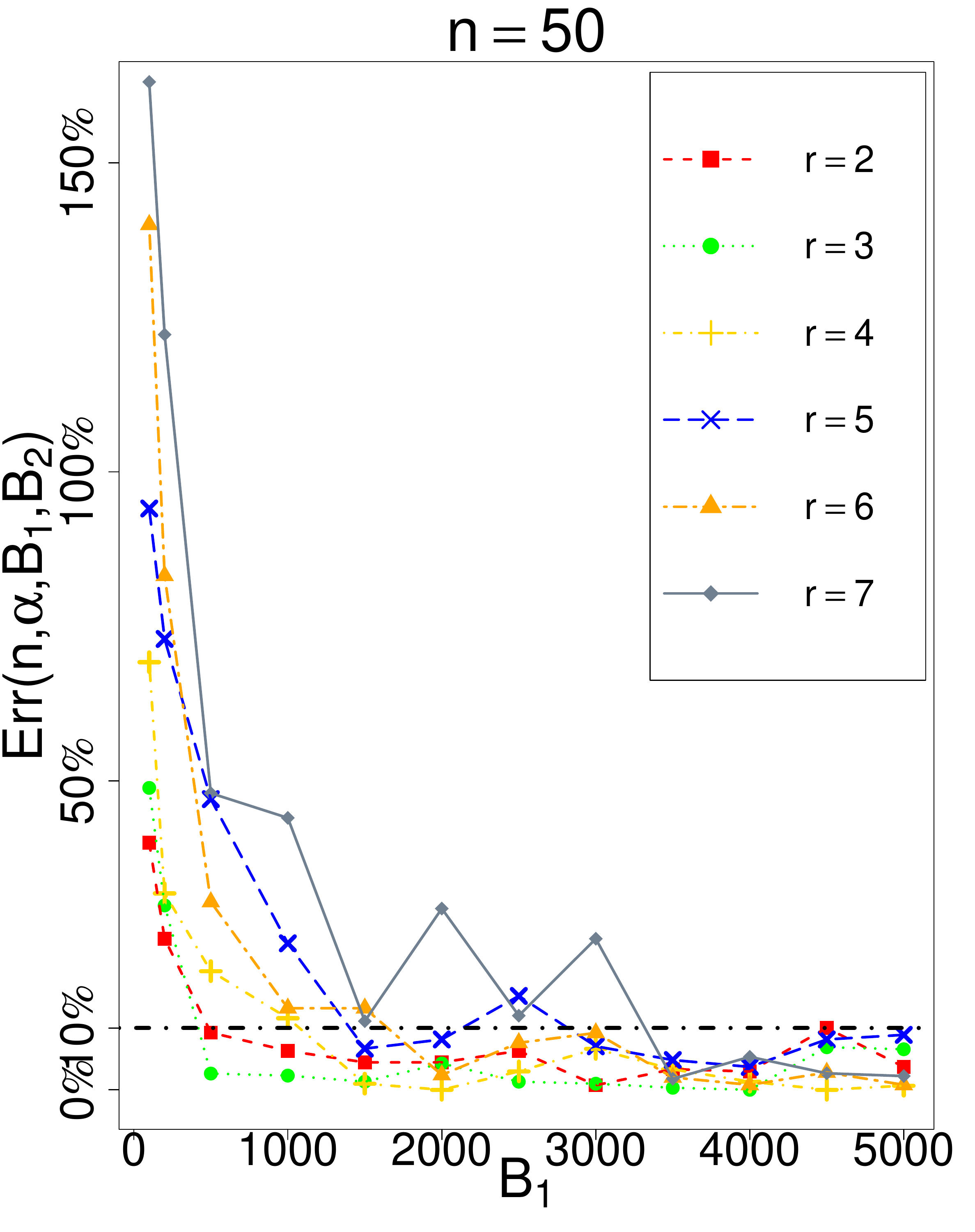}
 \includegraphics[width=0.32\textwidth]{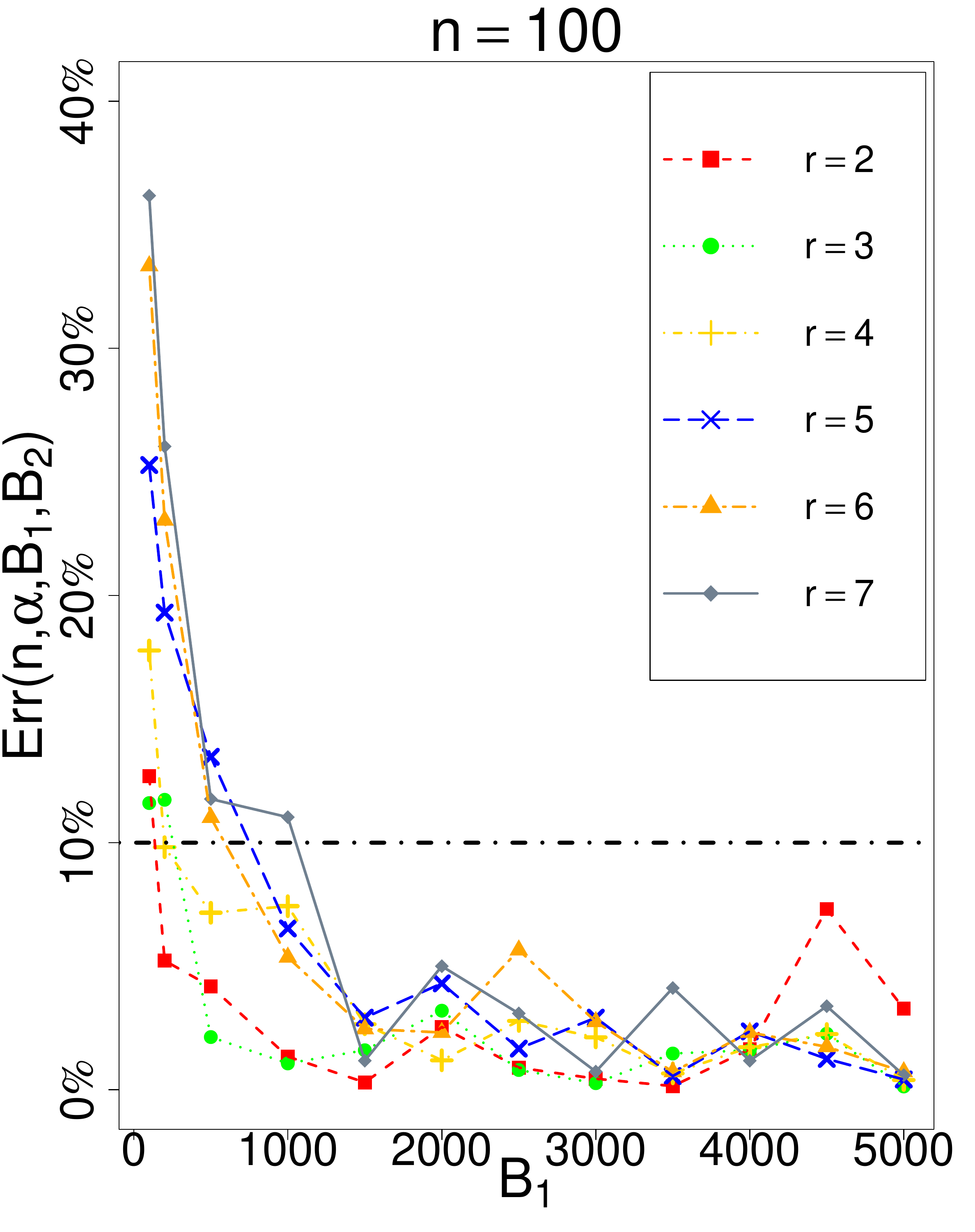}
 \includegraphics[width=0.32\textwidth]{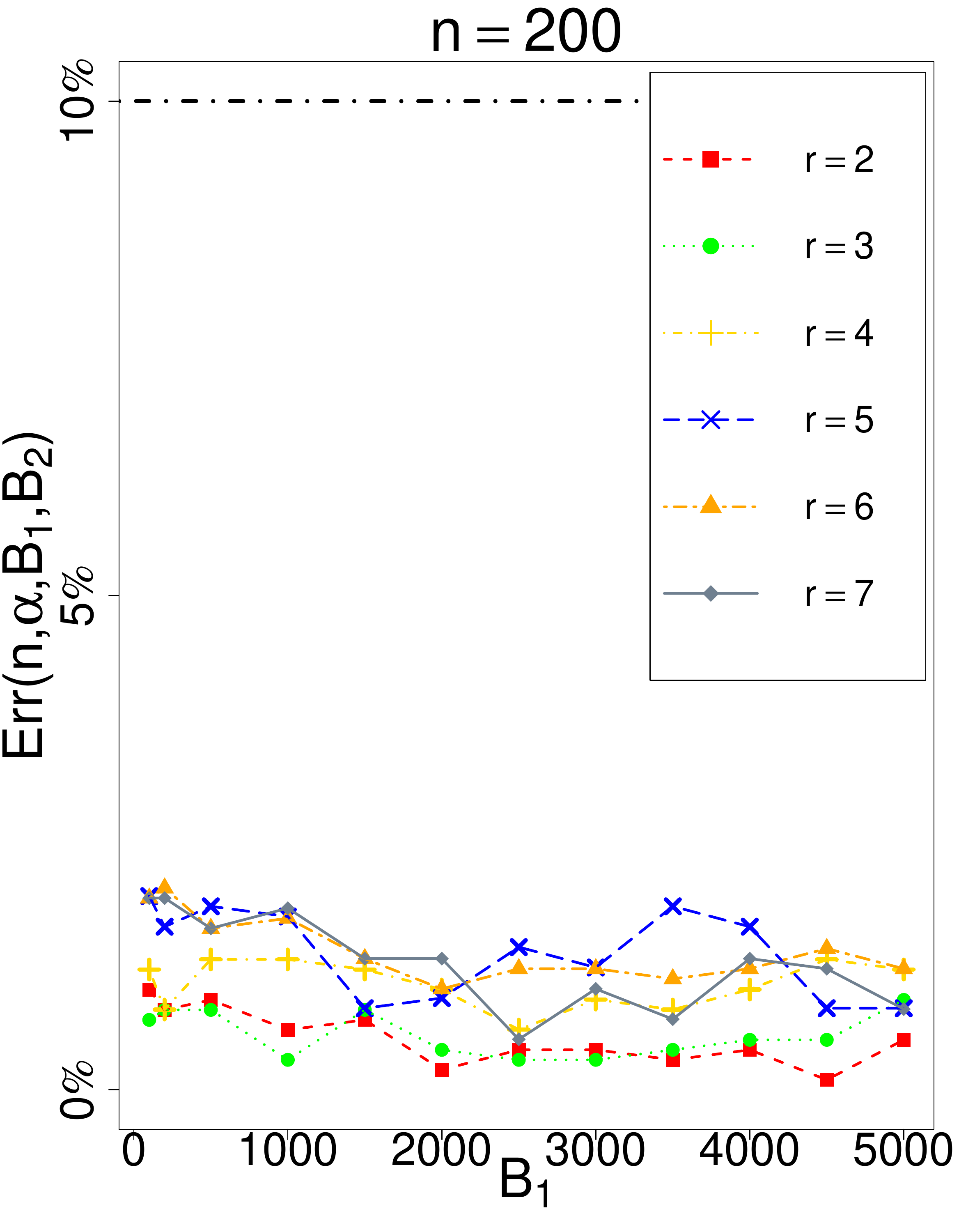}
 \caption{Absolute relative error between the empirical power of the theoretical and the permuted aggregated HSIC procedures w.r.t. the number $B_1$ of permutations, estimated from 1000 samples of sizes $n = 50, 100$ and $200$, with $B_2=500$, the bandwidth collections $\W_r$ and associated weights resp. defined in \eqref{eq:collectpermvstheo} and \eqref{eq:defPoidsUnif} and prescribed level $\alpha = 0.05$. 
}
 \label{fig:PowerPermutedAgreg}
\end{figure}

\paragraph{}
All these results show that both theoretical and permuted tests have comparable powers provided that the sample size and the number of permutations are large enough. 
In the following, we numerically study the power of the permuted tests, which are used in practice. 

\subsection{Comparison with existing tests}
\label{Comparison_MINT}

To complete this simulation study, we compare our aggregated procedure with some existing reference tests of independence. For this, we simulate accordingly to the data generating mechanisms of \cite{berrett2019nonparametric}, and a basic Gaussian model. 
\begin{enumerate}[label=(\roman*)]
\item\label{Berrettex1} For $l$ in $\ac{1,\ldots,10}$, define the joint density $f_{[l]}$ of $(X,Y)$ for all $(x,y)$ in $[-\pi, \pi]$ by $$f_{[l]} (x,y) = \cro{1 + \sin(lx) \sin(ly)}/(4 \pi^2).$$
\item\label{Berrettex2} For $l$ in $\ac{1,\ldots,10}$, let $X = L \cos \Theta + \varepsilon_1/4$ and $Y = L \sin \Theta + \varepsilon_2/4,$ where $L$, $\Theta$, $\varepsilon_1$ and $\varepsilon_2$ are independent, with $L$ is uniformly distributed on $\left\{ 1, \ldots, l \right\}$, $\Theta$ is uniformly distributed on $[0,2 \pi]$ and $\varepsilon_1$, $\varepsilon_2$ are standard normal random variables. 
\item\label{Berrettex3} For $\rho$ in $\ac{0.1, 0.2, \ldots ,1}$, let $X$ be a uniform random variable on $[-1,1]$ and define $Y = \abs{X}^{\rho} \varepsilon,$ where $\varepsilon$ is a standard normal random variable independent with $X$. 
\item\label{Gaussian} For $\rho$ in $\ac{0,0.1,\ldots,0.9}$, let $(X,Y)$ be a centered Gaussian vector such that $\Var(X)=\Var(Y)=1$ and $\cov(X,Y)=\rho$. 
\end{enumerate}
We also consider the bivariate case $X = (X^{(1)}, X^{(2)})$ and $Y = (Y^{(1)},Y^{(2)})$ where $(X^{(1)},Y^{(1)})$ is generated according to mechanisms \ref{Berrettex1}, \ref{Berrettex2} or \ref{Berrettex3}, while $X^{(2)}$, $Y^{(2)}$ are independent uniform random variables on $[0,1]$ and independent from $(X^{(1)},Y^{(1)})$. \\

\begin{sloppypar}
The numerical study of the impact of the bandwidth collection and the associated weights on the power of the aggregated procedure done in Appendix \ref{sect:choicecollweights} of the supplementary material suggests the following methodological choices. 
Inspired by usual heuristic bandwidths (see, e.g., \cite{de2017sensitivity}), define 
\begin{equation}\label{RefWidths}
\widetilde{\lambda}^2 = \frac{1}{2n(n-1)} \sum_{1\leq i\neq j\leq n} \norm{X_i - X_j}^2 \ \text{and} \ \widetilde{\mu}^2 = \frac{1}{2n(n-1)} \sum_{1\leq i\neq j\leq n} \norm{Y_i - Y_j}^2,
\end{equation}
where $\norm{\cdot}$ denotes the Euclidean norm. 
Note that, in the univariate case ($p=q=1$), $\widetilde{\lambda}$ and $\widetilde{\mu}$ are the empirical standard deviation of $X$ and $Y$ respectively. 
In the univariate case, we consider the collections defined by 
\begin{equation}\label{eq:collectioncompartestsunidim6}
\widetilde{\W} = \ac{2^{-m}\pa{\widetilde{\lambda},\widetilde{\mu}} ; 0\leq m\leq 6}.
\end{equation}
Similarly, in the bivariate case, the bandwidth collections are defined by 
\begin{equation}\label{eq:collectioncompartestsbidim}
\widetilde{\W} = \ac{2^{-m}\pa{\widetilde{\lambda},\widetilde{\lambda},\widetilde{\mu},\widetilde{\mu}} ; 0\leq m\leq 6}.
\end{equation}
We also consider exponential weights, that are defined, by analogy with Equation \eqref{eq:weigths}, for all bandwidths 
$2^{-m}(\widetilde{\lambda}, \widetilde{\mu})$ or $2^{-m}(\widetilde{\lambda},\widetilde{\lambda}, \widetilde{\mu}, \widetilde{\mu})$ as 
\begin{equation}\label{omegavarsur2iet2j}
\omega_{[m]} = 2\log\pa{m+1} + \log \left( \sum_{m'=0}^6\frac{1}{(m'+1)^2} \right).
\end{equation}
Note that the last term in \eqref{omegavarsur2iet2j} ensures that $\sum_{(\lambda,\mu) \in \widetilde{\W}} e^{- \omega_{\lambda,\mu}} = 1$. \\
\end{sloppypar}

\begin{figure}
 \centering
 \includegraphics[width=0.42\textwidth]{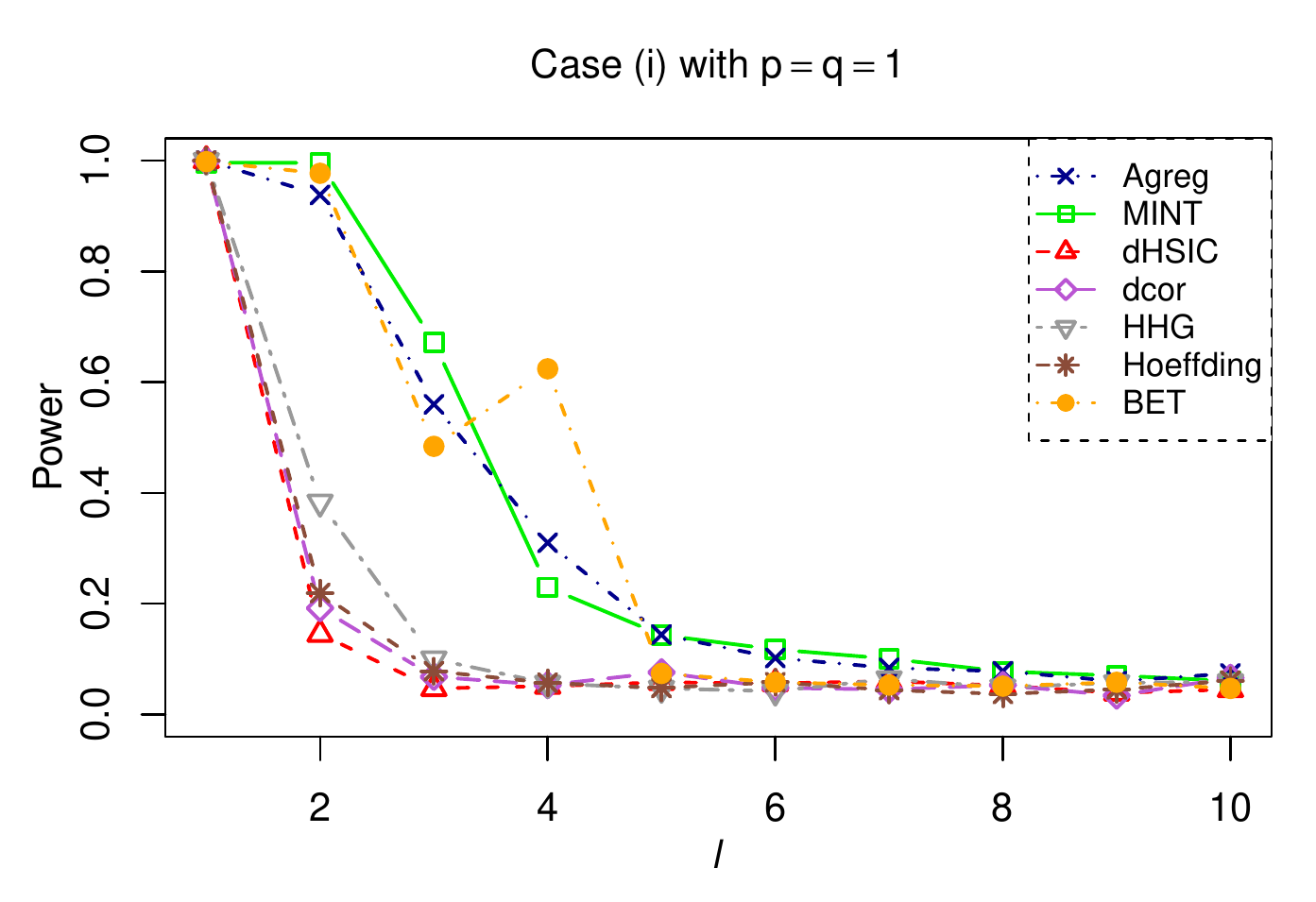}
 \includegraphics[width=0.42\textwidth]{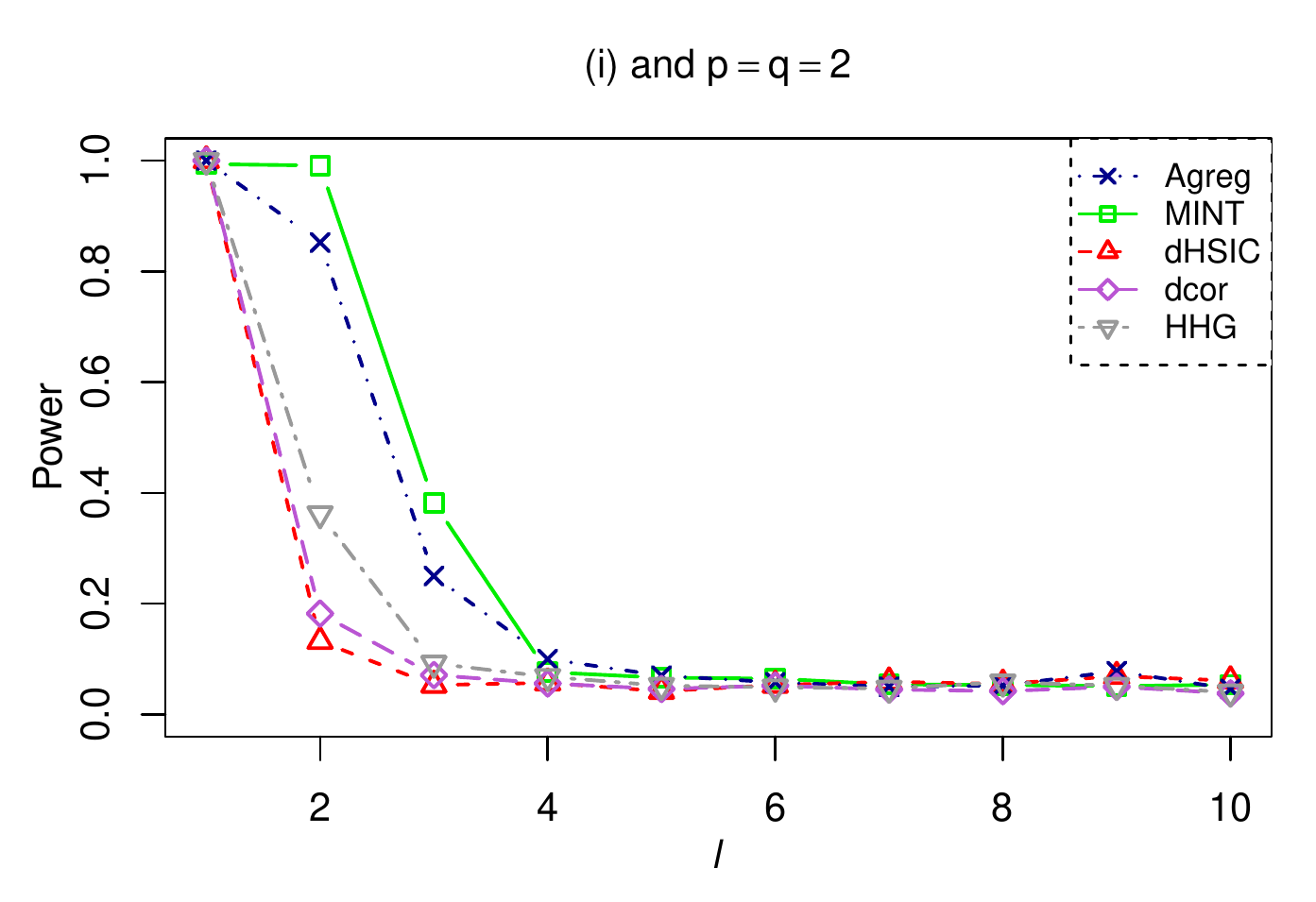}
 \includegraphics[width=0.42\textwidth]{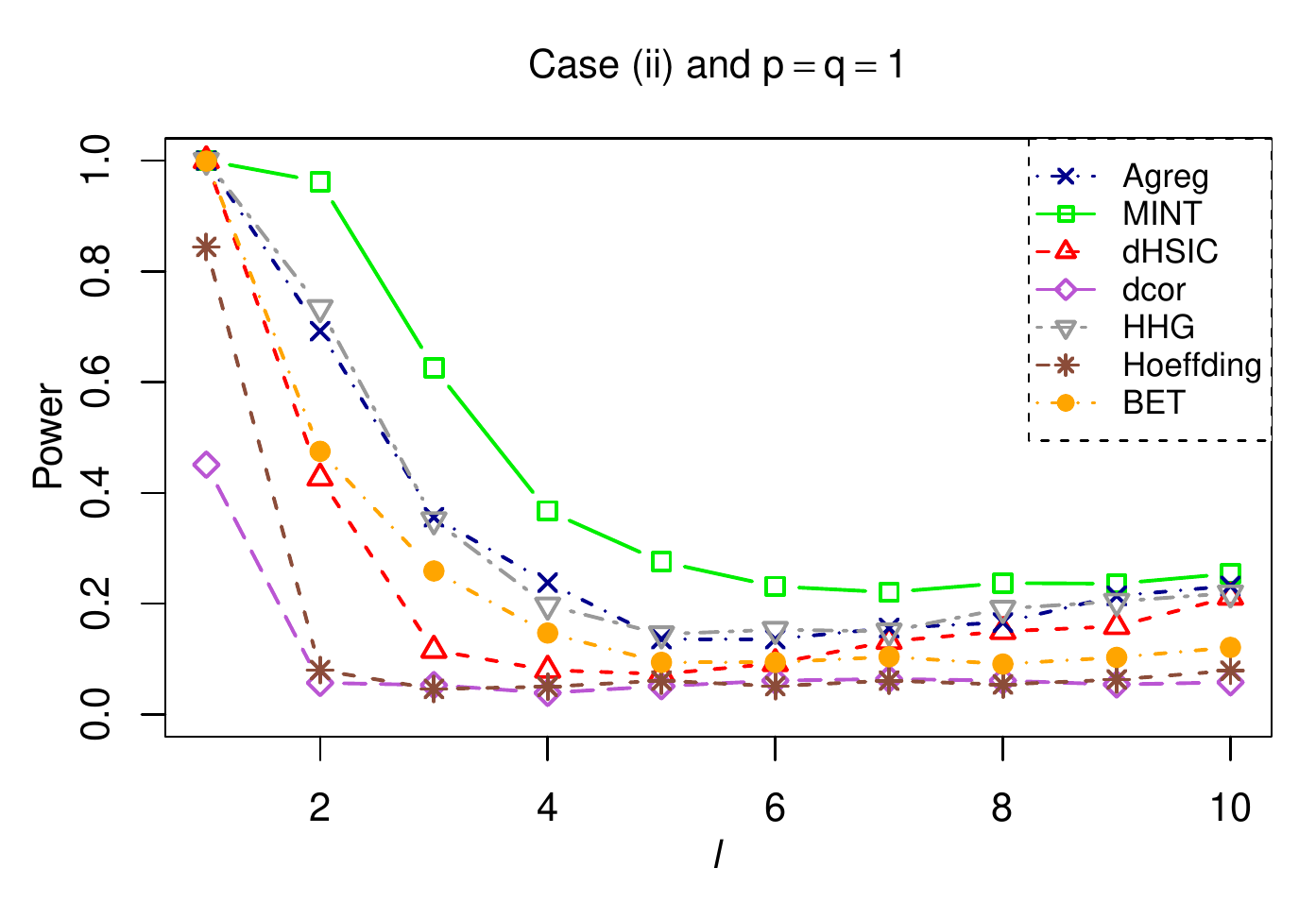}
 \includegraphics[width=0.42\textwidth]{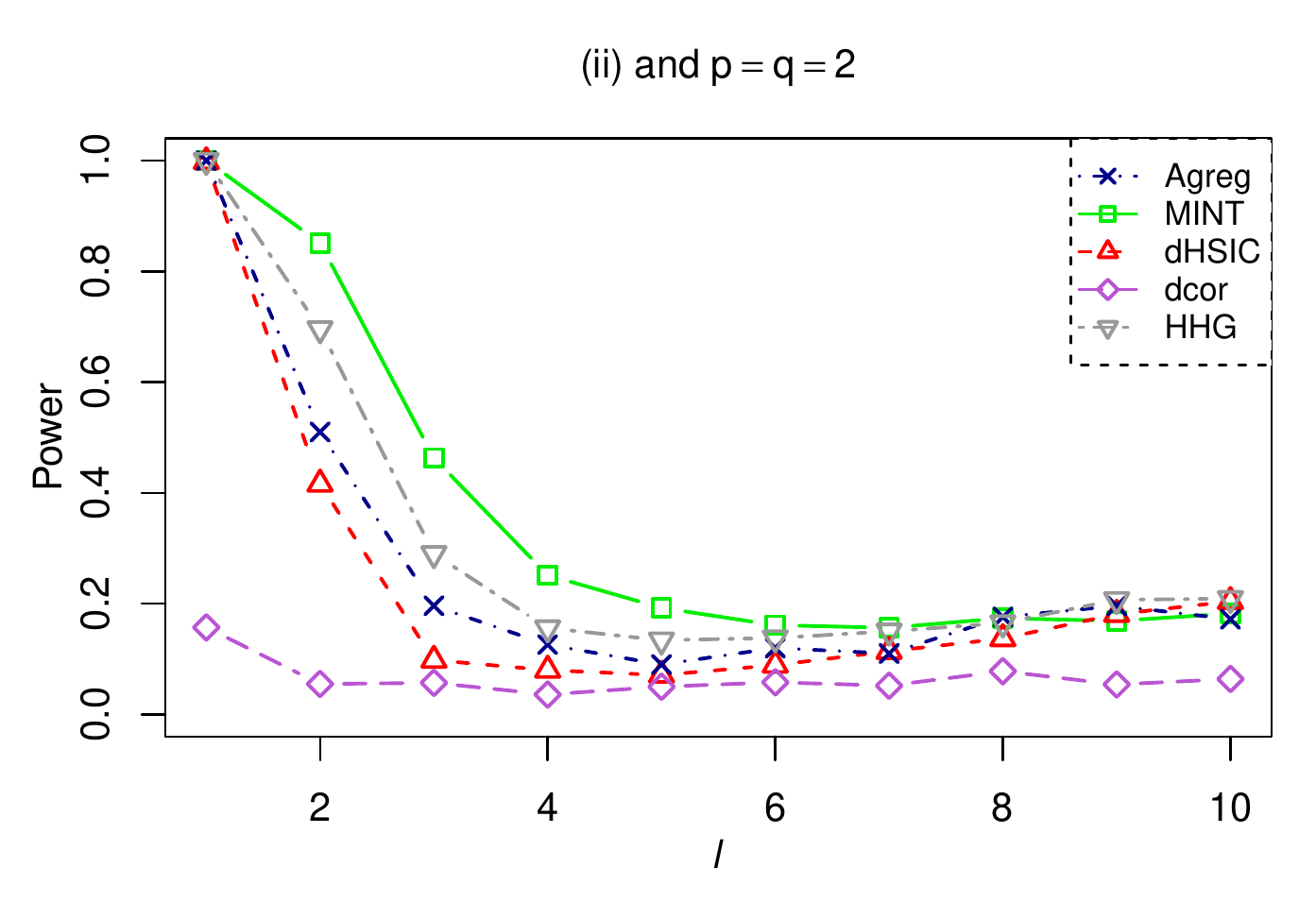}
 \includegraphics[width=0.42\textwidth]{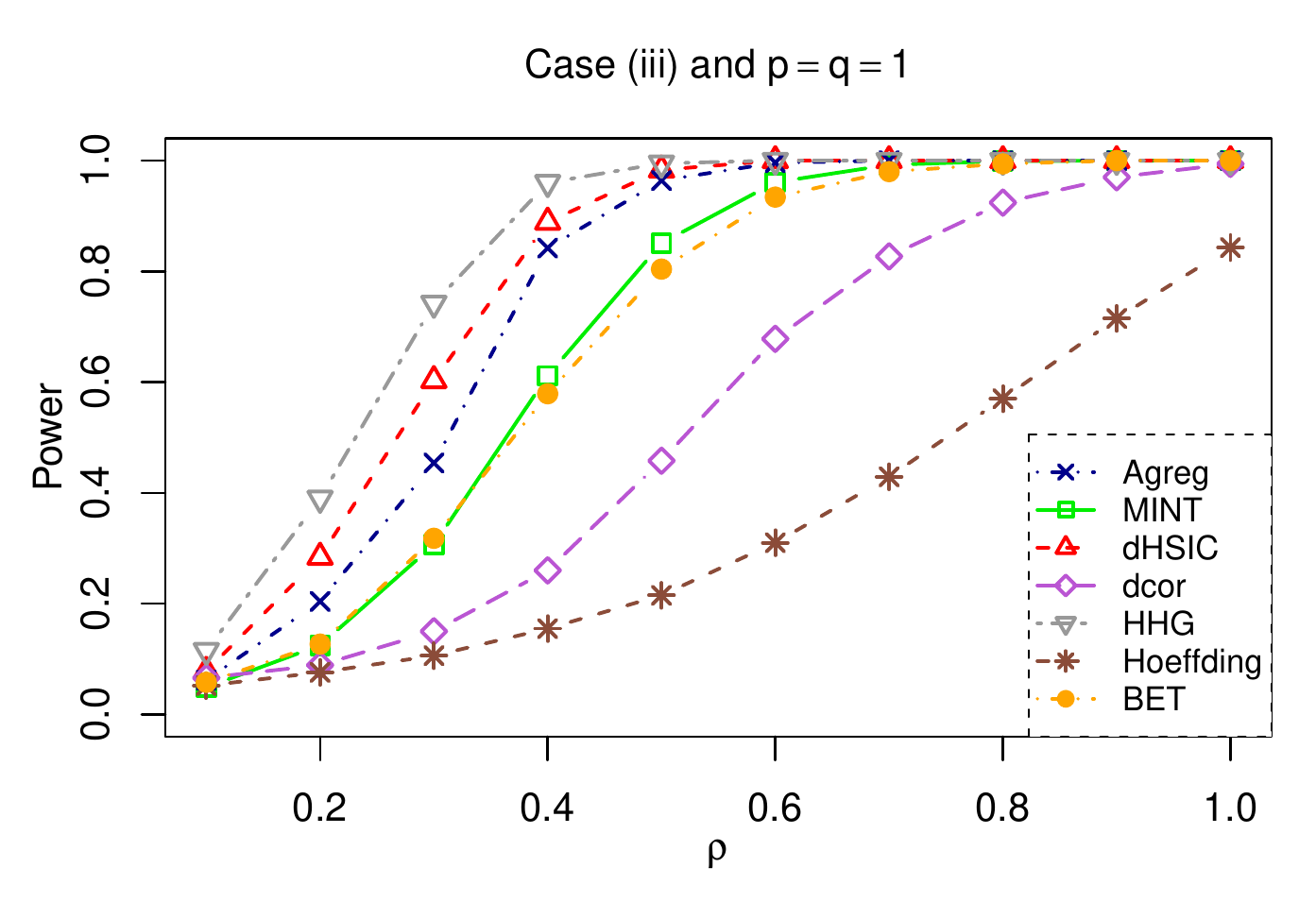}
 \includegraphics[width=0.42\textwidth]{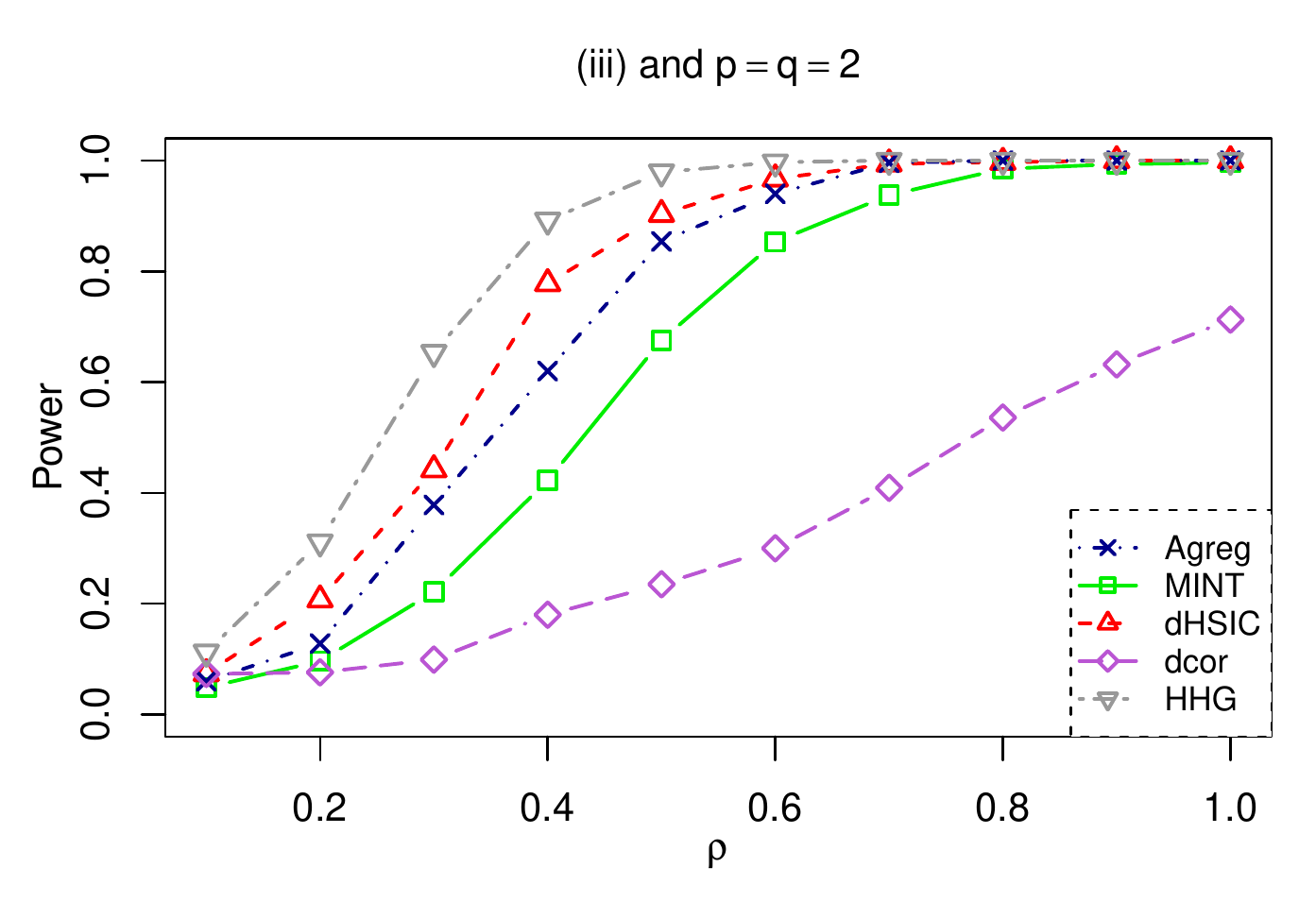}
 \includegraphics[width=0.42\textwidth]{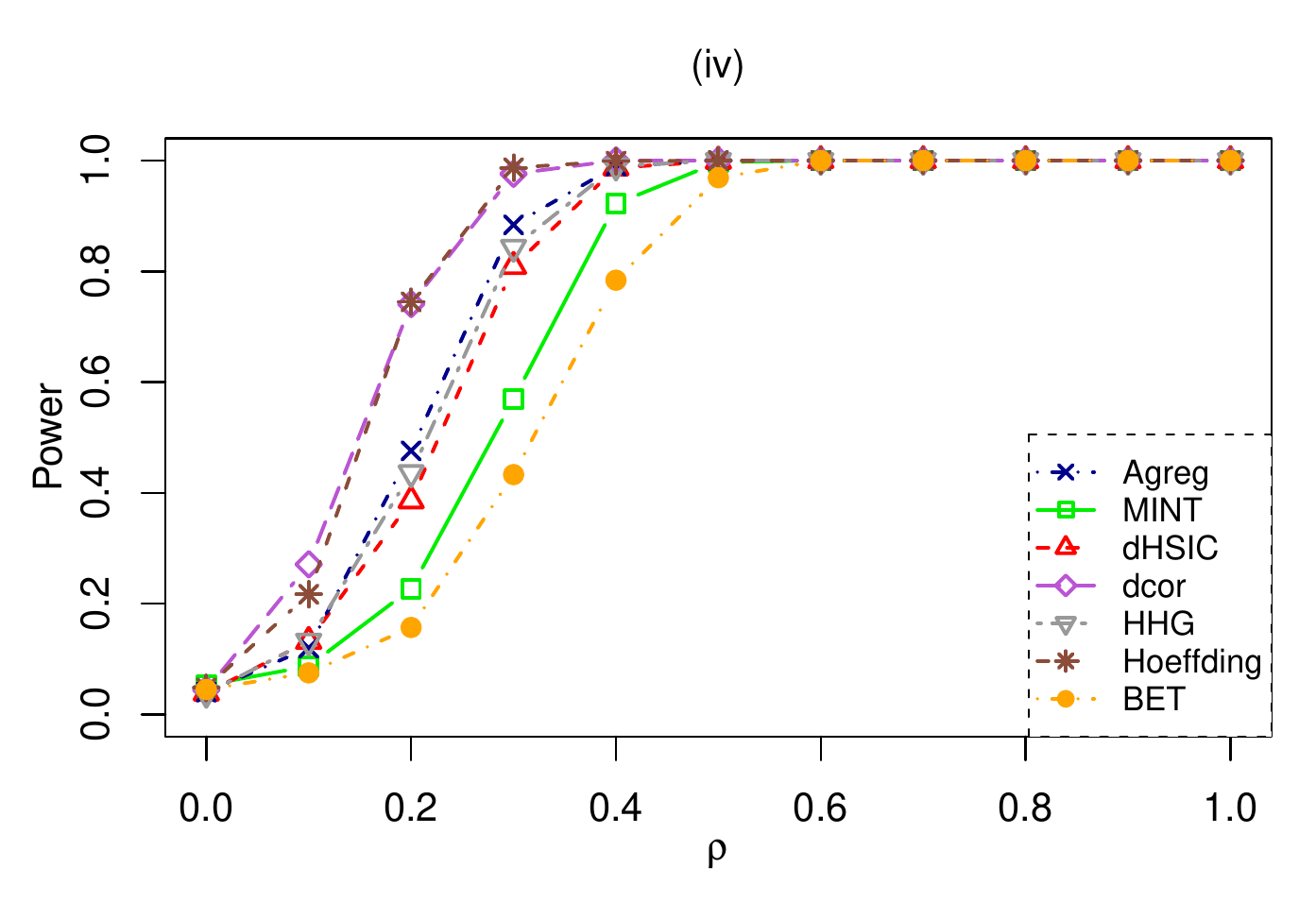} 
 \hspace{0.42\textwidth}
 \caption{Power curves of the permuted aggregated HSIC test with $B_1 = 3000$ and $B_2 = 500$, bandwidth collection $\widetilde{\W}$ defined in \eqref{eq:collectioncompartestsunidim6} or \eqref{eq:collectioncompartestsbidim} and exponential weights \eqref{omegavarsur2iet2j}. It is compared to the MINT, the single HSIC test, the distance covariance, Heller, Heller and Gorfine's test (HHG), Hoeffding's $D$-test and the BET. The empirical power is estimated from $1000$ samples of size $n = 200$. The prescribed level is $\alpha=0.05$.}
\label{fig:PowerCurvesMINTHSICAgreg}
\end{figure}

\begin{sloppypar}
In Figure \ref{fig:PowerCurvesMINTHSICAgreg}, we compare our permution-based aggregated HSIC test with the mutual information test (MINT) of \cite{berrett2019nonparametric} implemented in the {\tt R} package {\tt IndepTest}, the permutation-based HSIC single test (HSIC) implemented in the {\tt R} package {\tt dHSIC} \cite{pfister2018kernel} with $B = 1000$ permutations, 
the distance covariance of \cite{szekely2007measuring} implemented in the {\tt R} package {\tt energy}, the test of \cite{heller2016consistent} (HHG), the $D$-test of \cite{hoeffding1948non} implemented in the {\tt R} package {\tt Hmisc} 
and the binary expansion test (BET) of \cite{zhang2019bet}. 
\end{sloppypar}

For each example, we simulate samples with size $n=200$. In line with the results obtained in Section \ref{sect:comparTheoPerm}, Algorithm \ref{AlgoPermu} is applied with $B_1 = 3000$ and $B_2 = 500$. The power of the different tests is estimated using 1000 different samples of $(X,Y)$ and is represented w.r.t $l$ for simulated data from \ref{Berrettex1} and \ref{Berrettex2} and w.r.t $\rho$ for \ref{Berrettex3} and \ref{Gaussian}. 

As expected, no procedure of testing constantly yields the best performances in all cases. Indeed, it is well known that no uniformly most powerful test exists. 
However, as for the MINT procedure, the HSIC aggregated procedure seems to yield competitive results on all examples, contrarily to other procedures such as the distance covariance or Hoeffding's $D$-test which perform very well in the Gaussian case, but are not very powerfull in the other cases. 
Moreover, in most cases, the HSIC aggregated procedure performs better than the single HISC test, which illustrates the benefits of aggregation. 

\medskip

\begin{center}
{\Large\textsc{Supplementary material}}
\end{center}
\textbf{Supplement to \enquote{Adaptive test of independence based on HSIC measures}}\\
This Supplement contains sharp upper bounds for the uniform separation rates over Nikol'skii-Besov balls, a further numerical study and all the proofs.

\newpage

\begin{center}
{\Large \textsc{Supplement to \\
\medskip
\enquote{Adaptive test of independence based on HSIC measures}}}
\end{center}

\bigskip

Appendix \ref{section:supplNikol} contains sharp upper bounds for the uniform separation rates over anisotropic Nikol'skii-Besov balls of the single and the aggregated HSIC tests developed in the main article. 
In Appendix \ref{sect:NumericalSimulationsbis}, further simulations illustrate a comparison in terms of power between the theoretical and permuted single tests on the one hand, and the impact of the bandwidth collection and the weights choices on the power on the other hand. 
Finally, Appendix \ref{sect:proofs} is devoted to all the proofs. 
\bigskip

The references of Equations, Theorems, Propositions, etc, that use only numbers such as (3.1) for instance, refer to the main article {\it Adaptive test of independence based on HSIC measures}.

\appendix

\section{Control of the uniform separation rate over anisotropic Nikol'skii-Besov balls}
\label{section:supplNikol}

In this section, we consider anisotropic Nikol'skii-Besov balls which allow to take into account different regularity parameters in each direction in $ \R^{p+q}$. 
The anisotropic Nikol'skii-Besov ball $\mathcal{N}^{\delta}_{2,d} (R)$ in dimension $d$ in $\mathbb{N}^*$, with regularity parameter $\delta = (\delta_1, \ldots, \delta_d)$ in $(0,+\infty)^d$ and positive radius $R$, is defined by
\begin{align}
\mathcal{N}^{\delta}_{2,d} (R) = \biggl\lbrace & s: \mathbb{R}^d \rightarrow \mathbb{R} \ ;\ \text{for all $1\leq i\leq d$ and all $u_1, \ldots , u_{d} , v \in \mathbb{R}$},\biggr. \nonumber \\
& \text{$s$ has continuous partial derivatives $D_i^{\lfloor \delta_i \rfloor}$ of order $\lfloor \delta_i \rfloor$ w.r.t $u_i$, and}  \nonumber \\
& \biggl. \norm{ D_i^{\lfloor \delta_i \rfloor}s (u_1, \ldots ,u_i +v, \ldots ,u_d) - D_i^{\lfloor \delta_i \rfloor}s (u_1, \ldots ,u_d) }_{2} \leq R \abs{v}^{\delta_i - \lfloor \delta_i \rfloor} \biggr\rbrace, \nonumber 
\end{align}
where  $\lfloor  \delta_i\rfloor$ denotes the floor function of $\delta_i $ if $\delta_i$ is not integer and $ \lfloor  \delta_i\rfloor = \delta_i-1$ if $\delta_i$ is an integer. \\

As in the Sobolev case, we study optimality over $\mathcal{N}^{\delta}_{2,d} (R,R')$ defined by 
\begin{equation}\label{eq:Nikolinf}
\mathcal{N}^{\delta}_{2,d}(R,R') = \mathcal{N}^{\delta}_{2,d}(R) \cap \ac{f ; \max\ac{\norm{f}_{\infty}, \norm{f_1}_{\infty}, \norm{f_2}_{\infty}}\leq R'}.
\end{equation}

As in the Sobolev case, we prove upper bounds for the uniform separation rate of the tests defined in the main article over these new regularity spaces. 
Section \ref{unifseprateNikolsingle} is devoted to the single test $\Delta_\alpha^{\lambda,\mu}$ with fixed bandwidths defined in Equation \eqref{Deltalambdamu}, and the study of the aggregated test $\Delta_\alpha$ defined in Equation \eqref{agregtest} in done in Sections \ref{unifseprateNikoloracleagg} and \ref{unifseprateNikolagg}. 

\subsection{Uniform separation rate of the single tests over Nikol'skii-Besov balls}
\label{unifseprateNikolsingle}

In this section, we consider a fixed bandwidth $(\lambda,\mu)$.
Lemma \ref{TBNB} provides an upper bound of the bias term, similar to that of Lemma \ref{TBSB}, in the case when $ f-f_1 \otimes f_2$ belongs to an anisotropic Nikol'skii-Besov ball.

\begin{lemm} 
Let $ \psi = f-f_1 \otimes f_2 $ and assume that $\psi$ belongs to $\mathcal{N}^{\delta}_{2,p+q} (R)$, where the regularity parameter $\delta = (\nu_1, \ldots , \nu_p, \gamma_1, \ldots ,\gamma_q)$ belongs to $(0,2]^{p+q}$. 
Let $\varphi_\lambda$ and $\phi_\mu$ be the functions defined in \eqref{varphilambdaphimu}. 
Then, the bias term can be controlled as follows
\begin{equation*}
\norm{ \psi - \psi \ast (\varphi_\lambda \otimes \phi_\mu) }_{2}^2 \leq C(\delta,R) \left[ \sum_{i=1}^p \lambda_i^{2 \nu_i} +  \sum_{j=1}^q \mu_j^{2 \gamma_j} \right].
\end{equation*}
\label{TBNB}
\end{lemm}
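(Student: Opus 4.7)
The plan is to reduce the multidimensional bias to one-dimensional estimates along each coordinate via a telescoping argument that exploits the tensor-product structure of $\varphi_\lambda \otimes \phi_\mu$. Write $\varphi_\lambda \otimes \phi_\mu = G_1 \otimes \cdots \otimes G_{p+q}$, where $G_k$ is a centered one-dimensional Gaussian density with bandwidth $h_k$, using the shorthand $(h_1,\ldots,h_{p+q}) = (\lambda_1,\ldots,\lambda_p,\mu_1,\ldots,\mu_q)$ and associated regularities $(\delta_1,\ldots,\delta_{p+q}) = (\nu_1,\ldots,\nu_p,\gamma_1,\ldots,\gamma_q)$. Letting $\ast_k$ denote convolution only in the $k$-th variable, one has $\psi \ast (\varphi_\lambda \otimes \phi_\mu) = \psi \ast_1 G_1 \ast_2 \cdots \ast_{p+q} G_{p+q}$. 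Setting $T_0 \psi = \psi$ and $T_k \psi = T_{k-1} \psi \ast_k G_k$, the telescoping identity
\[
\psi - T_{p+q} \psi \;=\; \sum_{k=1}^{p+q} \pa{T_{k-1} \psi - (T_{k-1} \psi) \ast_k G_k}
\]
together with the triangle inequality in $\L_2$ reduces the problem to bounding each term $\norm{T_{k-1} \psi - (T_{k-1} \psi) \ast_k G_k}_2$ separately.

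The key one-dimensional estimate is: for any $s : \R^{p+q} \to \R$ satisfying the Nikol'skii-Besov condition in the $k$-th direction with regularity $\delta_k \in (0,2]$ and radius $R$, we have $\norm{s - s \ast_k G_k}_2 \leq C(\delta_k, R)\, h_k^{\delta_k}$. When $\delta_k \in (0,1]$, so that $\lfloor \delta_k \rfloor = 0$, applying Minkowski's integral inequality to $(s - s \ast_k G_k)(x) = -\int \cro{s(x - v e_k) - s(x)}\, G_k(v)\, dv$ and invoking the condition $\norm{s(\cdot - v e_k) - s(\cdot)}_2 \leq R|v|^{\delta_k}$ yields the estimate via $\int |v|^{\delta_k} G_k(v)\, dv = C h_k^{\delta_k}$. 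When $\delta_k \in (1,2]$, so that $\lfloor \delta_k \rfloor = 1$, I exploit the symmetry of $G_k$ (hence $\int v\, G_k(v)\, dv = 0$) via the first-order Taylor identity
\[
s(x - v e_k) - s(x) + v\, D_k s(x) \;=\; -\int_0^{v} \pa{D_k s(x - t e_k) - D_k s(x)}\, dt,
\]
which reduces the problem to controlling a first-order difference of $D_k s$, bounded by $R|t|^{\delta_k - 1}$ from the Nikol'skii condition, yielding again $C(\delta_k,R)\, h_k^{\delta_k}$ after Minkowski.

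The third ingredient is to verify that each partial convolution $T_{k-1} \psi$ still satisfies the Nikol'skii-Besov condition in direction $k$ with the same radius $R$. This follows because the convolutions constituting $T_{k-1}$ act in directions $1,\ldots,k-1$ and thus commute with both the finite-difference operator in direction $k$ and the partial derivative $D_k$, while Young's convolution inequality gives $\norm{T_{k-1} u}_2 \leq \norm{u}_2$ since each $G_j$ is a probability density. Combining this with the one-dimensional estimate applied to $s = T_{k-1} \psi$ yields
\[
\norm{\psi - \psi \ast (\varphi_\lambda \otimes \phi_\mu)}_2 \;\leq\; \sum_{k=1}^{p+q} C(\delta_k, R)\, h_k^{\delta_k},
\]
and squaring with $(\sum_k a_k)^2 \leq (p+q) \sum_k a_k^2$ gives the claimed bound with constant $C(\delta, R)$ absorbing $p+q$ and the individual $C(\delta_k, R)$.

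The main subtlety is the case split $\delta_k \in (0,1]$ versus $(1,2]$: in the latter regime, exploiting the mean-zero property of the Gaussian kernel via the first-order Taylor expansion is what converts a crude $O(h_k)$ rate into the sharp $O(h_k^{\delta_k})$ rate. The global assumption $\delta_k \leq 2$ is precisely what keeps the argument within the Nikol'skii norm of order $\lfloor \delta_k \rfloor \in \{0,1\}$; higher regularities would require higher-order Taylor expansions and corresponding vanishing moments of $G_k$ beyond the first, which the Gaussian does not provide.
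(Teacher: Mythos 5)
Your proof is correct and follows essentially the same route as the paper: your coordinate-wise telescoping of partial convolutions $T_k$ is exactly the paper's decomposition $b=\sum_i b_{1,i}+\sum_j b_{2,j}$ written in operator form, and both arguments then split on $\delta_k\in(0,1]$ versus $(1,2]$, using a generalized Minkowski inequality in the first case and a first-order Taylor expansion combined with the vanishing first moment of the Gaussian in the second. The only cosmetic difference is that you transfer the Nikol'skii condition to $T_{k-1}\psi$ via Young's inequality, whereas the paper keeps the Gaussian integrals explicit and uses translation invariance of the $\L_2$-norm, applying the smoothness condition directly to $\psi$.
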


In the Nikol'skii-Besov case, the control of the bias term requires a restriction on the regularity parameter to $(0,2]^{p+q}$, which comes from the fact that the Gaussian kernel is of order 2. In order to extend the range of the upper bound, kernels of higher order should be considered. This generalization lies beyond the scope of this article and requires further developments. 
As in Section \ref{Uniform-separation-rate}, one can deduce from Theorem \ref{th:powerful-testparam}  upper bounds for the uniform separation rates of the single test $\Delta_{\alpha}^{\lambda,\mu}$ over Nikol'skii-Besov balls.

\begin{theorem} \label{ThresholdNikol}
Let $\alpha$, $\beta$ in $(0,1)$, $\delta = (\nu_1, \ldots , \nu_p, \gamma_1, \ldots ,\gamma_q)$ in $(0,2]^{p+q}$ and $R,R'>0$.
Consider bandwidths $(\lambda,\mu)$ satisfying Assumptions $\boldsymbol{\mathcal{A}_2(\alpha)}$ and denote $\Delta_{\alpha}^{\lambda,\mu}$ the test defined by \eqref{Deltalambdamu}. 
Then, the uniform separation rate defined in \eqref{seprate} of the test $\Delta_{\alpha}^{\lambda,\mu}$ over the Nikol'skii-Besov ball $\mathcal{N}^{\delta}_{2,p+q} (R,R')$ defined in Equation \eqref{eq:Nikolinf} can be upper bounded as follows
\begin{multline}\label{eq:majnik}
\cro{\rho \left( \Delta_{\alpha}^{\lambda,\mu} , \mathcal{N}^{\delta}_{2,p+q} (R,R') , \beta \right) }^2\leq \displaystyle C(\delta,R) \left[ \sum_{i=1}^p \lambda_i^{2 \nu_i} +  \sum_{j=1}^q \mu_j^{2 \gamma_j} \right]  \\
+ \frac{C\pa{R', p, q, \beta}}{n\sqrt{\lambda_1 \ldots \lambda_p \mu_1 \ldots \mu_q}} \log\left( \frac{1}{\alpha} \right).
\end{multline}
where $C(\cdot)$ are positive constants depending only on their arguments.
\end{theorem}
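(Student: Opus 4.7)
The plan is to mirror the proof of Theorem \ref{ThresholdSobolev} almost verbatim, since the only difference between the Sobolev and Nikol'skii-Besov formulations lies in how the bias term $\norm{\psi - \psi \ast (\varphi_\lambda \otimes \phi_\mu)}_2^2$ is controlled. All the heavy lifting has already been done: Theorem \ref{th:powerful-testparam} converts the testing problem into a deterministic inequality on $\norm{\psi}_2^2$, and Lemma \ref{TBNB} (stated just above) bounds the bias term under the Nikol'skii-Besov assumption.

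More precisely, I would first invoke Theorem \ref{th:powerful-testparam}: under Assumption $\boldsymbol{\mathcal{A}_1}$ and $\boldsymbol{\mathcal{A}_2(\alpha)}$, one has $P_f(\Delta_\alpha^{\lambda,\mu}=0) \leq \beta$ whenever
\begin{equation*}
\norm{\psi}_{2}^2 > \norm{\psi - \psi \ast (\varphi_\lambda \otimes \phi_\mu)}_{2}^2 + \frac{C(M_f, p, q , \beta)}{n \sqrt{\lambda_1 \ldots \lambda_p \mu_1 \ldots \mu_q}} \log \pa{\frac{1}{\alpha}}.
\end{equation*}
For any $f \in \mathcal{N}^{\delta}_{2,p+q}(R,R')$, Assumption $\boldsymbol{\mathcal{A}_1}$ is automatically satisfied with $M_f \leq R'$, so $C(M_f, p, q, \beta) \leq C(R', p, q, \beta)$. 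Then I apply Lemma \ref{TBNB}, which is valid because $\delta \in (0,2]^{p+q}$, to replace the bias term by $C(\delta,R)[\sum_{i=1}^p \lambda_i^{2\nu_i} + \sum_{j=1}^q \mu_j^{2\gamma_j}]$.

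Combining these two ingredients, $P_f(\Delta_\alpha^{\lambda,\mu}=0) \leq \beta$ is guaranteed as soon as $\norm{\psi}_2^2$ exceeds the right-hand side of \eqref{eq:majnik}. By the very definition \eqref{seprate} of the uniform separation rate as the infimum of such thresholds, this yields the claimed bound on $[\rho(\Delta_\alpha^{\lambda,\mu}, \mathcal{N}^\delta_{2,p+q}(R,R'), \beta)]^2$.

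The only subtlety, and the point I would double-check, is the passage from $C(M_f,\cdot)$ to $C(R',\cdot)$: since the uniform separation rate is taken over the full class $\mathcal{N}^{\delta}_{2,p+q}(R,R')$, one needs a constant valid uniformly over all $f$ in that class, and the intersection with $\{f : \max(\norm{f}_\infty,\norm{f_1}_\infty,\norm{f_2}_\infty)\leq R'\}$ in the definition \eqref{eq:Nikolinf} is precisely what makes this uniform control possible. There is no genuine obstacle here — the real work is already contained in Lemma \ref{TBNB} and in Theorem \ref{th:powerful-testparam} (whose proof in turn relies on the refined variance bound of Proposition \ref{prop:finemajvar} and the quantile bound of Proposition \ref{LTQ}); the theorem itself is essentially a corollary.
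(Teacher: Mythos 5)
Your proposal is correct and follows essentially the same route as the paper's own proof: both combine Theorem \ref{th:powerful-testparam} with the bias bound of Lemma \ref{TBNB} (valid since $\delta\in(0,2]^{p+q}$), use $M_f\leq R'$ from the definition \eqref{eq:Nikolinf} to obtain constants depending on $R'$, and conclude via the definition \eqref{seprate} of the uniform separation rate. The only minor remark is that, unlike the Sobolev case, no additional absorption of a $\norm{\psi}_2^2$ term is needed here, which you correctly exploited.
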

As in Section \ref{Uniform-separation-rate}, we can deduce optimal bandwidths $(\lambda^*, \mu^*)$ which minimize the right-hand side of Equation \eqref{eq:majnik} and compute an upper bound for the uniform separation rate of the optimized test $\Delta_{\alpha}^{\lambda^*,\mu^*}$ over Nikol'skii-Besov balls.

\begin{corr}\label{cor:ThresholdNikol}
Let $\alpha$ in $(0,1/e)$, $\beta$ in $(0,1)$, $\delta = (\nu_1, \ldots , \nu_p, \gamma_1, \ldots ,\gamma_q)$ in $(0,2]^{p+q}$ and $R,R'>0$. 
Define for all $i$ in $\{1,\dots,p\}$ and for all $j$ in $\{1,\dots,q\}$, 
\begin{equation*}
\lambda_i^* = 
n^{- 2 \eta/[\nu_i (1 + 4 \eta)]} \quad \text{and} \quad \mu_j^* =  n^{- 2 \eta / [\gamma_j (1 + 4 \eta)]} \quad\quad \quad \mbox{where} \quad \frac{1}{\eta} = \sum_{i = 1}^p \frac{1}{\nu_i} + \sum_{j = 1}^q \frac{1}{\gamma_j}.
\end{equation*} 
If $n \geq \pa{\log(1/\alpha)}^{1+1/(4 \eta)}$, then, $(\lambda^*,\mu^*)$ satisfy $\boldsymbol{\mathcal{A}_2(\alpha)}$ and the uniform separation rate of the optimized test $\Delta_{\alpha}^{\lambda^*,\mu^*}$ over the Nikol'skii-Besov ball $\mathcal{N}^{\delta}_{2,p+q} (R,R')$ is controlled as follows 
\begin{equation}
\rho \left( \Delta_{\alpha}^{\lambda^*,\mu^*} , \mathcal{N}^{\delta}_{2,p+q} (R,R'), \beta \right) \leq \displaystyle C\pa{p, q,  \alpha, \beta, \delta,R,R'} n^{-2 \eta/(1 + 4 \eta)}.
\end{equation}
\end{corr}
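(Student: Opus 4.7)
\textbf{Proof proposal for Corollary \ref{cor:ThresholdNikol}.}

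The strategy is to simply plug the prescribed bandwidths $(\lambda^*,\mu^*)$ into the bound of Theorem \ref{ThresholdNikol}, after verifying that they lie in the admissible range $\boldsymbol{\mathcal{A}_2(\alpha)}$. The key design principle behind the choice of $(\lambda^*,\mu^*)$ is that each exponent is tuned so that $(\lambda_i^*)^{2\nu_i}$ and $(\mu_j^*)^{2\gamma_j}$ are all equal and match the variance term, producing a single overall rate. This is the anisotropic analogue of the scalar computation behind Corollary \ref{cor:ThresholdSobolev}, and borrows nothing conceptually new.

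First, I would compute the bias part of the right-hand side of \eqref{eq:majnik}. With the proposed values, each of the $p+q$ terms satisfies
\begin{equation*}
(\lambda_i^*)^{2\nu_i} = n^{-4\eta/(1+4\eta)}, \qquad (\mu_j^*)^{2\gamma_j} = n^{-4\eta/(1+4\eta)},
\end{equation*}
so the bias contribution is bounded by $C(p,q,\delta,R)\, n^{-4\eta/(1+4\eta)}$. Then I would compute the variance/quantile contribution. Using $\log\pa{\prod_{i=1}^p\lambda_i^*\prod_{j=1}^q\mu_j^*} = -\frac{2\eta}{1+4\eta}\pa{\sum_i \frac{1}{\nu_i}+\sum_j \frac{1}{\gamma_j}}\log n$ and the definition of $\eta$, which gives the sum inside parentheses equal to $1/\eta$, one obtains
\begin{equation*}
\prod_{i=1}^p\lambda_i^*\prod_{j=1}^q\mu_j^* = n^{-2/(1+4\eta)},
\end{equation*}
hence $1/\pa{n\sqrt{\prod\lambda_i^*\prod\mu_j^*}} = n^{-1+1/(1+4\eta)} = n^{-4\eta/(1+4\eta)}$. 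This is precisely why the chosen exponents are optimal: the two terms of \eqref{eq:majnik} are of the same order in $n$.

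Next I would check $\boldsymbol{\mathcal{A}_2(\alpha)}$. Since $\eta>0$ and every $1/\nu_i,1/\gamma_j>0$, both $\prod_i\lambda_i^*=n^{-\frac{2\eta}{1+4\eta}\sum_i 1/\nu_i}$ and $\prod_j\mu_j^*=n^{-\frac{2\eta}{1+4\eta}\sum_j 1/\gamma_j}$ are strictly less than $1$ as soon as $n\geq 2$. For the quantile assumption, $n\sqrt{\prod\lambda_i^*\prod\mu_j^*} = n^{4\eta/(1+4\eta)}$, and this exceeds $\log(1/\alpha)$ exactly when $n > \pa{\log(1/\alpha)}^{1+1/(4\eta)}$, which is the hypothesis made in the statement. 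The condition $\log(1/\alpha)>1$ follows from $\alpha<1/e$.

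Combining the two contributions in \eqref{eq:majnik} then yields
\begin{equation*}
\cro{\rho\pa{\Delta_\alpha^{\lambda^*,\mu^*},\mathcal{N}^\delta_{2,p+q}(R,R'),\beta}}^2 \leq C(p,q,\delta,R,R',\beta)\,\log(1/\alpha)\, n^{-4\eta/(1+4\eta)},
\end{equation*}
and taking the square root (absorbing $\sqrt{\log(1/\alpha)}$ into a constant depending on $\alpha$) gives the announced rate $n^{-2\eta/(1+4\eta)}$. There is no real obstacle in this proof; the only care needed is bookkeeping with the exponents and exploiting the identity $\sum_i 1/\nu_i + \sum_j 1/\gamma_j = 1/\eta$ at the right place.
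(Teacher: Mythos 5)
Your proposal is correct and follows essentially the same route as the paper: the paper derives the exponents by equating the bias and variance/quantile terms (solving $2a_i\nu_i = 2b_j\gamma_j = -\tfrac12[\sum_r a_r+\sum_s b_s]-1$), whereas you verify by direct substitution that both terms equal $n^{-4\eta/(1+4\eta)}$, which is the same computation read in the other direction. Your explicit check of $\boldsymbol{\mathcal{A}_2(\alpha)}$ matches the paper's brief remark, so there is nothing missing.
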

Notice that the upper bound obtained for Nikol'skii-Besov balls in Corollary \ref{cor:ThresholdNikol} is analogue to that obtained for Sobolev balls in Corollary~\ref{cor:ThresholdSobolev}. Indeed, if we consider the same regularities in all directions in the case of Nikol'skii-Besov balls: $\nu_1 = \ldots = \nu_p = \gamma_1 = \ldots = \gamma_q$, we obtain a similar upper bound. 
These upper bounds obtained in Corollaries \ref{cor:ThresholdSobolev} and \ref{cor:ThresholdNikol} coincide with the asymptotic minimax separation rate of testing mutual independence w.r.t. the $\mathbb{L}_2$-norm over isotropic Nikol'skii-Besov spaces \cite{ingster1989asymptotically}. This suggests that the test $\Delta_{\alpha}^{\lambda^*,\mu^*}$ with optimal bandwidths is optimal in the minimax sense over Nikol'skii-Besov balls with regularity parameter $\delta$ in $(0,2]^{p+q}$. Yet, as in the Sobolev case, it cannot be adaptive since the optimal bandwidths $(\lambda^*,\mu^*)$ depend on the regularity~$\delta$.

Finally, note that subsequently, \cite{kim2020minimax} also generalized Theorem \ref{ThresholdNikol} to the permuted tests, which suggests that the permuted test with bandwidths $(\lambda^*,\mu^*)$ defined in Corollary \ref{cor:ThresholdNikol} is optimal in the minimax sense. However, as in the Sobolev case, they only obtain a polynomial dependence in $\alpha$ which is not sharp enough to provide adaptive tests by aggregating as done in Section \ref{unifseprateNikolagg}.

\subsection{Oracle-type conditions for the uniform separation rate over Nikol'skii-Besov balls}
\label{unifseprateNikoloracleagg}

Theorem \ref{theorem3Nikol} is equivalent to Theorem \ref{theorem3Sobol} over Nikol'skii-Besov balls and provides an oracle-type inequality for the uniform separation rate of the aggregated testing procedure $\Delta_{\alpha}$.

\begin{theorem}\label{theorem3Nikol} 
\begin{sloppypar}
Let $\alpha, \beta$ in $(0,1)$. Consider a finite or countable collection $\W\subset (0,+\infty)^p\times (0,+\infty)^q$ of bandwidths $(\lambda,\mu)$ and a collection of positive weights $\ac{\omega_{\lambda,\mu}}_{(\lambda,\mu) \in \W}$ such that $\sum_{(\lambda,\mu) \in \W} e^{- \omega_{\lambda,\mu}} \leq 1$ and such that 
all $(\lambda,\mu)$ in $\W$ verifies Assumption $\boldsymbol{\mathcal{A}_2(\alpha e^{-\omega_{\lambda,\mu}})}$. 
Then, the uniform separation rate over Nikol'skii-Besov balls $\mathcal{N}^{\delta}_{2,p+q} (R,R')$ with $\delta = (\nu_1, \ldots , \nu_p, \gamma_1, \ldots ,\gamma_q)$ in $(0,2]^{p+q}$ and $R,R'>0$ of the aggregated test $\Delta_{\alpha}$ defined in Equation \eqref{agregtest} can be upper bounded as follows
\end{sloppypar}
\begin{multline*}
\cro{\rho \left( \Delta_{\alpha}, \mathcal{N}^{\delta}_{2,p+q} (R,R') , \beta \right) }^2\leq \displaystyle C\pa{p, q, \beta, \delta,R,R'} \inf_{(\lambda,\mu) \in \W} \Bigg\lbrace 
\left[ \sum_{i=1}^p \lambda_i^{2 \nu_i} +  \sum_{j=1}^q \mu_j^{2 \gamma_j} \right] \\ 
+ \frac{1}{n\sqrt{\lambda_1 \ldots \lambda_p \mu_1 \ldots \mu_q}} \left( \log\left( \frac{1}{\alpha} \right) + \omega_{\lambda,\mu} \right) \Bigg\rbrace,
\end{multline*}
where $C(\cdot)$ is a positive constant depending only on its arguments.
\end{theorem}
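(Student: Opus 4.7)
The plan is to mimic the proof of Theorem \ref{theorem3Sobol} (the Sobolev analogue), substituting the Nikol'skii-Besov bias bound from Lemma \ref{TBNB} for the Sobolev bias bound from Lemma \ref{TBSB}. Throughout, write $\psi = f - f_1 \otimes f_2$.

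First, I would apply Lemma \ref{prop5}: for every $(\lambda,\mu) \in \W$, the aggregated test satisfies
\begin{equation*}
P_f(\Delta_\alpha = 0) \leq P_f\bigl(\Delta_{\alpha e^{-\omega_{\lambda,\mu}}}^{\lambda,\mu} = 0\bigr).
\end{equation*}
Hence it is enough to show that for any $f$ with $\psi \in \mathcal{N}^{\delta}_{2,p+q}(R,R')$ whose $\L_2$-norm exceeds the right-hand side of the announced inequality (up to a constant), there exists at least one $(\lambda,\mu) \in \W$ for which the single test $\Delta_{\alpha e^{-\omega_{\lambda,\mu}}}^{\lambda,\mu}$ has second-kind error at most $\beta$.

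Second, I would invoke Theorem \ref{th:powerful-testparam} applied with the corrected level $\alpha e^{-\omega_{\lambda,\mu}}$ in place of $\alpha$. This yields $P_f(\Delta_{\alpha e^{-\omega_{\lambda,\mu}}}^{\lambda,\mu} = 0) \leq \beta$ whenever
\begin{equation*}
\norm{\psi}_2^2 > \norm{\psi - \psi \ast (\varphi_\lambda \otimes \phi_\mu)}_2^2 + \frac{C(M_f,p,q,\beta)}{n\sqrt{\lambda_1 \ldots \lambda_p \mu_1 \ldots \mu_q}}\bigl(\log(1/\alpha) + \omega_{\lambda,\mu}\bigr),
\end{equation*}
where the key observation is $\log(1/(\alpha e^{-\omega_{\lambda,\mu}})) = \log(1/\alpha) + \omega_{\lambda,\mu}$, and where the assumption $\boldsymbol{\mathcal{A}_2(\alpha e^{-\omega_{\lambda,\mu}})}$ is exactly what the hypothesis of Theorem \ref{theorem3Nikol} supplies. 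The constant $M_f$ is controlled by $R'$ since $f \in \mathcal{N}^{\delta}_{2,p+q}(R,R')$.

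Third, I would plug in the Nikol'skii-Besov bias bound from Lemma \ref{TBNB}: since $\delta \in (0,2]^{p+q}$,
\begin{equation*}
\norm{\psi - \psi \ast (\varphi_\lambda \otimes \phi_\mu)}_2^2 \leq C(\delta,R)\Bigl[\sum_{i=1}^p \lambda_i^{2\nu_i} + \sum_{j=1}^q \mu_j^{2\gamma_j}\Bigr].
\end{equation*}
The advantage compared to the Sobolev case is that there is no residual $(1 - e^{-T_\delta^2/2})\|\psi\|_2^2$ term to absorb, so the reduction is entirely straightforward here. Taking the infimum over $(\lambda,\mu) \in \W$ and combining with the previous step then gives, for $\|\psi\|_2^2$ exceeding the infimum of
\begin{equation*}
C(p,q,\beta,\delta,R,R')\Bigl\{\sum_{i=1}^p \lambda_i^{2\nu_i} + \sum_{j=1}^q \mu_j^{2\gamma_j} + \frac{\log(1/\alpha) + \omega_{\lambda,\mu}}{n\sqrt{\lambda_1\ldots\lambda_p\mu_1\ldots\mu_q}}\Bigr\}
\end{equation*}
over $\W$, the bound $P_f(\Delta_\alpha = 0) \leq \beta$. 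By the definition \eqref{seprate} of the uniform separation rate, this yields precisely the announced inequality.

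There is no real obstacle: the proof is a direct transcription of the Sobolev argument (Theorem \ref{theorem3Sobol}), with Lemma \ref{TBSB} replaced by Lemma \ref{TBNB}. The only minor point requiring care is verifying that Theorem \ref{th:powerful-testparam} may legitimately be applied at the corrected level $\alpha e^{-\omega_{\lambda,\mu}} \in (0,1)$, which is guaranteed by the hypothesis that $(\lambda,\mu)$ satisfies $\boldsymbol{\mathcal{A}_2(\alpha e^{-\omega_{\lambda,\mu}})}$ for every $(\lambda,\mu) \in \W$.
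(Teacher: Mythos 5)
Your proposal is correct and follows essentially the same route as the paper: the paper's proof also reduces to the single tests via Lemma \ref{prop5} and then invokes the single-test bound over Nikol'skii-Besov balls with $\log(1/\alpha)$ replaced by $\log(1/\alpha)+\omega_{\lambda,\mu}$, the only cosmetic difference being that the paper cites Theorem \ref{ThresholdNikol} directly while you unpack it into Theorem \ref{th:powerful-testparam} plus Lemma \ref{TBNB}, which is the same content.
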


As in the Sobolev case, Theorem \ref{theorem3Nikol} can be interpreted as an oracle-type condition for the uniform separation rate of the aggregated test $\Delta_{\alpha}$ over Nikol'skii-Besov balls. Indeed, without knowing the regularity $\delta$ of $f- f_1\otimes f_2 $, the uniform separation rate of $\Delta_{\alpha}$ is of the same order as the smallest uniform separation rate of the single tests corresponding to bandwidths $(\lambda,\mu)$ in $\W$, up to an additional term $\omega_{\lambda,\mu}$ due to the level corrections.

\subsection{Control of the uniform separation rate of the aggregated procedure}
\label{unifseprateNikolagg}

In this section, we provide an upper bound for the uniform separation rate of the aggregated testing procedure $\Delta_{\alpha}$ over Nikol'skii-Besov balls for the following specific choice of bandwidth collection and weights. Let
\begin{multline}\label{eq:defLambdaUNikol} 
\W = \Bigg\{ \pa{2^{-m_{1,1}}, \ldots ,2^{-m_{1,p}}, 2^{-m_{2,1}}, \ldots ,2^{-m_{2,q}}}, \\ 
\pa{m_{1,1}, \ldots ,m_{1,p}, m_{2,1}, \ldots ,m_{2,q}}\in (\mathbb{N}^*)^{p+q}\ ;\  
\sum_{i=1}^p m_{1,i} + \sum_{j=1}^q m_{2,j} \leq  2 \log_2\pa{\frac n {\log(n)} } \Bigg\} ,
\end{multline}
In addition, we associate to every bandwidths $(\lambda, \mu) = (2^{-m_{1,1}}, \ldots ,2^{-m_{1,p}}, 2^{-m_{2,1}}, \ldots ,2^{-m_{2,q}})$ in $\W$ the positive weight 
\begin{equation}\label{eq:weigthsNikol}
\omega_{\lambda, \mu} = 2 \sum_{i=1}^p \log\pa{m_{1,i} \times \frac{\pi}{\sqrt{6}}} + 2 \sum_{j=1}^q \log\pa{m_{2,j} \times \frac{\pi}{\sqrt{6}}},
\end{equation} 
so that $\sum_{(\lambda,\mu) \in \W} e^{- \omega_{\lambda,\mu}} \leq 1$. 
 
\begin{corr}\label{corr1Nikol}
Let $\alpha, \beta$ in $(0,1)$. Consider the aggregated test $\Delta_{\alpha}$ defined in \eqref{agregtest}, with the particular choice of the collection $\W$ and the weights $\left( \omega_{\lambda, \mu} \right)_{(\lambda, \mu) \in \W}$ defined in \eqref{eq:defLambdaUNikol} and \eqref{eq:weigthsNikol}. Assume that $\log \log (n) > 1$. Then, under the assumptions of Theorem \ref{theorem3Nikol}, for any  $\delta = (\nu_1, \ldots , \nu_p, \gamma_1, \ldots ,\gamma_q)$ in $(0,2]^{p+q}$ and positive radii $R,R'$, there exists a positive constant $C(p,q,\alpha,\delta)$ such that for all $n \geq C(p,q,\alpha,\delta)$, the uniform separation rate over the Nikol'skii-Besov ball $\mathcal{N}^{\delta}_{2,p+q} (R,R')$ of $\Delta_{\alpha}$ can be upper bounded as follows:
\begin{equation*}
\rho \left( \Delta_{\alpha} , \mathcal{N}^{\delta}_{2,p+q} (R,R'), \beta \right) \leq \displaystyle C\pa{p, q, \alpha, \beta, \delta,R,R'} \left( \frac{\log \log (n)}{n} \right)^{2 \eta / (1 + 4 \eta)}, 
\end{equation*}
where $\displaystyle  \frac{1}{\eta} = \displaystyle \sum_{i = 1}^p \frac{1}{\nu_i} + \sum_{j = 1}^q \frac{1}{\gamma_j}$. 
\end{corr}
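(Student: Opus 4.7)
The plan is to apply the oracle-type inequality of Theorem \ref{theorem3Nikol} at a specific dyadic bandwidth $(\lambda^*,\mu^*)$ belonging to the collection $\W$ defined in \eqref{eq:defLambdaUNikol}, mimicking the minimax-optimal choice of Corollary \ref{cor:ThresholdNikol} but rescaled by $\log\log n$ to pay for the adaptation. This parallels the strategy used to derive the Sobolev result of Corollary \ref{corr1Sobol}, with the additional technical bookkeeping required by the anisotropy.

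First I would verify that the hypotheses of Theorem \ref{theorem3Nikol} hold. The summability condition $\sum_{(\lambda,\mu)\in\W} e^{-\omega_{\lambda,\mu}} \leq 1$ follows from the exponential form of \eqref{eq:weigthsNikol} and the identity $\sum_{m\geq 1} 6/(\pi m)^2 = 1$ applied in each coordinate (ignoring the sum constraint in \eqref{eq:defLambdaUNikol} only increases the sum). Assumption $\boldsymbol{\mathcal{A}_2(\alpha e^{-\omega_{\lambda,\mu}})}$ then holds for every $(\lambda,\mu)\in\W$ whenever $n$ is large enough, because the constraint $\sum m_{1,i}+\sum m_{2,j}\leq 2\log_2(n/\log n)$ forces $n\sqrt{\prod\lambda_i\prod\mu_j}\geq \log n$, which dominates $\log(1/\alpha)+\omega_{\lambda,\mu}$ since $\omega_{\lambda,\mu}$ is at most polylogarithmic in $n$.

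Second, I would set
\begin{equation*}
m^*_{1,i} = \left\lceil \frac{2\eta}{\nu_i(1+4\eta)}\log_2\!\left(\frac{n}{\log\log n}\right)\right\rceil,\quad m^*_{2,j} = \left\lceil \frac{2\eta}{\gamma_j(1+4\eta)}\log_2\!\left(\frac{n}{\log\log n}\right)\right\rceil,
\end{equation*}
and $\lambda^*_i = 2^{-m^*_{1,i}}$, $\mu^*_j = 2^{-m^*_{2,j}}$. Summing and using $1/\eta = \sum_i 1/\nu_i + \sum_j 1/\gamma_j$ yields
\begin{equation*}
\sum_{i=1}^p m^*_{1,i}+\sum_{j=1}^q m^*_{2,j} \leq \frac{2}{1+4\eta}\log_2\!\left(\frac{n}{\log\log n}\right) + (p+q),
\end{equation*}
which is bounded above by $2\log_2(n/\log n)$ for $n\geq C(p,q,\alpha,\delta)$, so that $(\lambda^*,\mu^*)\in \W$. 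Moreover each $m^*_{1,i}$ and $m^*_{2,j}$ is of order $\log n$, hence by \eqref{eq:weigthsNikol} one has $\omega_{\lambda^*,\mu^*}\leq C(p,q)\log\log n$.

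Third, I would plug $(\lambda^*,\mu^*)$ into the oracle bound. By construction $\lambda^{*\,2\nu_i}_i$ and $\mu^{*\,2\gamma_j}_j$ are all of order $(n/\log\log n)^{-4\eta/(1+4\eta)}$, so the bias term $\sum_i\lambda^{*\,2\nu_i}_i+\sum_j\mu^{*\,2\gamma_j}_j$ is at most $C(p,q,\delta)(\log\log n/n)^{4\eta/(1+4\eta)}$. On the variance side,
\begin{equation*}
\sqrt{\prod_{i=1}^p\lambda^*_i\prod_{j=1}^q\mu^*_j}\asymp \left(\frac{n}{\log\log n}\right)^{-1/(1+4\eta)},
\end{equation*}
so that $\tfrac{1}{n\sqrt{\prod\lambda^*_i\prod\mu^*_j}}\bigl(\log(1/\alpha)+\omega_{\lambda^*,\mu^*}\bigr)$ equals, up to constants, $n^{-1}(n/\log\log n)^{1/(1+4\eta)}\log\log n = (\log\log n/n)^{4\eta/(1+4\eta)}$, matching the bias term. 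Theorem \ref{theorem3Nikol} then yields $[\rho(\Delta_\alpha,\mathcal{N}^{\delta}_{2,p+q}(R,R'),\beta)]^2 \leq C(\log\log n/n)^{4\eta/(1+4\eta)}$, and taking square roots gives the announced rate.

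The main obstacle I expect lies in the anisotropic balancing step: checking that the dyadic choice $m^*_{1,i},m^*_{2,j}$ simultaneously satisfies the sum constraint defining $\W$, keeps the weights $\omega_{\lambda^*,\mu^*}$ at the $\log\log n$ scale, and makes all bias components along with the variance-plus-weight term of identical order, which requires carefully keeping track of how the different regularities $\nu_i,\gamma_j$ enter through the harmonic-mean-like quantity $\eta$. Once this balance is done, the remainder is a direct substitution into Theorem \ref{theorem3Nikol}.
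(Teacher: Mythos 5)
Your proposal is correct and follows essentially the same route as the paper: verify $\boldsymbol{\mathcal{A}_2(\alpha e^{-\omega_{\lambda,\mu}})}$ for all elements of $\W$ using the constraint $\sum_i m_{1,i}+\sum_j m_{2,j}\leq 2\log_2(n/\log n)$, then apply Theorem \ref{theorem3Nikol} at the dyadic bandwidths $2^{-m^*_{1,i}},2^{-m^*_{2,j}}$ with $2^{m^*_{1,i}}\asymp (n/\log\log n)^{2\eta/[\nu_i(1+4\eta)]}$ (the paper characterizes these integers by the same two-sided inequalities your ceilings satisfy), bound $\omega_{\lambda^*,\mu^*}\leq C(p,q,\delta)\log\log n$, and balance the bias and variance-plus-weight terms at order $(\log\log n/n)^{4\eta/(1+4\eta)}$. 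The anisotropic balancing via $1/\eta=\sum_i 1/\nu_i+\sum_j 1/\gamma_j$ is exactly the computation done in the paper, so no gap remains.
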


As in the case of Sobolev regularity, according to Corollary \ref{corr1Nikol}, the uniform separation rate of the aggregated procedure over Nikol'skii-Besov balls is of the same order as the one of the optimized test $\Delta_{\alpha}^{\lambda^*,\mu^*}$ (given in Corollary \ref{cor:ThresholdNikol}), up to a $\log \log(n)$ factor which is, once again a usual price to pay for aggregated tests (see, e.g., \cite{spokoiny1996adaptive,ingster2000adaptive}.)

\section{Further numerical simulations}
\label{sect:NumericalSimulationsbis}

\subsection{Single tests comparison}
\label{Single_permuted_tests}

Similarly to Section \ref{sect:comparTheoPerm}, the objective here is to check that the permutation approach does not impact the power of the single HSIC test. To do so, we numerically illustrate that the power of the permuted single HSIC tests approximates very well the power of the theoretical tests, as soon as enough permutations are used for the estimation of the quantile under the null hypothesis. \\

In order to evaluate the accuracy of permuted single HSIC tests, we choose the kernel bandwidth associated to $X$ (resp. $Y$) to be the empirical standard deviation $s$ (resp. $s'$) of $X$ (resp. $Y$), which is a usual choice in the literature on single HSIC-test (see, e.g. \cite{de2017sensitivity}). 

As in Section \ref{sect:comparTheoPerm}, we rely on the data generating mechanism inspired from the Ishigami function \cite{ishigami1990importance} defined in \eqref{eq:modelM}. 
In the following, we illustrate the power for the three sample sizes $n$ in $\ac{50,100,200}$ and the two levels $\alpha$ in $\ac{0.05,0.001}$. 

For each sample size $n$ and level $\alpha$, we first estimate the power of the \emph{theoretical} test. 
To achieve this, we simulate 500.000 $n$-samples under the null hypothesis\footnote{To generate an independent $n$-sample of $(X,Y)$ under the null hypothesis, we first generate an independent $2n$-sample of $(X,Y)$ according to \eqref{eq:modelM}. Only the first $n$ elements are used to compute the marginal sample of $Y$ and the remaining $n$ elements are considered to be the marginal sample of $X$.} and compute the Monte Carlo estimator, denoted $\tilde{q}^{MC}_{1-\alpha}$, of the theoretical $(1-\alpha)$-quantile of $\widehat{\HSIC}_{s,s'}$ under the null hypothesis. 
Then, we generate 1000 different $n$-samples of $(X,Y)$ under the alternative according to \eqref{eq:modelM} and we estimate the power of the \emph{theoretical} test by $\hat\pi_{th} (n,\alpha)$ which is the ratio of times that the observed test statistic $\widehat{\HSIC}_{s,s'}$ exceeds the quantile $\tilde{q}^{MC}_{1 - \alpha}$. 

The second step consists in estimating the power of the \emph{permuted} tests for several values of the number of permutations $B$. The chosen values of $B$ are $\ac{10, 20, \ldots, 100, 200, \ldots, 2500}$. For each value of $n$, $\alpha$ and $B$, we generate 1000 $n$-sample of $(X,Y)$ according to \eqref{eq:modelM}. 
For each $n$-sample, we compute the permuted quantile $\hat{q}_{1-\alpha}$ defined in Equation \eqref{eq:permquant} using $B$ random permutations of this sample. Thereafter, we estimate the power of the \emph{permuted} test, by $\hat\pi(n,\alpha,B)$ which is the ratio of times the value of $\widehat{\HSIC}_{s,s'}$ exceeds the permuted quantile $\hat{q}_{1-\alpha}^{s,s'}$ (computed on the corresponding sample). 

As in Section \ref{sect:comparTheoPerm}, to compare the empirical powers of \emph{theoretical} and \emph{permuted} tests (resp. $\hat\pi_{th} (n,\alpha)$ and $\hat\pi(n,\alpha,B)$), we consider the relative absolute error $Err(n,\alpha,B)$ defined as $$Err(n,\alpha,B) = \frac{\abs{\hat\pi(n,\alpha,B) - \hat\pi_{th} (n,\alpha)}}{\hat\pi_{th} (n,\alpha)}.$$ 

\begin{figure}
 \centering
 \includegraphics[width=0.3\textwidth]{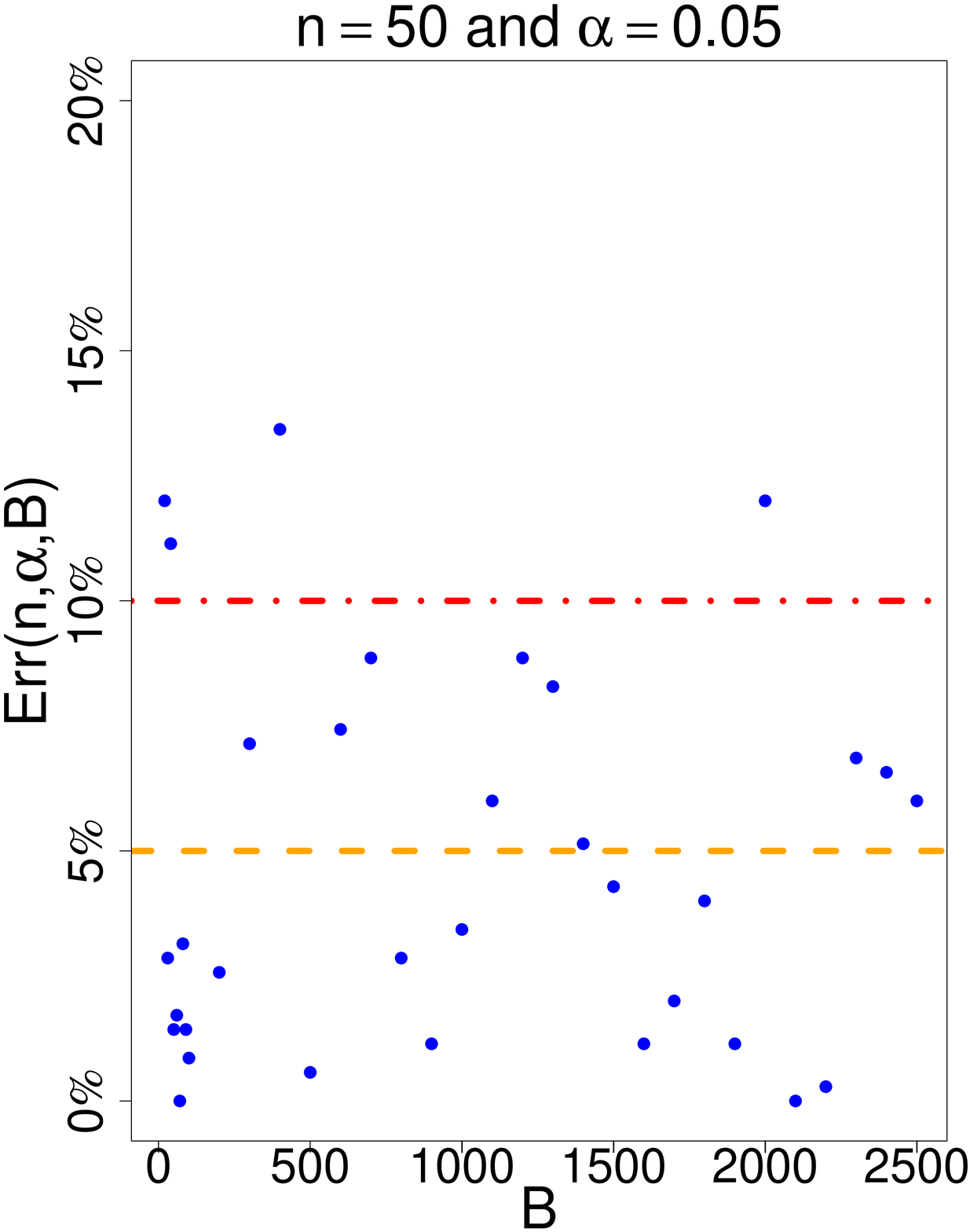}
 \includegraphics[width=0.3\textwidth]{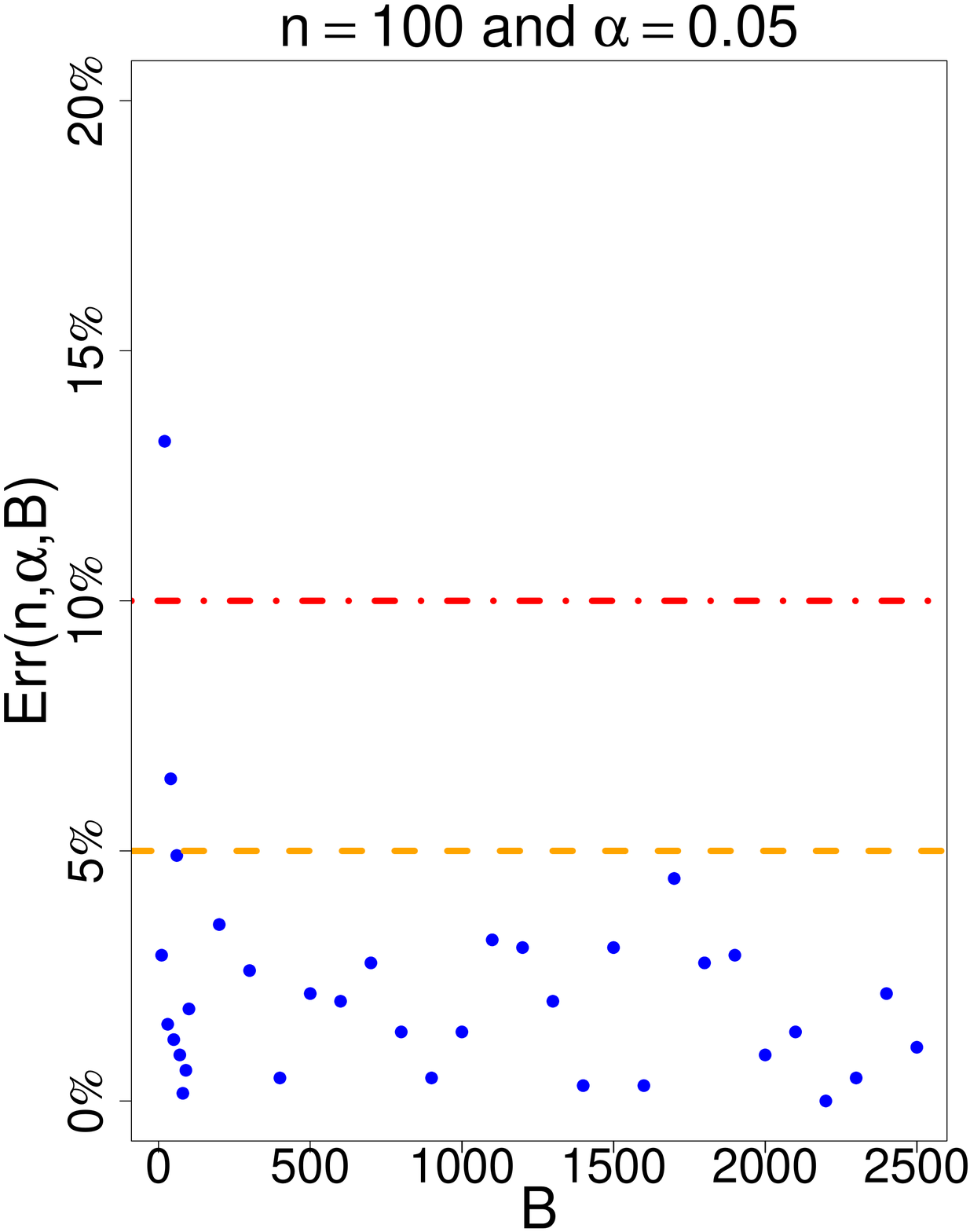}
 \includegraphics[width=0.3\textwidth]{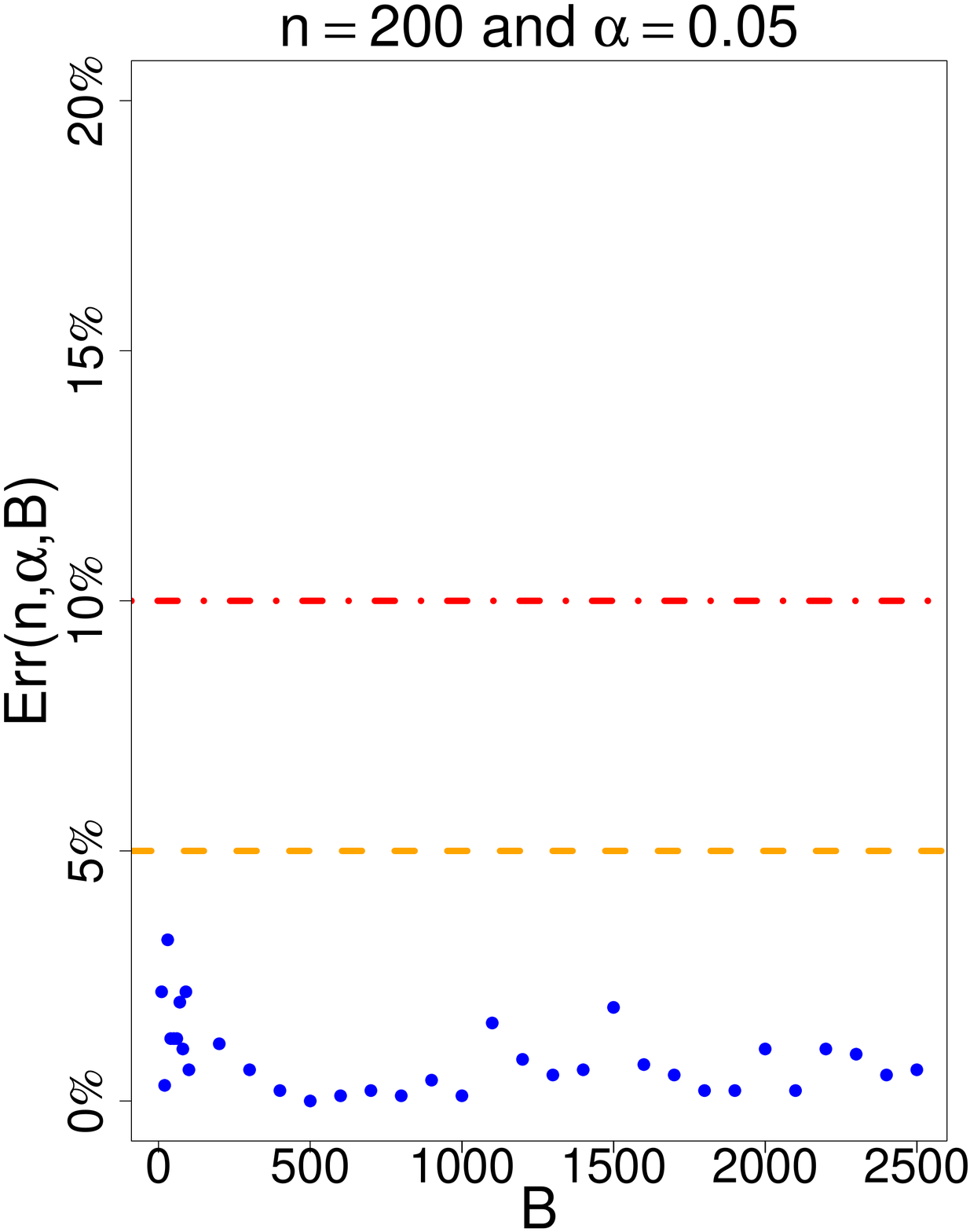}
 \caption{Absolute relative error between the empirical powers of the theoretical and permuted HSIC-tests, w.r.t the number $B$ of permutations, for samples generated according to Ishigami's data generating mechanism defined in Equation \eqref{eq:modelM} with sizes $n= 50$, $100$ and $200$. The presumed level is $\alpha = 0.05$. The red (resp. orange) dashed line represents the error threshold of $10 \%$ (resp. $5 \%$).}
 \label{fig:PowerPermutedalpha0.05}
\end{figure}

The results obtained for $\alpha = 0.05$ and different $n$ values are given by Figure \ref{fig:PowerPermutedalpha0.05}. We can see that the accuracy of the permuted approach tends to increase as $n$ increases. This is probably due to the fact that the power of the theoretical test increases as the sample size increases. Another explanation may be that, on the one hand, the power of the theoretical test is more difficult to estimate for small sample sizes, which explains the fluctuations observed for $n=50$. On the other hand, as $n$ increases, the approximation of the distribution of $\widehat{\HSIC}_{s,s'}$ under the null hypothesis based on $B$ permutations becomes more accurate, and this for any value of $B$ larger than $500$. 
Hence, the approximation of the quantile by permutation becomes more accurate, and thus, there are less fluctuations for larger sample sizes. 

Generally, the permutation approach allows to obtain the power of the theoretical test with an acceptable precision, even for small values of $B$. In particular, we observe for $n=50$ that aside from very small values of $B$ and two outliers, the absolute relative error is always less than $10 \%$. Moreover, from $n=100$ this error is mostly less than $10 \%$ and no observed error is greater than $5 \%$ for $n = 200$.

\begin{figure}
 \centering
 \includegraphics[width=0.3\textwidth]{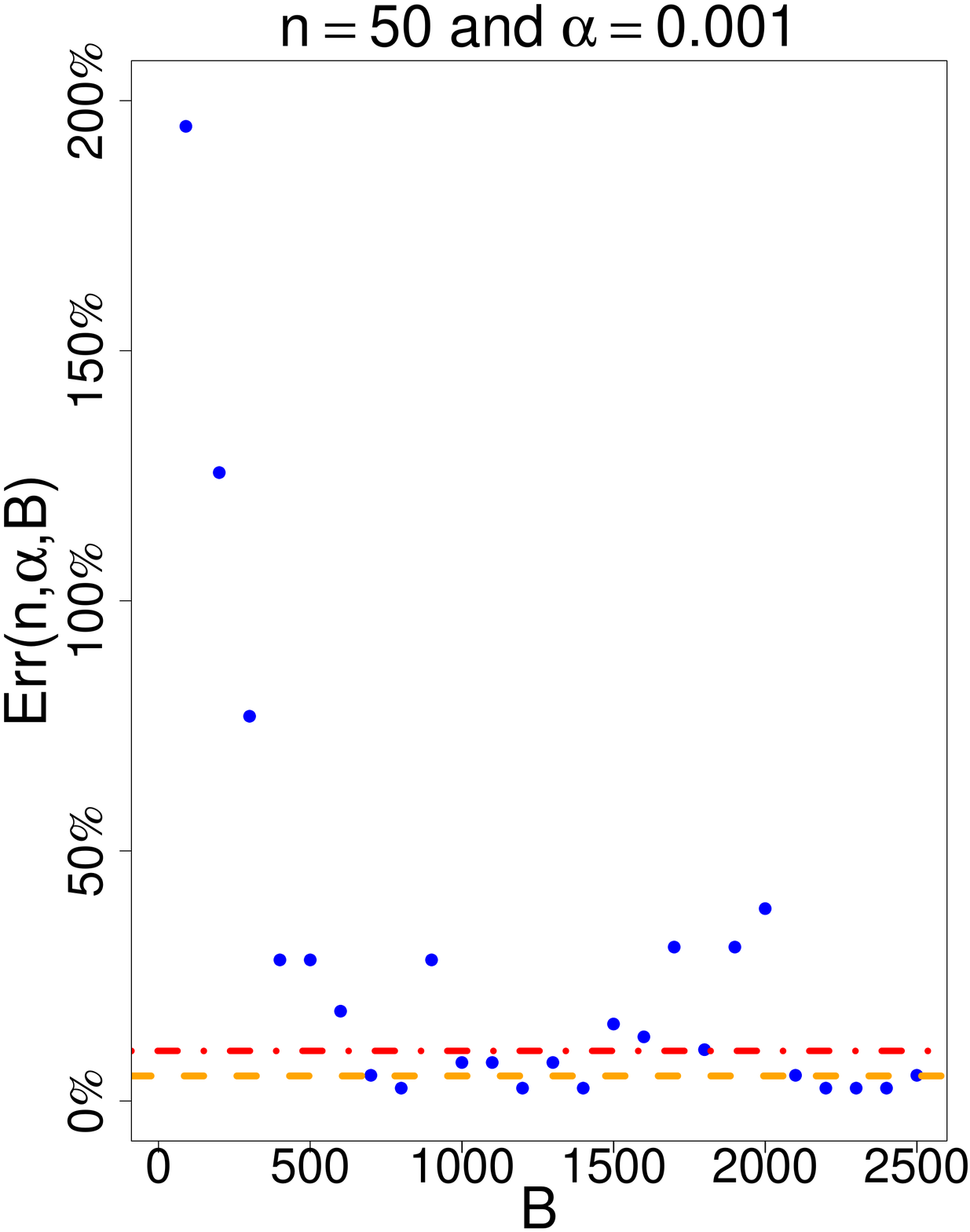}
 \includegraphics[width=0.3\textwidth]{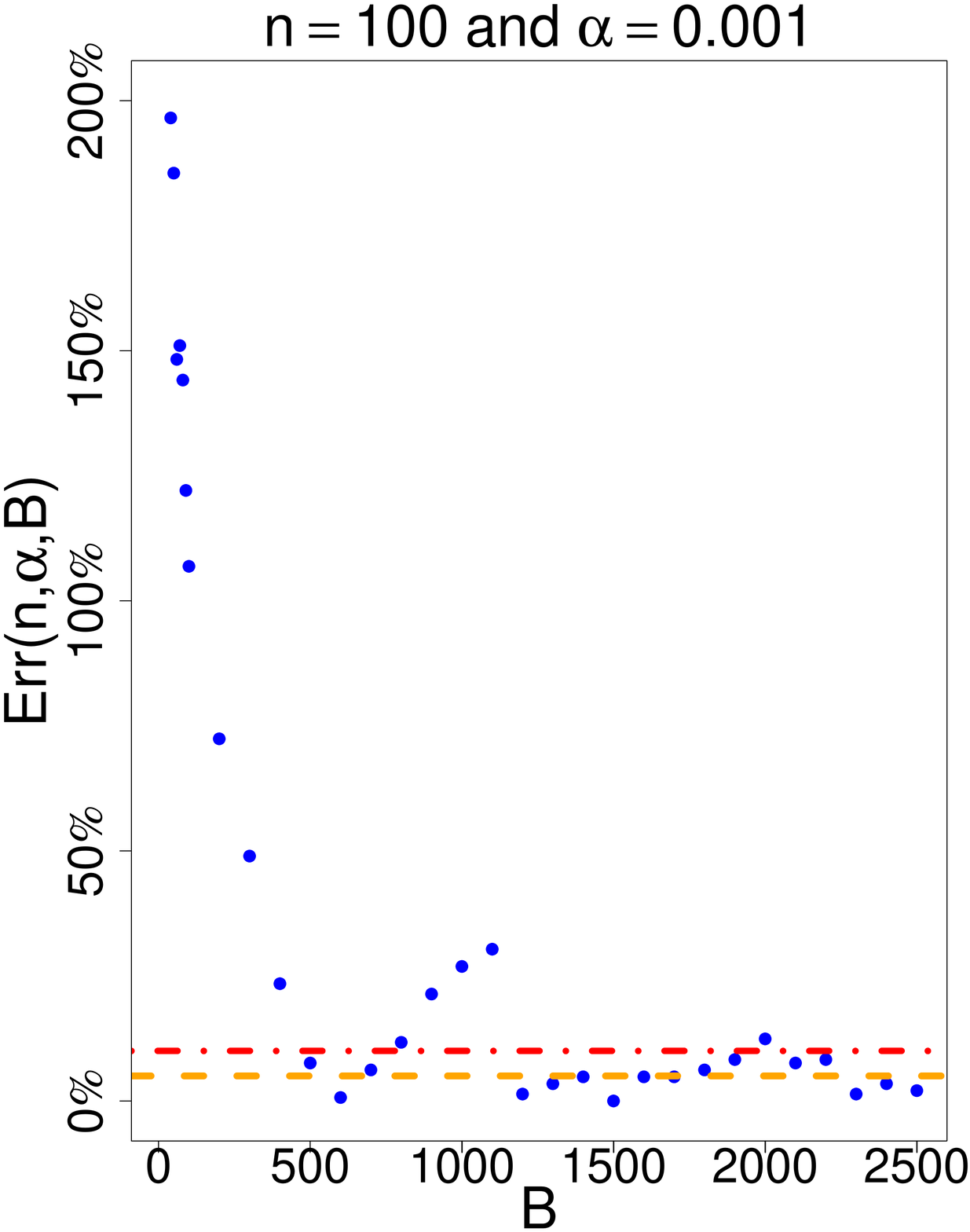}
 \includegraphics[width=0.3\textwidth]{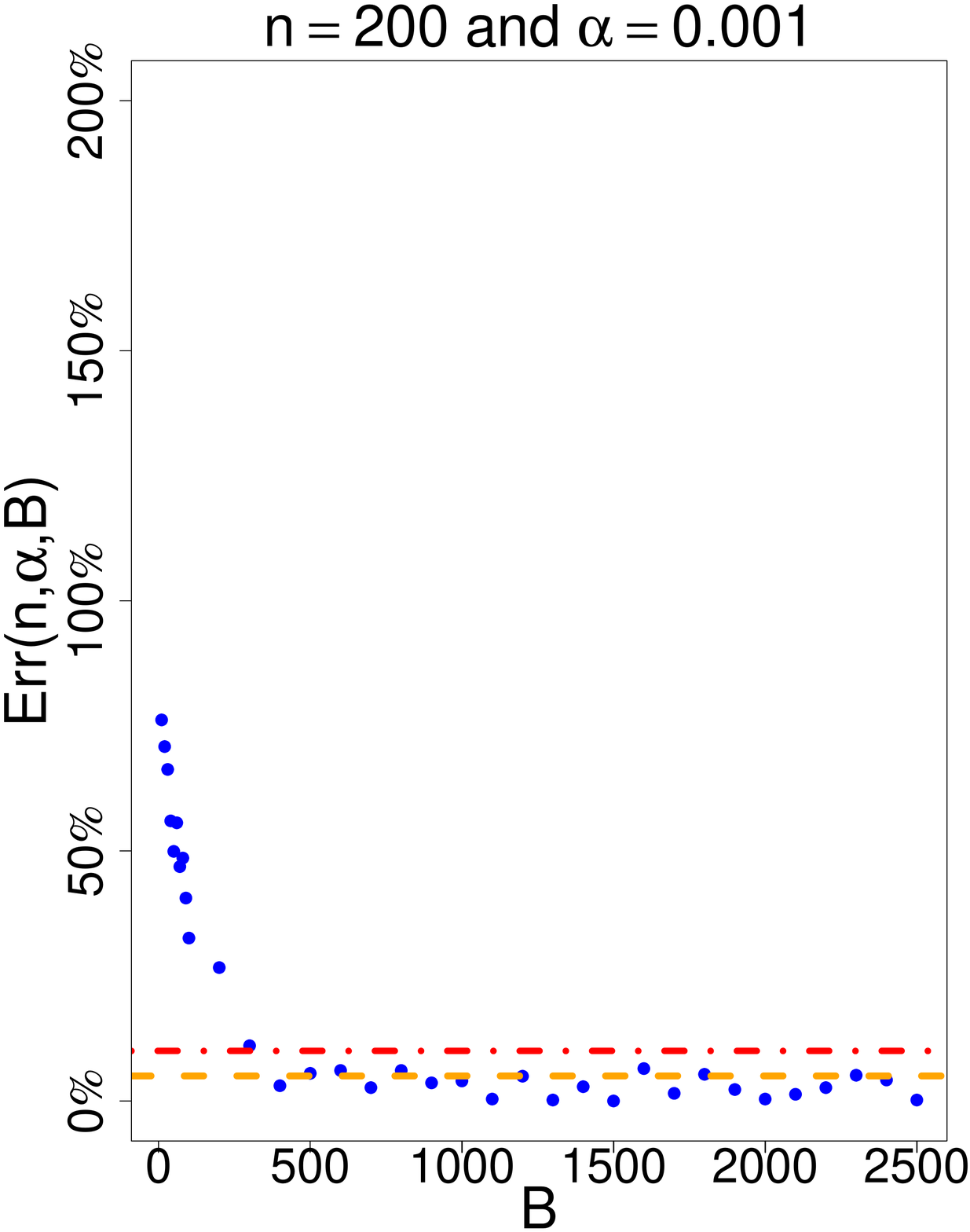}
 \caption{Absolute relative error between the empirical powers of the theoretical and the permuted HSIC-tests, w.r.t the number $B$ of permutations, for samples generated according to Ishigami's data generating mechanism defined in Equation \ref{eq:modelM} of Section \ref{sect:comparTheoPerm} with sizes $n= 50$, $100$ and $200$. The presumed level of tests is $\alpha = 0.001$. The red (resp. orange) dashed line represents the error threshold of $10 \%$ (resp. $5 \%$).}
 \label{fig:PowerPermutedalpha0.001}
\end{figure}

Since the aggregated procedure requires an individual level correction, we also study the impact of the level on the accuracy of the permutation approximation. We show in Figure \ref{fig:PowerPermutedalpha0.001} the relative absolute error of the power w.r.t. $n$ and $B$ for the extreme level value $\alpha = 0.001$. Contrary to the case $\alpha = 0.05$, we observe here much less precision of the power approximation. In particular, for $n=50$, $B = 2000$ permutations are required to obtain satisfactory accuracy (against $B = 30$ for $\alpha = 0.05$). Similar observations are done for $n=100$ and $200$ with respectively $B = 1200$ and $B = 500$ permutations required (against $B = 30$ and $B = 10$ for $\alpha = 0.05$). This slow convergence results from the difficulty of estimating extreme quantiles. Moreover, this phenomenon seems more significant for small sample sizes. Indeed, as in the previous case, the lowest the power of the test, the biggest its sensitivity to the quantile estimation error.

Similar results for the aggregated procedures are illustrated in Section \ref{sect:comparTheoPerm} of the main article. 

\subsection{Choice of the collections of bandwidths and the weights}
\label{sect:choicecollweights}

In our aggregated procedure, the collection of bandwidths $\W$, together with the weights have to be chosen. 
There is no universal best collection that would ensure optimal test power. To determine the collection, we first study the impact of the bandwidth choice on single HSIC-based tests. This leads us to particular forms of collections. 
Then, we investigate different choices of the collections $\W$ and together with different weights (including the single test case).

\subsubsection{Impact of the bandwidths choice on the power of the single tests} 
\label{Investigation_bandwidth}

The optimal bandwidth depends on the intrinsic characteristics of $X$ and $Y$ and their dependence structure. Consequently, it seems relevant to consider the possible bandwidths relatively to the standard deviations of $X$ and $Y$. Moreover, as already mentioned, the standard deviation is a usual choice for the bandwidth in the literature on single HSIC-test. We assume here that the exact values of standard deviations of $X$ and $Y$, respectively denoted $s$ and $s'$, are known. In such a way, we are able to construct collections which do not depend on the observation. In practice, when only a $n$-sample of $(X,Y)$ is available, we estimate these standard deviations by the usual empirical estimators. Practice shows that the effect of this estimation does not significantly impact the single tests performance. Indeed, standard deviation estimators converge in most cases rapidly w.r.t. $n$. More particularly, this estimation error is small compared to the estimation error of the quantiles. \\ 

For this, we consider the univariate mechanism of dependence \ref{Berrettex2} with $l = 2$ defined in Section \ref{Comparison_MINT}. Moreover, we consider, as possible bandwidths $\lambda$ and $\mu$, multiple or dyadic fractions of $s$ and $s'$ respectively. For each couple $(\lambda ,\mu)$, the power of the permuted single HSIC tests (with $B=1000$) is estimated as explained above. Figure \ref{fig:HeatMap} shows the obtained power maps w.r.t. $(\lambda ,\mu)$, for different sample sizes. First, we can observe that the bandwidths significantly impact the power: in this case, there is an optimal area around $(\lambda,\mu) = (s/4,s'/4)$ with a power close to one for $n = 200$. The power decreases progressively as we move away from this area, until being null for very high and very low values of bandwidths. We can also see that the regularity of the maps increases with the sample size (just like the power  for each point). Similar conclusions can be observed for other values of $l$ and the other data generating mechanisms \ref{Berrettex1} and \ref{Berrettex3} with one or several areas with higher power, but are not presented here. 
These results illustrate that an arbitrary choice of bandwidths is not relevant and justify the interest of considering several bandwidths through an aggregation strategy. Note that, according to our experience, it might be appropriate to consider bandwidths higher than standard deviations. However, in Section \ref{sect:impactweights}, we consider aggregating procedures based on collections $\W_r^{s,s'}$ of types 
\begin{equation}\label{CollectionsChoice}
\W_r^{s,s'} = \left\{ s, s/2, \ldots, s/2^{r-1} \right\} \times \left\{ s', s'/2, \ldots, s'/2^{r-1} \right\}, 
\end{equation}
where $r$ belongs to $\mathbb{N}^*$. Note that in this univariate case, these collections generalize to other sizes $r$, in an anisotropic way, the ones considered in Section \ref{Comparison_MINT} introduced in Equation \eqref{eq:collectioncompartestsunidim6}.
\begin{figure}
 \centering
    \includegraphics[width=0.32\textwidth]{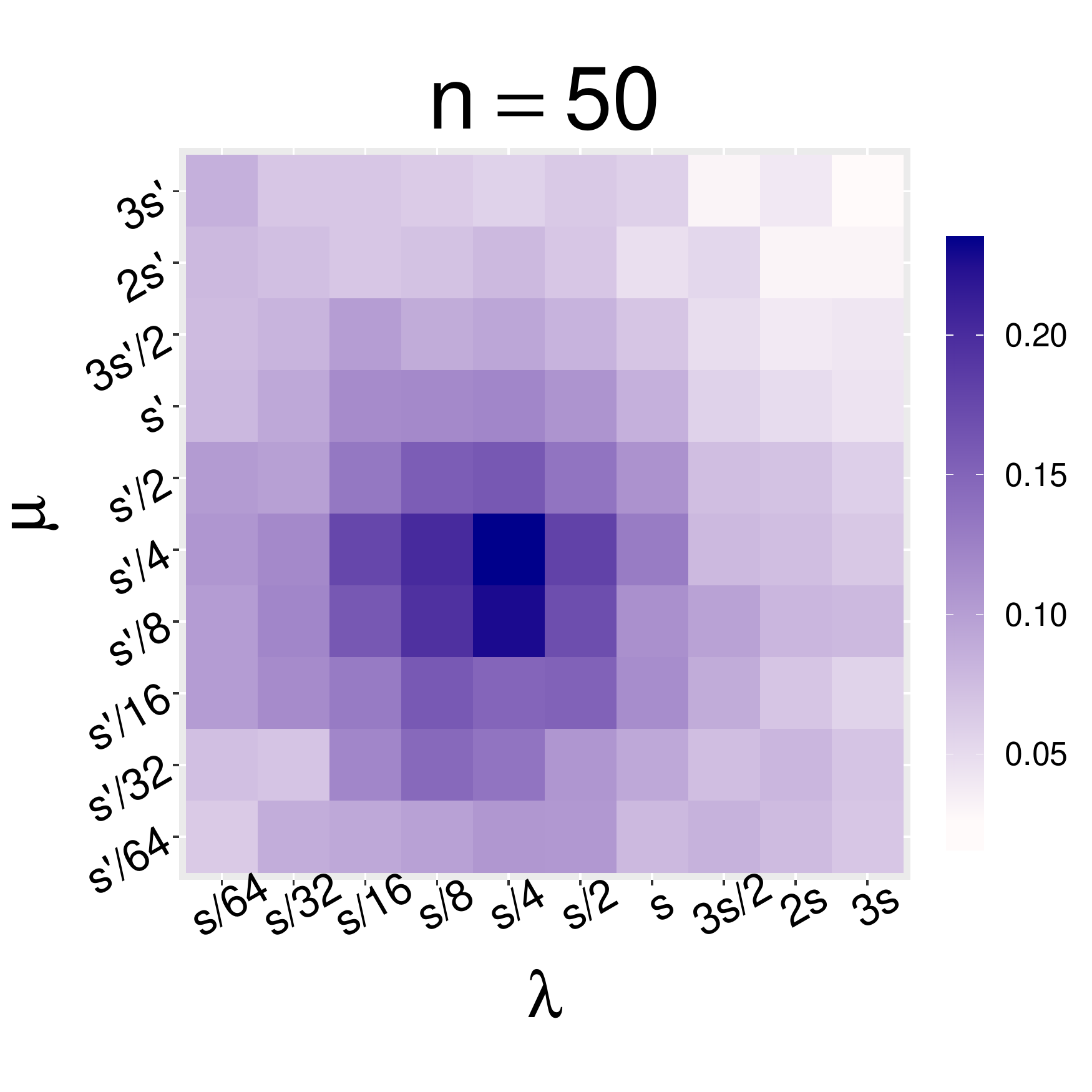} %
    \hspace{-0.25cm}
    \includegraphics[width=0.32\textwidth]{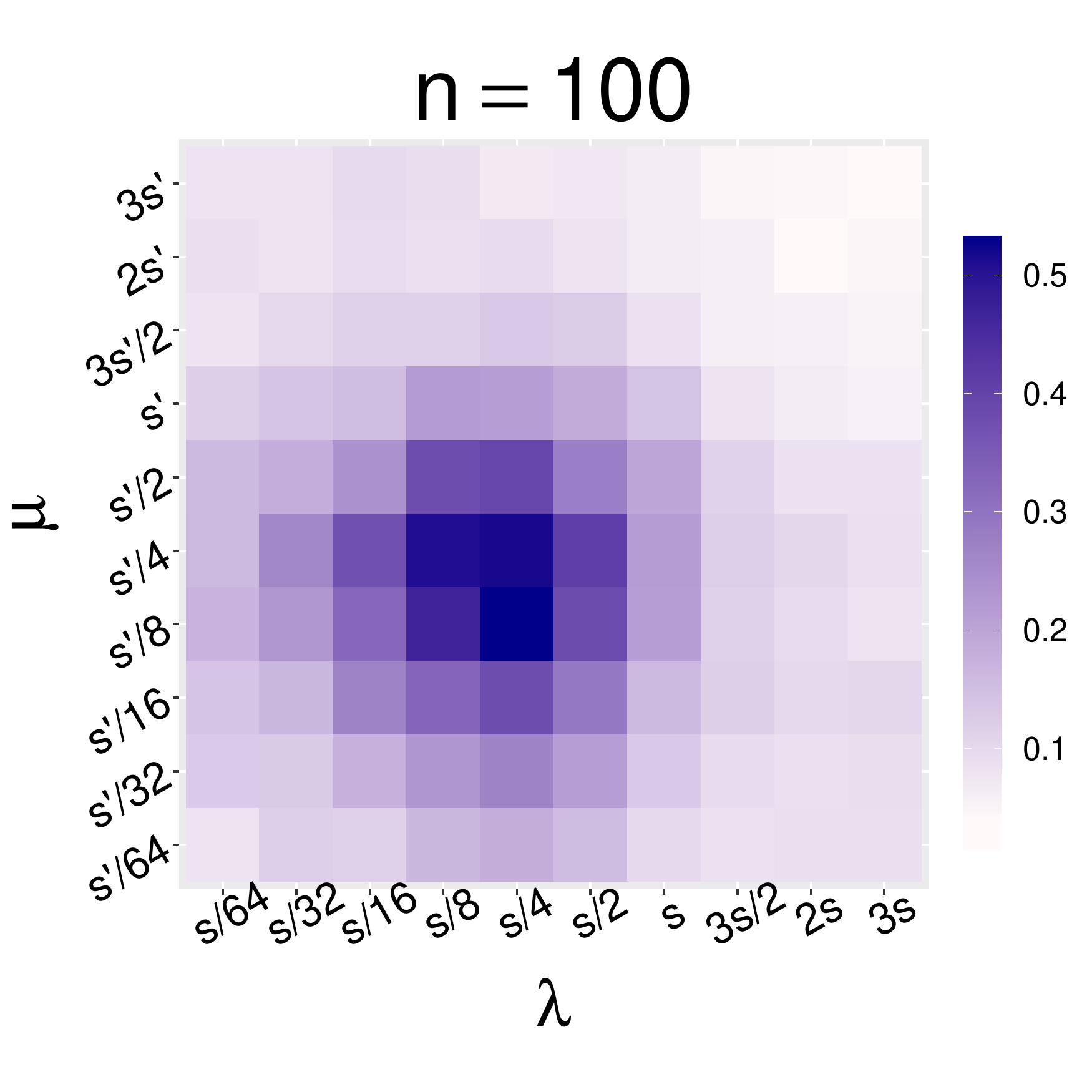}
    \hspace{-0.25cm}
    \includegraphics[width=0.32\textwidth]{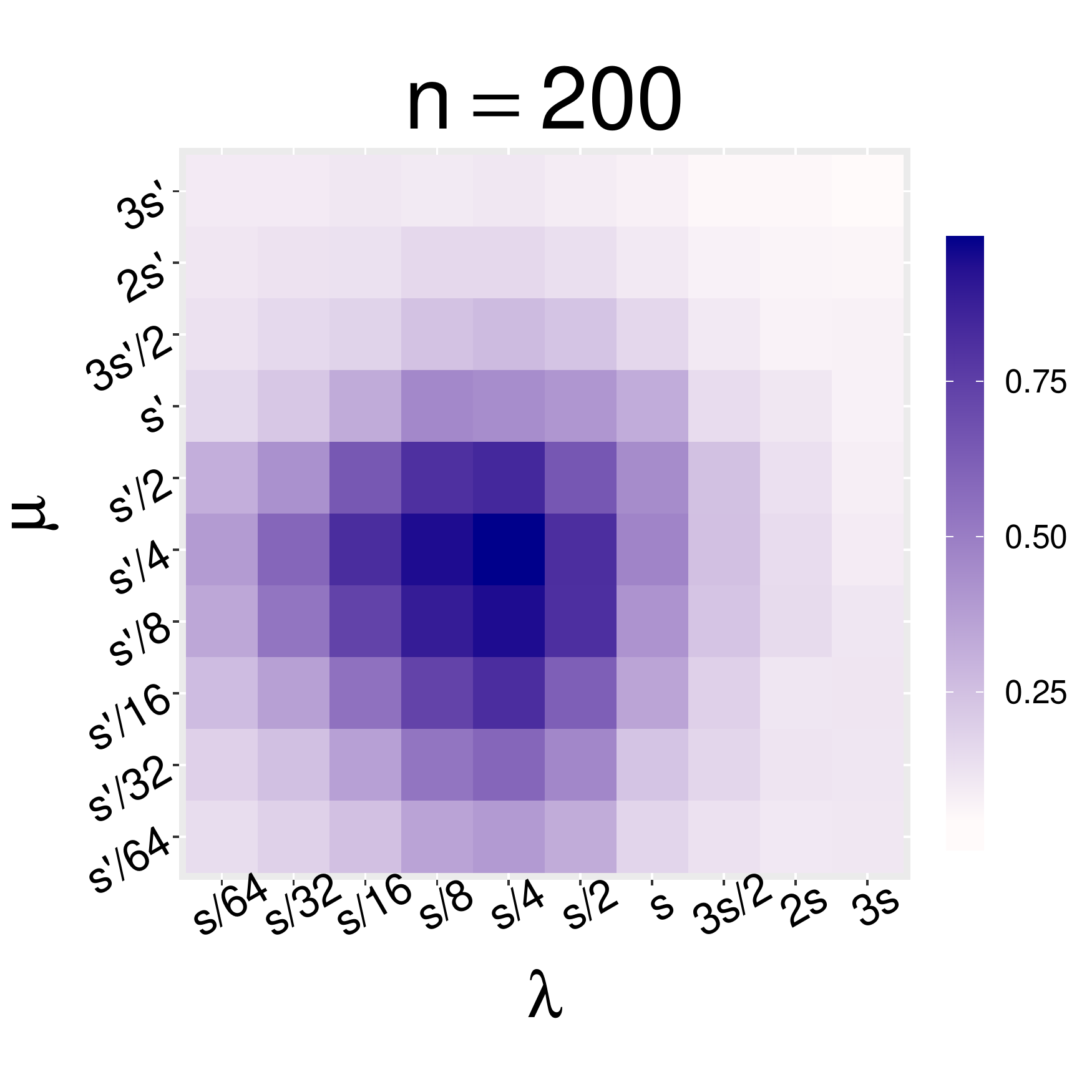}
    \caption{Power map of the permuted single HSIC test w.r.t. to kernel bandwidths $\lambda$ and $\mu$ respectively associated to $X$ and $Y$, for sample generated according to the univariate mechanism of dependence \ref{Berrettex2} with $l = 2$ defined in Section \ref{Comparison_MINT} with sizes $n = 50$, $100$ and $200$, $B=1000$ and $\alpha=0.05$.}
    \label{fig:HeatMap}
\end{figure}

\subsubsection{Impact of the weights choice on the power of the aggregated procedure}
\label{sect:impactweights}

Following the results of Section \ref{Investigation_bandwidth}, we consider bandwidth collections $\W_r^{s,s'}$ as defined in Equation \eqref{CollectionsChoice}, where $s$ and $s'$ are respectively the empirical standard deviations of the $X_i$'s and the $Y_i$'s. 
By now, let us compare two possible choices of weights: uniform and exponential weights. On the one hand, we recall that uniform weights depend only on the cardinalitly of the collection, and are defined in Equation \eqref{eq:defPoidsUnif} for all $(\lambda, \mu)$ in $\W_r^{s,s'}$ by 
$$\omega_{\lambda,\mu} =  \log(r^2).$$
On the other hand, in analogy with Equation \eqref{eq:weigthsNikol}, we consider the exponential weights defined for all bandwidths $(s/2^{m_1}, s'/2^{m_2})$ in $\W_r^{s,s'}$ by 
$$ \omega_{s/2^{m_1}, s'/2^{m_2}} = 2\log\pa{m_1+1} + 2 \log\pa{m_2+1} + \log \left( \sum_{ 1 \leq u,v \leq r} \frac{1}{u^2 v^2} \right). $$

\paragraph{}
The results obtained with the two types of weights are given in Figure \ref{fig:PowerGraph}, for different values of $r$ and sample sizes $n$. In this case, the uniform weights seem to give a better power than the exponential ones. However, we can observe a different behavior w.r.t. $r$. For the uniform weights, the power increases until a specific $r$ ($r=3$ or $4$ w.r.t $n$), before decreasing with $r$, to being lower than the power with exponential weights. On the contrary, the power with exponential weights has a more robust behavior, since it increases with $r$ until it stabilizes. This is a crucial advantage in favor of exponential weights, as the optimal $r$ is unknown in practice. It prevents deterioration of the quality of the test, when too large collection sizes have been chosen. We can also observe that the two aggregated strategies yield a greater power than the single test (which corresponds to the case $r=1$), as soon as the collection $\W$ is large enough. \\

Similar conclusions have been drawn from the other analytical examples, which are not presented here for the sake of brevity. Thus, from our experience, we recommend in practice the use of the aggregated procedure with exponential weights with $r = 5$ or $6$.
\begin{figure}
 \centering
    \includegraphics[width=0.75\textwidth]{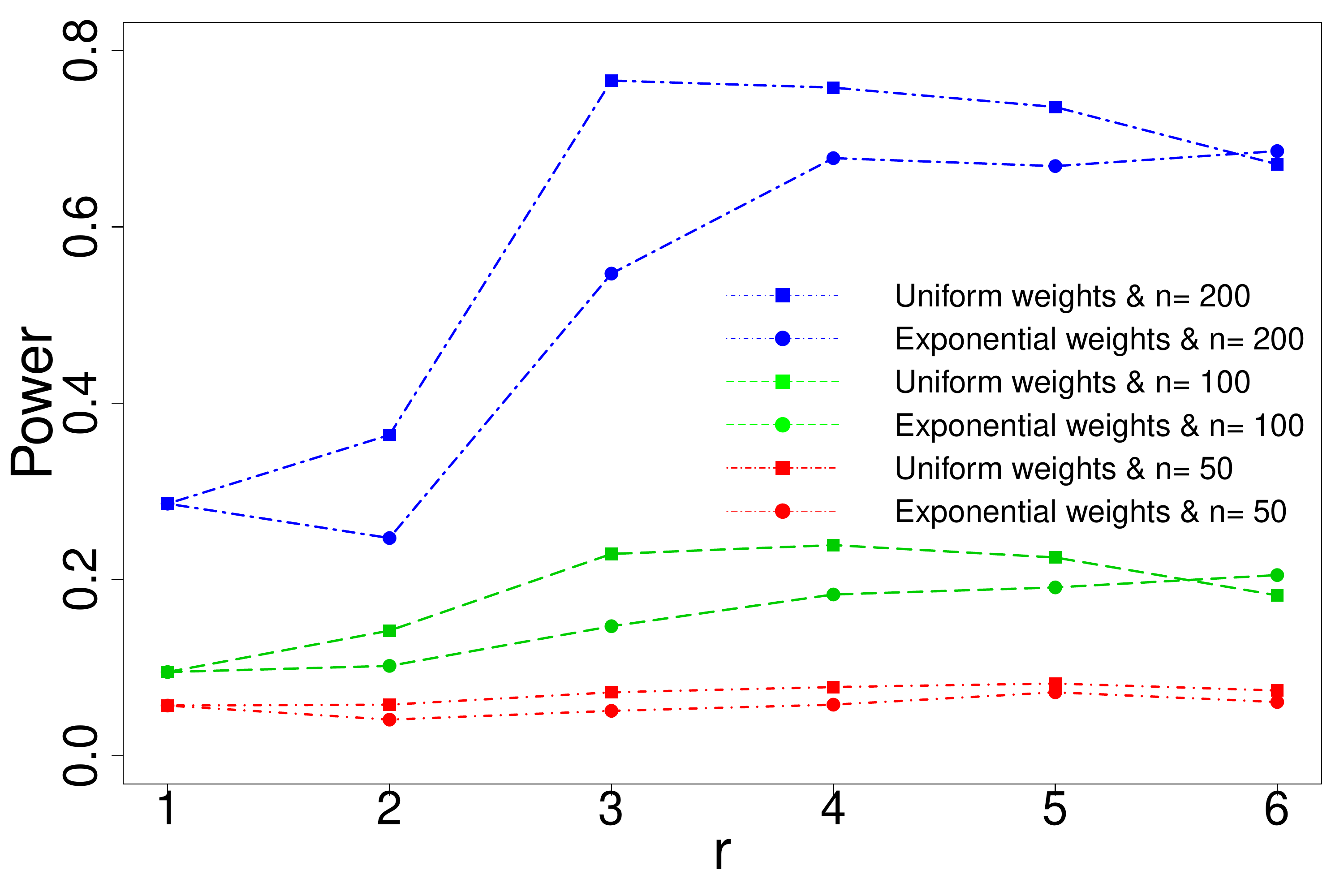}
    \caption{Empirical power of the permuted aggregated procedures with uniform and exponential weights, w.r.t. the number $r$ of aggregated bandwidths in each direction, for samples generated according to the univariate mechanism of dependence \ref{Berrettex2} with $l = 2$ defined in Section \ref{Comparison_MINT} of sizes $n=50$, $100$ and $200$, $B_1=3000$, $B_2=500$ and $\alpha=0.05$.}
    \label{fig:PowerGraph}
\end{figure}

\section{Proofs}
\label{sect:proofs} 

All along the proofs, we set $Z = (X,Y)$ and $Z_i = (X_i,Y_i)$ for all $i$ in $\{1, \ldots ,n\}$. We also denote by $A, B$ and $C$ positive universal constants whose values may change from line to line. Moreover, the generic notation $C(a,b, \ldots)$ denotes a positive constant depending only on its arguments $(a,b, \ldots)$ and that may vary from line to line. 


\subsection{Proof of Proposition \ref{levelapprxtest}}

Let $\alpha$ be in $(0,1)$. In order to prove that the permuted test with Monte Carlo approximation $\widehat{\Delta}_{\alpha}^{\lambda,\mu}$ defined in Equation \eqref{approxtest} is of prescribed level $\alpha$, we use Lemma 1 of \cite{romano2005exact} recalled here.
\begin{lemm}[{\cite[Lemma 1]{romano2005exact}}]\label{Lemm:RomanoWolf}
Let $R_1$, \ldots, $R_{B+1}$ be $(B+1)$ exchangeable random variables. Then, for all $u$ in $(0,1)$
\begin{equation*}
\proba{\frac{1}{B+1}\cro{1 + \sum_{b = 1}^B \mathds{1}_{R_b \geq R_{B+1}}} \leq u} \leq u. 
\end{equation*}
\label{Romano} 
\end{lemm}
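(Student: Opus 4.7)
The plan is to reduce the probabilistic statement to a purely deterministic combinatorial inequality via an averaging argument driven by exchangeability. First, I would introduce the rank-like statistics
\[
K_i \;:=\; 1 + \sum_{b\neq i} \mathds{1}_{R_b \geq R_i} \;=\; \abs{\ac{b\in\ac{1,\ldots,B+1}\ :\ R_b \geq R_i}}, \qquad i = 1,\ldots,B+1,
\]
so that the quantity appearing in the lemma is exactly $K_{B+1}/(B+1)$. Because $K_{B+1}$ is integer-valued in $\ac{1,\ldots,B+1}$, it suffices to prove $\proba{K_{B+1}\leq k}\leq k/(B+1)$ for every integer $k\geq 1$ and then use the equivalence $K_{B+1}/(B+1)\leq u \Leftrightarrow K_{B+1}\leq \lfloor u(B+1)\rfloor$ together with $\lfloor u(B+1)\rfloor \leq u(B+1)$ to reach the stated conclusion.

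The second ingredient is exchangeability itself. Since $(R_1,\ldots,R_{B+1})$ is exchangeable and each $K_i$ is a permutation-symmetric functional of $R_i$ together with the unordered collection $\ac{R_b,\ b\neq i}$, the random variables $K_1,\ldots,K_{B+1}$ share a common distribution. Averaging over $i$ therefore yields
\[
\proba{K_{B+1}\leq k} \;=\; \frac{1}{B+1}\sum_{i=1}^{B+1}\proba{K_i\leq k} \;=\; \frac{1}{B+1}\,\esp{\sum_{i=1}^{B+1}\mathds{1}_{K_i\leq k}}.
\]

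The remaining and, to my mind, only genuinely delicate step is a deterministic claim: for arbitrary real numbers $r_1,\ldots,r_{B+1}$, the counts $k_i := \abs{\ac{b\ :\ r_b\geq r_i}}$ satisfy $\abs{\ac{i\ :\ k_i\leq k}}\leq k$. The subtlety is that ties can inflate each individual $k_i$, so one cannot merely invoke uniformity of ranks; I would argue as follows. Let $S_k := \ac{i\ :\ k_i\leq k}$ and assume $S_k$ is nonempty (otherwise the bound is trivial). Pick any $i_0 \in S_k$ realizing $\min_{i\in S_k} r_i$. Every $j \in S_k$ then satisfies $r_j\geq r_{i_0}$, whence $S_k \subseteq \ac{b\ :\ r_b\geq r_{i_0}}$ and $\abs{S_k}\leq k_{i_0}\leq k$. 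Plugging this pointwise inequality into the expectation above gives $\proba{K_{B+1}\leq k}\leq k/(B+1)$, and the integer-rounding step from the first paragraph closes the argument.
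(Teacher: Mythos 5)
Your proof is correct. Note that the paper does not prove this lemma at all: it is quoted verbatim from Romano and Wolf (2005) and used as a black box in the proof of Proposition~\ref{levelapprxtest}, so there is no in-paper argument to compare against. Your argument is essentially the standard one behind the validity of Monte Carlo permutation tests: reduce to the integer event $K_{B+1}\leq \lfloor u(B+1)\rfloor$, use exchangeability to replace $\proba{K_{B+1}\leq k}$ by the average $\frac{1}{B+1}\esp{\sum_{i=1}^{B+1}\mathds{1}_{K_i\leq k}}$, and then bound the sum deterministically by $k$. The one genuinely delicate point --- that ties can make several $K_i$ small simultaneously --- is handled correctly by your choice of $i_0$ minimizing $r_i$ over $S_k$, which gives $S_k\subseteq\{b: r_b\geq r_{i_0}\}$ and hence $\abs{S_k}\leq k_{i_0}\leq k$. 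The only cosmetic remark is that the case $\lfloor u(B+1)\rfloor=0$ deserves a half-sentence (the event is then empty since $K_{B+1}\geq 1$), but your chain of inequalities covers it anyway.
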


\medskip

Recall that for all $1\leq b\leq B$, 
$$\widehat{H}^{\star b}_{\lambda,\mu} = \widehat{\HSIC}_{\lambda,\mu}\pa{\ZZ_n^{\rperm_b}}\quad \mbox{and} \quad \widehat{H}^{\star B+1}_{\lambda,\mu} = \widehat{\HSIC}_{\lambda,\mu}\pa{\ZZ_n} = \widehat{\HSIC}_{\lambda,\mu}\pa{\ZZ_n^{\rperm_{B+1}}},$$
where $\rperm_{B+1}=\id$ is the identity permutation of $\{1,\ldots,n\}$ (deterministic). 

Assume that $f = f_1 \otimes f_2$. Then the random variables $\widehat{H}^{\star 1}_{\lambda,\mu}, \ldots , \widehat{H}^{\star B}_{\lambda,\mu}$ and $\widehat{H}^{\star B+1}_{\lambda,\mu}$ are exchangeable. 
Indeed, let $\echperm$ be a (deterministic) permutation\ of $\{1,\ldots,B+1\}$ and let us prove that
\begin{equation}
\label{eq:echangeabilite}
\pa{\widehat{H}^{\star 1}_{\lambda,\mu}, \ldots , \widehat{H}^{\star B}_{\lambda,\mu}, \widehat{H}^{\star B+1}_{\lambda,\mu}}\quad \mbox{and}\quad \pa{\widehat{H}^{\star \echperm(1)}_{\lambda,\mu}, \ldots , \widehat{H}^{\star \echperm(B+1)}_{\lambda,\mu}} \quad \mbox{have the same distribution.}
\end{equation}

\medskip
\textbf{Case 1.} If $\echperm(B+1)=B+1$, then, since the permutations $(\rperm_b)_{1\leq b\leq B}$ are i.i.d., they are exchangeable. Hence, $(\rperm_{\echperm(1)},\ldots,\rperm_{\echperm(B)})$ is an i.i.d. sample of uniform permutations of $\{1,\dots,n\}$, independent of $\ZZ_n$ and \eqref{eq:echangeabilite} holds by construction.

\medskip
\textbf{Case 2.} If $\echperm(B+1)\neq B+1$, then 
$$\widehat{H}^{\star \echperm(B+1)}_{\lambda,\mu} = \widehat{\HSIC}_{\lambda,\mu}\pa{\ZZ_n^{\rperm_{\echperm(B+1)}}} = \widehat{\HSIC}_{\lambda,\mu}\pa{\tilde{\ZZ}_n}, \quad \mbox{where}\quad \tilde{\ZZ}_n = \ZZ_n^{\rperm_{\echperm(B+1)}}.$$

In particular, for all $b$ in $\{1,\ldots,B\}$, 
$$ \left\{
\begin{array}{ll}
\displaystyle\widehat{H}^{\star \echperm(b)}_{\lambda,\mu} = \widehat{\HSIC}_{\lambda,\mu}\pa{\ZZ_n^{\rperm_{\echperm(b)}}} = \widehat{\HSIC}_{\lambda,\mu}\pa{\tilde{\ZZ}_n^{\rperm_{\echperm(b)}\circ \rperm_{\echperm(B+1)}^{-1}}} & \mbox{if } \echperm(b)\neq B+1, \\
\displaystyle\widehat{H}^{\star \echperm(b)}_{\lambda,\mu} = \widehat{\HSIC}_{\lambda,\mu}\pa{\ZZ_n} = \widehat{\HSIC}_{\lambda,\mu}\pa{\tilde{\ZZ}_n^{\id\circ\rperm_{\echperm(B+1)}^{-1}}} & \mbox{if } \echperm(b) = B+1. 
\end{array}
\right.$$
Therefore, in order to prove \eqref{eq:echangeabilite}, it is sufficient to prove that $\{\rperm_{\echperm(1)}\circ \rperm_{\echperm(B+1)}^{-1},\ldots,\rperm_{\echperm(B)}\circ \rperm_{\echperm(B+1)}^{-1}\}$ is an i.i.d. sample of uniform permutations of $\{1,\dots,n\}$ independent of $\tilde{\ZZ}_n$. 
Let $A$ be a mesurable set, and $\perm_1, \ldots,\perm_B$ be (fixed) permutations of $\{1,\dots,n\}$. Then 
\begin{align*}
\mathbb{P} \Big(\tilde{\ZZ}_n \in A,\ & \rperm_{\echperm(1)}\circ \rperm_{\echperm(B+1)}^{-1}=\perm_1,\ \ldots,\ \rperm_{\echperm(B)}\circ \rperm_{\echperm(B+1)}^{-1}=\perm_B\Big) \nonumber\\
& = \proba{\ZZ_n^{\rperm_{\echperm(B+1)}} \in A, \rperm_{\echperm(1)}=\perm_1\circ \rperm_{\echperm(B+1)},\ldots,\rperm_{\echperm(B)}=\perm_B\circ \rperm_{\echperm(B+1)}} \nonumber\\
& = \esp{\proba{\ZZ_n^{\rperm_{\echperm(B+1)}} \in A, \rperm_{\echperm(1)}=\perm_1\circ \rperm_{\echperm(B+1)},\ldots,\rperm_{\echperm(B)}=\perm_B\circ \rperm_{\echperm(B+1)}\middle|\rperm_{\echperm(B+1)}}}.\nonumber 
\end{align*}
This leads to 
\begin{multline} \label{indepetinvariance} 
\proba{\tilde{\ZZ}_n \in A,\ \rperm_{\echperm(1)}\circ \rperm_{\echperm(B+1)}^{-1}=\perm_1,\ \ldots,\ \rperm_{\echperm(B)}\circ \rperm_{\echperm(B+1)}^{-1}=\perm_B} \\
= \mathbb{E} \Bigg[ \proba{\ZZ_n \in A} \times \pa{\prod_{\underset{b\neq \echperm^{-1}(B+1)}{b=1}}^{B}\proba{\rperm_{\echperm(b)}=\perm_b\circ \rperm_{\echperm(B+1)}\middle| \rperm_{\echperm(B+1)}}} \times  \\
\proba{\id=\perm_{\echperm^{-1}(B+1)}\circ \rperm_{\echperm(B+1)}\middle| \rperm_{\echperm(B+1)}} \Bigg], 
\end{multline}
where \eqref{indepetinvariance} holds by independence of all permutations $\rperm_b$ and of $\ZZ_n$ and since, if $f = f_1 \otimes f_2$, $\ZZ_n^{\rperm_{\echperm(B+1)}}$ and $\ZZ_n$ have the same distribution. Hence, 
\begin{align*}
\mathbb{P} \big(\tilde{\ZZ}_n \in A,\ &\rperm_{\echperm(1)}\circ \rperm_{\echperm(B+1)}^{-1}=\perm_1,\ \ldots,\ \rperm_{\echperm(B)}\circ \rperm_{\echperm(B+1)}^{-1}=\perm_B\big)  \\
& =\ \esp{\proba{\ZZ_n \in A} \pa{\frac{1}{n!}}^{B-1} \proba{\id=\perm_{\pi^{-1}(B+1)}\circ \rperm_{\echperm(B+1)}\middle|\rperm_{\echperm(B+1)}}},\nonumber\\
& =\ \proba{\ZZ_n \in A} \pa{\frac{1}{n!}}^{B-1} \proba{\rperm_{\echperm(B+1)}=\perm^{-1}_{\pi^{-1}(B+1)}}, \nonumber\\
& =\ \proba{\ZZ_n \in A} \pa{\frac{1}{n!}}^{B},\nonumber
\end{align*}
This ends the proof of the exchangeability of the $(\widehat{H}^{\star b}_{\lambda,\mu})_{1\leq b\leq B+1}.$

\paragraph{}
Then, by applying Lemma \ref{Lemm:RomanoWolf} to the $(\widehat{H}_{\lambda,\mu}^{\star b})_{1\leq b\leq B+1}$, we obtain
\begin{align}
P_{f_1 \otimes f_2} \left(\widehat{\Delta}_{\alpha}^{\lambda,\mu} = 1 \right) &=  P_{f_1 \otimes f_2} \left( \widehat{\HSIC}_{\lambda,\mu} > \widehat{q}_{1-\alpha}^{\lambda,\mu} \right) \nonumber\\
&= P_{f_1 \otimes f_2} \left( \widehat{H}_{\lambda,\mu}^{\star B+1} > \widehat{H}_{\lambda,\mu}^{\star (\lceil (B+1)(1-\alpha)\rceil)} \right) \nonumber\\
&= P_{f_1 \otimes f_2} \left(\sum_{b=1}^{B+1} \mathds{1}_{\widehat{H}_{\lambda,\mu}^{\star b} < \widehat{H}_{\lambda,\mu}^{\star B+1}} \geq \lceil (B+1)(1-\alpha)\rceil\right) \nonumber\\
&= P_{f_1 \otimes f_2} \left( \sum_{b=1}^{B+1} \mathds{1}_{\widehat{H}_{\lambda,\mu}^{\star b} \geq \widehat{H}_{\lambda,\mu}^{\star B+1}} \leq \lfloor \alpha(B+1)\rfloor \right), \label{floorceil}
\end{align}
where \eqref{floorceil} comes from the fact that $B+1 - \lceil (B+1)(1-\alpha)\rceil = \lfloor \alpha(B+1)\rfloor.$ 
Then, 
\begin{align}
P_{f_1 \otimes f_2} \left(\widehat{\Delta}_{\alpha}^{\lambda,\mu} = 1 \right) &= P_{f_1 \otimes f_2} \left( \sum_{b=1}^{B+1} \mathds{1}_{\widehat{H}_{\lambda,\mu}^{\star b} \geq \widehat{H}_{\lambda,\mu}^{\star B+1}} \leq \alpha(B+1) \right) \nonumber\\ 
&= P_{f_1 \otimes f_2} \left( \frac{1}{B+1} \left( 1 + \sum_{b=1}^B \mathds{1}_{\widehat{H}_{\lambda,\mu}^{\star b} \geq \widehat{H}_{\lambda,\mu}^{\star B+1}} \right)  \leq \alpha \right) \nonumber\\
&\leq \alpha, \label{byRomano}
\end{align}
where \eqref{byRomano} is obtained from Lemma \ref{Lemm:RomanoWolf}.

\subsection{Proof of Lemma \ref{powerful-test}}

Let $\alpha$ and $\beta$ be in $(0,1)$. We aim here to give a condition on $\HH_{\lambda,\mu}(f)$ w.r.t. the variance $\Var_f (\widehat{\HH}_{\lambda,\mu})$ and the quantile $q_{1-\alpha}^{\lambda,\mu}$, so that the statistical test $\Delta_{\alpha}^{\lambda,\mu}$ defined in Equation \eqref{Deltalambdamu} has a second kind error controlled by $\beta$. For this, we use Chebyshev's inequality. Since $\widehat{\HH}_{\lambda,\mu}$ is an unbiased estimator of $\HH_{\lambda,\mu}(f)$, 
\begin{equation*}
P_{f} \left( \abs{\widehat{\HH}_{\lambda,\mu} - \HH_{\lambda,\mu}(f)} \geq \sqrt{\frac{\Var_f (\widehat{\HH}_{\lambda,\mu})}{\beta}} \right) \leq \beta.
\end{equation*}
We then have the following inequality:
\begin{equation*}
P_{f} \left( \widehat{\HH}_{\lambda,\mu} \leq \HH_{\lambda,\mu}(f) - \sqrt{\frac{\Var_f(\widehat{\HH}_{\lambda,\mu})}{\beta}} \right) \leq \beta.
\end{equation*}
Consequently, one has $P_{f} \left( \widehat{\HH}_{\lambda,\mu} \leq q_{1-\alpha}^{\lambda,\mu} \right) \leq \beta$, as soon as $$\HH_{\lambda,\mu}(f) \geq \displaystyle \sqrt{\frac{\Var_f(\widehat{\HH}_{\lambda,\mu})}{\beta}} + q_{1-\alpha}^{\lambda,\mu}.$$ 

\subsection{Proof of Proposition \ref{LTV}}
\label{Variance}

In order to control the variance $\Var_f (\widehat{\HH}_{\lambda,\mu})$ w.r.t. the bandwidths $\lambda$, $\mu$ and the sample size $n$, let us first give the following lemma for a general $U$-statistic of any order $r$ in $\{1, \ldots ,n \}$.
\begin{lemm}
Let $h$ be a symmetric function with $r \leq n$ inputs, $V_1, \ldots ,V_n$ be independent and identically distributed random variables and $U_n$ be the $U$-statistic defined by 
\begin{equation*}
\displaystyle U_n = \frac{(n-r)!}{n!} \sum_{(i_1, \ldots ,i_r) \in \mathbf{i}_r^n} h(V_{i_1}, \ldots ,V_{i_r}),
\end{equation*}
where $\mathbf{i}^n_r$ is the set of all r-tuples drawn without replacement from $\{ 1,\ldots, n\}$. 
The following inequality gives an upper bound of the variance of $U_n$, 
\begin{equation}
\Var(U_n) \leq C(r) \left( \displaystyle \frac{\sigma^2}{n} + \displaystyle \frac{s^2}{n^2} \right),
\end{equation}
where $\sigma^2 = \Var \left( \esp{h (V_1, \ldots ,V_r) \mid V_1} \right)$ and $s^2 = \Var \left( h (V_1, \ldots ,V_r) \right)$.  
\label{Var_Ustat}
\end{lemm}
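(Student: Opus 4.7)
The plan is to invoke the Hoeffding (ANOVA-type) decomposition of the $U$-statistic $U_n$ and then crudely bound each orthogonal component. Define, for $c=1,\dots,r$,
\[
h_c(v_1,\dots,v_c) = \mathbb{E}\!\left[h(V_1,\dots,V_r)\,\middle|\, V_1=v_1,\dots,V_c=v_c\right]
\]
together with $\zeta_c = \Var(h_c(V_1,\dots,V_c))$. In particular $\zeta_1 = \sigma^2$ and $\zeta_r = s^2$. By the tower property and the conditional variance inequality, the sequence $(\zeta_c)$ is non-decreasing, so $\zeta_c \leq s^2$ for every $c \leq r$.

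The standard Hoeffding decomposition writes
\[
\Var(U_n) = \binom{n}{r}^{-1} \sum_{c=1}^{r} \binom{r}{c}\binom{n-r}{r-c}\, \xi_c,
\]
where $\xi_c$ is the variance of the completely degenerate kernel of order $c$, obtained by Möbius-inverting the $\zeta_c$'s. A quick induction using the usual identity $\zeta_c = \sum_{l=1}^{c}\binom{c}{l}\xi_l$ and the crude bound $|\xi_c|\leq C(r)\, \zeta_c$ (since $\xi_c$ is a fixed linear combination of $\zeta_1,\dots,\zeta_c$ depending only on $r$) allows one to replace the $\xi_c$'s by the $\zeta_c$'s up to a multiplicative constant depending only on $r$.

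Next, a direct expansion of the binomial coefficients yields
\[
\binom{n}{r}^{-1}\binom{r}{c}\binom{n-r}{r-c} \leq \frac{C(r)}{n^{c}},
\]
valid for $n\geq r$ and every $c\in\{1,\dots,r\}$. Splitting the sum between $c=1$ and $c\geq 2$, the first term contributes at most $C(r)\,\zeta_1/n = C(r)\,\sigma^2/n$, while the remaining terms contribute at most
\[
\sum_{c=2}^{r} \frac{C(r)}{n^{c}}\,\zeta_c \;\leq\; \frac{C(r)\, s^2}{n^2}\sum_{c=2}^{r} \frac{1}{n^{c-2}} \;\leq\; \frac{C(r)\, s^2}{n^2},
\]
using $\zeta_c\leq s^2$ and summing the geometric tail. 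Combining the two contributions gives the announced bound $\Var(U_n)\leq C(r)(\sigma^2/n + s^2/n^2)$.

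The only mildly technical step is the first one, namely justifying that the Hoeffding decomposition indeed produces the variance formula with coefficients $\binom{r}{c}\binom{n-r}{r-c}/\binom{n}{r}$ and that the canonical variances $\xi_c$ are controlled by $\zeta_c$ up to an $r$-dependent constant. This is entirely classical (see e.g.\ the textbook treatment of $U$-statistics) and requires no new ideas; everything else amounts to routine counting and the monotonicity $\zeta_c\leq s^2$.
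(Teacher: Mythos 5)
Your proposal follows essentially the same route as the paper: Hoeffding's decomposition of $\Var(U_n)$, the combinatorial bound $\binom{n}{r}^{-1}\binom{r}{c}\binom{n-r}{r-c}\leq C(r)/n^{c}$, and then $\zeta_1=\sigma^2$ together with $\zeta_c\leq s^2$ for $c\geq 2$ (the paper gets the latter from the law of total variance, you from monotonicity of $(\zeta_c)_c$, which is the same fact). One inaccuracy, though: the displayed identity you start from is not correct as written. The formula with coefficients $\binom{n}{r}^{-1}\binom{r}{c}\binom{n-r}{r-c}$ holds with the conditional-expectation variances $\zeta_c=\Var\!\left(\esp{h(V_1,\dots,V_r)\mid V_1,\dots,V_c}\right)$ (this is Serfling's Lemma A, the version the paper uses), whereas the completely degenerate (canonical) variances $\xi_c$ enter with the coefficients $\binom{r}{c}^2\binom{n}{c}^{-1}$; a simple example such as $h(v_1,v_2)=g(v_1)+g(v_2)$ shows the two are not interchangeable. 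Consequently your Möbius-inversion step, which converts $\xi_c$ back into $\zeta_c$, is both unnecessary and built on a false identity; the clean fix is simply to quote the decomposition directly in terms of $\zeta_c$ (or, alternatively, to keep the canonical form with its correct coefficients, since $\xi_1=\sigma^2$ and $\xi_c\leq C(r)s^2$ also yield the claim). Finally, you assert the binomial-coefficient inequality as routine; it is indeed elementary and true for $n\geq r$, but the paper does spend the effort to prove it, so a complete write-up should include that short computation.
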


\medskip
\begin{proof}[Proof of Lemma \ref{Var_Ustat}]
First, using Hoeffding's decomposition (see e.g. \cite[Lemma A, p. 183]{serfling2009approximation}), the variance of $U_n$ can be decomposed as
\begin{equation*}
\Var (U_n) = \displaystyle {n\choose r}^{-1} \sum_{c = 1}^r {r\choose c} {n-r\choose r-c} \zeta_c,
\end{equation*} 
where $\zeta_c = \displaystyle \Var (\esp{h (V_1, \ldots ,V_r) \mid V_1, \ldots ,V_c})$.
\newline \newline 
Let us now prove that, for all $n \in \mathbb{N}^*$, $r \in \{1, \ldots ,n\}$ and $c \in \{1, \ldots , r\}$,
\begin{equation}
{n\choose r}^{-1} {r\choose c} {n-r\choose r-c} \leq \frac{C(r,c)}{n^c}.
\label{ineq-combin-cnr}
\end{equation}
We first write
\begin{eqnarray}
{n\choose r}^{-1} {r\choose c} {n-r\choose r-c} 
&=& {r\choose c} \times \frac{r!}{(r-c)!} \times \frac{(n-r)!}{(n + c -2r)!} \times \frac{(n-r)!}{n!}. 
\label{combin-cnr}
\end{eqnarray}
Moreover, 
\begin{eqnarray*}
n! &=& (n - r)! \times (n-r + 1) \times \ldots \times (n-r+r) \\
&\geq& (n - r)! \times (n-r + 1)^{r},
\end{eqnarray*}
and 
\begin{eqnarray*}
(n-r)! &=& (n - 2r + c)! \times (n- 2r + c + 1) \times \ldots \times (n- 2r + c + r-c) \\
&\leq& (n - 2r + c)! \times (n-r+1)^{r-c}.
\end{eqnarray*}
\newline
Then, we have $$\frac{(n-r)!}{(n + c -2r)!} \times \frac{(n-r)!}{n!} \leq \frac{1}{(n-r+1)^c}.$$  
\newline
Furthemore, using that $n \geq r$, one can write
\begin{eqnarray*}
\frac{n-r+1}{n} &=& 1 - \frac{r-1}{n}\\
&\geq& 1 - \frac{r-1}{r} = \frac{1}{r}. 
\end{eqnarray*}
This leads to, $\displaystyle\frac{1}{n-r+1} \leq  \frac{r}{n}$. Finally, Equation \eqref{combin-cnr} leads to Equation \eqref{ineq-combin-cnr}.  
\newline \newline
By upper bounding each term in Hoeffding's decomposition of the variance of $U_n$ according to Equation \eqref{ineq-combin-cnr}, we obtain
\begin{equation}
\Var (U_n) \leq C(r) \sum_{c = 1}^r \frac{\zeta_c}{n^c}. 
\label{ineq:varun}
\end{equation}
On the one hand, $\zeta_1 = \sigma^2$. 
On the other hand, using the law of total variance (see e.g. \cite{weiss2006course}), $\zeta_c \leq s^2$ for all $c$  in $\lbrace 2, .. ,r \rbrace$. By injecting this last inequality in Equation \eqref{ineq:varun}, we obtain for all $n$ in $\mathbb{N}^*$, 
\begin{equation*}
\Var(U_n) \leq C(r) \left( \displaystyle \frac{\sigma^2}{n} + \displaystyle \frac{s^2}{n^2} \right),
\end{equation*} 
which achieves the proof of Lemma \ref{Var_Ustat}. 
\end{proof} 

\bigskip
Let us now apply Lemma \ref{Var_Ustat} in order to control the variance of $\widehat{\HH}_{\lambda,\mu}$ w.r.t $\lambda$, $\mu$ and $n$. For this, we first recall that $\widehat{\HH}_{\lambda,\mu}$ can be written as a single $U$-statistic of order 4 as 
\begin{equation*}
\widehat{\HH}_{\lambda,\mu} = \frac{1}{n(n-1)(n-2)(n-3)} \sum_{(i,j,q,r) \in \mathbf{i}^n_4} h_{i,j,q,r},
\end{equation*}
where the general term $h_{i,j,q,r}$ of $\widehat{\HH}_{\lambda,\mu}$ is defined as in \cite{gretton2008kernel}  by 
\begin{equation*}
h_{i,j,q,r} =  \frac{1}{4!} \sum_{(t,u,v,w)}^{(i,j,q,r)} \Big[k_{\lambda}(X_t, X_u)  l_{\mu}(Y_t, Y_u) + k_{\lambda}(X_t,X_u) l_{\mu}(Y_v,Y_w) - 2 k_\lambda(X_t,X_u) l_\mu(Y_t,Y_v)\Big].
\end{equation*}
where the sum represents all ordered quadruples $(t,u,v,w)$ drawn without replacement from $(i,j,q,r)$. 
\newline \newline 
Thus, using Lemma \ref{Var_Ustat}, the variance of $\widehat{\HH}_{\lambda,\mu}$ can be upper bounded as follows:
\begin{equation}
\Var_f \left( \widehat{\HH}_{\lambda,\mu} \right) \leq C \pa{\frac{\sigma^2 (\lambda,\mu)}{n} + \frac{s^2(\lambda,\mu)}{n^2}},
\label{ineq:var-s2-sigma2}
\end{equation} 
where, recalling that $Z_i = (X_i,Y_i)$ for all $i$ in $\{1, \ldots ,n\}$, $\sigma^2 (\lambda,\mu) = \Var_f \left( \mathbb{E}[h_{1,2,3,4} \mid Z_1] \right)$ and $s^2 (\lambda,\mu) = \Var_f \left( h_{1,2,3,4} \right)$.

\subsubsection{Upper bound of $\sigma^2 (\lambda,\mu)$}

By now, we upper bound $\sigma^2 (\lambda,\mu)=\Var_f \left( \mathbb{E}[h_{1,2,3,4} \mid Z_1]\right)$ w.r.t. $\lambda$ and $\mu$. For this, we first notice that in the cases where $k_{\lambda} (X_a,X_b) l_{\mu} (Y_c,Y_d)$ is independent from $Z_1$, the variance of its expectation conditionally on $Z_1$  equals 0. This corresponds to the cases where $a,b,c$ and $d$ are all different from 1. We then have the following inequality:
\begin{equation*}
\sigma^2 (\lambda,\mu) \leq \displaystyle C \sum_{i = 1}^6  \sigma^2_i (\lambda,\mu),
\end{equation*}
where 
\begin{align*}
   \sigma^2_1 (\lambda,\mu) &= \Var_f \left( \mathbb{E}[ k_{\lambda} (X_1,X_2) l_{\mu} (Y_1,Y_2)\mid Z_1] \right) , &
   \sigma^2_2 (\lambda,\mu) &=  \Var_f \left( \mathbb{E}[ k_{\lambda} (X_1,X_2) l_{\mu} (Y_3,Y_4)\mid X_1] \right) , \\
   \sigma^2_3 (\lambda,\mu) &= \Var_f \left( \mathbb{E}[ k_{\lambda} (X_3,X_4) l_{\mu} (Y_1,Y_2)\mid Y_1] \right), &
   \sigma^2_4 (\lambda,\mu) &= \Var_f \left( \mathbb{E}[ k_{\lambda} (X_1,X_2) l_{\mu} (Y_1,Y_3)\mid Z_1] \right), \\
   \sigma^2_5 (\lambda,\mu) &= \Var_f \left( \mathbb{E}[ k_{\lambda} (X_2,X_1) l_{\mu} (Y_2,Y_3)\mid X_1] \right), & 
   \sigma^2_6 (\lambda,\mu) &= , \Var_f \left( \mathbb{E}[ k_{\lambda} (X_2,X_3) l_{\mu} (Y_2,Y_1)\mid Y_1] \right).
\end{align*}

\medskip
\textbf{Case 1.} Upper bound of $\sigma^2_1 (\lambda,\mu)$: 
\begin{eqnarray*}
\sigma_1^2 (\lambda,\mu) 
&\leq& \E \left[\big( \E \left[ k_{\lambda} (X_1,X_2) l_{\mu} (Y_1,Y_2) \mid Z_1 \right] \big)^2 \right] \\ 
&\leq& \E \left[ k_{\lambda} (X_1,X_2) l_{\mu} (Y_1,Y_2) k_{\lambda} (X_1,X_3) l_{\mu} (Y_1,Y_3) \right].
\end{eqnarray*}
\newline 
Moreover, we have
\begin{multline*}
\E \left[ k_{\lambda} (X_1,X_2) k_{\lambda} (X_1,X_3) l_{\mu} (Y_1,Y_2) l_{\mu} (Y_1,Y_3) \right] \\  
= \displaystyle \int_{(\R^p\times \R^q)^3} k_{\lambda} (x_1,x_2) k_{\lambda} (x_1,x_3) l_{\mu} (y_1,y_2) l_{\mu} (y_1,y_3) \; \prod_{k = 1}^3 f (x_k,y_k) \mathrm{d} x_k \mathrm{d} y_k.
\end{multline*}
\newline 
Since $k_\lambda$ and $l_{\mu}$ are nonnegative, one can upper bound $f (x_2,y_2)$ and $f (x_3,y_3)$ by $\norm{f}_{\infty}$, and obtain
\begin{eqnarray*}
\sigma_1^2 (\lambda,\mu) 
& \leq &\norm{f}_{\infty}^2 \int_{(\R^p\times \R^q)^3} k_{\lambda} (x_1,x_2) k_{\lambda} (x_1,x_3) l_{\mu} (y_1,y_2) l_{\mu} (y_1,y_3) \;f (x_1,y_1) \prod_{k = 1}^3 \mathrm{d} x_k \mathrm{d} y_k \\ 
& = &\norm{f}_{\infty}^2 \int_{\R^p\times \R^q} \left[ \int_{\R^p} k_{\lambda} (x_1,x) \mathrm{d} x \right]^2 \left[\int_{\R^q} l_{\mu} (y_1,y) \mathrm{d} y \right]^2 f (x_1,y_1) \mathrm{d} x_1 \mathrm{d} y_1.  
\end{eqnarray*}

Finally, using that $\displaystyle \int_{\R^p} k_{\lambda} (\cdot,x) \mathrm{d} x = \int_{\R^q} l_{\mu} (\cdot,y) \mathrm{d} y = 1$, we write
\begin{equation}
\sigma_1^2 (\lambda,\mu) \leq \norm{f}_{\infty}^2.
\label{sigma1e}
\end{equation}

\medskip
\textbf{Case 2.} Upper bound of $\sigma^2_2 (\lambda,\mu)$:
\begin{eqnarray*}
\sigma_2^2 (\lambda,\mu) &\leq& \E \left[ \big( \E \left[ k_{\lambda} (X_1,X_2) l_{\mu} (Y_3,Y_4) \mid X_1 \right] \big)^2 \right] \\ 
&\leq&  \E \left[ \big( \E \left[ k_{\lambda} (X_1,X_2) \mid X_1 \right] \big)^2 \right] \big( \E \left[ l_{\mu} (Y_3,Y_4) \right] \big)^2 \\ 
&\leq&  \E \left[ k_{\lambda} (X_1,X_2) k_{\lambda} (X_1,X_3) \right] \big( \E \left[ l_{\mu} (Y_3,Y_4) \right] \big)^2.
\end{eqnarray*}
Moreover, it is easy to see that by upper bounding $f_1(x_2)$ and $f_1(x_3)$ by $\norm{f_1}_{\infty}$, and recalling that $\displaystyle \int_{\R^p} k_{\lambda} (x_1,x) \mathrm{d} x = 1$, we have,
\begin{align*}
\E \left[ k_{\lambda} (X_1,X_2) k_{\lambda} (X_1,X_3) \right] 
&= \int_{\R^p} \left[ \int_{\R^p} k_{\lambda} (x_1,x_2) f_1(x_2) \mathrm{d} x_2 \right] \left[ \int_{\R^p} k_{\lambda} (x_1,x_3) f_1(x_3) \mathrm{d} x_3 \right] f_1(x_1) \mathrm{d} x_1 \\ 
&\leq \norm{f_1}_{\infty}^2.
\end{align*} 
Besides, upper bounding $f_2(y_3)$ by $\norm{f_2}_{\infty}$ in the integral form of $\E \left[ l_{\mu} (Y_3,Y_4) \right]$ gives
\begin{equation*}
\E \left[ l_{\mu} (Y_3,Y_4) \right] \leq \norm{f_2}_{\infty}.
\end{equation*} 
By combining these inequalities, we obtain
\begin{equation}
\sigma_2^2 (\lambda,\mu) \leq \norm{f_1}_{\infty}^2 \norm{f_2}_{\infty}^2.
\label{sigma2e}
\end{equation}

\medskip
\textbf{Case 3.} Upper bound of $\sigma^2_3 (\lambda,\mu)$: this case is similar to case 2 by exchanging $X$ by $Y$ and $k_{\lambda}$ by $l_{\mu}$. Thus, we have the inequality
\begin{equation}
\sigma_3^2 (\lambda,\mu) \leq \norm{f_1}_{\infty}^2 \norm{f_2}_{\infty}^2.
\label{sigma3e}
\end{equation}

\medskip
\textbf{Case 4.} Upper bound of $\sigma^2_4 (\lambda,\mu)$:
\begin{eqnarray*}
\sigma_4^2 (\lambda,\mu) &\leq& \E \left[ \big( \E \left[ k_{\lambda} (X_1,X_2) l_{\mu} (Y_1,Y_3) \mid Z_1 \right] \big)^2 \right]  \\ 
&\leq& \E \left[ k_{\lambda} (X_1,X_2) k_{\lambda} (X_1,X_4) l_{\mu} (Y_1,Y_3) l_{\mu} (Y_1,Y_5) \right].
\end{eqnarray*}
By upper bounding $f_1 (x_2)$, $f_1 (x_4)$ by $\norm{f_1}_{\infty}$ and $f_2(y_3)$, $f_2(y_5)$ by $\norm{f_2}_{\infty}$ in the integral form of $\E \left[ k_{\lambda} (X_1,X_2) k_{\lambda} (X_1,X_4) l_{\mu} (Y_1,Y_3) l_{\mu} (Y_1,Y_5) \right]$, we obtain
\begin{equation}
\sigma_4^2 (\lambda,\mu) \leq \norm{f_1}_{\infty}^2 \norm{f_2}_{\infty}^2.
\label{sigma4e}
\end{equation}

\medskip
\textbf{Case 5.} Upper bound of $\sigma^2_5 (\lambda,\mu)$:
 \begin{eqnarray*}
\sigma_5^2 (\lambda,\mu) &\leq & \E \left[ \big( \E \left[ k_{\lambda} (X_2,X_1) l_{\mu} (Y_2,Y_3) \mid X_1 \right] \big)^2 \right] \\ 
&\leq & \E \left[ k_{\lambda} (X_2,X_1)  k_{\lambda} (X_4,X_1) l_{\mu} (Y_2,Y_3) l_{\mu} (Y_4,Y_5) \right].
\end{eqnarray*}
By upper bounding $f (x_2,y_2)$ and $f (x_4,y_4)$ by $\norm{f}_{\infty}$ in the integral form of the last expectation, we have
\begin{equation}
\sigma_5^2 (\lambda,\mu) \leq \norm{f}_{\infty}^2.
\label{sigma5e}
\end{equation}

\medskip
\textbf{Case 6.} Upper bound of $\sigma^2_6 (\lambda,\mu)$: this case is similar to case 5 by exchanging $X$ by $Y$ and $k_{\lambda}$ by $l_{\mu}$. We have then the inequality 
\begin{equation}
\sigma_6^2 (\lambda,\mu) \leq \norm{f}_{\infty}^2.
\label{sigma6e}
\end{equation}
\newline
Finally, by combining inequalities \eqref{sigma1e}, \eqref{sigma2e}, \eqref{sigma3e}, \eqref{sigma4e}, \eqref{sigma5e} and \eqref{sigma6e}, we have the following inequality
\begin{equation}
\sigma^2 (\lambda,\mu) \leq \displaystyle C(M_{f}).
\label{sigmae}
\end{equation}

\subsubsection{Upper bound of $s^2 (\lambda,\mu)$}
\label{Annexe/Variance/snew}

Let us first recall that the general term of the $U$-statistic $\widehat{\HH}_{\lambda,\mu}$ is written as   
\begin{equation*}
h_{1,2,3,4} =  \frac{1}{4!} \sum_{(u,v,w,t)}^{(1,2,3,4)} k_{\lambda} (X_u,X_v) \left[ l_{\mu} (Y_u,Y_v) + l_{\mu} (Y_w,Y_t) - 2 l_{\mu} (Y_u,Y_w) \right].
\end{equation*}
Moreover, all the terms of the last sum have the same distribution. We then have
\begin{eqnarray*}
s^2 (\lambda,\mu) &=& \Var_f \left( h_{1,2,3,4} \right) \\
& \leq & C \Var_f \left( k_{\lambda} (X_1,X_2) \left[ l_{\mu} (Y_1,Y_2) + l_{\mu} (Y_3,Y_4) - 2 l_{\mu} (Y_1,Y_3) \right] \right).
\end{eqnarray*}
It follows that, 
\begin{align*}
\Var_f &\left( h_{1,2,3,4} \right) \\ 
&\leq C \big[ \Var_f \left( k_{\lambda} (X_1,X_2) l_{\mu} (Y_1,Y_2) \right) + \Var_f \left( k_{\lambda} (X_1,X_2) l_{\mu} (Y_3,Y_4) \right) + \Var_f \left( k_{\lambda} (X_1,X_2) l_{\mu} (Y_1,Y_3) \right) \big] \\ 
&\leq C \Big( \mathbb{E} \left[ k_{\lambda}^2 (X_1,X_2) l_{\mu}^2 (Y_1,Y_2) \right] + \mathbb{E} \left[ k_{\lambda}^2 (X_1,X_2) l_{\mu}^2 (Y_3,Y_4) \right] + \mathbb{E} \left[ k_{\lambda}^2 (X_1,X_2) l_{\mu}^2 (Y_1,Y_3) \right] \Big). 
\end{align*} 
In order to bring back to multivariate normal densities, we express $k_{\lambda}^2$ and $l_{\mu}^2$ as
\begin{equation*}
k_{\lambda}^2 = \displaystyle \frac{\displaystyle k_{\lambda'}}{(4 \pi)^{\frac{p}{2}} \lambda_1 \ldots \lambda_p } \quad \text{and} \quad l_{\mu}^2 = \displaystyle \frac{\displaystyle l_{\mu'}}{(4 \pi)^{\frac{q}{2}} \mu_1 \ldots \mu_q},
\end{equation*}
where $\displaystyle  \lambda' = \frac{\lambda}{\sqrt{2}}$ and $\mu' = \displaystyle \frac{\mu}{\sqrt{2}}$.

Consequently, the expectation $\E \left[ k_{\lambda}^2 (X_1,X_2) l_{\mu}^2 (Y_1,Y_2) \right]$ can be expressed as
\begin{align*}
 \E \Big[ k_{\lambda}^2 &(X_1,X_2) l_{\mu}^2 (Y_1,Y_2) \Big] \\ 
 &= \displaystyle \frac{1}{(4 \pi)^{\frac{p+q}{2}} \lambda_1 \ldots \lambda_p \mu_1 \ldots \mu_q} \E \left[ k_{\lambda'} (X_1,X_2) l_{\mu'} (Y_1,Y_2)  \right] \\ 
 &= \displaystyle \frac{1}{(4 \pi)^{\frac{p+q}{2}} \lambda_1 \ldots \lambda_p \mu_1 \ldots \mu_q} \int_{(\R^p\times \R^q)^2} k_{\lambda'} (x_1,x_2) l_{\mu'} (y_1,y_2) f(x_1,y_1) f(x_2,y_2) \mathrm{d} x_1 \mathrm{d} x_2 \mathrm{d} y_1 \mathrm{d} y_2.  
\end{align*}
By upper bounding $f(x_2,y_2)$ by $\norm{f}_{\infty}$ in the last integral, we have
\begin{align*}
\displaystyle \int_{(\R^p\times \R^q)^2} & k_{\lambda'} (x_1,x_2) l_{\mu'} (y_1,y_2) f(x_1,y_1) f(x_2,y_2) \mathrm{d} x_1 \mathrm{d} x_2 \mathrm{d} y_1 \mathrm{d} y_2 \\ 
& \leq \displaystyle \norm{f}_{\infty} \int_{\R^p\times \R^q} \left[ \int_{\R^p} k_{\lambda'} (x_1,x_2) \mathrm{d} x_2 \right] \left[ \int_{\R^q} l_{\mu'} (y_1,y_2) \mathrm{d} y_2 \right] f(x_1,y_1) \mathrm{d} x_1 \mathrm{d} y_1 \\ 
&= \norm{f}_{\infty}. 
\end{align*}
This leads to, 
\begin{equation}
\E \left[ k_{\lambda}^2 (X_1,X_2) l_{\mu}^2 (Y_1,Y_2) \right] \leq \displaystyle \frac{\norm{f}_{\infty}}{(4 \pi)^{\frac{p+q}{2}} \lambda_1 \ldots \lambda_p \mu_1 \ldots \mu_q}.
\label{sineq1}
\end{equation}
We can easily show by similar argument that
\begin{equation}
\E \left[ k_{\lambda}^2 (X_1,X_2) l_{\mu}^2 (Y_3,Y_4) \right] \leq \displaystyle \frac{\norm{f_1}_{\infty} \norm{f_2}_{\infty}}{(4 \pi)^{\frac{p+q}{2}} \lambda_1 \ldots \lambda_p \mu_1 \ldots \mu_q}.
\label{sineq2}
\end{equation} 
and
\begin{equation}
\E \left[ k_{\lambda}^2 (X_1,X_2) l_{\mu}^2 (Y_1,Y_3) \right] \leq \displaystyle \frac{\norm{f}_{\infty}}{(4 \pi)^{\frac{p+q}{2}} \lambda_1 \ldots \lambda_p \mu_1 \ldots \mu_q}. 
\label{sineq3}
\end{equation}
From Equations \eqref{sineq1}, \eqref{sineq2} and \eqref{sineq3}, we have 
\begin{equation}
s^2 (\lambda,\mu) \leq \displaystyle \frac{C(M_{f})}{(4 \pi)^{\frac{p+q}{2}} \lambda_1 \ldots \lambda_p \mu_1 \ldots \mu_q}.
\label{sineq}
\end{equation}
From Equations \eqref{sigmae} and \eqref{sineq}, we deduce the following inequality for $\Var_f(\widehat{\HH}_{\lambda,\mu})$
\begin{equation*}
\Var_f(\widehat{\HH}_{\lambda,\mu}) \leq C(M_{f}, p, q) \left\{  \frac{1}{n} + \frac{1}{n^2\lambda_1 \ldots \lambda_p \mu_1 \ldots \mu_q}\right\}.
\end{equation*}

\subsection{Proof of Proposition \ref{LTQ}}
\label{Quantile}

To give an upper bound for the quantile $q_{1-\alpha}^{\lambda,\mu}$ w.r.t $\lambda$ and $\mu$, we use concentration inequalities for general $U$-statistics. 
Recall that $\widehat{\HH}_{\lambda,\mu}$ can be written as a $U$-statistic of order 4, 
\begin{equation*}
\widehat{\HH}_{\lambda,\mu} = \frac{1}{n(n-1)(n-2)(n-3)} \sum_{(i,j,q,r) \in \mathbf{i}^n_4} h_{i,j,q,r},
\end{equation*}
with general term $h_{i,j,q,r}$ defined by 
\begin{equation*}
h_{i,j,q,r} =  \frac{1}{4!} \sum_{(t,u,v,w)}^{(i,j,q,r)} \Big[k_{\lambda}(X_t, X_u)  l_{\mu}(Y_t, Y_u) + k_{\lambda}(X_t,X_u) l_{\mu}(Y_v,Y_w) - 2 k_\lambda(X_t,X_u) l_\mu(Y_t,Y_v)\Big].
\end{equation*}
where the sum represents all ordered quadruples $(t,u,v,w)$ drawn without replacement from $(i,j,q,r)$. \\

However, sharp upper bounds are obtained only for degenerate $U$-statistics (see e.g. \cite{houdre2003exponential}). We recall that a $U$-statistic of order $r$, denoted $U_n = U_n(V_1, \ldots ,V_r)$, is degenerate if $\E [ U_n \mid V_1,\ldots, V_{r-1}] = 0$. 
Note that this implies that $\E [ U_n \mid V_1,\ldots, V_{i}] = 0$ for all $i$ in $\lbrace 1, \ldots ,r-1 \rbrace$. 
Hence, the first step to upper bound $q_{1-\alpha}^{\lambda,\mu}$ is to write $\widehat{\HH}_{\lambda,\mu}$ as a sum of degenerate $U$-statistics. For this, we rely on the ANOVA-decomposition (ANOVA for ANalyse Of VAriance, see e.g. \cite{sobol2001global}) of the symmetric function $h_{i,j,q,r}$. We then write
\begin{equation}
h_{i,j,q,r} = \displaystyle \frac{1}{2!} \sum_{(t,u)}^{(i,j,q,r)} h_{t,u} + \frac{1}{3!} \sum_{(t,u,v)}^{(i,j,q,r)} h_{t,u,v} + \widetilde{h}_{i,j,q,r}, 
\label{ANOVA} 
\end{equation}
where the first (resp. the second) sum represents all ordered pairs $(t,u)$ (resp. triplets $(t,u,v)$) drawn without replacement from $(i,j,q,r)$ and the terms $h_{t,u}$, $h_{t,u,v}$ and $\widetilde{h}_{i,j,q,r}$ are defined as 
\begin{eqnarray*}
h_{t,u} & = & \esp{ h_{i,j,q,r} \mid Z_t , Z_u }, \\ 
h_{t,u,v} & = & \esp{ h_{i,j,q,r} \mid Z_t , Z_u , Z_v } - \frac{1}{2!} \sum_{(t',u')}^{(t,u,v)} h_{t',u'} , \\ 
\widetilde{h}_{i,j,q,r} & = & h_{i,j,q,r} - \displaystyle \frac{1}{3!} \sum_{(t,u,v)}^{(i,j,q,r)} h_{t,u,v} - \frac{1}{2!} \sum_{(t,u)}^{(i,j,q,r)} h_{t,u}. 
\end{eqnarray*}
Hence, by summing all terms $h_{i,j,q,r}$ for $(i,j,q,r)$ in $\mathbf{i}^n_4$ and then dividing by $n(n-1)(n-2)(n-3)$, we have
\begin{equation}
\widehat{\HH}_{\lambda,\mu} = 6\ \widehat{\HH}_{\lambda,\mu}^{(2,D)} + 4\ \widehat{\HH}_{\lambda,\mu}^{(3,D)} + \widehat{\HH}_{\lambda,\mu}^{(4,D)},
\end{equation} 
where
\begin{equation*}
\widehat{\HH}_{\lambda,\mu}^{(2,D)} = \frac{1}{n(n-1)} \sum_{(i,j) \in \mathbf{i}^n_2} h_{i,j} , \quad \quad \widehat{\HH}_{\lambda,\mu}^{(3,D)} = \frac{1}{n(n-1)(n-2)} \sum_{(i,j,q) \in \mathbf{i}^n_3} h_{i,j,q}\\
\end{equation*}
\begin{equation*}
\widehat{\HH}_{\lambda,\mu}^{(4,D)} = \displaystyle \frac{1}{n(n-1)(n-2)(n-3)} \sum_{(i,j,q,r) \in \mathbf{i}^n_4} \widetilde{h}_{i,j,q,r}.
\end{equation*}
\begin{lemm} 
Let us assume that $f = f_1 \otimes f_2$. Then, the $U$-statistics $\widehat{\HH}_{\lambda,\mu}^{(2,D)}$, $\widehat{\HH}_{\lambda,\mu}^{(3,D)}$ and $\widehat{\HH}_{\lambda,\mu}^{(4,D)}$ are degenerated.
\label{degHSIC}
\end{lemm}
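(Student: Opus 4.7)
The plan rests on a single conditional-expectation identity. The key observation is that under the null $f=f_1\otimes f_2$, the $2n$ variables $X_1,\ldots,X_n,Y_1,\ldots,Y_n$ are mutually independent (the $X_i$ i.i.d.\ with density $f_1$, the $Y_j$ i.i.d.\ with density $f_2$, and the two families independent). I claim that this implies
\begin{equation*}
\E\cro{h_{1,2,3,4}\mid Z_1}=0 \quad\text{and}\quad \E\cro{h_{1,2,3,4}}=0.
\end{equation*}
To prove the conditional claim I would split the $4!$ ordered tuples $(t,u,v,w)$ in the definition of $h_{1,2,3,4}$ into the three types of summands ($k_\lambda(X_t,X_u)l_\mu(Y_t,Y_u)$, $k_\lambda(X_t,X_u)l_\mu(Y_v,Y_w)$, $-2k_\lambda(X_t,X_u)l_\mu(Y_t,Y_v)$), and within each type group by the position of the index $1$. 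Mutual independence factorises each conditional expectation as a product of at most four quantities among $K_{xx}=\E\cro{k_\lambda(X_1,X_2)}$, $L_{yy}=\E\cro{l_\mu(Y_1,Y_2)}$, $K_x(X_1)=\E\cro{k_\lambda(X_1,X_2)\mid X_1}$ and $L_y(Y_1)=\E\cro{l_\mu(Y_1,Y_2)\mid Y_1}$. A direct multiplicity count shows that the contributions of the three types cancel exactly; the corresponding unconditional identity reduces to $\HSIC_{\lambda,\mu}(f_1\otimes f_2)=0$ via the representation \eqref{formHSIC}.

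Given this core identity, the three degeneracy statements follow from purely formal tower-property manipulations exploiting the mutual independence of distinct samples. For $\widehat{\HH}^{(2,D)}_{\lambda,\mu}$: since $h_{1,2}=\E\cro{h_{1,2,3,4}\mid Z_1,Z_2}$, the tower property immediately yields $\E\cro{h_{1,2}\mid Z_1}=\E\cro{h_{1,2,3,4}\mid Z_1}=0$ and likewise $\E\cro{h_{1,2}}=0$. For $\widehat{\HH}^{(3,D)}_{\lambda,\mu}$: by symmetry of $h_{t,u}$ in $(t,u)$, the ordered-pair sum collapses to $h_{1,2,3}=\E\cro{h_{1,2,3,4}\mid Z_1,Z_2,Z_3}-(h_{1,2}+h_{1,3}+h_{2,3})$; conditioning on $(Z_1,Z_2)$, the first term becomes $h_{1,2}$, cancelling the explicit $h_{1,2}$, while each of $\E\cro{h_{1,3}\mid Z_1,Z_2}$ and $\E\cro{h_{2,3}\mid Z_1,Z_2}$ reduces by independence of $Z_3$ from $(Z_1,Z_2)$ and the tower property to $\E\cro{h_{1,2,3,4}\mid Z_i}=0$.

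For $\widehat{\HH}^{(4,D)}_{\lambda,\mu}$: exploiting the symmetry of $h_{t,u}$ and $h_{t,u,v}$ in their arguments, rewrite $\widetilde h_{1,2,3,4}=h_{1,2,3,4}-\sum_{|S|=3}h_S-\sum_{|S|=2}h_S$ with $S\subset\ac{1,2,3,4}$. Conditioning on $(Z_1,Z_2,Z_3)$, the first term equals $h_{1,2,3}+h_{1,2}+h_{1,3}+h_{2,3}$; among the triple-index terms, $h_{1,2,3}$ is unchanged while $h_{1,2,4},h_{1,3,4},h_{2,3,4}$ all have vanishing conditional expectation by the order-3 result just proved; among the pair-index terms, $h_{1,2},h_{1,3},h_{2,3}$ are unchanged while $h_{1,4},h_{2,4},h_{3,4}$ vanish in conditional expectation by the order-2 result. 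The surviving terms cancel exactly, giving $\E\cro{\widetilde h_{1,2,3,4}\mid Z_1,Z_2,Z_3}=0$ (and hence the lower-order conditional expectations as well). The only genuine technical content is the multiplicity bookkeeping in the core computation; once that is settled everything else is a clean Hoeffding-style cascade.
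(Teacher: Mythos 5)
Your proposal is correct and follows essentially the same route as the paper: the identical tower-property cascade on the ANOVA terms $h_{i,j}$, $h_{i,j,q}$ and $\widetilde{h}_{i,j,q,r}$, using the independence of $Z_3$ (resp. $Z_4$) from the conditioning variables to reduce each lower-order term to an already-established zero. The only divergence is at the base identity $\E\cro{h_{1,2,3,4}\mid Z_1}=0$: the paper simply invokes Theorem 2 of \cite{gretton2008kernel}, whereas you sketch a direct proof by a multiplicity count, which does check out (each of the four products $K_x(X_1)L_y(Y_1)$, $K_x(X_1)L_{yy}$, $K_{xx}L_y(Y_1)$, $K_{xx}L_{yy}$ appears $12$ times from the first two types of summands and $-12$ times from the third), making your argument self-contained at the cost of the bookkeeping you acknowledge but do not fully write out.
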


\medskip
\begin{proof}[Proof of Lemma \ref{degHSIC}]
According to Theorem 2 of \cite{gretton2008kernel}, if $f = f_1 \otimes f_2$, we have
\begin{equation*}
\E [ h_{i,j,q,r} \mid Z_i ] = 0. 
\end{equation*}
We then easily show that $\widehat{\HH}_{\lambda,\mu}^{(2,D)}$ is degenerated by writing
\begin{equation}
\E [h_{i,j} \mid Z_i] = \E [h_{i,j,q,r} \mid Z_i] = 0.
\label{Deg2}
\end{equation}
Moreover, to prove that $\widehat{\HH}_{\lambda,\mu}^{(3,D)}$ is degenerated, we have
\begin{eqnarray}
\E [h_{i,j,q} \mid Z_i , Z_j] &=& \E [h_{i,j,q,r} \mid Z_i , Z_j] - \E [h_{i,j} \mid Z_i , Z_j] - \E [h_{i,q} \mid Z_i] - \E [h_{j,q} \mid Z_j] \nonumber \\ 
&=& h_{i,j} - h_{i,j} \label{Deg3} \\ 
&=&0 \nonumber,
\end{eqnarray}
where \eqref{Deg3} holds by definition of $h_{i,j}$ and Equation \eqref{Deg2}. 
Finally, from previous cases, and by definition of $h_{i,j,q}$, we obtain
\begin{eqnarray*}
\E [\widetilde{h}_{i,j,q,r} \mid Z_i , Z_j, Z_q] &=& \E [h_{i,j,q,r} \mid Z_i , Z_j, Z_q] - h_{i,j,q} - h_{i,j} -  h_{i,q} - h_{j,q} \\ 
&=& 0, 
\end{eqnarray*}
which proves that $\widehat{\HH}_{\lambda,\mu}^{(4,D)}$ is degenerated. 
\end{proof}

Once we have upper bounds of the $(1-\alpha)$-quantiles of $\widehat{\HH}_{\lambda,\mu}^{(r,D)}$ with $r$ in $\{2,3,4\}$ under the assumption $P_{f_1 \otimes f_2}$, an upper bound of the quantile $q_{1-\alpha}^{\lambda,\mu}$ is naturally obtained. In fact, we can easily show that,
\begin{equation}\label{eq:majq234}
q_{1-\alpha}^{\lambda,\mu} \leq 6\ q_{1-\alpha/3,2}^{\lambda,\mu} + 4\ q_{1-\alpha/3,3}^{\lambda,\mu} + q_{1-\alpha/3,4}^{\lambda,\mu} 
\end{equation}
where $q_{1-\alpha,r}^{\lambda,\mu}$ is the $(1-\alpha)$-quantiles of $\widehat{\HH}_{\lambda,\mu}^{(r,D)}$ under $P_{f_1 \otimes f_2}$.

\subsubsection{Upper bound of $q_{1-\alpha,2}^{\lambda,\mu}$}

First, \cite{gretton2008kernel} page 10 prove that,  under the hypothesis $f= f_1 \otimes f_2$, $h_{i,j} = \esp{h_{i,j,q,r}\mid Z_i,Z_j}$ can be written as follows
$$h_{i,j} = h_{(2)}(Z_i,Z_j),$$
where, for all $z_1=(x,y)$ and $z'=(x',y')$ in $\R^p\times \R^q$, 
\begin{align*}
h_{(2)} (z,z')  = \frac{1}{6} &\biggl\{ k_{\lambda}(x,x') - \esp{k_{\lambda}(x,X')} - \esp{k_{\lambda}(X,x')} + \esp{k_{\lambda}(X,X')} \biggr\} \\ 
\times  &\biggl\{ l_{\mu}(y,y') - \esp{l_{\mu}(y,Y')} - \esp{l_{\mu}(Y,y')} + \esp{l_{\mu}(Y,Y')} \biggr\},
\end{align*}
for $(X,Y)$ and $(X',Y')$ independent random variables with common density $f_1\otimes f_2$. \\

To upper bound the quantile $q_{1-\alpha,2}^{\lambda,\mu}$, we use the concentration inequality for degenerated $U$-statistics of order 2 given in \cite[p.15, Equation (3.5)]{gine2000exponential}. We write for all $t > 0$,
\begin{equation}
\mathbb{P} \left(\abs{ \displaystyle \sum_{(i,j) \in i^n_2} h_{(2)}(Z_i,Z_j) } > t \right) \leq A \exp \left( - \frac{1}{A} \min\left\{ \frac{t}{M}, \left( \frac{t}{L} \right)^{2/3}, \left( \frac{t}{K} \right)^{1/2} \right\} \right),
\label{Gine2000}
\end{equation}  
where $A>1$ is an absolute constant, 
\begin{equation*}
K = \norm{ h_{(2)} }_{\infty} , \quad\mbox{and}\quad M^2 = \sum_{(i,j) \in i_2^n} \esp{h_{(2)}^2(Z_i,Z_j)} = n(n-1) \esp{h_{(2)}^2(Z_1,Z_2)},
\end{equation*}
\begin{equation*}
L^2 = \max\left\{ \displaystyle \norm{ \sum_{i = 1}^n \esp{h_{(2)}^2(Z_i,\cdot)}}_{\infty} , \norm{ \sum_{j = 1}^n \esp{ h_{(2)}^2 \left(\cdot,Z_j \right) } }_{\infty}  \right\} = n \norm{\esp{h_{(2)}^2(Z_1,\cdot)}}_{\infty}. 
\end{equation*}
By setting $\varepsilon = \displaystyle \frac{t}{n^2}$, and using Equation \eqref{Gine2000}, we obtain
\begin{equation*}
\mathbb{P} \left( \displaystyle \frac{1}{n^2} \abs{ \displaystyle \sum_{(i,j) \in i_2^n} h_{i,j} } > \varepsilon \right) \leq A \exp \left( - \frac{1}{A} \min\left\{ \frac{n^2 \varepsilon}{M}, \left( \frac{n^2 \varepsilon}{L} \right)^{2/3}, \left( \frac{n^2 \varepsilon}{K} \right)^{1/2} \right\} \right). 
\end{equation*} 
Therefore, we have for all $\varepsilon > 0$,
\begin{align*}
\mathbb{P} \left( \displaystyle \frac{1}{n^2} \abs{ \displaystyle \sum_{(i,j) \in i_2^n} h_{i,j} } > \varepsilon \right)  \leq& \displaystyle A \max\left\{  \exp \left(-\frac{n^2 \varepsilon}{AM}\right), \exp \left(-\frac{n^{4/3} \varepsilon^{2/3}}{AL^{2/3}}\right), \exp \left(-\frac{n \varepsilon^{1/2}}{AK^{1/2}}\right) \right\}. 
\end{align*}
By adjusting the constant $A$, we can replace in the last inequality $\displaystyle \frac{1}{n^2} \displaystyle \sum_{(i,j) \in i_2^n} h_{i,j}$ by $\widehat{\HH}_{\lambda,\mu}^{(2,D)}$,
\begin{equation*}
\mathbb{P} \left( \abs{ \widehat{\HH}_{\lambda,\mu}^{(2,D)} } > \varepsilon \right) \leq \displaystyle A \max\left\{  \exp \left(-\frac{n^2 \varepsilon}{AM}\right), \exp \left(-\frac{n^{4/3} \varepsilon^{2/3}}{AL^{2/3}}\right), \exp \left(-\frac{n \varepsilon^{1/2}}{AK^{1/2}}\right) \right\}.
\end{equation*}

Hence, if $\varepsilon_{\alpha}$ is a positive number verifying 
\begin{equation*}
\alpha = \displaystyle A \max\left\{  \exp \left(-\frac{n^2 \varepsilon_{\alpha}}{AM}\right), \exp \left(-\frac{n^{4/3} \varepsilon_{\alpha}^{2/3}}{AL^{2/3}}\right), \exp \left(-\frac{n \varepsilon_{\alpha}^{1/2}}{AK^{1/2}}\right) \right\},
\end{equation*}
then, by definition of the quantile, 
\begin{equation}\label{eq:majorq2varepsalpha}
q_{1-\alpha,2}^{\lambda,\mu} \leq \varepsilon_{\alpha}.
\end{equation}
By now, we upper bound $\varepsilon_{\alpha}$ (and consequently $q_{1-\alpha,2}^{\lambda,\mu}$), in the 3 cases considered bellow.

\medskip
\textbf{Case 1.} If $\displaystyle \alpha =  A \exp \left(-n^2 \varepsilon_{\alpha} / \cro{AM}\right),$ then $\varepsilon_{\alpha}$ is expressed as
\begin{equation*}
\varepsilon_{\alpha} = \displaystyle \frac{AM}{n^2} \left( \log \left(\frac{1}{\alpha}\right) + \log \left( A \right) \right). 
\end{equation*}
Since in $\boldsymbol{\mathcal{A}_2(\alpha)}$, we assume that $\log(1/\alpha) > 1$, and since $A>1$, we can then bound $\varepsilon_{\alpha}$ as 
\begin{equation}
0<\varepsilon_{\alpha} \leq \displaystyle \frac{CM}{n^2} \log \left(\frac{1}{\alpha}\right). 
\label{2M}
\end{equation}
for some absolute positive constant $C$. \\

Let us upper bound $M$ w.r.t $\lambda$, $\mu$ and $n$. First notice that 
$$M^2 = n(n-1) \esp{h_{(2)}^2(Z_1,Z_2)} \leq n^2 \esp{h_{1,2}^2}.$$ 
Moreover, by the law of total variance, 
\begin{eqnarray*}
\esp{h_{1,2}^2} &=& \Var \pa{ \esp{h_{1,2,3,4} \mid Z_1, Z_2} } \\ 
&\leq& \Var \left( h_{1,2,3,4} \right). 
\end{eqnarray*}
Furthermore, we have shown in Equation \eqref{sineq} (see Section \ref{Annexe/Variance/snew}) that,
\begin{equation*}
\Var \left( h_{1,2,3,4} \right) \leq \frac{C(M_{f_1\otimes f_2}, p, q)}{\lambda_1 \ldots \lambda_p \mu_1 \ldots \mu_q} \leq \frac{C(M_{f}, p, q)}{\lambda_1 \ldots \lambda_p \mu_1 \ldots \mu_q}, 
\end{equation*}
since we work under $P_{f_1\otimes f_2}$. 

Hence, we can upper bound $M$ as follows, 
\begin{equation}
M \leq \frac{C(M_{f}, p, q) n}{\sqrt{\lambda_1 \ldots \lambda_p \mu_1 \ldots \mu_q}}.
\label{MM}
\end{equation}
Consequently, by combining Equations \eqref{2M} and \eqref{MM}, we obtain
\begin{equation}
q_{1-\alpha,2}^{\lambda,\mu} \leq \frac{C(M_{f}, p, q)}{n \sqrt{\lambda_1 \ldots \lambda_p \mu_1 \ldots \mu_q}} \log\left( \frac{1}{\alpha} \right).
\label{q2case1}
\end{equation}

\medskip
\textbf{Case 2.} If $\displaystyle \alpha =  A \exp \left(-n^{4/3} \varepsilon_{\alpha}^{2/3} / \cro{AL^{2/3}}\right)$, then $\varepsilon_{\alpha}$ verifies
\begin{equation*}
\varepsilon_{\alpha}^{2/3} = \displaystyle \frac{AL^{2/3}}{n^{4/3}} \left( \log \left(\frac{1}{\alpha}\right) + \log \left( A \right) \right).
\end{equation*}
Thus, since $\log(1/\alpha)>1$, $\varepsilon_{\alpha}$ can be upper bounded as
\begin{equation}
\varepsilon_{\alpha} \leq \displaystyle \frac{CL}{n^2} \log \left(\frac{1}{\alpha}\right)^{3/2},
\label{2L}
\end{equation}
Let us upper bound $L$ w.r.t $n$, $\lambda$ and $\mu$, where
\begin{equation*}
L^2 = n \sup_{z\in\R^p\times\R^q}\ac{\esp{h_{(2)}^2(Z_1,z)}}.
\end{equation*}

Yet, for all $z = (x, y) \in \mathbb{R}^{p}\times\mathbb{R}^{q}$, 
\begin{align*}
h_{(2)}^2 (Z_1,z) = \frac{1}{36} &\biggl\{ k_{\lambda}(X_1,x) - \esp{k_{\lambda}(X_1,X_2) \mid X_1} - \esp{k_{\lambda}(X_3,x)}  + \esp{k_{\lambda}(X_3,X_2)}\biggr\}^2 \\ 
\times &\biggl\{ l_{\mu}(Y_1,y) - \esp{l_{\mu}(Y_1,Y_2) \mid Y_1} - \esp{l_{\mu}(Y_3,y)}  + \esp{l_{\mu}(Y_3,Y_2)}\biggr\}^2 .   
\end{align*}
Therefore, we have the following inequality for $h_{(2)}^2 (Z_1,z)$,
\begin{align*}
h_{(2)}^2 (Z_1,z) \leq C &\biggl\{ k_{\lambda}^2(X_1,x) + \esp{k_{\lambda}(X_1,X_2) \mid X_1}^2 + \esp{k_{\lambda}(X_3,x)}^2  + \esp{k_{\lambda}(X_3,X_2)}^2\biggr\} \\ 
\times &\biggl\{ l_{\mu}^2(Y_1,y) + \esp{l_{\mu}(Y_1,Y_2) \mid Y_1}^2 + \esp{l_{\mu}(Y_3,y)}^2  + \esp{l_{\mu}(Y_3,Y_2)}^2\biggr\} .   
\end{align*}
Using that $(X_1, \ldots ,X_n)$ and $(Y_1, \ldots ,Y_n)$ are independent, and Jensen's inequality, 
\begin{align*}
\esp{h_{(2)}^2(Z_1,z)} \leq C \biggl\{ \esp{k_{\lambda}^2(X_1,x)} +  \esp{k_{\lambda}^2(X_1,X_2)} \biggr\} \times \biggl\{ \esp{l_{\mu}^2(Y_1,y)} +  \esp{l_{\mu}^2(Y_1,Y_2)} \biggr\}
\end{align*} 
Moreover, by similar arguments as in Section \ref{Annexe/Variance/snew}, one can prove that for all $x$ in $\R^p$, 
$$\esp{k_{\lambda}^2(X_1,x)} \leq \frac{C(\norm{f_1}_\infty,p)}{\lambda_1 \ldots \lambda_p}, \quad \quad \mbox{and}\quad \quad \esp{k_{\lambda}^2(X_1,X_2)} \leq \frac{C(\norm{f_1}_\infty,p)}{\lambda_1 \ldots \lambda_p},$$
and for all $y$ in $\R^q$, 
$$\esp{l_{\mu}^2(Y_1,y)}\leq \frac{C(\norm{f_2}_\infty,q)}{\mu_1\ldots\mu_q}\quad \quad \mbox{and}\quad \quad \esp{l_{\mu}^2(Y_1,Y_2)} \leq \frac{C(\norm{f_2}_\infty,q)}{\mu_1\ldots\mu_q}.$$
Hence, by taking the supremum over $z=(x,y)$ in $\R^p \times \R^q$, we obtain 
\begin{equation}
L^2 \leq  C(M_{f},p,q) \frac{n}{\lambda_1 \ldots \lambda_p \mu_1 \ldots \mu_q}.
\label{LL}
\end{equation}
By combining Equations \eqref{2L} and \eqref{LL}, we have 
\begin{equation*}
\varepsilon_{\alpha} \leq \displaystyle \frac{C(M_{f},p,q)}{n^{3/2}\sqrt{\lambda_1 \ldots \lambda_p \mu_1 \ldots \mu_q} } \left[ \log\left(\frac{1}{\alpha} \right) \right]^{3/2}.
\end{equation*}
Moreover, since from $\boldsymbol{\mathcal{A}_2(\alpha)}$ we have $\lambda_1 \ldots \lambda_p \mu_1 \ldots \mu_q < 1$, we obtain
\begin{equation}
\varepsilon_{\alpha} \leq \displaystyle \frac{C(M_{f})}{(n \sqrt{\lambda_1 \ldots \lambda_p \mu_1 \ldots \mu_q})^{3/2}} \left[ \log\left(\frac{1}{\alpha} \right) \right]^{3/2}.
\label{q2case2}
\end{equation}

\medskip
\textbf{Case 3.} If $\displaystyle \alpha =  A \exp \left(-n \varepsilon_{\alpha}^{1/2} / \cro{AK^{1/2}}\right)$, then $\varepsilon_{\alpha}$ is expressed as
\begin{equation*}
\varepsilon_{\alpha}^{1/2} =  \frac{AK^{1/2}}{n} \left( \log\left(\frac{1}{\alpha}\right) + \log\left( A \right) \right).
\end{equation*}
Using that, from $\boldsymbol{\mathcal{A}_2(\alpha)}$, $\log(1/\alpha) > 1$, we upper bound $\varepsilon_{\alpha}$ as  
\begin{equation}
\varepsilon_{\alpha} \leq \frac{CK}{n^2} \left[ \log\left(\frac{1}{\alpha}\right) \right]^2.
\label{2K}
\end{equation}
Moreover, we can easily show that 
\begin{equation}\label{KK}
K = \norm{ h_{(2)} }_{\infty} \quad \leq \quad C \pa{\sup_{x,x'\in\R^p} k_\lambda(x,x')} \pa{\sup_{y,y'\in\R^q} l_\mu(y,y')} = \frac{C(p,q)}{\lambda_1\ldots\lambda_p\mu_1\ldots\mu_q}.
\end{equation}
By combining Equations \eqref{2K} and \eqref{KK}, we obtain:
\begin{equation}
\varepsilon_{\alpha} \leq  \frac{C}{n^2\lambda_1 \ldots \lambda_p \mu_1 \ldots \mu_q} \cro{ \log\left(\frac{1}{\alpha}\right) }^2.
\label{q2case3}
\end{equation}

Finally, using Equations \eqref{eq:majorq2varepsalpha}, \eqref{q2case1}, \eqref{q2case2} and \eqref{q2case3} and the fact that, from Assumption $\boldsymbol{\mathcal{A}_2(\alpha)}$,
$$ \frac{1}{n\sqrt{\lambda_1 \ldots \lambda_p \mu_1 \ldots \mu_q}} \log\left(\frac{1}{\alpha} \right) < 1,$$ 
we have the following inequality  
\begin{equation}
q_{1-\alpha,2}^{\lambda,\mu} \leq \displaystyle \frac{C(M_f,p,q)}{n \sqrt{\lambda_1 \ldots \lambda_p \mu_1 \ldots \mu_q}} \log\left( \frac{1}{\alpha} \right).
\label{q2}
\end{equation}

\subsubsection{Upper bound of $q_{1-\alpha,3}^{\lambda,\mu}$}

In this part, we give an upper bound for the $(1-\alpha)$-quantile of 
$$\widehat{\HH}_{\lambda,\mu}^{(3,D)} = \frac{1}{n(n-1)(n-2)} \sum_{(i,j,q) \in \mathbf{i}^n_3} h_{(3)}(Z_i,Z_j,Z_q),$$
where $h_{(3)}$ is define by 
$$h_{(3)}(Z_i,Z_j,Z_q) = h_{i,j,q} = \esp{ h_{i,j,q,r} \mid Z_i , Z_j , Z_q} - \frac{1}{2!} \sum_{(t,u)}^{(i,j,q)} h_{t,u}.$$
For this, we use the concentration inequality (c), page 1501 of \cite{arcones1993limit}. We write for all $t > 0$,
\begin{equation}
\mathbb{P} \pa{ n^{-3/2} \abs{ \sum_{(i,j,q) \in i^n_3}  h_{(3)}(Z_i,Z_j,Z_k) } > t } \leq \displaystyle A \exp \pa{ - \frac{B t^{2/3}}{M^{2/3} + K^{1/2} t^{1/6} n^{-1/4}} },
\label{Arco}
\end{equation} 
where $K = \norm{ h_{(3)} }_{\infty}$, $M^2 = \esp{h_{1,2,3}^2}$ and $A>1$, $B>0$ are absolute constant. 
\newline \newline
By setting $\varepsilon = \displaystyle \frac{t}{n^{3/2}}$ and using Equation \eqref{Arco}, we have
\begin{equation*}
\proba{\frac{1}{n^3} \abs{ \displaystyle \sum_{(i,j,q) \in i_3^n} h_{i,j,q} } > \varepsilon } \leq  A \exp \pa{ - \frac{B n \varepsilon^{2/3}}{M^{2/3} + K^{1/2} \varepsilon^{1/6}} }.
\end{equation*} 
Moreover, by adjusting the value of $B$, we can write 
\begin{equation}
\proba{\abs{ \widehat{\HH}_{\lambda,\mu}^{(3,D)} } > \varepsilon} \leq A \exp \pa{ - \frac{B n \varepsilon^{2/3}}{M^{2/3} + K^{1/2} \varepsilon^{1/6}} }.
\label{Gine1993}
\end{equation}
Hence, if $\varepsilon_{\alpha}$ is a positive number verifying
\begin{equation}
A \exp \pa{ - \frac{B n \varepsilon_\alpha^{2/3}}{M^{2/3} + K^{1/2} \varepsilon_\alpha^{1/6}} } = \alpha,
\label{eq-eps-alph}
\end{equation}
then, we have the following inequality
\begin{equation*}
q_{1-\alpha,3}^{\lambda,\mu} \leq \varepsilon_\alpha.
\label{talpha3}
\end{equation*}
In order to upper bound $\varepsilon_\alpha$ in \eqref{eq-eps-alph}, we set $\gamma_\alpha = \varepsilon_\alpha^{1/6}$ and we obtain
\begin{equation}
B n \gamma_\alpha^4 = K^{1/2} \log\left( \frac{A}{\alpha} \right) \gamma_\alpha + M^{2/3}\log\left( \frac{A}{\alpha} \right).
\label{eq-gamma-alph}
\end{equation}    
The polynomial Equation \eqref{eq-gamma-alph} has no explicit solutions. However, it is possible to give an upper bound of its roots. Indeed, 
\begin{equation*}
B n \gamma_\alpha^4 \leq 2 \max\left\{ K^{1/2} \gamma_\alpha , M^{2/3} \right\} \log\left( \frac{A}{\alpha} \right).
\end{equation*}  

\medskip
\textbf{Case 1.} If $K^{1/2} \gamma_\alpha \geq M^{2/3}$, then, $\gamma_\alpha$ verifies the following inequality, 
\begin{equation*}
\gamma_\alpha^3 \quad\leq\quad \displaystyle \frac{2K^{1/2}}{Bn} \log\left( \frac{A}{\alpha} \right) \quad\leq\quad \frac{CK^{1/2}}{n}\log\pa{\frac{1}{\alpha}},
\end{equation*}
since $\log(1/\alpha)>1$ in $\boldsymbol{\mathcal{A}_2(\alpha)}$, one gets $\log(A/\alpha)\leq C\log(1/\alpha)$. 
Hence,
\begin{equation*}
\varepsilon_\alpha \quad\leq\quad \displaystyle \frac{CK}{n^2} \left( \log\left( \frac{1}{\alpha} \right) \right)^2 .
\end{equation*}
Moreover, once again, one can upper bound $K = \norm{h_{(3)}}_{\infty}$ by 
$$K \quad \leq \quad C \cro{\sup_{x,x'\in\R^p} k_\lambda(x,x')} \cro{\sup_{y,y'\in\R^q} l_\mu(y,y')} \quad =\quad \frac{C(p,q)}{\lambda_1\ldots\lambda_p\mu_1\ldots\mu_q}.$$
Hence, 
$$\varepsilon_\alpha \leq \frac{C(p,q)}{n^2 \lambda_1\ldots\lambda_p\mu_1\ldots\mu_q}\pa{\log\pa{\frac{1}{\alpha}}}^2,$$
and, since from Assumption $\boldsymbol{\mathcal{A}_2(\alpha)}$, 
$$ \frac{1}{n\sqrt{\lambda_1 \ldots \lambda_p \mu_1 \ldots \mu_q}} \log\left(\frac{1}{\alpha} \right) < 1,$$ 
we have the following inequality  
\begin{equation}\label{eq:majquant3cas1}
\varepsilon_\alpha \leq \displaystyle \frac{C}{n \sqrt{\lambda_1 \ldots \lambda_p \mu_1 \ldots \mu_q}} \log\left( \frac{1}{\alpha} \right).
\end{equation}

\medskip
\textbf{Case 2.} If $K^{1/2} \gamma_\alpha \leq M^{2/3}$, then, 
\begin{equation*}
\gamma_\alpha^4 \quad \leq\quad \frac{2M^{2/3}}{Bn} \log\pa{\frac{A}{\alpha}} \quad \leq \quad \frac{CM^{2/3}}{n} \log\pa{\frac{1}{\alpha}},
\end{equation*}
since $\log(1/\alpha)>1$ in $\boldsymbol{\mathcal{A}_2(\alpha)}$.
Therefore, $\varepsilon_\alpha$ can be upper bounded as
\begin{equation*}
\varepsilon_\alpha \leq \displaystyle \frac{CM}{n^{3/2}} \left[ \log\left( \frac{1}{\alpha} \right) \right]^{3/2}.
\end{equation*}
Moreover, using the law of total variance, 
one can upper bound $M^2 = \esp{h_{1,2,3}^2}$ by
\begin{equation}
\displaystyle M^2 = \Var \left( h_{1,2,3} \right) \leq C \Var \left( h_{1,2,3,4} \right).
\end{equation}
Then, according Equation \eqref{sineq} (see Section \ref{Annexe/Variance/snew}), under $P_{f_1\otimes f_2}$, $M$ can be upper bounded as
\begin{equation*}
\displaystyle M \leq \frac{C(M_{f_1\otimes f_2},p,q)}{\sqrt{\lambda_1 \ldots \lambda_p \mu_1 \ldots \mu_q}} \leq \frac{C(M_{f},p,q)}{\sqrt{\lambda_1 \ldots \lambda_p \mu_1 \ldots \mu_q}}.
\end{equation*}
Hence, 
$$\varepsilon_\alpha \leq \frac{C(M_{f},p,q)}{n^{3/2} \sqrt{\lambda_1 \ldots \lambda_p \mu_1 \ldots \mu_q}}  \cro{\log\pa{\frac{1}{\alpha}}}^{3/2},$$ 
Moreover, since both assumptions in $\boldsymbol{\mathcal{A}_2(\alpha)}$ imply that $n^{-1}\log(1/\alpha)<1$, we obtain 
\begin{equation}\label{eq:majquant3cas2}
\varepsilon_\alpha \leq \frac{C(M_f,p,q)}{n \sqrt{\lambda_1 \ldots \lambda_p \mu_1 \ldots \mu_q}}  \log\pa{\frac{1}{\alpha}}.
\end{equation}
Finally, both \eqref{eq:majquant3cas1} and \eqref{eq:majquant3cas2} lead to 
\begin{equation}\label{q3}
q_{1-\alpha,3}^{\lambda,\mu} \leq \frac{C(M_f,p,q)}{n \sqrt{\lambda_1 \ldots \lambda_p \mu_1 \ldots \mu_q}} \log\left( \frac{1}{\alpha} \right).
\end{equation}

\subsubsection{Upper bound of $q_{1-\alpha,4}^{\lambda,\mu}$}

In this part, we give an upper bound for the $(1-\alpha)$-quantile of 
$$\widehat{\HH}_{\lambda,\mu}^{(4,D)} = \frac{1}{n(n-1)(n-2)(n-3)} \sum_{(i,j,q,r) \in \mathbf{i}^n_4} h_{(4)}(Z_i,Z_j,Z_q,Z_r),$$
under $P_{f_1\otimes f_2}$ where $h_{(4)}$ is define by 
$$h_{(4)}(Z_i,Z_j,Z_q,Z_r) = \widetilde{h}_{i,j,q,r} = h_{i,j,q,r} - \displaystyle \frac{1}{3!} \sum_{(t,u,v)}^{(i,j,q,r)} h_{t,u,v} - \frac{1}{2!} \sum_{(t,u)}^{(i,j,q,r)} h_{t,u}. $$
For this, we use the concentration inequality (d), page 1501 of \cite{arcones1993limit}. We have for all $t > 0$, 
\begin{equation*}
\proba{\frac{1}{n^2} \abs{ \sum_{(i,j,q,r) \in i^n_4} \widetilde{h}_{i,j,q,r} } > t } \leq  A \exp\pa{ - B \sqrt{\frac{t}{K}}},
\end{equation*} 
where $A>1$ and $B>0$ are absolute constants and $K = \norm{ h_{(4)} }_{\infty}$. 
\newline \newline
By setting $\varepsilon = \displaystyle \frac{t}{n^2}$, we have
\begin{equation*}
\proba{\frac{1}{n^4} \abs{\sum_{(i,j,q,r) \in i^n_4} \widetilde{h}_{i,j,q,r} } > \varepsilon} \leq  A \exp  \left( - B n \sqrt{\frac{\varepsilon}{K}} \right).
\end{equation*} 
Furthermore, by adjusting the constant $B$, we can replace $\displaystyle \frac{1}{n^4} \displaystyle \sum_{(i,j,q,r) \in i^n_4} \widetilde{h}_{i,j,q,r}$ by $\widehat{\HH}_{\lambda,\mu}^{(4,D)}$ and obtain
\begin{equation}
\proba{\abs{ \widehat{\HH}_{\lambda,\mu}^{(4,D)} } > \varepsilon} \leq  A \exp  \left( - B n \sqrt{\frac{\varepsilon}{K}} \right).
\label{Gine1993d}
\end{equation}
Hence, if $\varepsilon_{\alpha}$ is a positive number verifying
\begin{equation}
A \exp\pa{ - B n \sqrt{\frac{\varepsilon_\alpha}{K}}} = \alpha,
\label{reso4}
\end{equation}
then
\begin{equation*}
q_{1-\alpha,4}^{\lambda,\mu} \leq \varepsilon_\alpha.
\label{talpha4}
\end{equation*}
By resolving Equation \eqref{reso4}, we obtain 
\begin{equation*}
\varepsilon_\alpha = \frac{BK}{n^2} \left[ \log\left( \frac{A}{\alpha} \right) \right]^2.
\end{equation*}
Therefore, since $\log(1/\alpha)>1$ in $\boldsymbol{\mathcal{A}_2(\alpha)}$, we can easily show that 
\begin{equation*}
\varepsilon_\alpha \leq \displaystyle \frac{CK}{n^2} \left[ \log\left( \frac{1}{\alpha} \right) \right]^2. 
\end{equation*}
Moreover, as above, one can upper bound $K = \norm{h_{(4)}}_{\infty}$ by 
$$K \quad \leq \quad C \cro{\sup_{x,x'\in\R^p} k_\lambda(x,x')} \cro{\sup_{y,y'\in\R^q} l_\mu(y,y')} \quad =\quad \frac{C(p,q)}{\lambda_1\ldots\lambda_p\mu_1\ldots\mu_q}.$$
Hence, 
\begin{equation*}
q_{1-\alpha,4}^{\lambda,\mu} \leq \displaystyle \frac{C(M_f,p,q)}{\lambda_1 \ldots \lambda_p  \mu_1 \ldots \mu_q n^2} \left( \log\left( \frac{1}{\alpha} \right) \right)^2.
\end{equation*}
Consequently, since from Assumption $\boldsymbol{\mathcal{A}_2(\alpha)}$,
$$ \frac{1}{n\sqrt{\lambda_1 \ldots \lambda_p \mu_1 \ldots \mu_q}} \log\left(\frac{1}{\alpha} \right) < 1,$$ 
one finally obtains \begin{equation}
q_{1-\alpha,4}^{\lambda,\mu} \leq \displaystyle \frac{C(M_f,p,q)}{n \sqrt{\lambda_1 \ldots \lambda_p  \mu_1 \ldots \mu_q} } \log\left( \frac{1}{\alpha} \right).
\label{q4}
\end{equation}

Finally, combining \eqref{eq:majq234}, \eqref{q2}, \eqref{q3} and \eqref{q4} ends the proof of Proposition \ref{LTQ}. 

\subsection{Proof of Corollary \ref{powerful-testparam}}

The proof of this corollary is immediately obtained from Lemma \ref{powerful-test}, Proposition \ref{LTV} and Proposition \ref{LTQ}. 

\subsection{Proof of Lemma \ref{Hpsi}}

Recalling the formulation of $\HH_{\lambda,\mu} (f)$ given in Equation \eqref{formHSIC} with $k = k_\lambda$ and $l = l_\mu$, we obtain    
\begin{align*}
\HH_{\lambda,\mu} (f) =  &\int_{(\R^p\times\R^q)^2}  k_{\lambda}(x , x') l_{\mu} (y , y') f(x,y) f(x',y') \mathrm{d}x \mathrm{d}y \mathrm{d}x' \mathrm{d}y' \\ 
& - 2\int_{(\R^p\times\R^q)^2} k_{\lambda}(x , x') l_{\mu} (y , y')  f(x,y)  f_1(x') f_2(y') \mathrm{d}x \mathrm{d}y \mathrm{d}x' \mathrm{d}y' \\ 
& +  \int_{(\R^p\times\R^q)^2} k_{\lambda}(x , x') l_{\mu} (y , y') f_1(x) f_2(y) f_1(x') f_2(y') \mathrm{d}x \mathrm{d}y \mathrm{d}x' \mathrm{d}y'.
\end{align*}
This expression can be factorized using the symmetry of the kernels $k_\lambda$ and $l_{\mu}$ as
\begin{align*}
\HH_{\lambda,\mu} (f) = \displaystyle &\int_{(\R^p\times\R^q)^2} k_{\lambda}(x , x') l_{\mu} (y , y') \biggl[ f(x,y) - f_1(x) f_2(y) \biggr]  \biggl[ f(x',y') - f_1(x') f_2(y') \biggr] \mathrm{d}x \mathrm{d}y \mathrm{d}x' \mathrm{d}y' \\ 
= &\int_{(\R^p\times\R^q)^2} k_{\lambda}(x , x') l_{\mu} (y , y') \psi(x,y) \psi(x',y') \mathrm{d}x \mathrm{d}y \mathrm{d}x' \mathrm{d}y',  
\end{align*}
where $\psi(x,y) = f(x,y) - f_1(x) f_2(y)$. 
\newline  \newline
Thereafter, by replacing $k_{\lambda}(x , x')$ with $\varphi_\lambda (x-x')$ and replacing $l_{\mu}(y , y')$ with $\phi_\mu (y-y')$, where $\varphi_\lambda$ and $\phi_\mu$ are respectively the functions defined in Equation \eqref{varphilambdaphimu}, one obtains
\begin{eqnarray*}
\HH_{\lambda,\mu} (f) &=& \int_{\R^p\times\R^q} \psi(x,y) \left[ \int_{\R^p\times\R^q} \psi(x',y') \varphi_\lambda(x-x') \phi_\mu (y-y') \mathrm{d}x' \mathrm{d}y' \right] \mathrm{d}x \mathrm{d}y \\ 
&=& \int_{\R^p\times\R^q} \psi(x,y) \left[ \psi \ast (\varphi_\lambda \otimes \phi_\mu) \right] (x,y) \; \mathrm{d}x \mathrm{d}y \\ 
&=& \langle \psi , \psi \ast (\varphi_\lambda \otimes \phi_\mu) \rangle_{2}.
\end{eqnarray*}

\subsection{Proof of Proposition \ref{prop:finemajvar}}

First recall that $\widehat{\HH}_{\lambda,\mu}$ can be written as a $U$-statistic of order 4, that is 
\begin{equation*}
\widehat{\HH}_{\lambda,\mu} = \frac{1}{n(n-1)(n-2)(n-3)} \sum_{(i,j,q,r) \in \mathbf{i}^n_4} h_{i,j,q,r},
\end{equation*}
where the general term $h_{i,j,q,r}$ of $\widehat{\HH}_{\lambda,\mu}$ is defined by 
\begin{equation}
\label{hijqr}
h_{i,j,q,r} =  \frac{1}{4!} \sum_{(t,u,v,w)}^{(i,j,q,r)} \pa{k_{t,u} l_{t,u} + k_{t,u} l_{v,w} - 2 k_{t,u} l_{t,v}}.
\end{equation}
where the sum represents all ordered quadruples $(t,u,v,w)$ drawn without replacement from $(i,j,q,r)$, and for all $t,u$ in $\{1, \ldots, n\}$, 
$$k_{t,u} = k_{\lambda}(X_t, X_u) \quad \quad \mbox{and}\quad \quad l_{t,u} = l_{\mu}(Y_t, Y_u).$$ 

According to Equations \eqref{ineq:var-s2-sigma2} and \eqref{sineq}, we already proved that 
\begin{equation}
\Var_f (\widehat{\HH}_{\lambda,\mu}) \leq  \frac{ C}{n} \Var_f \left( \mathbb{E}[h_{1,2,3,4} \mid Z_1] \right) + \frac{C(M_{f}, p, q)}{\lambda_1 \ldots \lambda_p \mu_1 \ldots \mu_q n^2}. 
\label{ineq:varinterm}
\end{equation}

To prove the intended result, we need a sharper control of $\Var_f \left( \mathbb{E}[h_{1,2,3,4} \mid Z_1] \right)$ in terms of $\norm{\psi \ast (\varphi_\lambda \otimes \phi_\mu)}_2^2$, which is provided in Lemma \ref{ineq:sigma2-normH2}. 

\begin{lemm}
For all $\lambda$ in $(0, +\infty)^p$ and $\mu$ in $(0, +\infty)^q$, we have
\begin{equation*}
\Var_f \left( \mathbb{E}[h_{1,2,3,4} \mid Z_1] \right) \leq C(M_{f}) \norm{\psi \ast (\varphi_\lambda \otimes \phi_\mu)}_2^2.
\end{equation*}
\label{ineq:sigma2-normH2}
\end{lemm}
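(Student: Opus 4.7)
The plan is to compute $\mathbb{E}_f[h_{1,2,3,4}\mid Z_1]$ in closed form and to recognize it (up to a constant that drops in the variance) as a linear combination of $g(X_1,Y_1)$ and its partial integrals against the marginals, where $g := \psi \ast (\varphi_\lambda \otimes \phi_\mu)$. The variance bound will then follow from elementary $\mathbb{L}_2$ inequalities.

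Starting from $\widehat{\HSIC}_{\lambda,\mu} = \widehat{\HSIC}^{(2)}_{\lambda,\mu} - 2\widehat{\HSIC}^{(3)}_{\lambda,\mu} + \widehat{\HSIC}^{(4)}_{\lambda,\mu}$ and the classical first-order Hoeffding formula $\mathbb{E}_f[U_n\mid Z_1] = \mathbb{E}_f[U_n] + (r/n)\bigl(\mathbb{E}_f[h(Z_1,\ldots,Z_r)\mid Z_1] - \mathbb{E}_f[h(Z_1,\ldots,Z_r)]\bigr)$ applied in turn to each $\widehat{\HSIC}^{(r)}_{\lambda,\mu}$, I would first derive
\begin{equation*}
\mathbb{E}_f[h_{1,2,3,4}\mid Z_1] - \HSIC_{\lambda,\mu}(f) = \tfrac{1}{2}h_1^{(2)}(Z_1) - \tfrac{3}{2}h_1^{(3)}(Z_1) + h_1^{(4)}(Z_1),
\end{equation*}
where $h_1^{(r)}$ is the first-order projection of the symmetrized order-$r$ kernel. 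Each $h_1^{(r)}$ is computed by direct integration, splitting cases by whether the index $1$ appears in each subscript. Introducing the smoothed densities $\tilde f_1 := f_1 \ast \varphi_\lambda$, $\tilde f_2 := f_2 \ast \phi_\mu$, $\tilde f := f \ast (\varphi_\lambda \otimes \phi_\mu)$, the constants $B := \int \tilde f_2 f_2$, $B' := \int \tilde f_1 f_1$, and $\alpha(x) := \int k_\lambda(x,x')\tilde f_2(y')f(x',y')\,\mathrm dx'\mathrm dy'$, $\gamma(y) := \int l_\mu(y,y')\tilde f_1(x')f(x',y')\,\mathrm dx'\mathrm dy'$, the computation yields, up to an additive constant $c$,
\begin{equation*}
\mathbb{E}_f[h_{1,2,3,4}\mid Z_1] - c = \tfrac12\bigl[\tilde f(X_1,Y_1) - \tilde f_1(X_1)\tilde f_2(Y_1)\bigr] + \tfrac12\bigl[\tilde f_1(X_1)B - \alpha(X_1)\bigr] + \tfrac12\bigl[B'\tilde f_2(Y_1) - \gamma(Y_1)\bigr].
\end{equation*}

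The key algebraic identity is that each bracket reduces to a $g$-expression. Since $g = \tilde f - \tilde f_1 \otimes \tilde f_2$, the first bracket equals $g(X_1,Y_1)$. For the second, $\tilde f_1(X_1)B - \alpha(X_1) = -\int k_\lambda(X_1,x')\tilde f_2(y')\psi(x',y')\,\mathrm dx'\mathrm dy'$; inserting $\tilde f_2(y') = \int l_\mu(y',y'')f_2(y'')\,\mathrm dy''$, applying Fubini, and recalling from Lemma \ref{Hpsi} that $g(x_1,y_1) = \int k_\lambda(x_1,x')l_\mu(y_1,y')\psi(x',y')\,\mathrm dx'\mathrm dy'$, this becomes $-U(X_1)$ with $U(x) := \int g(x,y)f_2(y)\,\mathrm dy$. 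Symmetrically the third bracket equals $-V(Y_1)$ with $V(y) := \int g(x,y)f_1(x)\,\mathrm dx$, so that $\mathbb{E}_f[h_{1,2,3,4}\mid Z_1] - c = \tfrac12\bigl(g(X_1,Y_1) - U(X_1) - V(Y_1)\bigr)$.

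Finally, I would bound $\Var_f(\mathbb{E}_f[h_{1,2,3,4}\mid Z_1])$ by the second moment of $\tfrac12(g(X_1,Y_1) - U(X_1) - V(Y_1))$, apply $(a-b-c)^2 \le 3(a^2+b^2+c^2)$, and estimate each term via Cauchy--Schwarz: $\mathbb{E}_f[g(X_1,Y_1)^2] \le \|f\|_\infty\|g\|_2^2 \le M_f \|g\|_2^2$, while $(\int g(x,y)f_2(y)\,\mathrm dy)^2 \le \int g(x,y)^2 f_2(y)\,\mathrm dy$ (since $\int f_2 = 1$) gives $\mathbb{E}_f[U(X_1)^2] \le \int f_1(x)f_2(y)g(x,y)^2\,\mathrm dx\,\mathrm dy \le M_f^2\|g\|_2^2$, and identically for $V$. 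Combining yields $\Var_f(\mathbb{E}_f[h_{1,2,3,4}\mid Z_1]) \le C(M_f)\|g\|_2^2$, the claim. The main obstacle is the bookkeeping leading to the key identity: $h_{1,2,3,4}$ is a symmetrization of three distinct product shapes over $4!=24$ permutations, and correctly tracking which arrangements contribute to the $g$-like combinations (rather than to pure constants absorbed into $c$) is where the calculation is delicate; everything thereafter is mechanical.
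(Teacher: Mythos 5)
Your proposal is correct, and it reaches the paper's bound by a more explicit route. The paper never computes the first-order projection: it regroups $h_{1,2,3,4}$ so that every $Z_1$-dependent term falls into one of three blocks of the form $k_{1,u}(l_{1,u}-l_{1,v})$, $k_{u,1}(l_{u,v}-l_{v,w})$ and $(k_{w,v}-k_{u,v})l_{1,w}$, plus a remainder $R(Z_2,Z_3,Z_4)$ that is constant given $Z_1$; it then bounds $\Var_f\!\left(\E[h_{1,2,3,4}\mid Z_1]\right)$ by the sum of the three corresponding variances, identifies the conditional expectations via the representation $G_{\lambda,\mu}(x,y)=\E\!\left[k_\lambda(x,X')\left(l_\mu(y,Y')-l_\mu(y,Y'')\right)\right]$ (with $(X',Y')$ of density $f$ and $Y''$ of density $f_2$, independent) as $G_{\lambda,\mu}(X_1,Y_1)$, $\E[G_{\lambda,\mu}(X_1,Y_3)\mid X_1]$ and $\E[G_{\lambda,\mu}(X_3,Y_1)\mid Y_1]$, and concludes with the law of total variance and sup-norm bounds. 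Your exact Hoeffding projection, $\E[h_{1,2,3,4}\mid Z_1]-c=\tfrac12\left(g(X_1,Y_1)-U(X_1)-V(Y_1)\right)$, is consistent with this (indeed $U(X_1)=\E[G_{\lambda,\mu}(X_1,Y_3)\mid X_1]$ and $V(Y_1)=\E[G_{\lambda,\mu}(X_3,Y_1)\mid Y_1]$); I checked the $24$-term bookkeeping, the coefficients $\tfrac12,-\tfrac32,1$ coming from the orders $2,3,4$ with weights $1,-2,1$, and the Fubini identification of the two marginal brackets with $-U$ and $-V$, and they are all right, as are your closing estimates ($\Var(W)\leq\E[(W-c)^2]$, $(a-b-c)^2\leq 3(a^2+b^2+c^2)$, Jensen against the densities $f_1,f_2$, and the sup-norm bounds yielding $M_f$ and $M_f^2$), which coincide with the paper's final step. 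The trade-off: your route delivers an exact closed form for the projection at the price of the $4!$-term case analysis you rightly flag as the delicate part, whereas the paper's regrouping discards the $Z_1$-independent part immediately and only ever needs upper bounds, which makes it shorter and less error-prone; the two arguments are otherwise the same in substance.
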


Finally, both Equation \eqref{ineq:varinterm} and Lemma \ref{ineq:sigma2-normH2} end the proof of Proposition \ref{prop:finemajvar}. 

\begin{proof}[Proof of Lemma \ref{ineq:sigma2-normH2}]
The first step to upper bound $\Var_f \left( \mathbb{E}[h_{1,2,3,4} \mid Z_1] \right)$ is to rewrite $h_{1,2,3,4}$ by isolating all the terms depending on $Z_1$. 
\begin{align*}
h_{1,2,3,4} =& \frac{1}{4!} \sum_{(t,u,v,w)}^{(1,2,3,4)} \left[ k_{t,u} l_{t,u} + k_{t,u} l_{v,w} - 2 k_{t,u} l_{t,v} \right] \\
=& \frac{2}{4!} \sum_{(u,v,w)}^{(2,3,4)} \left[ k_{1,u} l_{1,u} + k_{1,u} l_{v,w} + k_{u,v} l_{1,w} - k_{w,v} l_{w,1} - k_{u,1} l_{u,v} - k_{1,u} l_{1,v} \right] + R (Z_2 , Z_3, Z_4),
\end{align*}
where the last sum represents all triplets $(u,v,w)$ drawn without replacement from $(2,3,4)$ and $R (Z_2 , Z_3, Z_4)$ is a random variable depending only on $Z_2$, $Z_3$ and $Z_4$. 
\newline \newline
Then, 		
\begin{equation*}
h_{1,2,3,4} = \frac{1}{12} \sum_{(u,v,w)}^{(2,3,4)} \cro{ k_{1,u} (l_{1,u} - l_{1,v}) - k_{u,1} (l_{u,v} - l_{v,w}) - (k_{w,v} - k_{u,v}) l_{1,w} } + R\pa{Z_2 , Z_3, Z_4}.
\end{equation*}
The random variable $R (Z_2 , Z_3, Z_4)$ being independent from $Z_1$, the variance of its expectation conditionally to $Z_1$ is equal to 0. It is then easy to see that $\Var_f \left( \mathbb{E}[h_{1,2,3,4} \mid Z_1] \right)$ can be upper bounded as follows:
\begin{align}
\Var_f \left( \esp{h_{1,2,3,4} \mid Z_1} \right) \leq C \big[ &\Var_f \left( \mathbb{E}[ k_{1,2} (l_{1,2} - l_{1,3}) \mid Z_1] \right) + \Var_f \left( \mathbb{E}[ k_{2,1} (l_{2,3} - l_{3,4}) \mid X_1] \right) \nonumber \\ + &\Var_f \left( \mathbb{E}[ (k_{2,3} - k_{4,3}) l_{1,2} \mid Y_1] \right) \big]. 
\label{varhcondZ1}
\end{align}
By now, we reformulate the function $\psi \ast (\varphi_\lambda \otimes \phi_\mu)$ in a simpler form in order to link its $\mathbb{L}_2$-norm with the upper bound given in Equation \eqref{varhcondZ1}. For notational convenience, we denote $G_{\lambda, \mu} = \psi \ast (\varphi_\lambda \otimes \phi_\mu)$. Then
\begin{eqnarray*}
G_{\lambda, \mu} (x,y) &=&  \int_{\R^p\times \R^q} \psi(x',y') k_{\lambda} (x,x') l_{\mu} (y,y') \; \mathrm{d}x' \mathrm{d}y' \\
&=& \int_{\R^p\times\R^q\times\R^q} k_{\lambda} (x,x') \bigg(l_{\mu} (y,y') - l_{\mu} (y,y'')\bigg)\; f(x',y')f_2(y'') \mathrm{d}x' \mathrm{d}y'\mathrm{d}y'' \\
&=& \esp{k_{\lambda} (x,X') \bigg(l_{\mu}(y,Y') - l_{\mu}(y,Y'')\bigg)},
\end{eqnarray*} 
where $(X',Y')$ and $Y''$ are independent random variables with respective densities $f$ and $f_2$. 
\newline \newline
Thereafter, the conditional expectations in Equation \eqref{varhcondZ1} can all be expressed as follows:   
\begin{eqnarray*}
\esp{k_{1,2} (l_{1,2} - l_{1,3}) \mid Z_1} &=& G_{\lambda, \mu} ( X_1,Y_1 ), \\
\esp{k_{2,1} (l_{2,3} - l_{3,4}) \mid X_1} &=& \esp{G_{\lambda, \mu} ( X_1,Y_3) \mid X_1}, \\
\esp{(k_{2,3} - k_{4,3}) l_{1,2} \mid Y_1} &=&  \esp{G_{\lambda, \mu} ( X_3,Y_1) \mid Y_1}.  
\end{eqnarray*}
Thus, using the law of total variance \cite{weiss2006course}, we have the following upper bound:
$$\Var_f \left( \mathbb{E}[h_{1,2,3,4} \mid Z_1] \right) \leq C \biggl[ \Var_f \left( G_{\lambda, \mu} (X_1,Y_1 ) \right) +  \Var_f \left( G_{\lambda, \mu} ( X_1,Y_3 ) \right) + \Var_f \left( G_{\lambda, \mu} ( X_3,Y_1 ) \right) \biggr].$$ 
On the other hand, it is straightforward to upper bound the three variances in the last equation as
\begin{eqnarray*}
\Var_f \left( G_{\lambda, \mu} (X_1,Y_1) \right) &\leq & \norm{f}_{\infty} \norm{ G_{\lambda, \mu}}_2^2, \\
\Var_f \left( G_{\lambda, \mu} ( X_1,Y_3 ) \right) &\leq & \norm{f_1 \otimes f_2}_{\infty}  \norm{G_{\lambda, \mu}}_2^2 , \\
\Var_f \left( G_{\lambda, \mu} ( X_3,Y_1 ) \right) &\leq & \norm{f_1 \otimes f_2}_{\infty} \norm{G_{\lambda, \mu}}_2^2.  
\end{eqnarray*}
Finally, combining these inequalities with Equation \eqref{varhcondZ1} allows to upper bound $\Var_f \left( \mathbb{E}[h_{1,2,3,4} \mid Z_1] \right)$ as
\begin{equation*}
\Var_f \left( \mathbb{E}[h_{1,2,3,4} \mid Z_1] \right) \leq C(M_{f}) \norm{\psi \ast (\varphi_\lambda \otimes \phi_\mu)}_2^2,
\end{equation*}
which ends the proof of Lemma \ref{ineq:sigma2-normH2}. 
\end{proof}

\subsection{Proof of Lemma \ref{TBSB}}
\label{Bias2}

Recall that for any bandwidths $\lambda = \left( \lambda_1,\ldots, \lambda_p \right)$ in $(0,+ \infty)^p$ and $\mu = \left( \mu_1,\ldots, \mu_q \right)$ in $(0,+ \infty)^q$, $\varphi_{\lambda}$ and $\phi_{\mu}$ are defined in Equation \eqref{varphilambdaphimu} for any $x$ in $\R^p$ and $y$ in $\R^q$, 
\begin{equation*}
\varphi_{\lambda}(x) = \frac1{\lambda_1 \ldots \lambda_p } g_p\pa{\frac{x^{(1)}}{\lambda_1},\ldots , \frac{x^{(p)}}{\lambda_p}}, \quad \phi_{\mu}(y) = \frac1{\mu_1 \ldots \mu_q } g_q\pa{\frac{y^{(1)}}{\mu_1},\ldots , \frac{y^{(q)}}{\mu_q}},
\end{equation*}
where $g_p$ and $g_q$ are the standard Gaussian density defined in Equation \eqref{gs}. 
The objective here is the provide an upper bound of the bias term $\norm{\psi - \psi \ast (\varphi_\lambda \otimes \phi_\mu)}_2^2$ w.r.t $\lambda$ and $\mu$. \\

First of all, since $\psi - \psi\ast (\varphi_\lambda \otimes \phi_\mu)$ belongs to $\mathbb{L}_2$, by Plancherel's theorem we obtain that 
\begin{eqnarray*}
(2\pi)^{p+q} \norm{\psi- \psi\ast (\varphi_\lambda \otimes \phi_\mu)}_2^2 
&=& \norm{\reallywidehat{\psi- \psi\ast (\varphi_\lambda \otimes \phi_\mu)}}_2^2 \\
&=& \norm{\widehat{\psi}\pa{1-\widehat{\varphi_\lambda\otimes \phi_\mu}}}_2^2 \\ 
&=& \int_{\R^p \times \R^q} \abs{1-\widehat{\varphi_\lambda\otimes\phi_\mu}(\xi,\zeta)}^2 \abs{\widehat{\psi}(\xi,\zeta)}^2 \mathrm{d}\xi \mathrm{d}\zeta.
\end{eqnarray*}
Moreover, by definition of $\varphi_\lambda$ and $\phi_\mu$ (see Equation \eqref{varphilambdaphimu}), 
$$\widehat{\varphi_\lambda\otimes\phi_\mu}(\xi,\zeta) = \widehat{g_p\otimes g_q}(\lambda\xi,\mu\zeta),$$
where $\lambda \xi = (\lambda_1\xi^{(1)},\ldots,\lambda_p\xi^{(p)})$ and $\mu \zeta = (\mu_1\zeta^{(1)},\ldots,\mu_q\zeta^{(q)})$. 
Besides, the Gaussian density satisfies for all $(u,v)$ in $\R^p\times\R^q$, 
$$\widehat{g_p\otimes g_q}(u,v) = (2\pi)^{(p+q)/2} g_p\otimes g_q(u,v) = \exp\pa{\frac{-\norm{(u,v)}^2}{2}}.$$
Hence, the bias term satisfies
$$(2\pi)^{p+q} \norm{\psi- \psi\ast (\varphi_\lambda \otimes \phi_\mu)}_2^2  = 
\int_{\R^p \times \R^q} \cro{1-\exp\pa{\frac{-\norm{(\lambda\xi,\mu\zeta)}^2}{2}}}^2 \abs{\widehat{\psi}(\xi,\zeta)}^2 \mathrm{d}\xi \mathrm{d}\zeta.$$

In addition, for all $\delta>0$, there exists $T_\delta$ in $[0,1]$ such that 
\begin{equation}\label{eq:Tdelta}
\forall x \geq T_\delta, \quad \quad 1-\exp\pa{-x^2/2} \leq x^\delta.
\end{equation}
Indeed, the function $g_\delta: x\mapsto 1-\exp(-x^2/2) - x^\delta$ is continuous on $\R_+$, satisfies $g_\delta(0) = 0$, and for all $x\geq 1$, $g_\delta(x) < 0$ since 
$$1-\exp\pa{-x^2/2} < 1 \leq x^\delta.$$ 
Note that if $\delta\leq 2$, then $T_\delta=0$ since, in addition, for all $x$ in $[0,1]$, 
$$1-\exp(-x^2/2) \leq \frac{x^2}{2} \leq \frac{x^\delta}{2}\leq x^\delta.$$

Therefore, one can split the integral as 
\begin{equation}\label{eq:biaisI1I2}
(2\pi)^{p+q} \norm{\psi- \psi\ast (\varphi_\lambda \otimes \phi_\mu)}_2^2  = I_1 + I_2,
\end{equation}
where 
\begin{eqnarray}
I_1 & = & \int_{\norm{(\lambda\xi,\mu\zeta)}<T_\delta} \cro{1-\exp\pa{\frac{-\norm{(\lambda\xi,\mu\zeta)}^2}{2}}}^2 \abs{\widehat{\psi}(\xi,\zeta)}^2 \mathrm{d}\xi \mathrm{d}\zeta \nonumber \\
 & \leq & \pa{1-e^{-T_\delta^2/2}}^2 \norm{\widehat{\psi}}_2^2 \nonumber \\
 & \leq & \pa{1-e^{-T_\delta^2/2}} (2\pi)^{p+q} \norm{\psi}_2^2, \quad \quad \text{since }\pa{1-e^{-T_\delta^2/2}}<1, \label{eq:I1} 
\end{eqnarray}
and
\begin{eqnarray*}
I_2 & = & \int_{\norm{(\lambda\xi,\mu\zeta)}\geq T_\delta} \cro{1-\exp\pa{\frac{-\norm{(\lambda\xi,\mu\zeta)}^2}{2}}}^2 \abs{\widehat{\psi}(\xi,\zeta)}^2 \mathrm{d}\xi \mathrm{d}\zeta \\
 & \leq & \int_{\R^p\times \R^q} \norm{(\lambda\xi,\mu\zeta)}^{2\delta} \abs{\widehat{\psi}(\xi,\zeta)}^2 \mathrm{d}\xi \mathrm{d}\zeta,
\end{eqnarray*}
by Equation \eqref{eq:Tdelta}. 
In addition, since for all $1\leq i \leq p$, $\lambda_i^2 \leq \norm{(\lambda,\mu)}^2$ and for all $1\leq j\leq q$, $\mu_j^2\leq \norm{(\lambda,\mu)}^2$, 
$$\norm{(\lambda\xi,\mu\zeta)}^{2\delta} = \pa{\sum_{i=1}^p\lambda_i^2\cro{\xi^{(i)}}^2+\sum_{j=1}^q \mu_j^2 \cro{\zeta^{(j)}}^2}^{\delta}\leq \norm{(\lambda,\mu)}^{2\delta}\norm{(\xi,\zeta)}^{2\delta}.$$
Thus, since $\psi$ belongs to $\mathcal{S}^{\delta}_{p+q} (R)$, 
\begin{equation}\label{eq:I2}
I_2 \leq (2\pi)^{p+q} R^2 \norm{(\lambda,\mu)}^{2\delta}.
\end{equation}
Thereafter, using H\"{o}lder's inequality if $\delta\geq 1$ and fact that $\norm{ \cdot }_{1/\delta} \leq \norm{ \cdot }_1$ if $\delta<1$, it is straightforward to see that 
\begin{equation}\label{eq:HolderConcav}
\norm{ (\lambda, \mu) }^{2 \delta} \leq \displaystyle C(p,q,\delta) \left[ \sum_{i=1}^p \lambda_i^{2 \delta} +  \sum_{j=1}^q \mu_j^{2 \delta} \right], 
\end{equation}

Finally, combining \eqref{eq:I1}, \eqref{eq:I2} and \eqref{eq:HolderConcav} in \eqref{eq:biaisI1I2} leads to 
$$\norm{\psi- \psi\ast (\varphi_\lambda \otimes \phi_\mu)}_2^2 \leq \pa{1-e^{-T_\delta^2/2}} \norm{\psi}_2^2 + C(p,q,\delta,R) \left[ \sum_{i=1}^p \lambda_i^{2 \delta} +  \sum_{j=1}^q \mu_j^{2 \delta} \right].$$ 

Note once again that if $\delta\leq 2,$ then $T_\delta=0$, and one directly obtains that 
$$\norm{\psi- \psi\ast (\varphi_\lambda \otimes \phi_\mu)}_2^2 \leq C(p,q,\delta,R) \left[ \sum_{i=1}^p \lambda_i^{2 \delta} +  \sum_{j=1}^q \mu_j^{2 \delta} \right].$$

\subsection{Proof of Theorem \ref{ThresholdSobolev}}
\label{Th-Sobolev}

Assume that $\psi$ belongs to the Sobolev balls $\mathcal{S}^{\delta}_{p+q} (R,R')$ with $\delta,R,R'>0$. 
One may notice that, by Lemma \ref{TBSB}, since, $T_\delta\leq 1$, then 
$$\norm{\psi- \psi\ast (\varphi_\lambda \otimes \phi_\mu)}_2^2 \leq \pa{1-e^{-1/2}} \norm{\psi}_2^2 + C(p,q,\delta,R) \left[ \sum_{i=1}^p \lambda_i^{2 \delta} +  \sum_{j=1}^q \mu_j^{2 \delta} \right].$$ 
Thus, since $M_f\leq R'$, one may easily deduce from Theorem \ref{th:powerful-testparam} that $P_{f} (\Delta_{\alpha}^{\lambda,\mu}=0) \leq \beta$ as soon as 
$$e^{-1/2}\norm{\psi}_2^2 > C(p,q,\delta,R) \left[ \sum_{i=1}^p \lambda_i^{2 \delta} +  \sum_{j=1}^q \mu_j^{2 \delta} \right] + \frac{C(R', p, q , \beta)}{n \sqrt{\lambda_1 \ldots \lambda_p \mu_1 \ldots \mu_q}} \log \left( \frac{1}{\alpha} \right),$$ 
that is, since constants may vary from line to line, 
$$\norm{\psi}_2^2 > C(p,q,\delta,R) \left[ \sum_{i=1}^p \lambda_i^{2 \delta} +  \sum_{j=1}^q \mu_j^{2 \delta} \right] + \frac{C(R', p, q , \beta)}{n \sqrt{\lambda_1 \ldots \lambda_p \mu_1 \ldots \mu_q}} \log \left( \frac{1}{\alpha} \right).$$ 
It now follows from the definition \eqref{seprate} of the uniform separation rate that 
$$\cro{\rho \left( \Delta_{\alpha}^{\lambda,\mu} , \mathcal{S}^{\delta}_{p+q} (R,R') , \beta \right)}^2 \leq \displaystyle C(p,q, \delta,R) \left[ \sum_{i=1}^p \lambda_i^{2 \delta} +  \sum_{j=1}^q \mu_j^{2 \delta} \right] + \frac{C(R',p, q, \beta)}{n\sqrt{\lambda_1 \ldots \lambda_p \mu_1 \ldots \mu_q}} \log\left( \frac{1}{\alpha} \right).$$

\subsection{Proof of Corollary \ref{cor:ThresholdSobolev}}

The objective here is to give the uniform separation rate having the smallest upper bound w.r.t. the sample size $n$, when $\psi$ belongs to a Sobolev ball $\mathcal{S}^{\delta}_{p+q} (R,R')$. For this, we recall that according to Theorem \ref{ThresholdSobolev}, we have 
$$\cro{\rho \left( \Delta_{\alpha}^{\lambda,\mu} , \mathcal{S}^{\delta}_{p+q} (R,R') , \beta \right)}^2 \leq \displaystyle C(p,q, \delta,R) \left[ \sum_{i=1}^p \lambda_i^{2 \delta} +  \sum_{j=1}^q \mu_j^{2 \delta} \right] + \frac{C(R', p, q, \beta)}{n\sqrt{\lambda_1 \ldots \lambda_p \mu_1 \ldots \mu_q}} \log\left( \frac{1}{\alpha} \right).$$ 
In order to have the smallest behavior of the right side of the last inequality w.r.t. $n$, one has then to choose bandwidths $\lambda^* = (\lambda^*_1, \ldots ,\lambda^*_p)$ and $\mu^* = (\mu^*_1, \ldots ,\mu^*_q)$ w.r.t. $n$ in such a way that 
$$ \cro{\sum_{i=1}^p \lambda_i^{*2 \delta} +  \sum_{j=1}^q \mu_j^{*2 \delta}} \quad \text{and} \quad \frac{1}{n\sqrt{\lambda_1^* \ldots \lambda_p^* \mu_1^* \ldots \mu_q^*}}$$ 
have the same order. Thereafter, it is clear that all $\lambda^*_i$'s and $\mu^*_j$'s have the same behavior w.r.t. $n$. 
It follows that for all $i$ in $\lbrace 1, \ldots , p \rbrace$ and all $j$ in $\lbrace 1, \ldots , q\rbrace$, we have 
$$ \lambda_i^* = \mu_j^* =   n^{- 2/( 4 \delta + p+q)}.$$ 
Consequently, the separation rate over $\mathcal{S}^{\delta}_{p+q} (R,R')$ can be upper bounded as 
$$\rho \left( \Delta_{\alpha}^{\lambda^*,\mu^*} , \mathcal{S}^{\delta}_{p+q} (R,R') , \beta \right) \leq \displaystyle C(p, q, \alpha, \beta, \delta,R,R') n^{- 2 \delta/(4 \delta + p+q )}.$$

\subsection{Proof of Lemma \ref{TBNB}}
\label{Bias}

The objective here is to give an upper bound of the bias term $\norm{\psi - \psi \ast (\varphi_\lambda \otimes \phi_\mu)}_2^2$ w.r.t. $\lambda$ and $\mu$, when $\psi$ belongs to a Nikol'skii-Besov ball $\mathcal{N}^{\delta}_{2,p+q} (R)$, with $\delta = (\nu_1, \ldots , \nu_p, \gamma_1, \ldots ,\gamma_q)$ in $(0,2]^{p+q}$. We first set $b = \psi \ast (\varphi_\lambda \otimes \phi_\mu) - \psi$ and we write   
\begin{eqnarray*}
b(x,y) &=& \psi \ast (\varphi_\lambda \otimes \phi_\mu) (x,y) - \psi(x,y) \\ 
&=& \displaystyle \int \psi(x',y') \varphi_\lambda(x-x') \phi_\mu (y-y') \mathrm{d}x' \mathrm{d}y' - \psi(x,y). 
\end{eqnarray*}
Moreover, using Equation \eqref{varphilambdaphimu}, the fonction $b$ can be written in terms of the functions $g_p$ and $g_q$ defined in Equation \eqref{gs} as
\begin{eqnarray*}
b(x,y) &=& \displaystyle \frac{1}{\lambda_1 \ldots \lambda_p \mu_1 \ldots \mu_q} \int \psi(x',y') g_p \left(\frac{x_1-x'_1}{\lambda_1}, \ldots ,\frac{x_p-x'_p}{\lambda_p} \right) g_q \left(\frac{y_1-y'_1}{\mu_1}, \ldots ,\frac{y_q-y'_q}{\mu_p} \right) \mathrm{d}x' \mathrm{d}y' \\ 
&&- \psi(x,y) \\
&=& \int \psi(x_1 + \lambda_1 u_1, \ldots , x_p + \lambda_p u_p, y_1 + \mu_1 v_1, \ldots ,y_q + \mu_q v_q) g_p (u_1, \ldots , u_p) g_q (v_1, \ldots , v_q) \; \mathrm{d}u \mathrm{d}v \\
&&- \psi(x,y). 
\end{eqnarray*}
Thereafter, using that $\displaystyle \int_{\mathbb{R}^p} g_p = \int_{\mathbb{R}^q} g_q = 1$, the function $b$ can be expressed as
\begin{equation*}
b(x,y) = \int g_p(u_1, \ldots ,u_p) g_q(v_1, \ldots ,v_q) \biggl[ \psi(x_1 + \lambda_1 u_1, \ldots ,y_q + \mu_q v_q) -  \psi(x, y) \biggr] \; \mathrm{d}u \mathrm{d}v. 
\end{equation*}
Let us from now define for all $i$ in $\{1, \ldots ,p\}$ and $j$ in $\{1, \ldots ,q \}$, the functions $b_{1,i}$ and $b_{2,j}$ by 
\begin{eqnarray*}
b_{1,i} (x,y) &=& \int g_p(u_1, \ldots ,u_p) g_q(v_1, \ldots ,v_q) \omega_{1,i} (x, y, u_1, \ldots , u_i) \; \mathrm{d}u \mathrm{d}v,\\
b_{2,j} (x,y) &=& \int g_p(u_1, \ldots ,u_p) g_q(v_1, \ldots ,v_q) \omega_{2,j} (x, y, u_1, \ldots , u_p, v_1, \ldots ,v_j) \; \mathrm{d}u \mathrm{d}v,
\end{eqnarray*}
where the function $\omega_{1,i}$ is defined as 
\begin{multline*}
\omega_{1,i} (x,y, u_1, \ldots , u_i) = \psi(x_1 + \lambda_1 u_1, \ldots , x_i + \lambda_i u_i, x_{i+1}, \ldots ,x_p, y) \\  
- \psi(x_1 + \lambda_1 u_1, \ldots , x_{i-1} + \lambda_{i-1} u_{i-1}, x_i, \ldots ,x_p, y),
\end{multline*}
while the function $\omega_{2,j}$ is defined as 
\begin{multline*}
\omega_{2,j} (x, y, u_1, \ldots , u_p, v_1, \ldots ,v_j) = \psi(x_1 + \lambda_1 u_1, \ldots ,x_p + \lambda_p u_p ,y_1 + \mu_1 v_1, \ldots , y_j + \mu_j v_j, y_{j+1}, \ldots ,y_q ) \\
- \psi(x_1 + \lambda_1 u_1, \ldots ,x_p + \lambda_p u_p ,y_1 + \mu_1 v_1, \ldots , y_{j-1} + \mu_{j-1} v_{j-1} , y_j, \ldots ,y_q). 
\end{multline*} 
It is then easy to see that the function $b$ is the sum of all the functions $b_{1,i}$ and $b_{2,j}$
$$b(x,y) =\sum_{i =1}^p b_{1,i} (x,y) + \sum_{j =1}^q b_{2,j} (x,y).$$ 
One can then deduce that it would be sufficient for the control of the $\mathbb{L}_2$-norm of $b$, to control the $\mathbb{L}_2$-normes of all the functions $b_{1,i}$ and $b_{2,j}$. Using the triangular inequality, we have
\begin{equation}
\norm{b}_2 \leq \sum_{i =1}^p \norm{b_{1,i}}_2 + \sum_{j =1}^q \norm{b_{2,j}}_2.
\label{nrmb1b2}
\end{equation}
By now, let us upper bound $\norm{b_{1,i}}_2^2$ and $\norm{b_{2,j}}_2^2$ for all $i$ in $\{1, \ldots ,p \}$ and $j$ in $\{1, \ldots ,q \}$. We distinguish two cases. 

\bigskip
\textbf{Case 1.} Assume that $0 < \nu_i \leq 1$. We first recall that $\norm{b_{1,i}}_2^2$ can be written as 
\begin{equation*}
\norm{b_{1,i}}_2^2 = \int \biggl[ \int g_p(u_1, \ldots ,u_p) g_q(v_1, \ldots ,v_q) \omega_{1,i} (x, y, u_1, \ldots , u_i) \; \mathrm{d}u \mathrm{d}v \biggr]^2 \; \mathrm{d}x \mathrm{d}y. 
\end{equation*}
We use the following lemma from page 13 of \cite{tsybakov2009introduction}. 
\begin{lemm}
Let $\rho: \mathbb{R}^d \times \mathbb{R}^{d'} \rightarrow \mathbb{R}$ be a Borel function, then we have the following inequality: 
\begin{equation*}
\displaystyle \int \left( \int \rho(\theta,z) d\theta \right)^2 dz \leq \left[ \int \left( \int \rho^2(\theta,z) dz \right)^{1/2} d\theta \right]^2.
\end{equation*}  
\label{Tsybakov}
\end{lemm}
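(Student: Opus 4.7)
The plan is to prove this generalized Minkowski-type inequality via duality in $\mathbb{L}_2$, followed by a one-step Cauchy–Schwarz on the inner integral. Concretely, I would denote $F(z) = \int_{\mathbb{R}^d} \rho(\theta, z)\, d\theta$ and aim to bound $\|F\|_2$, since the claim is equivalent to
\[
\norm{F}_2 \leq \int_{\mathbb{R}^d} \pa{\int_{\mathbb{R}^{d'}} \rho^2(\theta,z)\, dz}^{1/2} d\theta.
\]

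First I would invoke the dual characterization of the $\mathbb{L}_2$-norm, namely
\[
\norm{F}_2 = \sup_{\norm{g}_2 \leq 1} \int_{\mathbb{R}^{d'}} F(z)\, g(z)\, dz,
\]
the supremum being over Borel functions $g:\mathbb{R}^{d'}\to\mathbb{R}$. For any such $g$, I would apply Fubini's theorem (which requires a preliminary assumption that the right-hand side of the target inequality is finite, otherwise the statement is trivial) to rewrite
\[
\int_{\mathbb{R}^{d'}} F(z)\, g(z)\, dz = \int_{\mathbb{R}^d} \pa{\int_{\mathbb{R}^{d'}} \rho(\theta,z)\, g(z)\, dz}\, d\theta.
\]
Then, applying the Cauchy–Schwarz inequality to the inner integral (over $z$, with $\theta$ fixed) gives
\[
\int_{\mathbb{R}^{d'}} \rho(\theta,z)\, g(z)\, dz \leq \pa{\int_{\mathbb{R}^{d'}} \rho^2(\theta,z)\, dz}^{1/2} \norm{g}_2 \leq \pa{\int_{\mathbb{R}^{d'}} \rho^2(\theta,z)\, dz}^{1/2}.
\]

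Integrating this bound over $\theta$ and then taking the supremum over $g$ with $\norm{g}_2 \leq 1$ yields the desired inequality on $\norm{F}_2$, and squaring produces precisely the statement of the lemma. The only technical point worth handling carefully is the use of Fubini, which is justified by first reducing to $|\rho|$ (the conclusion for $\rho$ follows from that for $|\rho|$ since $|F(z)| \leq \int |\rho(\theta,z)|\, d\theta$) and assuming finiteness of the right-hand side. There is no real obstacle here; the only subtlety is the standard duality-plus-Cauchy–Schwarz trick that trades the order of the $\mathbb{L}_2$-norm and the $\theta$-integral.
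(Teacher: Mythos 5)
Your argument is correct, but note that the paper does not actually prove this lemma: it is quoted verbatim from page 13 of Tsybakov's book, so there is no in-paper proof to match. Your route — dualize the $\mathbb{L}_2$-norm as $\norm{F}_2=\sup_{\norm{g}_2\leq 1}\int F(z)g(z)\,dz$, swap the integrals, and apply Cauchy--Schwarz in $z$ for each fixed $\theta$ — is a valid proof of this generalized Minkowski inequality, and your reduction to $\abs{\rho}$ is the right way to handle signs (since $\abs{F(z)}\leq\int\abs{\rho(\theta,z)}\,d\theta$ and $\rho^2=\abs{\rho}^2$). Two small technical remarks: once you work with $\abs{\rho}$ and $g\geq 0$, Tonelli applies with no finiteness hypothesis, so the preliminary assumption that the right-hand side is finite is not needed for the interchange (it is only needed, trivially, to make the statement non-vacuous); and the duality identity for an $F\geq 0$ that is not known a priori to lie in $\mathbb{L}_2$ requires the standard extension to nonnegative measurable functions (e.g.\ via truncation and monotone convergence), which is worth a sentence. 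A slightly more elementary alternative, closer to the proof in Tsybakov's book and specific to the exponent $2$, avoids duality altogether: expand the square to write $\int\pa{\int\rho(\theta,z)\,d\theta}^2 dz=\int\!\!\int\pa{\int\rho(\theta,z)\rho(\theta',z)\,dz}d\theta\,d\theta'$ by Tonelli, bound the inner integral by $\pa{\int\rho^2(\theta,z)\,dz}^{1/2}\pa{\int\rho^2(\theta',z)\,dz}^{1/2}$ via Cauchy--Schwarz, and observe that the resulting double integral factorizes into the claimed square. Your duality argument has the advantage of generalizing immediately to $\mathbb{L}_p$ with $p\geq 1$, while the expand-the-square argument is shorter and entirely self-contained for the case at hand.
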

By applying Lemma \ref{Tsybakov} to the function 
$$\left( (u,v),(x,y) \right) \mapsto g_p(u_1, \ldots ,u_p) g_q(v_1, \ldots ,v_q) \omega_{1,i} (x, y, u_1, \ldots , u_i),$$ 
we obtain
\begin{align}
\norm{b_{1,i}}_2^2 &\leq \biggl[ \int  \biggl( \int g_p^2(u_1, \ldots ,u_p) g_q^2(v_1, \ldots ,v_q) \omega_{1,i}^2 (x, y, u_1, \ldots , u_i) \; \mathrm{d}x \mathrm{d}y \biggr)^{1/2} \; \mathrm{d}u \mathrm{d}v \biggr]^2 \nonumber \\ 
&= \displaystyle \biggl[ \int g_p(u_1, \ldots ,u_p) g_q(v_1, \ldots ,v_q) \biggl( \int \omega_{1,i}^2 (x, y, u_1, \ldots , u_i) \; \mathrm{d}x \mathrm{d}y \biggr)^{1/2} \; \mathrm{d}u \mathrm{d}v \biggr]^2. \label{intb1i}
\end{align}
On the other hand, since $\psi$ belongs to the Nikol'skii-Besov ball $\mathcal{N}^{\delta}_{2,p+q} (R)$, we have 
$$\biggl( \int \omega_{1,i}^2 (x, y, u_1, \ldots , u_i) \;  \mathrm{d}x \mathrm{d}y \biggr)^{1/2} \leq R \lambda_i^{\nu_i} \abs{u_i}^{\nu_i}.$$ 
We then have by injecting this last inequation in Equation \eqref{intb1i}, that 
$$\norm{b_{1,i}}_2^2 \leq C(\nu_i,R) \lambda_i^{2 \nu_i}.$$

\bigskip
\textbf{Case 2.} Now assume that $1 < \nu_i \leq 2$. 
In this case the function $\psi$ has continuous first-order partial derivatives. Using Taylor expansion with integral form of the remainder w.r.t. the $i^{\text{th}}$ variable of $\psi$, we have 
$$\omega_{1,i} (x, y, u_1, \ldots , u_i) = \displaystyle \lambda_i u_i \int_0^1 (1-\tau) D^1_i \psi(x_1 + \lambda_1 u_1, \ldots ,x_i + \tau \lambda_i u_i, x_{i+1}, \ldots ,y) d\tau,$$ 
where $D^1_i$ denotes the first-order partial derivative of $\psi$ w.r.t. the $i^{\text{th}}$ variable. 
\newline \newline
Thereafter, by injecting the last equation in the expression of $b_{1,i}$, we obtain
\begin{multline*}
b_{1,i} (x,y) = \int \lambda_i u_i g_p(u_1, \ldots ,u_p) g_q (v_1, \ldots ,v_q) \\ 
\biggl[ \int_0^1 (1-\tau) D^1_i \psi(x_1 + \lambda_1 u_1, \ldots ,x_i + \tau \lambda_i u_i, x_{i+1}, \ldots ,y) d\tau \biggr] \; \mathrm{d}u \mathrm{d}v. 
\end{multline*}  
Furthermore, using the fact that $g_p$ is of order 2, we have that $\displaystyle \int u_i g_p(u_1, \ldots ,u_p) \mathrm{d}u_i = 0$. The function $b_{1,i}$ can then be written as
\begin{equation*}
b_{1,i} (x,y) =  \int \lambda_i u_i g_p(u_1, \ldots ,u_p) g_q (v_1, \ldots ,v_q) \biggl[  \int_0^1  (1-\tau) D^1_i \omega_{1,i} (x, y, u_1, \ldots , \tau u_i) \; d\tau \biggr] \; \mathrm{d}u \mathrm{d}v.   
\end{equation*}
We have then the following equation for the $\mathbb{L}_2$-norm of $b_{1,i}$:
\begin{equation*}
\norm{b_{1,i}}_2^2 = \int \biggl[ \int \lambda_i u_i g_p(u_1, \ldots ,u_p) g_q (v_1, \ldots ,v_q) \biggl( \int_0^1  (1-\tau) D^1_i \omega_{1,i} (x, y, u_1, \ldots , \tau u_i) \; d\tau \biggr) \; \mathrm{d}u \mathrm{d}v \biggr]^2 \; \mathrm{d}x \mathrm{d}y.
\end{equation*}
By now, we use as in Case 1 of Lemma \ref{Tsybakov} in order to upper bound $\norm{b_{1,i}}_2^2$. We then obtain:    
\begin{multline*}
\norm{b_{1,i}}_2^2  \leq \biggl( \int \biggl[ \int \biggl( \lambda_i u_i g_p(u_1, \ldots ,u_p) g_q (v_1, \ldots ,v_q) \\
\int_0^1 (1- \tau) D^1_i \omega_{1,i} (x, y, u_1, \ldots , \tau u_i) \; d\tau \biggr)^2 \mathrm{d}x \mathrm{d}y \biggr]^{1/2} \; \mathrm{d}u \mathrm{d}v \biggr)^2 
\end{multline*}
Then, 
\begin{multline*}
\norm{b_{1,i}}_2^2  \leq \biggl( \int \lambda_i u_i g_p(u_1, \ldots ,u_p) g_q (v_1, \ldots ,v_q) \\
 \biggl[  \int \biggl(  \int_0^1 (1- \tau) D^1_i \omega_{1,i} (x, y, u_1, \ldots , \tau u_i) \; d\tau \biggr)^2 \mathrm{d}x \mathrm{d}y \biggr]^{1/2} \; \mathrm{d}u \mathrm{d}v \biggr)^2
\end{multline*}
We apply a second time Lemma \ref{Tsybakov}. For this, consider the function 
$$\rho \colon \left( (x,y), \tau \right) \quad \mapsto\quad  (1- \tau) D^1_i \omega_{1,i} (x, y, u_1, \ldots , \tau u_i),$$ we then have
\begin{multline}
\norm{b_{1,i}}_2^2  \leq \biggl( \int \lambda_i u_i g_p(u_1, \ldots ,u_p) g_q (v_1, \ldots ,v_q) \\
\biggl[ \int_0^1 (1- \tau)  \biggl( \int \left( D^1_i \omega_{1,i} (x, y, u_1, \ldots , \tau u_i) \right)^2 \mathrm{d}x \mathrm{d}y \biggr)^{1/2} d\tau \biggr]  \mathrm{d}u \mathrm{d}v \biggr)^2.  \label{intbi}
\end{multline}
On the other hand, using that $\psi$ belongs to the Nikol'skii-Besov ball $\mathcal{N}^{\delta}_{2,p+q} (R)$,
$$\biggl( \int \left( D^1_i \omega_{1,i} (x, y, u_1, \ldots , \tau u_i) \right)^2 \mathrm{d}x \mathrm{d}y \biggr)^{1/2} \leq R \lambda_i^{\nu_i - 1} \abs{\tau u_i}^{\nu_i - 1}.$$ 
We then obtain by injecting this last inequation in Equation \eqref{intbi}, that 
$$\norm{b_{1,i}}_2^2 \leq C(\nu_i,R) \lambda_i^{2 \nu_i}.$$

Besides, for all $j$ in $\{1, \ldots ,q\}$, by similar arguments, one can prove that 
$$\norm{b_{2,j}}_2^2 \leq C(\gamma_j,R) \mu_j^{2 \gamma_j}.$$ 
\newline 
Consequently, according to Equation \eqref{nrmb1b2}, we have the following upper bound of $\norm{b}_2^2$ 
$$\norm{b}_2^2 \leq C(\delta,R) \left[ \sum_{i=1}^p \lambda_i^{2 \nu_i} +  \sum_{j=1}^q \mu_j^{2 \gamma_j} \right].$$

\subsection{Proof of Theorem \ref{ThresholdNikol}}

This proof is similar to the one of Theorem \ref{ThresholdSobolev}. 
Assume that $\psi$ belongs to the Sobolev balls $\mathcal{N}^{\delta}_{2,p+q} (R,R')$ with $\delta = (\nu_1, \ldots , \nu_p, \gamma_1, \ldots ,\gamma_q)$ in $(0,2]^{p+q}$ and $R,R'>0$. Then, since $M_f\leq R'$, one may easily deduce from Theorem \ref{th:powerful-testparam} and Lemma \ref{TBNB} that $P_{f} (\Delta_{\alpha}^{\lambda,\mu}=0) \leq \beta$ as soon as 
$$\norm{\psi}_2^2 > C(\delta,R) \left[ \sum_{i=1}^p \lambda_i^{2 \nu_i} +  \sum_{j=1}^q \mu_j^{2 \gamma_j} \right] + \frac{C(R', p, q , \beta)}{n \sqrt{\lambda_1 \ldots \lambda_p \mu_1 \ldots \mu_q}} \log \left( \frac{1}{\alpha} \right).$$ 

One can then conclude from the definition \eqref{seprate} of the uniform separation rate that 
$$\cro{\rho \left( \Delta_{\alpha}^{\lambda,\mu} , \mathcal{N}^{\delta}_{2,p+q} (R,R') , \beta \right)}^2 \leq C(\delta,R) \left[ \sum_{i=1}^p \lambda_i^{2 \nu_i} +  \sum_{j=1}^q \mu_j^{2 \gamma_j} \right] + \frac{C\left(R', p, q, \beta \right)}{n\sqrt{\lambda_1 \ldots \lambda_p \mu_1 \ldots \mu_q}} \log\left( \frac{1}{\alpha} \right).$$

\subsection{Proof of Corollary \ref{cor:ThresholdNikol}}
We aim here to give the uniform separation rate having the smallest upper bound w.r.t. the sample size $n$, when $\psi$ belongs to a Nikol'skii-Besov ball $\mathcal{N}^{\delta}_{2,p+q} (R,R')$, with $\delta = (\nu_1, \ldots , \nu_p, \gamma_1, \ldots ,\gamma_q)$ in $(0,2]^{p+q}$. 
We first recall that Theorem \ref{ThresholdNikol} shows that 
$$\cro{\rho \left( \Delta_{\alpha}^{\lambda,\mu} , \mathcal{N}^{\delta}_{2,p+q} (R,R') , \beta \right)}^2 \leq C(\delta,R) \left[ \sum_{i=1}^p \lambda_i^{2 \nu_i} +  \sum_{j=1}^q \mu_j^{2 \gamma_j} \right] + \frac{C(R', p, q, \beta)}{n\sqrt{\lambda_1 \ldots \lambda_p \mu_1 \ldots \mu_q}} \log\left( \frac{1}{\alpha} \right).$$ 
Hence, in order to minimize the right side of the last inequality w.r.t. $n$, we choose bandwidths $\lambda^* = (\lambda^*_1, \ldots ,\lambda^*_p)$ and $\mu^* = (\mu^*_1, \ldots ,\mu^*_q)$ w.r.t. $n$ such that
$$ \left[ \sum_{i=1}^p \lambda_i^{*2 \nu_i} +  \sum_{j=1}^q \mu_j^{*2 \gamma_j} \right] \quad \text{and} \quad \frac{1}{n\sqrt{\lambda_1^* \ldots \lambda_p^* \mu_1^* \ldots \mu_q^*}}$$ 
have the same order. Let us set for all $i$ in $\lbrace 1, \ldots , p \rbrace$ and all $j$ in $\lbrace 1, \ldots , q\rbrace$, $\lambda_i^* = n^{a_i}$ and $\mu_j^* = n^{b_j}$. Then, it is clear that for all $i$ and all $j$
\begin{equation}
2 a_i \nu_i = 2 b_j \gamma_j = - \frac{1}{2}  \left[ \sum_{r = 1}^p  a_r + \sum_{s = 1}^q b_s \right] - 1. 
\label{interm:expNiko}
\end{equation}
One can first express all $a_i$'s and all $b_j$'s w.r.t $a_1$ as 
$$a_i = a_1 \frac{\nu_1}{\nu_i} \quad \text{and} \quad b_j = a_1 \frac{\nu_1}{\gamma_j}.$$ 
Thereafter, using Equation \eqref{interm:expNiko} we have
$$2a_1 \nu_1 = \frac{-a_1 \nu_1}{2 \eta} - 1.$$
Thus, we first write that $a_1 = \displaystyle \frac{-2 \eta}{\nu_1 (4\eta + 1)}$. We next obtain for all $i$ and for all $j$ that 
$$a_i = \frac{-2 \eta}{\nu_i (4\eta + 1)} \quad \text{and} \quad b_j = \frac{-2 \eta}{\gamma_j (4\eta + 1)}.$$ 
Note that the condition  $n \geq \pa{\log(1/\alpha)}^{1+1/(4 \eta)}$ ensures that $( \lambda^*,\mu^*)$ satisfies Assumption $\boldsymbol{\mathcal{A}_2(\alpha)}$. 
Consequently, the separation rate over $\mathcal{N}^{\delta}_{2,p+q} (R,R')$ can be upper bound as $$\rho \left( \Delta_{\alpha}^{\lambda^*,\mu^*} , \mathcal{N}^{\delta}_{2,p+q} (R,R'), \beta \right) \leq C(p, q,  \alpha, \beta, \delta,R,R') n^{- \frac{2 \eta}{1 + 4 \eta}}.$$

\subsection{Proof of Lemma \ref{prop5}}

Let $\alpha$ be in $(0,1)$, we first prove that $u_\alpha \geq \alpha$. For this, we apply Bonferroni's inequality
\begin{eqnarray*}
P_{f_1 \otimes f_2} \pa{ \sup_{(\lambda,\mu) \in \W} \ac{ \widehat{\HH}_{\lambda,\mu} -  q_{1- \alpha e^{- \omega_{\lambda,\mu}}}^{\lambda,\mu}} > 0 } 
&=& P_{f_1 \otimes f_2} \pa{ \bigcup_{(\lambda,\mu) \in \W} \ac{ \widehat{\HH}_{\lambda,\mu} > q_{1- \alpha e^{- \omega_{\lambda,\mu}}}^{\lambda,\mu} } } \\
&\leq& \sum_{(\lambda,\mu) \in \W} P_{f_1 \otimes f_2} \pa{\widehat{\HH}_{\lambda,\mu} > q_{1- \alpha e^{- \omega_{\lambda,\mu}}}^{\lambda,\mu}} \\
&\leq& \sum_{(\lambda,\mu) \in \W}  \alpha e^{- \omega_{\lambda,\mu}} \\ 
&\leq& \quad \alpha.  
\end{eqnarray*}  
Then, by definition of $u_\alpha$ we have $u_\alpha \geq \alpha$. Thereafter, we obtain
\begin{eqnarray*}
P_{f} \pa{\Delta_\alpha = 0} 
&=& P_{f} \pa{ \bigcap_{(\lambda,\mu) \in \W} \ac{\widehat{\HH}_{\lambda,\mu} \leq q_{1- u_\alpha e^{- \omega_{\lambda,\mu}}}^{\lambda,\mu}} } \\
&\leq& \inf_{(\lambda,\mu) \in \W} P_{f} \pa{ \widehat{\HH}_{\lambda,\mu} \leq q_{1- u_\alpha e^{- \omega_{\lambda,\mu}}}^{\lambda,\mu} } \\
&\leq& \inf_{(\lambda,\mu) \in \W} P_{f} \pa{ \widehat{\HH}_{\lambda,\mu} \leq q_{1- \alpha e^{- \omega_{\lambda,\mu}}}^{\lambda,\mu} } \\
&=& \inf_{(\lambda,\mu) \in \W} \ac{P_{f} \pa{\Delta_{\alpha e^{- \omega_{\lambda,\mu}}}^{\lambda,\mu}=0}},
\end{eqnarray*}
which concludes the proof. 

\subsection{Proof of Theorems \ref{theorem3Sobol} and \ref{theorem3Nikol}}

Let $\alpha$ and $\beta$ be in $(0,1)$. According to Lemma \ref{prop5}, $P_{f} \pa{\Delta_\alpha = 0} \leq \beta$ as soon as there exists $(\lambda,\mu)$ in $\W$ such that 
$$P_{f} \pa{\Delta_{\alpha e^{- \omega_{\lambda,\mu}}}^{\lambda,\mu}=0} \leq \beta.$$ 

Then, according to Theorem \ref{ThresholdSobolev} if $\psi$ belongs to $\mathcal{S}^{\delta}_{p+q} (R,R')$, or Theorem \ref{ThresholdNikol} if $\psi$ belongs to $\mathcal{N}^{\delta}_{2,p+q} (R,R')$, one can take the infimum of the upper bounds for the uniform separation rates over $\mathcal{S}^{\delta}_{p+q} (R)$ (resp. over $\mathcal{N}^{\delta}_{2,p+q} (R,R')$) of the single tests over $\W$ while replacing $\log(1/\alpha)$ by $\log(1/\alpha) + \omega_{\lambda,\mu}$.

\subsection{Proof of Corollary \ref{corr1Sobol}}

Assume that $\psi$ belongs to $\mathcal{S}^{\delta}_{p+q} (R,R')$ with regularity parameter $\delta>0$ and positive radiuses $R,R'$. Let us first verify that  $\boldsymbol{\mathcal{A}_2(\alpha e^{-\omega_{\lambda,\mu}})}$ holds for all $(\lambda,\mu)$ in $\W$. 
Let $(\lambda,\mu)$ in $\W$. Then, by definition of $M_n^{p,q}$, 
$$n \sqrt{ \lambda_1 \ldots \lambda_p \mu_1 \ldots \mu_q}\ \geq\  n 2^{-M_n^{p,q} \pa{\frac{p+q}2}}\ \geq\ \log(n).$$
Moreover, 
 \begin{eqnarray*}
 \log\pa{\frac1\alpha} + \omega_{\lambda,\mu} &\leq& \log\pa{\frac1\alpha} + 2 \log \pa{M_n^{p,q}  \frac{ \pi}{\sqrt{6}}}\\
  &\leq&  \log\pa{\frac1\alpha}  + 2  \log \pa{ \frac{ \pi}{\sqrt{6}}} + \log \pa{\frac{ 2}{p+q}} + \log(\log_2(n)).
\end{eqnarray*} 
This implies that there exists $C(p,q,\alpha)$ such that for $n \geq C(p,q, \alpha)$, $\boldsymbol{\mathcal{A}_2(\alpha e^{-\omega_{\lambda,\mu}})}$ holds, and this for all $(\lambda,\mu)$ in $\W$.
Hence, using Theorem \ref{theorem3Sobol}, we have the following inequality
\begin{multline*}
\cro{\rho \left( \Delta_{\alpha} , \mathcal{S}^{\delta}_{p+q} (R,R') , \beta \right) }^2\leq \displaystyle C\pa{p, q, \beta , \delta,R,R'} \inf_{(\lambda,\mu) \in \W} \left\{ 
\left[ \sum_{i=1}^p \lambda_i^{2 \delta} + \sum_{j=1}^q \mu_j^{2 \delta} \right] + \right.\\ 
\left. + \frac{1}{n\sqrt{\lambda_1 \ldots \lambda_p \mu_1 \ldots \mu_q}} \left( \log\left( \frac{1}{\alpha} \right) + \omega_{\lambda,\mu} \right) \right\}.
\end{multline*}
Let us take $(\lambda^*, \mu^*) = 2^{-m^*} \boldsymbol{1}_{p+q}$ with $m^*$ satisfying the condition
\begin{equation*}
\pa{\frac n{\log\log (n)}}^{\frac2{p+q+4\delta}}  <  2^{m^* } \leq 2  \pa{\frac n{\log\log (n)}}^{\frac2{p+q+4\delta}}.
\end{equation*}
Note that there exists a positive constant $C(p,q, \delta)$ such that for $ n \geq C(p,q, \delta)$, $m^* \in \ac{1, \ldots , M_n^{p,q}}$, which implies that $ (\lambda^*, \mu^*) \in \W$. Noticing that
\begin{eqnarray*}
\left[ \sum_{i=1}^p (\lambda^*_i)^{2 \delta} + \sum_{j=1}^q (\mu^*_j)^{2 \delta} \right] &+ &
 \frac{1}{n\sqrt{\lambda^*_1 \ldots \lambda^*_p \mu^*_1 \ldots \mu^*_q}} \left( \log\left( \frac{1}{\alpha} \right) + \omega_{\lambda^*,\mu^*} \right) \\
 &&\leq   C\pa{p, q, \alpha,  \delta}\left( \frac{\log \log (n)}{n} \right)^{4\delta/( 4 \delta + p+q)},
 \end{eqnarray*}
and applying Theorem \ref{theorem3Sobol}, we obtain the desired result.

\subsection{Proof of Corollary \ref{corr1Nikol}}

Assume that $\psi$ belongs to $\mathcal{N}^{\delta}_{2,p+q} (R,R')$ with regularity parameter $\delta = (\nu_1, \ldots , \nu_p, \gamma_1, \ldots ,\gamma_q)$ in $(0,2]^{p+q}$ and positive radiuses $R,R'$. 
Let us first verify that 
$\boldsymbol{\mathcal{A}_2(\alpha e^{-\omega_{\lambda,\mu}})}$ holds for all $(\lambda,\mu)$ in $\W$.
The condition $\sum_{i=1}^p m_{1,i} + \sum_{j=1}^q m_{2,j} \leq  2 \log_2\cro{\pa{\frac n {\log(n)}}}$ implies that for all $(\lambda, \mu )$ in $\W$, 
$$ n \sqrt{ \lambda_1 \ldots \lambda_p \mu_1 \ldots \mu_q} \geq \log(n) .$$
Moreover, by definition of the weights $\omega_{\lambda,\mu}$, we have that for all $(\lambda,\mu)$ in $\W$, 
\begin{eqnarray*}
 \log\pa{\frac1\alpha}+ \omega_{\lambda,\mu} &\leq& \log\pa{\frac1\alpha} + 2(p+q) \log \pa{ \frac{ \pi}{\sqrt{6}}} + 2\sum_{i=1}^p \log(m_{1,i}) +  2\sum_{j=1}^q \log(m_{2,i})  \\
&\leq &  \log\pa{\frac1\alpha} + 2(p+q) \log \pa{ \frac{ 2\pi}{\sqrt{6}}} + 2(p+q) \log(\log_2(n)) . 
\end{eqnarray*}
This implies that there exists some constant $C(p,q,\alpha)$ such that for $n \geq C(p,q, \alpha)$, $\boldsymbol{\mathcal{A}_2(\alpha e^{-\omega_{\lambda,\mu}})}$ holds for all $(\lambda,\mu)$ in $\W$.
Hence, using Theorem \ref{theorem3Nikol}, we have the following inequality
\begin{multline*}
\cro{  \rho \left( \Delta_{\alpha}, \mathcal{N}^{\delta}_{2,p+q} (R,R') , \beta \right) }^2 \leq C(p, q, \beta, \delta,R,R') 
\inf_{(\lambda,\mu) \in \W} \Bigg\lbrace \cro{ \sum_{i=1}^p \lambda_i^{2 \nu_i} +  \sum_{j=1}^q \mu_j^{2 \gamma_j} } \\
+ \frac{1}{\sqrt{\lambda_1 \ldots \lambda_p \mu_1 \ldots \mu_q}n} \cro{ \log\pa{\frac{1}{\alpha}} + \omega_{\lambda,\mu}} \Bigg\rbrace.
\end{multline*}
Let us take $\lambda^* = (2^{-m^*_{1,1}}, \ldots ,2^{-m^*_{1,p}})$ and $\mu^* = (2^{-m^*_{2,1}}, \ldots ,2^{-m^*_{2,q}})$, where the integers $m^*_{1,1}, \ldots , m^*_{1,p}$, $m^*_{2,1}, \ldots , m^*_{2,q}$ are defined by the inequalities
$$\pa{ \frac{n}{\log \log (n)} }^{\frac{2 \eta}{\nu_i(1 + 4 \eta)}} < 2^{m^*_{1,i}} \leq   2 \pa{ \frac{n}{\log \log (n)} }^{\frac{2 \eta}{\nu_i(1 + 4 \eta)}}$$
and
$$\pa{ \frac{n}{\log \log (n)} }^{\frac{2 \eta}{\gamma_i(1 + 4 \eta)}} < 2^{m^*_{2,j}} \leq   2 \pa{ \frac{n}{\log \log (n)} }^{\frac{2 \eta}{\gamma_j (1 + 4 \eta)}},$$ 
where $\eta^{-1} = \sum_{i = 1}^p (\nu_i)^{-1} + \sum_{j = 1}^q (\gamma_j)^{-1}$.  Note that there exists a positive constant $C(\delta)$ such that for $ n \geq C(\delta)$, $(\lambda^*, \mu^*)$ belongs to $\W$. 
Then, we obviously have
\begin{multline*}
\cro{ \rho \left( \Delta_{\alpha}, \mathcal{N}^{\delta}_{2,p+q} (R,R') , \beta \right) }^2 \leq C(p, q, \beta, \delta,R,R') \Bigg\{ \cro{\sum_{i=1}^p (\lambda^*_i)^{2 \nu_i} +  \sum_{j=1}^q (\mu^*_j)^{2 \gamma_j}} \\
 + \frac{1}{\sqrt{\lambda^*_1 \ldots \lambda^*_p \mu^*_1 \ldots \mu^*_q}n} \left( \log\left( \frac{1}{\alpha} \right)  + \omega_{\lambda^*,\mu^*} \right) \Bigg\}.
\end{multline*}
By definition of  the integers $m^*_{1,1}, \ldots , m^*_{1,p}$, $m^*_{2,1}, \ldots , m^*_{2,q}$, we have 
\begin{equation*}
(\lambda^*_i)^{-1/2} = 2^{m_{1,i}^*/2} \leq \sqrt{2}  \left( \frac{n}{\log \log (n)} \right)^{\frac{\eta}{\nu_i(1 + 4 \eta)}} 
\quad \text{and} \quad 
(\mu^*_j)^{-1/2} = 2^{m_{2,j}^*/2} \leq \sqrt{2} \left( \frac{n}{\log \log (n)} \right)^{\frac{\eta}{\gamma_j (1 + 4 \eta)}}.
\end{equation*}
Therefore, we obtain 
\begin{equation}
(\lambda^*_1 \ldots \lambda^*_p \mu^*_1 \ldots \mu^*_q)^{-1/2} \leq  2^{(p+q)/2 } \left( \frac{n}{\log \log (n)} \right)^{\frac{1}{(1 + 4 \eta)}}. 
\label{corr1eq1}
\end{equation}
Let us now upper bound $\omega_{\lambda^*,\mu^*}$. We first write
\begin{eqnarray*}
\omega_{\lambda^*,\mu^*} &=& 2 \sum_{i=1}^p \log \pa{m^*_{1,i} \times \frac{\pi}{\sqrt{6}}} + 2 \sum_{j=1}^q \log \pa{m^*_{2,j} \times \frac{\pi}{\sqrt{6}}} \\ 
&=& 2 \log \pa{ m^*_{1,1} \ldots m^*_{1,p} m^*_{2,1} \ldots m^*_{2,q} } + 2(p+q) \log \pa{\frac{\pi}{\sqrt{6}}}.  
\end{eqnarray*}
Moreover, it is easy to see that for $n \geq C(\delta)$,
\begin{equation*}
m^*_{1,i} \leq \frac{2 \eta}{\nu_i(1 + 4 \eta)} \log_2(n) 
\quad \text{and} \quad 
m^*_{2,j} \leq \frac{2 \eta}{\gamma_j (1 + 4 \eta)} \log_2(n).
\end{equation*}
Then, for $n \geq C(\delta)$,
\begin{equation*}
\log( m^*_{1,1} \ldots m^*_{1,p} m^*_{2,1} \ldots m^*_{2,q}) \leq C(\delta) \log \log (n).
\end{equation*}
Thereafter, $\omega_{\lambda^*,\mu^*}$ can be upper bound as
\begin{equation}
\omega_{\lambda^*,\mu^*} \leq C(\delta,p,q) \log \log (n).
\label{corr1eq2}
\end{equation}
From Equations \eqref{corr1eq1} and \eqref{corr1eq2}, we have 
\begin{equation}
\displaystyle \frac{1}{n \sqrt{\lambda^*_1 \ldots \lambda^*_p \mu^*_1 \ldots \mu^*_q}} \left( \log\left( \frac{1}{\alpha} \right) + \omega_{\lambda^*,\mu^*} \right) \leq C(\alpha, \delta ,p,q) \left( \frac{\log \log (n)}{n} \right)^{\frac{4 \eta}{(1 + 4 \eta)}}.
\label{corr1eq3}
\end{equation}
We aim now to upper bound $\sum_{i=1}^p (\lambda^*_i)^{2 \nu_i} +  \sum_{j=1}^q (\mu^*_j)^{2 \gamma_j}$. 
By definition of  the integers $m^*_{1,1}, \ldots , m^*_{1,p}$, $m^*_{2,1}, \ldots , m^*_{2,q}$, 
\begin{equation*}
(\lambda^*_i)^{2 \nu_i} \leq \left( \frac{\log \log (n)}n \right)^{\frac{4 \eta}{1 + 4 \eta}} \quad \text{and} \quad (\mu^*_j)^{2 \gamma_j} \leq \left( \frac{\log \log (n)}n \right)^{\frac{4 \eta}{1 + 4 \eta}}.
\end{equation*}
Therefore, we obtain
\begin{equation}
\sum_{i=1}^p (\lambda^*_i)^{2 \nu_i} +  \sum_{j=1}^q (\mu^*_j)^{2 \gamma_j} \leq (p+q) \left( \frac{\log \log (n)}n \right)^{\frac{4 \eta}{1 + 4 \eta}}. 
\label{corr1eq4}
\end{equation} 
Consequently, from Equations \eqref{corr1eq3} and \eqref{corr1eq4}, 
\begin{equation*}
\rho \left( \Delta_{\alpha} , \mathcal{N}^{\delta}_{2,p+q} (R,R'), \beta \right) \leq \displaystyle C(p, q, \alpha, \beta, \delta,R,R') \left( \frac{\log \log (n)}{n} \right)^{\frac{2 \eta}{1 + 4 \eta}},
\end{equation*} 
which ends the proof of Corollary \ref{corr1Nikol}

\subsection{Proof of Lemma \ref{Lem:MethodLowerBound}} 

Assume there exists a distribution $f_0$ that satisfies $(\mathcal{H}_0)$ such that the probability measure $P_{\nu_{\rho_*}}$ is absolutely continuous w.r.t. $P_{f_0}$ and verifies Equation \eqref{Ineq:Lnu}.

Let us first lower bound $\beta \big[ \mathcal{F}_{\rho_*} (\regSp_\delta) \big]$ w.r.t. the distributions $P_{\nu_{\rho_*}}$ and $P_{f_0}$. 
We recall that
$$ \beta \big[ \mathcal{F}_{\rho_*} (\regSp_\delta) \big] = \underset{\Delta_\alpha}{\inf}
\underset{f \in  \mathcal{F}_{\rho_*} (\regSp_\delta) }{\sup} P_{f}  \left( \Delta_\alpha = 0 \right) .$$
Using the assumption $\nu_{\rho_*}(\mathcal{F}_{\rho_*} (\regSp_\delta)) \geq 1-\eta$, we obtain the following inequalities 
\begin{eqnarray*}
P_{\nu_{\rho_*}} \left( \Delta_\alpha = 0 \right)  &=& \int_{\mathbb{L}_2(\R^p\times \R^q)} P_{f}  \left( \Delta_\alpha = 0 \right) d\nu_{\rho_*}(f)  \\
 &\leq& \int_{\mathcal{F}_{\rho_*} (\regSp_\delta)} P_{f}  \left( \Delta_\alpha = 0 \right) d\nu_{\rho_*}(f)  + \eta\\
 &\leq& \sup_{f \in \mathcal{F}_{\rho_*} (\regSp_\delta)} P_{f}  \left( \Delta_\alpha = 0 \right)   + \eta. 
 \end{eqnarray*}
This leads to 
$$ \sup_{f \in \mathcal{F}_{\rho_*} (\regSp_\delta)} P_{f}  \left( \Delta_\alpha = 0 \right)  \geq  P_{\nu_{\rho_*}} \left( \Delta_\alpha = 0 \right) -\eta.$$
Hence, we have 
\begin{eqnarray*}
\beta \big[ \mathcal{F}_{\rho_*} (\regSp_\delta) \big]  &\geq&  \underset{\Delta_\alpha}{\inf}  P_{\nu_{\rho_*}} \left( \Delta_\alpha = 0 \right) -\eta \\
&\geq & 1 - \underset{\Delta_\alpha}{\sup}  P_{\nu_{\rho_*}} \left( \Delta_\alpha = 1 \right) -\eta  \\
&\geq& 1-\alpha - \underset{\Delta_\alpha}{\sup} \abs{ P_{\nu_{\rho_*}} \left( \Delta_\alpha = 1 \right) -P_{f_0} \left( \Delta_\alpha = 1 \right) } -\eta. 
\end{eqnarray*}
We denote by $\norm{ P_{\nu_{\rho_*}} - P_{f_0} }_{TV}$ the total variation distance between the distributions $P_{\nu_{\rho_*}}$ and $P_{f_0}$. We recall that, $$\norm{ P_{\nu_{\rho_*}} - P_{f_0} }_{TV} = \underset{E \in \mathcal{E}}{\sup} \abs{ P_{\nu_{\rho_*}} (E) - P_{f_0}(E) },$$ where $\mathcal{E}$ is the space of measurable sets in $\R^{n(p+q)}$. We then obtain $$\beta \big[ \mathcal{F}_{\rho_*} (\regSp_\delta) \big] \geq 1 - \alpha -\eta- \norm{ P_{\nu_{\rho_*}} - P_{f_0} }_{TV}.$$ Notice that, $$\norm{ P_{\nu_{\rho_*}} - P_{f_0} }_{TV} = \underset{E \in \mathcal{E}}{\sup}  \left[ P_{\nu_{\rho_*}} (E) - P_{f_0}(E) \right] = \underset{E \in \mathcal{E}}{\sup} \left[ P_{f_0}(E) - P_{\nu_{\rho_*}} (E) \right].$$ It is then straightforward to show that 
\begin{eqnarray*}
\norm{ P_{\nu_{\rho_*}} - P_{f_0} }_{TV} &=& \frac{1}{2} \int_{\R^{n(p+q)}} \abs{ L_{\nu_{\rho_*}} - 1 } \, \mathrm{d} P_{f_0} \\
&=& \frac{1}{2} \mathbb{E}_{P_{f_0}} \left[ \abs{ L_{\nu_{\rho_*}} (\mathbb{Z}_n) - 1 } \right] \\
&\leq& \frac{1}{2} \left( \mathbb{E}_{P_{f_0}} \left[ L^2_{\nu_{\rho_*}} (\mathbb{Z}_n) \right] - 1 \right)^{1/2},
\end{eqnarray*}
where the last inequality holds by applying Cauchy-Schwarz and the fact that $\mathbb{E}_{P_{f_0}} \left[ L_{\nu_{\rho_*}} (\mathbb{Z}_n) \right] = 1$. Thus, 
$$\beta \big[ \mathcal{F}_{\rho_*} (\regSp_\delta) \big] \geq  1 - \alpha -\eta- \frac{1}{2} \left( \mathbb{E}_{P_{f_0}} \left[ L^2_{\nu_{\rho_*}} (\mathbb{Z}_n) \right] - 1 \right)^{1/2}.$$ 
If the condition \eqref{Ineq:Lnu} holds, we then obtain 
$$\beta \big[ \mathcal{F}_{\rho_*} (\regSp_\delta) \big] > \beta.$$ 
Furthermore, using that $\mathcal{F}_{\rho_*} (\regSp_\delta) \subset \mathcal{F}_{\rho} (\regSp_\delta)$ for all $\rho \leq \rho_*$, we have 
$$\beta \big[ \mathcal{F}_{\rho} (\regSp_\delta) \big] > \beta.$$

\paragraph{}
Let us now prove that this implies the lower bound 
\begin{equation}
\rho \left(\regSp_\delta, \alpha, \beta \right) = \inf_{\Delta_\alpha}\rho \left( \Delta_\alpha, \regSp_\delta, \beta \right) \geq \rho_*.
\label{eq:borneinfgene}
\end{equation}
Assume 
$\beta \big[ \mathcal{F}_{\rho_*} (\regSp_\delta) \big] > \beta$, then
$$\forall \Delta_\alpha,\quad \underset{f \in \mathcal{F}_{\rho_*} \pa{\regSp_\delta}}{\sup} P_f \left( \Delta_\alpha = 0 \right)>\beta.$$
In particular, since the family $\ac{\mathcal{F}_{\rho}\pa{\regSp_\delta}}_{\rho>0}$ is non increasing for the inclusion,
$$\forall \Delta_\alpha,\quad \rho \left( \Delta_\alpha, \regSp_\delta, \beta \right) = \inf\ac{\rho>0\ ;\ \underset{f \in \mathcal{F}_{\rho} \pa{\regSp_\delta}}{\sup} P_f \left( \Delta_\alpha = 0 \right)\leq\beta} > \rho_*,$$
which directly implies \eqref{eq:borneinfgene}.

\subsection{Proof of Lemma \ref{prop:alternftheta}} 



\paragraph{Proof of \ref{borninfproba}} Assume that 
\begin{equation}\label{eq:cond1C0}
C_0 \leq \min\{1,R'-1\}e^{p+q}.
\end{equation}
Let us first prove that $f_\theta$ is a density function. First, it is obvious from Equation \eqref{ftheta} that $$\displaystyle \int_{\mathbb{R}^{p+q}} f_\theta (x,y) \, \mathrm{d}x \, \mathrm{d}y = 1,$$ since $\mathds{1}_{[0,1]^{p+q}}$ is a probability density function and that $\displaystyle \int_{\mathbb{R}} G(x) \, \mathrm{d}x = 0$. 
It remains to check that $f_\theta$ is a non-negative function under Assumption \eqref{eq:cond1C0}. 

Let $j=(j_1,\ldots,j_p)$ in $\{1,\ldots,M_n\}^p$ and $l=(l_1,\ldots,l_q)$ in $\{1,\ldots,M_n\}^q$. Knowing that for all $1\leq r\leq p$ and all $1\leq s\leq q$, the supports of the functions $G_{h_n} (\cdot - j_r h_n)$  and $G_{h_n} (\cdot - l_s h_n)$ are respectively the intervals $\Big((j_r-1)h_n, j_r h_n \Big]$ and $\Big((l_s-1)h_n, l_s h_n \Big]$, we deduce that the support of the function 
\begin{equation}
\label{def:gnpq}
g_{n,j,l}:(x,y) \mapsto \prod_{r = 1}^p G_{h_n} (x_r - j_r h_n) \prod_{s = 1}^q G_{h_n}(y_s - l_s h_n)
\end{equation}
is the set 
\begin{equation}
D_{(j,l)}=\prod_{r = 1}^p \Big( (j_r -1)h_n, j_r h_n \Big] \times \prod_{s = 1}^q \Big( (l_s -1)h_n, l_s h_n \Big]. 
\label{eq:support}
\end{equation}
These supports are then disjoint for different 
multi-indexes $(j,l)$ in $\I_{n,p,q}$ and have as union set $(0,1]^{p+q}$ (since $M_n h_n = 1$). In particular, for all $(x,y)$ in $(0,1]^{p+q}$, 
\begin{eqnarray}
\abs{ \sum_{(j,l)\in \I_{n,p,q}} \theta_{(j,l)} \prod_{r = 1}^p G_{h_n} (x_r - j_r h_n) \prod_{s = 1}^q G_{h_n}(y_s - l_s h_n) } &\leq& \frac{1}{h_n^{p+q}}  \left(\sup_{t \in [-1,0]} |G(t)| \right)^{p+q} \nonumber\\
&=& \frac{1}{(e h_n)^{p+q}}.\label{eq:majresteftheta}
\end{eqnarray}
Hence, if $(x,y)$ belongs to $[0,1]^{p+q}$, then since $h_n\leq 1$,
$$f_\theta(x,y) \geq 1 - C_0\frac{h_n^\delta}{e^{p+q}} \geq 1 - \frac{C_0}{e^{p+q}} \geq 0,$$
by equation \eqref{eq:cond1C0}. 
Otherwise, if $(x,y)\notin[0,1]^{p+q}$, then $f_\theta(x,y)=0$. 
In particular, for all $(x,y)$ in $\R^{p+q}$, $f_\theta(x,y)\geq 0$. \\

Remains to prove that $\max\{\norm{f_\theta}_\infty,\norm{f_{\theta,1}}_\infty,\norm{f_{\theta,2}}_\infty\}\leq R'$. 
On the one hand, since $f_{\theta,1}=\mathds{1}_{[0,1]^p}$ and $f_{\theta,2}=\mathds{1}_{[0,1]^q}$, we directly obtain 
$$\norm{f_{\theta,1}}_\infty = \norm{f_{\theta,2}}_\infty = 1 \leq R'.$$
On the other hand, by \eqref{eq:majresteftheta}, for all $(x,y)$ in $\R^{p+q}$, 
$$\abs{f_\theta(x,y)} \leq 1 + C_0\frac{h_n^\delta}{e^{p+q}} \leq 1+\frac{C_0}{e^{p+q}}.$$
Hence, assuming \eqref{eq:cond1C0} directly leads to $\norm{f_{\theta}}_\infty\leq R'$, which ends the proof of this point. 

\paragraph{Proof of \ref{borninfdistH0}} 

Let us prove that, for all $\theta$ in $\{-1,1\}^{M_n^{p+q}}$, $f_\theta$ satisfies 
$$\norm{f_{\theta} - f_{\theta,1} \otimes f_{\theta,2}}_2 = C(p,q,\delta,R,R',\eta)h_n^\delta.$$
Since, $\int_\R G(t) \mathrm{d}t =0$, we know that $f_{\theta,1}=\mathds{1}_{[0,1]^p}$ and $f_{\theta,2}=\mathds{1}_{[0,1]^q}$, thus $f_{\theta,1} \otimes f_{\theta,2} = \mathds{1}_{[0,1]^{p+q}}$ and
$$f_{\theta} - f_{\theta,1} \otimes f_{\theta,2} = C_0 h_n^{\delta + (p+q)}  \sum_{(j,l)\in \I_{n,p,q}} \theta_{(j,l)} g_{n,j,l}(x,y),$$
where the functions $g_{n,j,l}$ are defined in \eqref{def:gnpq}, with disjoint supports. \\
In particular, 
$$\norm{f_{\theta} - f_{\theta,1} \otimes f_{\theta,2}}_2^2 = C_0^2 h_n^{2\delta + 2(p+q)}  \sum_{(j,l)\in \I_{n,p,q}} \norm{g_{n,j,l}}_2^2.$$
Moreover, for all $(j,l)\in \I_{n,p,q}$, 
\begin{eqnarray*}
\norm{g_{n,j,l}}_2^2 &=& \int_{\R^{p+q}} \cro{\prod_{r = 1}^p G_{h_n}^2 (x_r - j_r h_n) \prod_{s = 1}^q G_{h_n}^2(y_s - l_s h_n)} \mathrm{d}x_1 \ldots \mathrm{d}x_p \mathrm{d}y_1\ldots \mathrm{d}y_q \\
&=& \cro{\prod_{r = 1}^p \pa{\int_\R G_{h_n}^2(x_r - j_r h_n) \mathrm{d}x_r}}\times \cro{ \prod_{s = 1}^q \pa{\int_\R G_{h_n}^2(y_s - l_s h_n) \mathrm{d}y_s}},
\end{eqnarray*}
and for all $k$ in $\{1,\ldots,M_n\}$, a simple change of variables implies that 
$$\int_\R G_{h_n}^2(t-k h_n) \mathrm{d}t \quad =\quad  \frac{1}{h_n^2}\int_\R G^2\pa{\frac{t-k h_n}{h_n}} \mathrm{d}t 
\quad =\quad \frac{1}{h_n}\int_\R G^2(t) \mathrm{d}t\quad =\quad \frac{\norm{G}_2^2}{h_n},$$
since $G$ belongs to $\mathbb{L}_2(\R)$. 
We thus deduce that 
\begin{equation}
\norm{g_{n,j,l}}_2^2= \frac{\norm{G}_2^{2(p+q)}}{h_n^{p+q}},
\label{eq:norm2g}
\end{equation} and that, since the cardinality of $I_{n,p,q}$ equals $M_n^{p+q}$, recalling that $M_nh_n=1$,
$$\norm{f_{\theta} - f_{\theta,1} \otimes f_{\theta,2}}_2^2 =  C_0^2 \norm{G}_2^{2(p+q)} h_n^{2\delta + 2(p+q)}  \times\frac{M_n^{p+q}}{h_n^{p+q}} = C_0^2 \norm{G}_2^{2(p+q)} h_n^{2\delta}.$$

\subsection{Proof of Lemma \ref{lemma:alternfthetaSobolev}}

Let us prove that there exists a positive constant $C(p,q,\delta,\eta)$ such that, if $C_0^2 \leq (2\pi)^{p+q} R^2 / [2C(p,q,\delta,\eta)]$, then the random function $f_\Theta - f_{\Theta,1} \otimes f_{\Theta,2}$ belongs to the Sobolev ball $\mathcal{S}^\delta_{p+q} (R)$ with probability greater that $1-\eta$. 
This point relies on Lemma \cite[Lemma 2]{butucea2007goodness} recalled below. 
\begin{lemm}[\cite{butucea2007goodness}]
Let $G$ be the function defined in Equation \eqref{Eq:functionG}. Then $G$ is an infinitely differentiable function such that $\displaystyle \int_{\mathbb{R}} G(x) \, \mathrm{d}x =0$. Its Fourier transform verifies 
\begin{equation*}
\abs{\widehat{G} (u)} \leq C \exp \pa{- a \sqrt{\abs{u}}} \quad \text{as} \quad \abs{u} \rightarrow \infty,
\end{equation*}
for some positive constants $C$ and $a$. Moreover, $\widehat{G}$ is an infinitely differentiable and bounded function.
\label{lemm:Butucea}
\end{lemm}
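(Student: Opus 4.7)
The plan is to establish the four claims sequentially: $C^\infty$ regularity, zero integral, boundedness/smoothness of $\widehat{G}$, and the subexponential decay. Let $\varphi(s) = e^{-1/(1-s^2)}\mathds{1}_{(-1,1)}(s)$ be the classical bump function, which is well-known to be in $C^\infty(\R)$ with compact support. The two pieces of $G$ are affine reparametrizations of $\varphi$: with $s=4t+3$ the first piece equals $\varphi(4t+3)$ on $(-1,-1/2)$ and with $s=4t+1$ the second piece equals $\varphi(4t+1)$ on $(-1/2,0)$. Since $\varphi$ vanishes to all orders at $\pm 1$, both pieces extend by zero to $C^\infty$ functions on $\R$, hence so does $G$, with $\mathrm{supp}(G)\subset[-1,0]$. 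For the zero integral, the same two substitutions reduce $\int_{\R}G = \int_{-1}^{-1/2} e^{-1/(1-(4t+3)^2)}\mathrm{d}t - \int_{-1/2}^{0} e^{-1/(1-(4t+1)^2)}\mathrm{d}t$ to $\frac{1}{4}\int_{-1}^1\varphi - \frac{1}{4}\int_{-1}^1\varphi = 0$.

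For the third claim, since $G\in L^1\cap C^\infty$ with compact support, $|\widehat{G}(u)|\leq \|G\|_1<\infty$, and differentiation under the integral is legitimate because $(-it)^k G(t)\in L^1$ for every $k$; the $k$-th derivative of $\widehat{G}$ is the Fourier transform of $(-it)^k G(t)$ and is itself bounded, giving $\widehat{G}\in C^\infty(\R)\cap L^\infty(\R)$.

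The decay estimate is the genuine obstacle. I would follow the standard integration-by-parts route: since $G$ has compact support and is $C^\infty$, for every integer $k\geq 1$,
\begin{equation*}
|\widehat{G}(u)| \;=\; \Bigl|\frac{1}{(iu)^k}\int_{\R} G^{(k)}(t)\, e^{-iut}\,\mathrm{d}t\Bigr|\;\leq\; \frac{\|G^{(k)}\|_1}{|u|^k}.
\end{equation*}
The crucial input is a Gevrey-$2$ bound of the form $\|G^{(k)}\|_1 \leq A\, B^k\, (k!)^2$ for some constants $A,B>0$. This can be obtained by analyzing the derivatives of $\varphi(s)=e^{-1/(1-s^2)}$ via Cauchy's formula: $\varphi$ extends to a holomorphic function on $\C\setminus\{\pm 1\}$, so on any disk of radius $r(s) \asymp \mathrm{dist}(s,\{\pm 1\})$ centered at $s\in(-1,1)$, Cauchy's inequalities give $|\varphi^{(k)}(s)|\leq k!\, \sup_{|z-s|=r(s)}|\varphi(z)|/r(s)^k$. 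A direct estimate of $|e^{-1/(1-z^2)}|$ on such circles, combined with integration in $s$, yields the claimed Gevrey-$2$ bound. Once this bound is in hand, optimizing in $k$—choosing $k\sim c\sqrt{|u|/B}$ and invoking Stirling's formula in the form $(k!)^2/|u|^k \leq \exp(-2k\log(|u|^{1/2}/(Ck)))$—produces the stated decay $|\widehat{G}(u)|\leq Ce^{-a\sqrt{|u|}}$ for appropriate constants $C,a>0$.

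The hardest part is the Gevrey-$2$ bound on the successive derivatives of $\varphi$, because one must track precisely how the singularities of the holomorphic extension at $\pm 1$ interact with the derivatives; the Cauchy-formula route above is the cleanest, but requires a careful choice of contour radius $r(s)$ that is small enough to keep $\varphi$ bounded on the circle and large enough to beat $k!$ by $r(s)^{-k}$. Everything else reduces to bookkeeping with elementary inequalities.
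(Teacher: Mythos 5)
The paper itself offers no proof of this lemma: it is imported verbatim as Lemma~2 of \cite{butucea2007goodness}, so there is no in-paper argument to compare against. Your proof is correct and is the standard self-contained route to this kind of statement. The first three claims (smoothness via the affine reparametrizations $s=4t+3$, $s=4t+1$ of the bump $\varphi(s)=e^{-1/(1-s^2)}\mathds{1}_{(-1,1)}(s)$, the cancellation of the two integrals, and the boundedness and smoothness of $\widehat G$ from $G\in L^1$ with compact support) are all handled correctly. For the decay, you rightly isolate the Gevrey-$2$ estimate $\|G^{(k)}\|_1\leq A B^k (k!)^2$ as the only nontrivial input, and the Cauchy-formula argument you sketch does deliver it: on a circle of radius $r(s)=\epsilon\,\mathrm{dist}(s,\{\pm1\})$ with $\epsilon$ small, one has $\mathrm{Re}\bigl(1/(1-z^2)\bigr)\geq c/\mathrm{dist}(s,\{\pm1\})$, hence $|\varphi^{(k)}(s)|\leq k!\,e^{-c/d(s)}(\epsilon d(s))^{-k}$ with $d(s)=1-|s|$, and maximizing $d^{-k}e^{-c/d}$ over $d>0$ yields the extra factor $\leq B^k k!$. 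Combined with $|\widehat G(u)|\leq \|G^{(k)}\|_1/|u|^k$ and the choice $k\sim c\sqrt{|u|}$, this gives $|\widehat G(u)|\leq C e^{-a\sqrt{|u|}}$ exactly as you describe (the polynomial prefactor from Stirling is absorbed by slightly decreasing $a$). The one point worth making explicit in a full write-up is the lower bound on $\mathrm{Re}(1/(1-z^2))$ on the Cauchy circles — it is what guarantees $\varphi$ stays bounded there and is the only place the specific form of the bump function enters — but your outline already identifies this correctly, so there is no gap.
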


According to the Fourier transform properties, we write, for all $\theta$ in $\{-1,1\}^{M_n^{p+q}}$,
 $$\widehat{f}_\theta (u,v) = \widehat{f}_{\theta,1} \otimes \widehat{f}_{\theta,2} (u,v) + C_0\displaystyle h_n^{\delta + (p+q)} \sum_{(j,l)\in \I_{n,p,q}} \theta_{j,l} \prod_{r = 1}^p \exp(i u_r j_r h_n) \widehat{G} (h_n u_r) \prod_{s = 1}^q \exp(i v_s l_s h_n) \widehat{G}(h_n v_s).$$
Then, 
\begin{equation} \label{Eq:Fourierftheta}
\abs{ \widehat{f}_\theta (u,v) - \widehat{f}_{\theta,1} \otimes \widehat{f}_{\theta,2} (u,v) }^2 = H_{1,n} (u,v) + H_{2,n} (u,v,\theta),
\end{equation}
where the functions $H_{1,n}$ and $H_{2,n}$ are respectively defined by
\begin{align}
H_{1,n} (u,v) &= C_0^2 M_n^{p+q} h_n^{2\delta + 2(p+q)}\pa{\prod_{r = 1}^p \abs{ \widehat{G} (h_n u_r) }^2} \pa{\prod_{s = 1}^q \abs{ \widehat{G} (h_n v_s) }^2} \label{H1n}, \\
H_{2,n} (u,v,\theta) &= C_0^2 h_n^{2\delta + 2(p+q)} \sum_{\substack{(j_1, l_1)\in \I_{n,p,q} \\ (j_2, l_2) \in \I_{n,p,q} \\ (j_1, l_1) \neq (j_2, l_2) }} \theta_{j_1,l_1} \theta_{j_2,l_2} \mathcal{G}_{j_1,l_1,j_2,l_2} (h_n u, h_n v), \label{H2n}.
\end{align}
and, the function $\mathcal{G}_{j_1,l_1,j_2,l_2}$ is defined for all indexes $j_k = (j_{k,1}, \ldots, j_{k,p})$ and $l_k = (l_{k,1}, \ldots, l_{k,q})$, $k=1,2$, as 
\begin{equation} \label{Gj1k1j2k2}
\mathcal{G}_{j_1,l_1,j_2,l_2}: (u,v) \mapsto \pa{\prod_{r = 1}^p \exp\left(i u_r (j_{1,r}-j_{2,r}) \right) \abs{ \widehat{G} (u_r) }^2} \pa{\prod_{s = 1}^q \exp\left( i v_s (l_{1,s}-l_{2,s}) \right) \abs{ \widehat{G} (v_s) }^2}.
\end{equation} 
By now, our aim is to prove that
 $$ \mathbb{P} \pa{ \int_{\R^{p+q}} \norm{(u,v)}^{2 \delta} \abs{ \widehat{f}_\Theta (u,v) - \widehat{f}_{\Theta,1} \otimes \widehat{f}_{\Theta,2} (u,v) }^2 \mathrm{d}u \mathrm{d}v \leq (2\pi)^{p+q}R^2} \geq 1-\eta.$$ First, by Equation \eqref{eq:HolderConcav}, we have 
\begin{equation} \label{Ineq:norm_uv}
\norm{(u,v)}^{2 \delta} \leq C(p,q,\delta)\left[ \displaystyle \sum_{i = 1}^p \abs{u_i}^{2 \delta} + \sum_{j=1}^q \abs{v_j}^{2 \delta}\right].
\end{equation}
We then obtain from Equations \eqref{H1n} and \eqref{Ineq:norm_uv} the following result,  
\begin{multline*}
\int_{\R^{p+q}} \norm{(u,v)}^{2 \delta} H_{1,n} (u,v) \, \mathrm{d}u \, \mathrm{d}v \\ 
\leq  C(p,q,\delta) C_0^2 M_n^{p+q} h_n^{2\delta + 2(p+q)} \left( \int_{\mathbb{R}} |t|^{2 \delta} \abs{ \widehat{G} (h_n t) }^2 \, \mathrm{d}t  \right) \left( \int_{\mathbb{R}}  \abs{ \widehat{G} (h_n z) }^2 \, \mathrm{d}z  \right)^{p+q-1} \\
=  C(p,q,\delta) C_0^2(M_n h_n)^{p+q} \left( \int_{\mathbb{R}} |t|^{2 \delta} \abs{ \widehat{G} (t) }^2 \, \mathrm{d}t  \right) \left( \int_{\mathbb{R}} \abs{ \widehat{G} (z) }^2 \, \mathrm{d}z  \right)^{p+q-1}.
\end{multline*}
The functions $t \mapsto |t|^{2 \delta} \abs{ \widehat{G} (t) }^2$ and $z \mapsto \abs{ \widehat{G} (z) }^2$ being integrable according to Lemma \ref{lemm:Butucea}, we have
\begin{eqnarray} \label{Ineq:IntH1n}
\int_{\R^{p+q}} \norm{(u,v)}^{2 \delta} H_{1,n} (u,v) \, \mathrm{d}u \, \mathrm{d}v &\leq&   C(p,q,\delta) C_0^2 (M_n h_n)^{p+q} \nonumber \\ 
&\leq&    (2\pi)^{p+q}R^2 /2
\end{eqnarray}
provided that $C(p,q,\delta) C_0^2 \leq (2\pi)^{p+q}R^2 /2$. \\

To complete  the proof, let us now consider the random part. Starting from the expression of $H_{2,n}$ in \eqref{H2n}, we write $$\int_{\R^{p+q}} \norm{(u,v)}^{2 \delta} H_{2,n} (u,v,\Theta) \, \mathrm{d}u \, \mathrm{d}v = C_0^2 h_n^{p+q} \sum_{\substack{(j_1, l_1)\in \I_{n,p,q} \\ (j_2, l_2) \in \I_{n,p,q} \\ (j_1, l_1) \neq (j_2, l_2) }} \Theta_{j_1,l_1} \Theta_{j_2,l_2} \int_{\mathbb{R}^{p + q}} \norm{(u,v)}^{2 \delta} \mathcal{G}_{j_1,l_1,j_2,l_2} (u,v) \, \mathrm{d}u \, \mathrm{d}v.$$
Noting that $  \mathcal{G}_{j_2,l_2,j_1,l_1} = \overline{ \mathcal{G}_{j_1,l_1,j_2,l_2}}$, we get 
$$
  \sum_{\substack{(j_1, l_1)\in \I_{n,p,q} \\ (j_2, l_2) \in \I_{n,p,q} \\ (j_1, l_1) \neq (j_2, l_2)  }} \Theta_{j_1,l_1} \Theta_{j_2,l_2}  \mathcal{G}_{j_1,l_1,j_2,l_2} (u,v) =\sum_{(j_1, l_1)\prec (j_2, l_2) \in \I_{n,p,q}} \Theta_{j_1,l_1} \Theta_{j_2,l_2} \pa{ \mathcal{G}_{j_1,l_1,j_2,l_2} (u,v) + \overline{ \mathcal{G}_{j_1,l_1,j_2,l_2}}} 
$$
where $ (j_1,l_1) \prec (j_2,l_2)$ means that $ (j_1,l_1)$ is strictly smaller that $ (j_2,l_2)$ in the lexicographic order. 
In the following, we prove that there exists some positive constant $C(p,q,\delta, \eta)$ such that, with probability greater that $1-\eta$, we have 
\begin{equation}\label{eq:controlH2}
 \sum_{(j_1, l_1)\prec (j_2, l_2) \in \I_{n,p,q}}  \Theta_{j_1,l_1} \Theta_{j_2,l_2} \int_{\mathbb{R}^{p + q}} \norm{(u,v)}^{2 \delta} \pa{\mathcal{G}_{j_1,l_1,j_2,l_2} (u,v)  + \overline{ \mathcal{G}_{j_1,l_1,j_2,l_2} }}\, \mathrm{d}u \, \mathrm{d}v \leq C(p,q,\delta,\eta)  M_n^{p+q} .
\end{equation}
Since $M_n h_n =1$, this implies that 
$$ \proba{ \int_{\R^{p+q}} \norm{(u,v)}^{2 \delta} H_{2,n} (u,v,\Theta) \, \mathrm{d}u \, \mathrm{d}v  \leq (2\pi)^{p+q}R^2 /2} \geq 1-\eta, $$
choosing $C_0$ such that $C(p,q,\delta,\eta) C_0^2 \leq (2\pi)^{p+q}R^2 /2$, and concludes the proof. \\

In order to show that \eqref{eq:controlH2} holds with probability greater that $1-\eta$, we use Hoeffding's inequality. For the sake of completeness, let us first recall this inequality. 
\begin{lemm}\label{lem:Hoeffding} \cite{Hoeffding1963}
Let $Z_1, \ldots, Z_n$ be independent real random variables such that for all $i$, $a\leq Z_i \leq b$. Then we have, for all $x>0$, 
$$ \proba{ \abs{Z_1+ \ldots+Z_n} \geq x} \leq 2 \exp \pa{ - \frac{2x^2}{n(b-a)^2}}.$$
\end{lemm}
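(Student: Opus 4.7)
The plan is to prove this via the classical Chernoff bound combined with Hoeffding's lemma. Strictly speaking, the inequality as stated is only meaningful under the additional assumption $\E[Z_i] = 0$ (otherwise one should center: the same bound holds for $|\sum_i(Z_i-\E Z_i)|$); the application above to products $\Theta_{j_1,l_1}\Theta_{j_2,l_2}$ of distinct independent Rademacher variables indeed produces mean-zero summands, so I will proceed in the centered setting.

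First, for any $\lambda>0$, the exponential Markov inequality together with the independence of the $Z_i$ yields
$$\proba{Z_1+\cdots+Z_n\geq x} \;\leq\; e^{-\lambda x}\,\E\bigl[e^{\lambda(Z_1+\cdots+Z_n)}\bigr]\;=\;e^{-\lambda x}\prod_{i=1}^n \E[e^{\lambda Z_i}].$$
The key ingredient is then Hoeffding's lemma: for any random variable $Z$ with $a\leq Z\leq b$ a.s. and $\E Z=0$, one has $\E[e^{\lambda Z}] \leq \exp\bigl(\lambda^2(b-a)^2/8\bigr)$. To establish it, I would write $Z$ as the convex combination $Z = \tfrac{Z-a}{b-a}\,b+\tfrac{b-Z}{b-a}\,a$ and use convexity of $t\mapsto e^{\lambda t}$ to bound $e^{\lambda Z}$ pointwise by $\tfrac{Z-a}{b-a}e^{\lambda b}+\tfrac{b-Z}{b-a}e^{\lambda a}$. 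Taking expectation (using $\E Z=0$) reduces the problem to the scalar function $\psi(\lambda)=\log\bigl(pe^{\lambda b}+(1-p)e^{\lambda a}\bigr)-\lambda(pb+(1-p)a)$ with $p=-a/(b-a)\in[0,1]$, and a Taylor expansion at $\lambda=0$ combined with the identification of $\psi''(\lambda)$ as the variance of the associated tilted two-point distribution on $\{a,b\}$ (which is at most $(b-a)^2/4$) gives $\psi(\lambda)\leq \lambda^2(b-a)^2/8$.

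Inserting this estimate into the Chernoff bound yields $\proba{S_n\geq x}\leq\exp\bigl(-\lambda x+n\lambda^2(b-a)^2/8\bigr)$, and the one-parameter optimization with the choice $\lambda^*=4x/[n(b-a)^2]$ produces the one-sided bound $\proba{S_n\geq x}\leq \exp\bigl(-2x^2/[n(b-a)^2]\bigr)$. Applying the identical argument to $-Z_1,\ldots,-Z_n$ handles $\proba{S_n\leq -x}$ with the same exponential, and a union bound on the two tails delivers the factor $2$ and completes the two-sided inequality.

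The main obstacle is the scalar second-derivative estimate $\psi''(\lambda)\leq(b-a)^2/4$ at the heart of Hoeffding's lemma; once this is in hand, the remainder of the argument is a routine assembly of Markov's inequality, independence, and an elementary one-variable minimization, and all steps are quantitative and explicit. Since this lemma is the classical result of Hoeffding, one could alternatively simply invoke \cite{Hoeffding1963}, but the sketch above is self-contained.
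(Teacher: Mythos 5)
Your proposal is correct. Note that the paper gives no proof of this lemma at all: it is recalled as the classical inequality of Hoeffding and justified solely by the citation \cite{Hoeffding1963}, so your Chernoff-bound argument (exponential Markov inequality, Hoeffding's lemma via convexity and the bound $\psi''(\lambda)\leq (b-a)^2/4$, optimization in $\lambda$, and a union bound over the two tails) is simply the standard self-contained proof of that cited result. Your caveat about centering is also well taken: as stated the inequality requires $\E[Z_i]=0$ (it fails for, say, $Z_i\equiv b>0$), and in the paper's application the summands $Z_{j_1,l_1,j_2,l_2}$ are indeed mean-zero, being products of two distinct independent Rademacher variables multiplied by a bounded deterministic integral, so the use made of the lemma is unaffected.
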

We apply Hoeffding's inequality to the variables $ (Z_{j_1,l_1,j_2,l_2})_{ (j_1, l_1) \in \I_{n,p,q}, (j_2, l_2) \in \I_{n,p,q},  (j_1, l_1)\prec (j_2, l_2)}$, where
$$ Z_{j_1,l_1,j_2,l_2} = \Theta_{j_1,l_1} \Theta_{j_2,l_2} \int_{\mathbb{R}^{p + q}} \norm{(u,v)}^{2 \delta}\pa{ \mathcal{G}_{j_1,l_1,j_2,l_2} (u,v) +  \overline{ \mathcal{G}_{j_1,l_1,j_2,l_2} }(u,v) }\, \mathrm{d}u \, \mathrm{d}v.$$
One easily verifies that the variables $(Z_{j_1,l_1,j_2,l_2})_{ (j_1, l_1)\in \I_{n,p,q},  (j_2, l_2) \in \I_{n,p,q} , (j_1, l_1) \prec (j_2, l_2) }$ are independent. Furthermore,
$$ |Z_{j_1,l_1,j_2,l_2} | \leq  2 \int_{\mathbb{R}^{p + q}} \norm{(u,v)}^{2 \delta}  \prod_{r=1}^p |\widehat{G}(u_r)|^2  \prod_{s=1}^q |\widehat{G}(v_s)|^2  \leq C(p,q,\delta )$$
by Lemma \ref{lemm:Butucea}. Hence, we obtain from Hoeffding's inequality that for all $x>0$, 
$$ \proba{\abs{\sum_{\substack{(j_1, l_1)\in \I_{n,p,q} \\ (j_2, l_2) \in \I_{n,p,q} \\ (j_1, l_1) \neq (j_2, l_2) }} Z_{j_1,l_1,j_2,l_2}} \geq x} \leq 2 \exp \pa{ - \frac{x^2}{2 C^2(p,q,\delta ) M_n^{2p+2q}}}.$$
We deduce from the above inequality that
$$  \proba{ \abs{\sum_{\substack{(j_1, l_1)\in \I_{n,p,q} \\ (j_2, l_2) \in \I_{n,p,q} \\ (j_1, l_1) \neq (j_2, l_2) }} Z_{j_1,l_1,j_2,l_2}} \geq C(p,q,\delta, \eta) M_n^{p+q}} \leq \eta,$$
which yields \eqref{eq:controlH2} and concludes the proof.

\subsection{Proof of Proposition \ref{prop:rapportvrais}}

Let $\mathbb{Z}_n = (X_i,Y_i)_{1 \leq i \leq n}$ be an i.i.d sample with common uniform distribution $P_{f_0}$ on $[0,1]^{p+q}$.

For simplicity, denote for all $1\leq i\leq n$ and all $(j,l)$ in $\I_{n,p,q}$, 
$$a_{i,j,l}\ =\ C_0 h_n^{\delta + (p+q)} g_{n,j,l}(X_i,Y_i)\ =\
C_0 h_n^{\delta + (p+q)}  \prod_{r = 1}^p G_{h_n} (X^{(r)}_i - j_r h_n) \prod_{s = 1}^q G_{h_n}(Y^{(s)}_i - l_s h_n),$$ 
where $g_{n,j,l}$ is defined in Equation \eqref{def:gnpq}, such that $f_\theta(X_i,Y_i)= 1 + \sum_{(j,l)\in \I_{n,p,q}} \theta_{(j,l)} a_{i,j,l}.$
Note that $a_{i,j,l}\neq 0$ if and only if $(X_i,Y_i)$ belongs to the set $D_{(j,l)}$ defined in Equation \eqref{eq:support}.

\paragraph{}
Then, since $f_0 = \mathds{1}_{[0,1]^{p+q}}$, the likelihood ratio equals 
\begin{align*}
L_{\nu_{\rho_n^*}}(\mathbb{Z}_n)  &= \frac{\mathrm{d} P_{\nu_{\rho_n^*}}}{\mathrm{d} P_{f_0}} (\mathbb{Z}_n) = \int \prod_{i = 1}^n \frac{f_\theta }{f_0} (X_i ,Y_i) \pi(\mathrm{d} \theta) \\
& = \esps{\Theta}{ \prod_{i=1}^n \pa{1 + \sum_{(j,l)\in \I_{n,p,q}} \Theta_{(j,l)} a_{i,j,l}} },
\end{align*} 
where $\Theta = (\Theta_{(j,l)})_{(j,l)\in \I_{n,p,q}}$ has i.i.d. Rademacher components $\Theta_{(j,l)}$, and $\esps{\Theta}{\cdot}$ denotes the expectation w.r.t. $\Theta$. 

\paragraph{}
Noticing that for all $1\leq i\leq n$, there exists a unique $(j,l)$ in $\I_{n,p,q}$ such that $a_{i,j,l}\neq 0$, we obtain
$$1+\sum_{(j,l)\in \I_{n,p,q}}\Theta_{(j,l)} a_{i,j,l} = \prod_{(j,l)\in \I_{n,p,q}} \pa{1+\Theta_{(j,l)} a_{i,j,l}}.$$
Thus, 
\begin{align*}
L_{\nu_{\rho_n^*}} (\mathbb{Z}_n) &= \esps{\Theta}{\prod_{(j,l)\in \I_{n,p,q}} \prod_{i=1}^n \pa{1+\Theta_{(j,l)} a_{i,j,l}}} \\
&=\prod_{(j,l)\in \I_{n,p,q}}  \cro{\frac{1}{2}\prod_{i=1}^n \pa{1- a_{i,j,l}} + \frac{1}{2}\prod_{i=1}^n \pa{1+ a_{i,j,l}}}. 
\end{align*}
Moreover, for $\varepsilon$ in $\{-1,1\}$, 
$$\prod_{i=1}^n \pa{1+ \varepsilon a_{i,j,l}} = 1+\sum_{k=1}^n \varepsilon^k \pa{\sum_{1\leq i_1<\ldots<i_k\leq n} a_{i_1,j,l}\ldots a_{i_k,j,l}}.$$
Hence, by cancelling the odd terms, we obtain 
\begin{align*}
\frac{1}{2}\prod_{i=1}^n \pa{1- a_{i,j,l}} + \frac{1}{2}\prod_{i=1}^n \pa{1+ a_{i,j,l}} &= 1+\sum_{k=1}^{[n/2]} \sum_{1\leq i_1<\ldots<i_{2k}\leq n} a_{i_1,j,l}\ldots a_{i_{2k},j,l} \\
&= 1+\sum_{k=1}^{[n/2]} A_{k,j,l}, 
\end{align*}
where $[\cdot]$ denotes the integer part, and 
\begin{equation}
A_{k,j,l} = \sum_{1\leq i_1<\ldots<i_{2k}\leq n} a_{i_1,j,l}\ldots a_{i_{2k},j,l}.
\label{eq:defAkjl}
\end{equation}
Thus, 
$$\cro{L_{\nu_{\rho_n^*}} (\mathbb{Z}_n)}^2 = \prod_{(j,l)\in \I_{n,p,q}}  \pa{1+\sum_{k=1}^{[n/2]} A_{k,j,l}}^2 = \prod_{(j,l)\in \I_{n,p,q}}  \pa{1+ B_{j,l}},$$
where 
\begin{equation}
B_{j,l} = 2\sum_{k=1}^{[n/2]} A_{k,j,l} + \sum_{k,k'=1}^{[n/2]} A_{k,j,l} A_{k',j,l}.
\label{eq:defBjl}
\end{equation}
Then, 
\begin{equation}
\label{esp0Lnu}
\cro{L_{\nu_{\rho_n^*}} (\mathbb{Z}_n)}^2 = 1+ \sum_{m=1}^{M_n^{p+q}} \frac{1}{m!} \sum_{(j_1,l_1),\ldots,(j_m,l_m)}^{\neq} B_{j_1,l_1}  \ldots B_{j_m,l_m},
\end{equation}
where $\displaystyle\sum^{\neq}$ means that the indexes are all distinct. 

After tedious computations, up to a possible permutation of the indexes $(j_1,l_1),\ldots,(j_m,l_m)$, we can express the product $B_{j_1,l_1}  \ldots B_{j_m,l_m}$ as a sum of terms of the form 
\begin{equation}
\label{sommehorriblesimple} 
2^P\sum_{k_1=1}^{[n/2]}\sum_{k_2'',\ldots,k_{M}''=1}^{[n/2]} 2A_{k_1,j_1,l_1}\times A_{k_2'',j_2',l_2'}\times \ldots \times A_{k_M'',j_M',l_M'}
\end{equation}
or
\begin{equation}
\label{sommehorribledouble}
2^Q\sum_{k_1,k_1'=1}^{[n/2]}\sum_{k_2'',\ldots,k_{M}''=1}^{[n/2]} A_{k_1,j_1,l_1}A_{k_1',j_1,l_1}\times A_{k_2'',j_2',l_2'}\times \ldots \times A_{k_M'',j_M',l_M'}
\end{equation}
where $P$ and $Q$ are integers, $M\in\{m-1,\ldots,2m-2\}$ and $(j_2',l_2'),\ldots,(j_M',l_M')$ are drawn in $(j_2,l_2),\ldots,(j_m,l_m)$ such that each $(j_r,l_r)$ for $2\leq r\leq m$ appears exactly once or twice. To be more precise, $P$ and $Q$ count the number of indexes $(j_r,l_r)_r$, appearing exactly once in the product. \\

First note that in Equation \eqref{sommehorriblesimple}, the index $(j_1,l_1)$ appears only once. 
Moreover
\begin{multline*}
\esps{f_0}{A_{k_1,j_1,l_1}\times A_{k_2'',j_2',l_2'}\times \ldots \times A_{k_M'',j_M',l_M'}} \\
 = \sum_{i_{1,1}<\ldots<i_{1,2k_1}} \sum_{i_{2,1}<\ldots<i_{2,2k''_2}} \ldots \sum_{i_{M,1}<\ldots<i_{M,2k''_M}} \mathbb{E}_{f_0}\big[
 a_{i_{1,1},j_1,l_1} \times \ldots \times a_{i_{1,2k_1},j_1,l_1} \times \\
 \times a_{i_{2,1},j_2',l_2'} \times \ldots \times a_{i_{2,2k_2''},j_2',l_2'} \times 
 \ldots 
 \times a_{i_{M,1},j_M',l_M'} \times \ldots \times a_{i_{M,2k_M''},j_M',l_M'}\big]
\end{multline*}
If $i_{1,1}$ appears at least twice in the sums, that is there exists $2\leq r\leq M$ and $1\leq s\leq 2k_r''$ such that $i_{1,1}=i_{r,s}$, then, 
$a_{i_{1,1},j_1,l_1} a_{i_{1,1},j_r,l_r} = 0$ since $D_{j_1,l_1}\cap D_{j_r,l_r}=\emptyset$. 
Otherwise, if $i_{1,1}$ appears only once, by independence between the $(X_i,Y_i)_i$, we obtain that 
\begin{multline*}
\esps{f_0}{a_{i_{1,1},j_1,l_1} \times \ldots \times a_{i_{1,2k_1},j_1,l_1} \times 
  a_{i_{2,1},j_2',l_2'} \times \ldots \times a_{i_{M,2k_M''},j_M',l_M'}} \\
=\esps{f_0}{a_{i_{1,1},j_1,l_1}} \times \esps{f_0}{a_{i_{1,2},j_1,l_1}\ldots \times a_{i_{1,2k_1},j_1,l_1} \times 
  a_{i_{2,1},j_2',l_2'} \times \ldots \times a_{i_{M,2k_M''},j_M',l_M'}} =0
\end{multline*}
Hence, 
\begin{equation*}\label{sommesimplenulle}
\mathbb{E}_{f_0}[A_{k_1,j_1,l_1}\times A_{k_2'',j_2',l_2'}\times \ldots \times A_{k_M'',j_M',l_M'}]=0.
\end{equation*}
and thus, all the terms of the form \eqref{sommehorriblesimple} have a null expectation. \\
Let us now consider Equation \eqref{sommehorribledouble} (where the index $(j_1,l_1)$ appears twice). 
\begin{multline*}
\mathbb{E}_{f_0}[A_{k_1,j_1,l_1}\times A_{k_1',j_1,l_1}\times A_{k_2'',j_2',l_2'}\times \ldots \times A_{k_M'',j_M',l_M'}] 
 = \\
 \sum_{i_{1,1}<\ldots<i_{1,2k_1}}\sum_{i_{1,1}'<\ldots<i_{1,2k_1'}'} \sum_{i_{2,1}<\ldots<i_{2,2k''_2}} \ldots \sum_{i_{M,1}<\ldots<i_{M,2k''_M}} 
 \mathbb{E}_{f_0}\big[
 a_{i_{1,1},j_1,l_1} \times \ldots \times a_{i_{1,2k_1},j_1,l_1} \times a_{i_{1,1}',j_1,l_1} \times  \\
\times \ldots \times a_{i_{1,2k_1'}',j_1,l_1}  \times a_{i_{2,1},j_2',l_2'} \times \ldots \times a_{i_{2,2k_2''},j_2',l_2'} \times 
 \ldots 
 \times a_{i_{M,1},j_M',l_M'} \times \ldots \times a_{i_{M,2k_M''},j_M',l_M'}\big]
\end{multline*}

If there exists at least one index $i_{1,\cdot}$ or $i_{1,\cdot}'$ that can be isolated, then by independence,  
\begin{multline*}
\mathbb{E}_{f_0}\big[
 a_{i_{1,1},j_1,l_1} \times \ldots \times a_{i_{1,2k_1},j_1,l_1} \times a_{i_{1,1}',j_1,l_1} \times \ldots \times a_{i_{1,2k_1'}',j_1,l_1} \times \\
 \times a_{i_{2,1},j_2',l_2'} \times \ldots \times a_{i_{2,2k_2''},j_2',l_2'} \times 
 \ldots 
 \times a_{i_{M,1},j_M',l_M'} \times \ldots \times a_{i_{M,2k_M''},j_M',l_M'}\big] = 0.
\end{multline*}

Hence, the remaining terms are obtained for $k_1=k_1'$ and $i_{1,s}=i_{1,s}'$ for all $1\leq s\leq 2k_1$. 
These arguments are being valid for any index $(j_r,l_r)$, we obtain that $Q=0$ and 

\begin{multline*}
\mathbb{E}_{f_0} \big[B_{j_1,l_1}  \ldots B_{j_m,l_m}\big] \\
 = \sum_{k_1,\ldots,k_m=1}^{[n/2]} \sum_{i_{1,1}<\ldots<i_{1,2k_1}}\ldots \sum_{i_{m,1}<\ldots<i_{m,2k_m}} 
\mathbb{E}_{f_0} \Bigg[ a_{i_{1,1},j_1,l_1}^2 \times \ldots \times a_{i_{1,2k_1},j_1,l_1}^2 \times \ldots \times\\ 
\times a_{i_{m,1},j_m,l_m}^2 \times \ldots \times a_{i_{m,2k_m},j_m,l_m}^2 \Bigg] \\
 = \sum_{k_1,\ldots,k_m=1}^{[n/2]} \sum_{\underset{\Card(I_r)=2k_r}{I_1,\ldots,I_m \subset\{1,\ldots n\}}}
\esps{f_0}{\prod_{i_1\in I_1}a_{i_1,j_1,l_1}^2 \times \ldots \times \prod_{i_m\in I_m}  a_{i_m,j_m,l_m}^2}. 
\end{multline*}

If the subsets $I_r$ are not pairwise disjoints, the product $\prod_{i_1\in I_1}a_{i_1,j_1,l_1}^2 \times \ldots\times \prod_{i_m\in I_m} a_{i_m,j_m,l_m}^2=0$, since the supports $D_{(j_r,l_r)}$ are disjoint. 

Thus, 
\begin{eqnarray*}
\esps{f_0}{B_{j_1,l_1}  \ldots B_{j_m,l_m}} &=& \sum_{k_1,\ldots,k_m=1}^{[n/2]} \sum_{\underset{\underset{\Card(I_r)=2k_r}{I_r\cap I_s = \emptyset, \forall r\neq s}}{{I_1,\ldots,I_m \subset\{1,\ldots n\}}}}\esps{f_0}{\prod_{i_1\in I_1}a_{i_1,j_1,l_1}^2 \times \ldots\times \prod_{i_m\in I_m} a_{i_m,j_m,l_m}^2} \\
&=& \sum_{k_1,\ldots,k_m=1}^{[n/2]} \sum_{\underset{\underset{\Card(I_r)=2k_r}{I_r\cap I_s = \emptyset, \forall r\neq s}}{{I_1,\ldots,I_m \subset\{1,\ldots n\}}}}\prod_{i_1\in I_1} \esps{f_0}{a_{i_1,j_1,l_1}^2} \times\ldots\times \prod_{i_m\in I_m} \esps{f_0}{a_{i_m,j_m,l_m}^2},\\
\end{eqnarray*}
by independence of the $(X_i,Y_i)_{1\leq i\leq n}$. 

Besides, for all $1\leq i\leq n$, 
$$\esps{f_0}{a^2_{i,j,l}}\ =\ C_0^2 h_n^{2\delta + 2p + 2q}\norm{g_{n,j,l}}_2^2\ =\ C(p,q,\delta,R,R',\eta) h_n^{2\delta + p+q},$$ 
since $C_0$ depends on $p,q,\delta,R,R'$ and $\eta$, and by Equation \eqref{eq:norm2g}. 
Thus, 
\begin{eqnarray*}
\esps{f_0}{B_{j_1,l_1}  \ldots B_{j_m,l_m}} &=& \sum_{k_1,\ldots,k_m=1}^{[n/2]} \sum_{\underset{\underset{\Card(I_r)=2k_r}{I_r\cap I_s = \emptyset, \forall r\neq s}}{{I_1,\ldots,I_m \subset\{1,\ldots n\}}}} \pa{C(p,q,\delta,R,R',\eta)h_n^{2\delta + p+q}}^{2k_1+\ldots+2k_m} \\
&=& \sum_{k_1,\ldots,k_m=1}^{[n/2]} \binom{n}{2k_1,\ldots,2k_m,n-\sum_{r=1}^m 2k_r} \pa{C(p,q,\delta,R,R',\eta)h_n^{2\delta + p+q}}^{2k_1+\ldots+2k_m}. 
\end{eqnarray*}
Moreover, the multinomial coefficient can be upper bounded as follows
$$\binom{n}{2k_1,\ldots,2k_m,n-\sum_{r=1}^m 2k_r} \leq n^{2k_1+\ldots+2k_m}.$$ 
Hence, 
\begin{eqnarray*}
\esps{f_0}{B_{j_1,l_1}  \ldots B_{j_m,l_m}} &\leq& \sum_{k_1=1}^{[n/2]}\ldots \sum_{k_m=1}^{[n/2]} \pa{C(p,q,\delta,R,R',\eta) n \times h_n^{2\delta + p+q}}^{2k_1+\ldots+2k_m} \\
&=& \cro{\sum_{k=1}^{[n/2]} \pa{C(p,q,\delta,R,R',\eta) n \times h_n^{2\delta + p+q}}^{2k}}^{m}.
\end{eqnarray*}
Furthermore, for $h_n$ defined in \eqref{eq:defhn} we have
$$h_n \leq C(p,q,\alpha,\beta,\delta,R,R',\eta)n^{-2/(4\delta+p+q)},$$
and thus, whatever the constant $C(p,q,\alpha,\beta,\delta,R,R',\eta)$ is, 
$$C(p,q,\delta,R,R',\eta)\ n\times h_n^{2\delta + p+q} \leq C'(p,q,\alpha,\beta,\delta,R,R',\eta) n^{-(p+q)/(4\delta+p+q)} < 1/2$$ 
for $n$ large enough. Thus, by property of geometric series, we get 
\begin{eqnarray*}
\sum_{k=1}^{[n/2]}\ac{\cro{C(p,q,\delta,R,R',\eta)\ n\times h_n^{2\delta + p+q}}^{2}}^{k} &\leq& \frac{\cro{C(p,q,\delta,R,R',\eta)\ n\times h_n^{2\delta + p+q}}^{2}}{1-\cro{C(p,q,\delta,R,R',\eta)\ n\times h_n^{2\delta + p+q}}^{2}} \\&\leq& \frac{4}{3}\cro{C(p,q,\delta,R,R',\eta)\ n\times h_n^{2\delta + p+q}}^{2}.
\end{eqnarray*}
\paragraph{}
We recall that the constants $C(\cdot)$ may vary from line to line. 
This being true for all $(j,l)$ in $\I_{n,p,q}$, from Equation \eqref{esp0Lnu}, we deduce that 
\begin{align*}
\esps{f_0}{\ac{L_{\nu_{\rho_n^*}} (\mathbb{Z}_n)}^2} 
& \leq 1+\sum_{m=1}^{M_n^{p+q}}  \binom{M_n^{p+q}}{m} \cro{C(p,q,\delta,R,R',\eta)\ n\times h_n^{2\delta + p+q}}^{2m} \\
& \leq 1 + \sum_{m=1}^{M_n^{p+q}} \ac{M_n^{p+q}\cro{C(p,q,\delta,R,R',\eta)\ n\times h_n^{2\delta + p+q}}^{2}}^m \\
& \leq 1 + \sum_{m=1}^{M_n^{p+q}} \cro{C(p,q,\delta,R,R',\eta)\ n^2\times h_n^{4\delta + p+q}}^m,
\end{align*}
since $\binom{M_n^{p+q}}{m}\leq \cro{M_n^{p+q}}^{m}$ and $M_nh_n=1$.
\paragraph{}
Finally, for $h_n$ defined in \eqref{eq:defhn}, with 
$$C(p,q,\alpha,\beta,\delta,R,R',\eta) = \pa{\frac{1}{C(p,q,\delta,R,R',\eta)} \times \frac{4(1 - \alpha - \beta-\eta )^2}{1+4(1 - \alpha - \beta-\eta)^2}}^{1/(4\delta+p+q)},$$
we directly obtain that 
$$C(p,q,\delta,R,R',\eta)\ n^2\times h_n^{4\delta + p+q} \leq \frac{4(1 - \alpha - \beta -\eta)^2}{1+4(1 - \alpha - \beta-\eta)^2} < 1.$$ 
Hence, by property of the geometric series we obtain, 
\begin{eqnarray*}
\esps{f_0}{\pa{L_{\nu_{\rho_n^*}} (\mathbb{Z}_n)}^2} 
&<& 1 + \frac{\cro{C(p,q,\delta,R,R',\eta)\ n^2\times h_n^{4\delta + p+q}}}{1-\cro{C(p,q,\delta,R,R',\eta)\ n^2\times h_n^{4\delta + p+q}}}\\
& < & 1+4(1-\alpha-\beta-\eta)^2,
\end{eqnarray*}
which ends the proof of Proposition \ref{prop:rapportvrais}.


\begin{thebibliography}{}

\bibitem[Ahmad and Li, 1997]{ahmad1997testing}
Ahmad, I.~A. and Li, Q. (1997).
\newblock Testing independence by nonparametric kernel method.
\newblock {\em Statistics \& probability letters}, 34(2):201--210.

\bibitem[Albert, 2015]{albert2015tests}
Albert, M. (2015).
\newblock {\em Tests of independence by bootstrap and permutation: an
  asymptotic and non-asymptotic study. Application to neurosciences.}
\newblock PhD thesis, Universit{\'e} Nice Sophia Antipolis.

\bibitem[Arcones and Gine, 1993]{arcones1993limit}
Arcones, M.~A. and Gine, E. (1993).
\newblock Limit theorems for {$U$}-processes.
\newblock {\em The Annals of Probability}, pages 1494--1542.

\bibitem[Aronszajn, 1950]{aronszajn1950theory}
Aronszajn, N. (1950).
\newblock Theory of reproducing kernels.
\newblock {\em Transactions of the American Mathematical Society},
  68(3):337--404.

\bibitem[Bach and Jordan, 2002]{bach2002kernel}
Bach, F.~R. and Jordan, M.~I. (2002).
\newblock Kernel independent component analysis.
\newblock {\em Journal of Machine Learning Research}, 3(Jul):1--48.

\bibitem[Baker, 1973]{baker1973joint}
Baker, C.~R. (1973).
\newblock Joint measures and cross-covariance operators.
\newblock {\em Transactions of the American Mathematical Society},
  186:273--289.

\bibitem[Balakrishnan et~al., 2019]{balakrishnan2019hypothesis}
Balakrishnan, S., Wasserman, L., et~al. (2019).
\newblock Hypothesis testing for densities and high-dimensional multinomials:
  Sharp local minimax rates.
\newblock {\em The Annals of Statistics}, 47(4):1893--1927.

\bibitem[Baraud, 2002]{baraud2002non}
Baraud, Y. (2002).
\newblock Non-asymptotic minimax rates of testing in signal detection.
\newblock {\em Bernoulli}, 8(5):577--606.

\bibitem[Baraud et~al., 2003]{baraud2003adaptive}
Baraud, Y., Huet, S., and Laurent, B. (2003).
\newblock Adaptive tests of linear hypotheses by model selection.
\newblock {\em The Annals of Statistics}, 31(1):225--251.

\bibitem[Berrett et~al., 2020]{berrett2020optimal}
Berrett, T.~B., Kontoyiannis, I., and Samworth, R.~J. (2020).
\newblock Optimal rates for independence testing via {$U$}-statistic
  permutation tests.
\newblock {\em arXiv preprint arXiv:2001.05513}.

\bibitem[Berrett and Samworth, 2019]{berrett2019nonparametric}
Berrett, T.~B. and Samworth, R.~J. (2019).
\newblock Nonparametric independence testing via mutual information.
\newblock {\em Biometrika}, 106(3):547--566.

\bibitem[Butucea, 2007]{butucea2007goodness}
Butucea, C. (2007).
\newblock Goodness-of-fit testing and quadratic functional estimation from
  indirect observations.
\newblock {\em The Annals of Statistics}, 35(5):1907--1930.

\bibitem[De~Lozzo and Marrel, 2017]{de2017sensitivity}
De~Lozzo, M. and Marrel, A. (2017).
\newblock Sensitivity analysis with dependence and variance-based measures for
  spatio-temporal numerical simulators.
\newblock {\em Stochastic environmental research and risk assessment},
  31(6):1437--1453.

\bibitem[Fromont et~al., 2006]{fromont2006adaptive}
Fromont, M., Laurent, B., et~al. (2006).
\newblock Adaptive goodness-of-fit tests in a density model.
\newblock {\em The Annals of Statistics}, 34(2):680--720.

\bibitem[Fromont et~al., 2013]{fromont2013two}
Fromont, M., Laurent, B., and Reynaud-Bouret, P. (2013).
\newblock The two-sample problem for {P}oisson processes: Adaptive tests with a
  nonasymptotic wild bootstrap approach.
\newblock {\em The Annals of Statistics}, 41(3):1431--1461.

\bibitem[Fukumizu et~al., 2004]{fukumizu2004dimensionality}
Fukumizu, K., Bach, F.~R., and Jordan, M.~I. (2004).
\newblock Dimensionality reduction for supervised learning with reproducing
  kernel {H}ilbert spaces.
\newblock {\em Journal of Machine Learning Research}, 5(Jan):73--99.

\bibitem[Fukumizu et~al., 2008]{fukumizu2008kernel}
Fukumizu, K., Gretton, A., Sun, X., and Sch{\"o}lkopf, B. (2008).
\newblock Kernel measures of conditional dependence.
\newblock In {\em Advances in Neural Information Processing Systems}, pages
  489--496.

\bibitem[Gin{\'e} et~al., 2000]{gine2000exponential}
Gin{\'e}, E., Lata{\l}a, R., and Zinn, J. (2000).
\newblock Exponential and moment inequalities for {$U$}-statistics.
\newblock In {\em High Dimensional Probability II}, pages 13--38. Springer.

\bibitem[Gretton et~al., 2005a]{gretton2005measuring}
Gretton, A., Bousquet, O., Smola, A., and Sch{\"o}lkopf, B. (2005a).
\newblock Measuring statistical dependence with {H}ilbert-{S}chmidt norms.
\newblock In {\em International conference on algorithmic learning theory},
  pages 63--77. Springer.

\bibitem[Gretton et~al., 2008]{gretton2008kernel}
Gretton, A., Fukumizu, K., Teo, C.~H., Song, L., Sch{\"o}lkopf, B., and Smola,
  A.~J. (2008).
\newblock A kernel statistical test of independence.
\newblock In {\em Advances in Neural Information Processing Systems}, pages
  585--592.

\bibitem[Gretton et~al., 2005b]{gretton2005kernel}
Gretton, A., Herbrich, R., Smola, A., Bousquet, O., and Sch{\"o}lkopf, B.
  (2005b).
\newblock Kernel methods for measuring independence.
\newblock {\em Journal of Machine Learning Research}, 6(Dec):2075--2129.

\bibitem[Gretton et~al., 2003]{gretton2003kernel}
Gretton, A., Herbrich, R., and Smola, A.~J. (2003).
\newblock The kernel mutual information.
\newblock In {\em Acoustics, Speech, and Signal Processing, 2003.
  Proceedings.(ICASSP'03). 2003 IEEE International Conference on}, volume~4,
  pages IV--880. IEEE.

\bibitem[Gretton et~al., 2005c]{gretton2005kernelConstrained}
Gretton, A., Smola, A.~J., Bousquet, O., Herbrich, R., Belitski, A., Augath,
  M., Murayama, Y., Pauls, J., Sch{\"o}lkopf, B., and Logothetis, N.~K.
  (2005c).
\newblock Kernel constrained covariance for dependence measurement.
\newblock In {\em AISTATS}, volume~10, pages 112--119.

\bibitem[Heller et~al., 2016]{heller2016consistent}
Heller, R., Heller, Y., Kaufman, S., Brill, B., and Gorfine, M. (2016).
\newblock Consistent distribution-free {$K$}-sample and independence tests for
  univariate random variables.
\newblock {\em The Journal of Machine Learning Research}, 17(1):978--1031.

\bibitem[Hoeffding, 1948]{hoeffding1948non}
Hoeffding, W. (1948).
\newblock A non-parametric test of independence.
\newblock {\em The Annals of Mathematical Statistics}, pages 546--557.

\bibitem[Hoeffding, 1963]{Hoeffding1963}
Hoeffding, W. (1963).
\newblock Probability inequalities for sums of bounded random variables.
\newblock {\em J. Amer. Statist. Assoc.}, 58:13--30.

\bibitem[Houdr{\'e} and Reynaud-Bouret, 2003]{houdre2003exponential}
Houdr{\'e}, C. and Reynaud-Bouret, P. (2003).
\newblock Exponential inequalities, with constants, for {$U$}-statistics of
  order two.
\newblock In {\em Stochastic inequalities and applications}, pages 55--69.
  Springer.

\bibitem[Ingster, 1989]{ingster1989asymptotically}
Ingster, Y.~I. (1989).
\newblock An asymptotically minimax test of the hypothesis of independence.
\newblock {\em J. Soviet Math}, 44:466--476.

\bibitem[Ingster, 1993a]{ingster1993asymptotically}
Ingster, Y.~I. (1993a).
\newblock Asymptotically minimax hypothesis testing for nonparametric
  alternatives. i, ii, iii.
\newblock {\em Math. Methods Statist}, 2(2):85--114.

\bibitem[Ingster, 1993b]{ingster1993minimax}
Ingster, Y.~I. (1993b).
\newblock Minimax testing of the hypothesis of independence for ellipsoids in
  $l_p$.
\newblock {\em Zapiski Nauchnykh Seminarov POMI}, 207:77--97.

\bibitem[Ingster, 2000]{ingster2000adaptive}
Ingster, Y.~I. (2000).
\newblock Adaptive chi-square tests.
\newblock {\em Journal of Mathematical Sciences}, 99(2):1110--1119.

\bibitem[Ingster and Suslina, 1998]{ingster1998minimax}
Ingster, Y.~I. and Suslina, I.~A. (1998).
\newblock Minimax detection of a signal for {B}esov bodies and balls.
\newblock {\em Problemy Peredachi Informatsii}, 34(1):56--68.

\bibitem[Ishigami and Homma, 1990]{ishigami1990importance}
Ishigami, T. and Homma, T. (1990).
\newblock An importance quantification technique in uncertainty analysis for
  computer models.
\newblock In {\em [1990] Proceedings. First International Symposium on
  Uncertainty Modeling and Analysis}, pages 398--403. IEEE.

\bibitem[Jacod and Protter, 2012]{jacod2012probability}
Jacod, J. and Protter, P. (2012).
\newblock {\em Probability essentials}.
\newblock Springer Science \& Business Media.

\bibitem[Kim et~al., 2020]{kim2020minimax}
Kim, I., Balakrishnan, S., and Wasserman, L. (2020).
\newblock Minimax optimality of permutation tests.
\newblock {\em arXiv preprint arXiv:2003.13208}.

\bibitem[Laurent et~al., 2012]{laurent2012non}
Laurent, B., Loubes, J.-M., and Marteau, C. (2012).
\newblock Non asymptotic minimax rates of testing in signal detection with
  heterogeneous variances.
\newblock {\em Electronic Journal of Statistics}, 6:91--122.

\bibitem[Lee et~al., 2019]{lee2019testing}
Lee, D., Zhang, K., and Kosorok, M.~R. (2019).
\newblock Testing independence with the binary expansion randomized ensemble
  test.
\newblock {\em arXiv preprint arXiv:1912.03662}.

\bibitem[Li and Yuan, 2019]{li2019optimality}
Li, T. and Yuan, M. (2019).
\newblock On the optimality of gaussian kernel based nonparametric tests
  against smooth alternatives.
\newblock {\em arXiv preprint arXiv:1909.03302}.

\bibitem[Marrel et~al., 2020]{marrel2020statistical}
Marrel, A., Raguet, H., and Chabridon, V. (2020).
\newblock Statistical developments for target and conditional sensitivity
  analysis: application on safety studies for nuclear reactor.
\newblock {\em HAL preprint hal-02541142v2}.

\bibitem[Micchelli et~al., 2006]{micchelli2006universal}
Micchelli, C.~A., Xu, Y., and Zhang, H. (2006).
\newblock Universal kernels.
\newblock {\em Journal of Machine Learning Research}, 7(Dec):2651--2667.

\bibitem[Parzen, 1962]{parzen1962estimation}
Parzen, E. (1962).
\newblock On estimation of a probability density function and mode.
\newblock {\em The Annals of Mathematical Statistics}, 33(3):1065--1076.

\bibitem[Pfister et~al., 2018]{pfister2018kernel}
Pfister, N., B{\"u}hlmann, P., Sch{\"o}lkopf, B., and Peters, J. (2018).
\newblock Kernel-based tests for joint independence.
\newblock {\em Journal of the Royal Statistical Society: Series B (Statistical
  Methodology)}, 80(1):5--31.

\bibitem[P{\'o}czos et~al., 2012]{poczos2012copula}
P{\'o}czos, B., Ghahramani, Z., and Schneider, J. (2012).
\newblock Copula-based kernel dependency measures.
\newblock {\em arXiv preprint arXiv:1206.4682}.

\bibitem[Ramdas et~al., 2016]{ramdas2016minimax}
Ramdas, A., Isenberg, D., Singh, A., and Wasserman, L. (2016).
\newblock Minimax lower bounds for linear independence testing.
\newblock In {\em 2016 IEEE International Symposium on Information Theory
  (ISIT)}, pages 965--969. IEEE.

\bibitem[Romano and Wolf, 2005]{romano2005exact}
Romano, J.~P. and Wolf, M. (2005).
\newblock Exact and approximate stepdown methods for multiple hypothesis
  testing.
\newblock {\em Journal of the American Statistical Association},
  100(469):94--108.

\bibitem[Rosenblatt, 1975]{rosenblatt1975quadratic}
Rosenblatt, M. (1975).
\newblock A quadratic measure of deviation of two-dimensional density estimates
  and a test of independence.
\newblock {\em The Annals of Statistics}, pages 1--14.

\bibitem[Serfling, 2009]{serfling2009approximation}
Serfling, R.~J. (2009).
\newblock {\em Approximation theorems of mathematical statistics}, volume 162.
\newblock John Wiley \& Sons.

\bibitem[Sobol, 2001]{sobol2001global}
Sobol, I.~M. (2001).
\newblock Global sensitivity indices for nonlinear mathematical models and
  their {M}onte {C}arlo estimates.
\newblock {\em Mathematics and computers in simulation}, 55(1-3):271--280.

\bibitem[Spokoiny, 1996]{spokoiny1996adaptive}
Spokoiny, V.~G. (1996).
\newblock Adaptive hypothesis testing using wavelets.
\newblock {\em The Annals of Statistics}, 24(6):2477--2498.

\bibitem[Sriperumbudur et~al., 2010]{sriperumbudur2010hilbert}
Sriperumbudur, B.~K., Gretton, A., Fukumizu, K., Sch{\"o}lkopf, B., and
  Lanckriet, G.~R. (2010).
\newblock {H}ilbert space embeddings and metrics on probability measures.
\newblock {\em Journal of Machine Learning Research}, 11(Apr):1517--1561.

\bibitem[Steinwart, 2001]{steinwart2001influence}
Steinwart, I. (2001).
\newblock On the influence of the kernel on the consistency of support vector
  machines.
\newblock {\em Journal of Machine Learning Research}, 2(Nov):67--93.

\bibitem[Sz{\'e}kely and Rizzo, 2013]{szekely2013distance}
Sz{\'e}kely, G.~J. and Rizzo, M.~L. (2013).
\newblock The distance correlation {$t$}-test of independence in high
  dimension.
\newblock {\em Journal of Multivariate Analysis}, 117:193--213.

\bibitem[Sz{\'e}kely et~al., 2007]{szekely2007measuring}
Sz{\'e}kely, G.~J., Rizzo, M.~L., and Bakirov, N.~K. (2007).
\newblock Measuring and testing dependence by correlation of distances.
\newblock {\em The Annals of Statistics}, 35(6):2769--2794.

\bibitem[Tsybakov, 2009]{tsybakov2009introduction}
Tsybakov, A.~B. (2009).
\newblock Introduction to nonparametric estimation. {R}evised and extended from
  the 2004 {French} original. {T}ranslated by {V}ladimir {Z}aiats.

\bibitem[Weihs et~al., 2018]{weihs2018symmetric}
Weihs, L., Drton, M., and Meinshausen, N. (2018).
\newblock Symmetric rank covariances: a generalized framework for nonparametric
  measures of dependence.
\newblock {\em Biometrika}, 105(3):547--562.

\bibitem[Weiss, 2006]{weiss2006course}
Weiss, N.~A. (2006).
\newblock {\em A course in probability}.
\newblock Addison-Wesley.

\bibitem[Yao et~al., 2018]{yao2018testing}
Yao, S., Zhang, X., and Shao, X. (2018).
\newblock Testing mutual independence in high dimension via distance
  covariance.
\newblock {\em Journal of the Royal Statistical Society: Series B (Statistical
  Methodology)}, 80(3):455--480.

\bibitem[Yod{\'e}, 2004]{yode2004asymptotically}
Yod{\'e}, A. (2004).
\newblock Asymptotically minimax test of independence.
\newblock {\em Mathematical Methods of Statistics}, 13(2):201--234.

\bibitem[Yod{\'e}, 2011]{yode2011adaptive}
Yod{\'e}, A. (2011).
\newblock Adaptive minimax test of independence.
\newblock {\em Mathematical Methods of Statistics}, 20(3):246.

\bibitem[Zhang, 2019]{zhang2019bet}
Zhang, K. (2019).
\newblock Bet on independence.
\newblock {\em Journal of the American Statistical Association},
  114(528):1620--1637.

\bibitem[Zhang et~al., 2011]{Zhang2011uai}
Zhang, K., Peters, J., Janzing, D., and Sch{\"o}lkopf, B. (2011).
\newblock Kernel-based conditional independence test and application in causal
  discovery.
\newblock In {\em Proceedings of the 27th Annual Conference on {U}ncertainty in
  {A}rtificial {I}ntelligence ({UAI})}, pages 804--813. AUAI Press.

\end{thebibliography}

\begin{thebibliography}{10}

\bibitem[Arcones and Gine, 1993]{arcones1993limit}
Arcones, M.~A. and Gine, E. (1993).
\newblock Limit theorems for {$U$}-processes.
\newblock {\em The Annals of Probability}, pages 1494--1542.

\bibitem[Butucea, 2007]{butucea2007goodness}
Butucea, C. (2007).
\newblock Goodness-of-fit testing and quadratic functional estimation from
  indirect observations.
\newblock {\em The Annals of Statistics}, 35(5):1907--1930.

\bibitem[De~Lozzo and Marrel, 2017]{de2017sensitivity}
De~Lozzo, M. and Marrel, A. (2017).
\newblock Sensitivity analysis with dependence and variance-based measures for
  spatio-temporal numerical simulators.
\newblock {\em Stochastic environmental research and risk assessment},
  31(6):1437--1453.

\bibitem[Gin{\'e} et~al., 2000]{gine2000exponential}
Gin{\'e}, E., Lata{\l}a, R., and Zinn, J. (2000).
\newblock Exponential and moment inequalities for {$U$}-statistics.
\newblock In {\em High Dimensional Probability II}, pages 13--38. Springer.

\bibitem[Gretton et~al., 2008]{gretton2008kernel}
Gretton, A., Fukumizu, K., Teo, C.~H., Song, L., Sch{\"o}lkopf, B., and Smola,
  A.~J. (2008).
\newblock A kernel statistical test of independence.
\newblock In {\em Advances in Neural Information Processing Systems}, pages
  585--592.

\bibitem[Hoeffding, 1963]{Hoeffding1963}
Hoeffding, W. (1963).
\newblock Probability inequalities for sums of bounded random variables.
\newblock {\em J. Amer. Statist. Assoc.}, 58:13--30.

\bibitem[Houdr{\'e} and Reynaud-Bouret, 2003]{houdre2003exponential}
Houdr{\'e}, C. and Reynaud-Bouret, P. (2003).
\newblock Exponential inequalities, with constants, for {$U$}-statistics of
  order two.
\newblock In {\em Stochastic inequalities and applications}, pages 55--69.
  Springer.

\bibitem[Ingster, 1989]{ingster1989asymptotically}
Ingster, Y.~I. (1989).
\newblock An asymptotically minimax test of the hypothesis of independence.
\newblock {\em J. Soviet Math}, 44:466--476.

\bibitem[Ingster, 2000]{ingster2000adaptive}
Ingster, Y.~I. (2000).
\newblock Adaptive chi-square tests.
\newblock {\em Journal of Mathematical Sciences}, 99(2):1110--1119.

\bibitem[Ishigami and Homma, 1990]{ishigami1990importance}
Ishigami, T. and Homma, T. (1990).
\newblock An importance quantification technique in uncertainty analysis for
  computer models.
\newblock In {\em [1990] Proceedings. First International Symposium on
  Uncertainty Modeling and Analysis}, pages 398--403. IEEE.

\bibitem[Kim et~al., 2020]{kim2020minimax}
Kim, I., Balakrishnan, S., and Wasserman, L. (2020).
\newblock Minimax optimality of permutation tests.
\newblock {\em arXiv preprint arXiv:2003.13208}.

\bibitem[Romano and Wolf, 2005]{romano2005exact}
Romano, J.~P. and Wolf, M. (2005).
\newblock Exact and approximate stepdown methods for multiple hypothesis
  testing.
\newblock {\em Journal of the American Statistical Association},
  100(469):94--108.

\bibitem[Serfling, 2009]{serfling2009approximation}
Serfling, R.~J. (2009).
\newblock {\em Approximation theorems of mathematical statistics}, volume 162.
\newblock John Wiley \& Sons.

\bibitem[Sobol, 2001]{sobol2001global}
Sobol, I.~M. (2001).
\newblock Global sensitivity indices for nonlinear mathematical models and
  their {M}onte {C}arlo estimates.
\newblock {\em Mathematics and computers in simulation}, 55(1-3):271--280.

\bibitem[Spokoiny, 1996]{spokoiny1996adaptive}
Spokoiny, V.~G. (1996).
\newblock Adaptive hypothesis testing using wavelets.
\newblock {\em The Annals of Statistics}, 24(6):2477--2498.

\bibitem[Tsybakov, 2009]{tsybakov2009introduction}
Tsybakov, A.~B. (2009).
\newblock Introduction to nonparametric estimation. {R}evised and extended from
  the 2004 {French} original. {T}ranslated by {V}ladimir {Z}aiats.

\bibitem[Weiss, 2006]{weiss2006course}
Weiss, N.~A. (2006).
\newblock {\em A course in probability}.
\newblock Addison-Wesley.

\end{thebibliography}

\end{document}